\setlist[enumerate]{label={\upshape(\roman*)}}
\newlist{myenumi}{enumerate}{1}
\setlist[myenumi,1]{label=\upshape(\roman*)}
\newlist{myenuma}{enumerate}{1}
\setlist[myenuma,1]{label=\upshape(\alph*)}
\declaretheorem[name=Theorem,numberwithin=section]{thm}
\declaretheorem[name=Lemma,numberlike=thm]{lem}
\declaretheorem[name=Corollary,numberlike=thm]{cor}
\declaretheorem[name=Proposition,numberlike=thm]{prop}
\declaretheorem[name=Question, numberlike=thm]{question}
\declaretheorem[name=Definition,numberlike=thm,style=definition,qed=\(\blacklozenge\)]{defn}
\declaretheorem[name=Example,numberlike=thm,style=definition,qed=\(\blacklozenge\)]{example}
\declaretheorem[name=Remark,numberlike=thm,style=definition,qed=\(\blacklozenge\)]{rem}
\declaretheorem[name=Assumption,numberlike=thm,style=definition,qed=\(\blacklozenge\)]{assumption}
\crefname{thm}{Theorem}{Theorems}
\crefname{lem}{Lemma}{Lemmas}
\crefname{defn}{Definition}{Definitions}
\crefname{prop}{Proposition}{Propositions}
\crefname{cor}{Corollary}{Corollaries}
\crefname{assumption}{Assumption}{Assumptions}
\crefname{equation}{}{}
\newcommand{\N}{\mathbb{N}}
\newcommand{\Z}{\mathbb{Z}}
\newcommand{\R}{\mathbb{R}}
\newcommand{\IR}{\mathbb{R}}
\newcommand{\C}{\mathbb{C}}
\newcommand{\supp}{\mathrm{supp}}
\newcommand{\cA}{\mathcal{A}}
\newcommand{\cS}{\mathcal{S}}
\newcommand{\cAz}{\cA_0}
\newcommand{\Hilbert}{\mathcal{H}}
\newcommand{\fA}{\mathfrak{A}}
\newcommand{\fm}{\mathfrak{m}}
\newcommand{\Ct}{\mathrm{C}}
\newcommand*{\Cz}{\Ct_0}
\newcommand*{\Cb}{\Ct_{\mathrm{b}}}
\newcommand*{\Cub}{\Ct_{\mathrm{ub}}}
\newcommand*{\Lin}{\mathfrak{B}}
\newcommand*{\Kom}{\mathfrak{K}}
\newcommand*{\grKom}{\widehat{\Kom}}
\newcommand*{\elltwo}{\ell^2}
\newcommand*{\grelltwo}{\widehat{\ell}^2}
\newcommand*{\Cl}{\C\ell}
\newcommand*{\ClR}{\mathrm{C}\ell}
\newcommand{\unitary}{\mathrm{U}}
\DeclareMathOperator{\SO}{SO}
\newcommand{\K}{\mathrm{K}}
\newcommand{\KO}{\mathrm{KO}}
\newcommand{\EE}{\mathrm{E}}
\newcommand{\frakc}{\mathfrak{c}}
\newcommand{\frakcbar}{\bar{\mathfrak{c}}}
\newcommand*{\sHigCom}{\frakcbar}
\newcommand*{\sHigCor}{\frakc}
\NewDocumentCommand{\textCstar}{}{\ensuremath{\mathrm{C}^*\!}}
\NewDocumentCommand{\LSym}{}{\mathrm{L}}
\NewDocumentCommand \RoeSymbol {o} {
	\mathrm{C}^{\ast}
	\IfNoValueF{#1}{_{#1}}
}
\NewDocumentCommand \VanishSymbol {o} {
	\mathrm{N}^{\ast}
	\IfNoValueF{#1}{_{#1}}
}
\NewDocumentCommand \FiproSymbol {o} {
	\mathrm{E}^{\ast}
	\IfNoValueF{#1}{_{#1}}
}
\NewDocumentCommand \RoePlaceholder {o} {
\RoeSymbol[
	\IfNoValueF{#1}{#1,}
	\mathrm{?}
]
}
\NewDocumentCommand \Roe {o} {\RoeSymbol[#1]}
\NewDocumentCommand \Fipro {o} {\FiproSymbol[#1]}
\NewDocumentCommand \varRoe {o} {
	\RoeSymbol[\sim\IfNoValueF{#1}{,#1}]
}
\NewDocumentCommand \Loc {o} {
	\RoeSymbol[
		\IfNoValueF{#1}{#1,}
		\LSym
	]
}
\NewDocumentCommand \LocVanish {o} {
	\VanishSymbol[
		\IfNoValueF{#1}{#1,}
    \LSym
	]
}
\NewDocumentCommand \FiproLoc {o} {
	\FiproSymbol[
		\IfNoValueF{#1}{#1,}
		\LSym
	]
}
\NewDocumentCommand \varLoc {o} {
	\RoeSymbol[
		\sim,
		\IfNoValueF{#1}{#1,}
		\LSym
	]
}
\NewDocumentCommand \Locz {o} {
	\RoeSymbol[
		\IfNoValueF{#1}{#1,}
		\LSym,0
	]
}
\NewDocumentCommand \FiproLocz {o} {
	\FiproSymbol[
		\IfNoValueF{#1}{#1,}
		\LSym,0
	]
}
\NewDocumentCommand \varLocz {o} {
	\RoeSymbol[
		\sim,
		\IfNoValueF{#1}{#1,}
		\LSym,0
	]
}
\newcommand*{\spinC}{spin\(^\mathrm{c}\)}
\newcommand*{\SpinC}{\mathrm{Spin}^\mathrm{c}}
\newcommand*{\Spin}{\mathrm{Spin}}
\DeclareMathOperator{\im}{im}
\newcommand{\id}{\mathrm{id}}
\DeclareMathOperator{\ind}{ind}
\DeclareMathOperator{\Ad}{Ad}
\DeclareMathOperator{\dist}{dist}
\DeclareMathOperator{\End}{End}
\DeclareMathOperator{\grad}{grad}
\DeclareMathOperator{\tw}{tw}
\DeclareMathOperator{\ev}{ev}
\NewDocumentCommand{\blank}{}{{-}}
\newcommand*{\grtensor}{\mathbin{\widehat{\otimes}}}
\newcommand*{\grexttensprod}{\mathbin{\widehat{\boxtimes}}}
\newcommand{\midd}{\mathrel{\|}}
\numberwithin{equation}{section} 
\author{Alexander Engel\thanks{Institut f\"{u}r Mathematik und Informatik, Universit\"at Greifswald,
Walther--Rathenau--Straße 47,
17489 Greifswald,
Germany\newline
\href{mailto:alexander.engel@uni-greifswald.de}{alexander.engel@uni-greifswald.de}}
\and Christopher Wulff\thanks{
Mathematisches Institut, Georg--August--Universität Göttingen, Bunsenstraße 3--5, 37073 Göttingen, Germany\newline
\href{mailto:christopher.wulff@mathematik.uni-goettingen.de}{christopher.wulff@mathematik.uni-goettingen.de}}
}
\title{The relative index in coarse index theory and submanifold obstructions to uniform positive scalar curvature}
\date{}
\begin{document}

\maketitle

\begin{abstract}
We provide a coarse version of the relative index of Gromov and Lawson and thoroughly establish all of its basic properties. As an application, we discuss a general procedure to construct wrong way maps on the $K$-theory of the Roe algebra mapping the coarse index class of the Dirac operator of a manifold to the one of a suitably embedded submanifold of arbitrary codimension,
thereby establishing an abstract machinery to find obstructions to uniform positive scalar curvature coming from these submanifolds.
\end{abstract}

\setcounter{tocdepth}{2}
\tableofcontents

\section{Introduction}

While generalized Dirac operators over compact manifolds are Fredholm operators and hence have a Fredholm index, this is no longer the case over non-compact complete Riemannian manifolds.
There are various ways in which one can deal with this problem if one still wants to do index theory, each with its own advantages and disadvantages. 
One of them is the relative index introduced by Gromov and Lawson in \cite[Section 4]{gromov_lawson_complete}: Suppose that $D_{1,2}$ are generalized Dirac operators acting on smooth sections of bundles $S_{1,2}$ over complete Riemannian manifolds $M_{1,2}$ and assume that they agree outside of a compact subset. That is, there exist compact subsets $L_1\subset M_1$ and $L_2\subset M_2$, an isometry $\psi\colon M_1\setminus L_1\to M_2\setminus L_2$ and a bundle isomorphism $\Psi\colon S_1|_{M_1\setminus L_1}\to S_2|_{M_2\setminus L_2}$ such that $D_2=\Psi\circ D_1\circ \Psi^{-1}$.
Then there are several ways to define a relative index $\ind(D_1,D_2)\in\Z$. 
It has the advantage that it is an easily accessible numerical invariant, which is due to the fact that it only contains ``compactly supported'' index information from the regions where the two operators differ and hence it can be deduced from classical Fredholm index theory.

A different approach is the coarse index theory introduced by John Roe \cite{roe_index_1,roe_index_2, roe_partitioning,roe_coarse_cohomology,roe_index_coarse,HigRoe}.
He observed that generalized Dirac operators $D$ on complete Riemannian manifolds $M$ are invertible modulo a certain tailor-made \textCstar-algebra $\Roe(M)$, nowadays called the Roe algebra, and hence one can define their coarse index as an element in $\K$-theory: $\ind(D)\in\K(\Roe(M))$.
It contains index information from the whole manifold, but it is in general less accessible, because it is not a numerical invariant.

The purpose of our work is to combine these two concepts and discuss relative coarse indices for situations where the two subsets $L_1,L_2$ are not compact.
It should be said that Roe already defined a relative coarse index in \cite[Section 4]{Roe_Relative}, although with different methods than we do. 
We have not checked whether they are equal (up to canonical maps), but considering that Gromov-Lawson's very important relative index theorem \cite[Theorem 4.18]{gromov_lawson_complete} has almost identical canonical generalizations in both Roe's work \cite[Theorem 4.6]{Roe_Relative} and ours (see \Cref{thm:PhiIndexTheorem}, which we will adress in a second), it seems highly likely that they are. A relative coarse index was also constructed in \cite[Section~10]{BE_relative_coarse_index}.

Before going any further into the details of our work, it should be mentioned that we use the set-up of \cite{WulffTwisted} as the foundation for coarse index theory, because we will also make very heavy use of the results therein. 
The operators under consideration are $A$-linear Dirac operators $D$ over complete Riemannian manifolds $M$, $A$ a graded unital \textCstar-algebra, and such operators have indices in $\K(C^*(M;A))$, the $\K$-theory of the Roe algebra with coefficients in $A$.
It has the advantage that we do not have to distinguish between the even dimensional case with graded self-adjoint operators and the odd dimensional case with ungraded self-adjoint operators, because that can be dealt with via Bott periodicity by using coefficients in Clifford algebras instead. We will also implement the grading of $\K$-theory via Clifford algebras, i.\,e.\ instead of $\K_p(A)$ we will usually write one of the canonically isomorphic groups $\K(A\grtensor \Cl_{r,s})$ with $r-s=p$.
The whole set-up will be recalled in \Cref{sec:recapitulation}.

Our approach to the relative coarse index is to follow the definition of the $\Phi$-relative index in \cite[(4.32) on page 339]{gromov_lawson_complete}.
In the situation described above, if $L_1=W_1$ and $L_2=W_2$ are two closed submanifolds of codimension zero with boundary, then $\Psi$ can be used to glue the two $A$-linear Dirac operators $D_1,D_2$ together to yield an $A$-linear Dirac operator $D_X$ over the complete manifold 
\[X\coloneqq W_1 \cup_{\psi|_{\partial W_1}\colon\partial W_1\to \partial W_2} W_2\]
and we define the relative coarse index
\[\ind(D_1,D_2\mid \Psi)\coloneqq \ind (D_X)\in\K(\Roe(X;A))\]
in \Cref{defn:PhiRelInd}.

Our first main theorem is our version of the relative index theorem.
To formulate it, we will introduce in \Cref{sec:invertibleawayfromclosedsubsets} the notion of an $A$-linear Dirac operator $D$ on $M$ which is ``invertible away from a closed subset $L$''. Primary examples are the spin Dirac operator on a complete Riemannian spin manifold that has uniformly positive scalar curvature outside of some $R$-neighborhood of $L$ and Fredholm operators if $L$ is compact. Such operators have an $L$-local coarse index 
\[\ind(D\mid L)\in \K(\Roe(L\subset M;A))\cong\K(\Roe(L;A))\,,\]
where $\Roe(L\subset M;A)\subset \Roe(M;A)$ is a certain well-known ideal with the same $\K$-theory as $\Roe(L;A)$.
Recalling that the $\K$-theory of the Roe algebra is functorial under coarse maps, we now have everything to understand the statement of the theorem.

\begin{thm}[\Cref{thm:PhiIndexTheorem}]
Assume that $D_1,D_2$ are invertible away from $W_1,W_2$, respectively.
Then 
\[\ind(D_1,D_2\mid\Psi)=(\iota_1)_*\ind(D_1\mid W_1)-(\iota_2)_*\ind(D_2\mid W_2)\]
where $\iota_{1,2}\colon W_{1,2}\to X$ denote the canonical inclusions.
\end{thm}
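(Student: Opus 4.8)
The plan is to deduce this equality of classes in $\K(\Roe(X;A))$ from a cut-and-paste decomposition of the parametrix used to build $\ind(D_X)$, organised via the Mayer--Vietoris sequence for the coarsely excisive decomposition $X=W_1\cup W_2$. Write $N\coloneqq\partial W_1$, identified with $\partial W_2$ via $\psi$, and $V\coloneqq\overline{M_1\setminus W_1}$, identified with $\overline{M_2\setminus W_2}$ via $\Psi$, so that $M_1=W_1\cup_N V$, $M_2=W_2\cup_N V$ and $X=W_1\cup_N W_2$. By the construction of $D_X$ in \Cref{defn:PhiRelInd}, all of $D_1$, $D_2$ and $D_X$ coincide --- after the canonical identifications of the bundles near $N$, the one on the $W_2$-side involving Clifford multiplication by the normal --- on a collar $N\times(-\varepsilon,\varepsilon)$ of $N$; I fix a normalising function $\chi$ of propagation $<\varepsilon/2$ and build all index classes in sight from it.

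The first step is to construct a lift of $\ind(D_X)$ along the Mayer--Vietoris surjection $\K(\Roe(W_1\subset X;A))\oplus\K(\Roe(W_2\subset X;A))\to\K(\Roe(X;A))$. Choose a partition of unity $\phi_1+\phi_2=1$ on $X$ with $\supp\phi_i$ a small thickening of $W_i$ inside $X$, so that $\supp\phi_1\cap\supp\phi_2$ lies well inside the collar of $N$. Cutting the index idempotent associated to $\chi(D_X)$ along $\phi_1,\phi_2$ and absorbing the commutators $[\chi(D_X),\phi_i]$ --- which are supported in the collar where all operators agree literally --- into a homotopy produces classes $c_1\in\K(\Roe(W_1\subset X;A))$ and $c_2\in\K(\Roe(W_2\subset X;A))$ with $(\iota_1)_*c_1-(\iota_2)_*c_2=\ind(D_X)$, where I use that the ideal inclusion induces $(\iota_i)_*$ under the canonical identification $\Roe(W_i\subset X;A)\cong\Roe(W_i;A)$ and that the second Mayer--Vietoris map is a difference. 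This is the coarse incarnation of the fact that, by finite propagation speed, $\chi(D_X)$ on the part of $W_i$ of large distance from $N$ is indistinguishable from $\chi(D_i)$.

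The second step is to identify $c_1$ with $\ind(D_1\mid W_1)$ and $c_2$ with $\ind(D_2\mid W_2)$. Since $D_X$ and $D_1$ coincide on a whole neighbourhood of $W_1$ in $X$ --- which under the identification corresponds to a neighbourhood of $W_1$ in $M_1=W_1\cup_N V$ --- the $\phi_1$-part of the $D_X$-parametrix equals, in $\K(\Roe(W_1\subset X;A))\cong\K(\Roe(W_1\subset M_1;A))$, the $\phi_1$-part of the $D_1$-parametrix, and the latter represents $\ind(D_1\mid W_1)$ essentially by the construction of the localized index in \Cref{sec:invertibleawayfromclosedsubsets} --- which is available precisely because $D_1$ is invertible away from $W_1$. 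The same reasoning on the $W_2$-side identifies $c_2$ with $\ind(D_2\mid W_2)$, the normal-Clifford twist being already built into $D_X$; combined with the first step this yields the theorem. As a consistency check, the connecting map of the Mayer--Vietoris sequence for $X$ sends $\ind(D_X)$ to the index of the Dirac operator $D_N$ on $N$, which indeed vanishes because $\ind(D_1)$ lies in the image of $\K(\Roe(W_1\subset M_1;A))$ (again the invertibility hypothesis) and this image is annihilated by the connecting map of $M_1=W_1\cup_N V$.

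The main obstacle is the second step: making rigorous the claim that the $\phi_i$-part of the $D_X$-parametrix \emph{literally} represents $\ind(D_i\mid W_i)$, and not merely up to a class pulled back from $\K(\Roe(N;A))$. This is a statement about the functional calculus of a Dirac operator near a separating hypersurface and relies on the fine structural properties of the localized index and of the ideals $\Roe(L\subset M;A)$ set up earlier --- in particular on an invariance statement for $\ind(D\mid L)$ under replacing the ambient manifold by one agreeing with it near $L$. With those properties available, the remainder is a diagram chase through the Mayer--Vietoris sequences for $X=W_1\cup_N W_2$, $M_1=W_1\cup_N V$ and $M_2=W_2\cup_N V$ together with the naturality of the coarse index under the canonical identifications of the common pieces $W_1$, $W_2$ and $N$.
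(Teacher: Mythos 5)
Your approach is genuinely different from the paper's: you propose a Mayer--Vietoris cut-and-paste argument for the decomposition $X=W_1\cup_N W_2$, whereas the paper runs a surgery/bordism argument. It constructs a bordism $V$ between $M_1\sqcup M_2$ and $X$ (with a fourth ``trivial'' boundary component $(M_1\setminus\mathring W_1)\cup_\psi(M_2\setminus\mathring W_2)$ capped off as in \Cref{lem:RelIndDDidNull}), extends by cylinders to $\widehat V$, and then invokes the bordism invariance statement \Cref{thm:LocalIndexBordismInvariance} for the $L$-local index. That theorem was proved precisely to serve as the engine here; your proposal does not use it at all.

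But there is a genuine gap in your argument, and you have already located it yourself without resolving it. Two problems. First, in Step~1 the existence of a lift $(c_1,c_2)$ along $\K(\Roe(W_1\subset X;A))\oplus\K(\Roe(W_2\subset X;A))\to\K(\Roe(X;A))$ requires that $\ind(D_X)$ die under the MV boundary map, and any such lift is only determined up to the image of $\K(\Roe(N\subset X;A))$; ``cutting the index idempotent along $\phi_1,\phi_2$'' is not a well-defined operation on the spectral-picture class $\phi\mapsto\phi(D_X)$ (the compressions $\phi_i\,p\,\phi_i$ of an idempotent are not idempotents, and $\ind(D_X)$ is not given by a single idempotent in the first place), so you have not actually produced a preferred lift. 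Second, and more seriously, Step~2 asserts that the $\phi_i$-piece equals $\ind(D_i\mid W_i)$ \emph{exactly}, not just modulo $\K(\Roe(N\subset X;A))$. You point to ``an invariance statement for $\ind(D\mid L)$ under replacing the ambient manifold by one agreeing with it near $L$'', but no such statement is proved in the paper: \Cref{lem:InvertibilityCheckOutsideL} only transports the \emph{invertibility} property between $(M_1,L_1)$ and $(M_2,L_2)$, not equality of the resulting indices, and \Cref{lem:independence} is an independence-of-extension statement for the $\EE$-theory classes $\llbracket D|_U;B\rrbracket$, which as noted around \Cref{defn:invertibleawayfromLindex} are precisely what is \emph{not} available for the $L$-local index (the target ideal $\Roe(L\subset M;A)$ is not preserved by $\phi(t^{-1}D)$ for $t>1$). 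Until you prove such an ambient-invariance lemma --- which would carry essentially the same weight as the paper's bordism-invariance theorem --- the crux of your argument remains open.
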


Subsequently we will restrict our attention to the special case where we have one $A$-linear Dirac operator $D$ over a complete Riemannian manifold $M$ twisted by two different bundles $E,F\to M$ which are isomorphic outside of some $R$-neighborhood $U$ of a closed subset $L\subset M$. The bundles $E,F$ can be more general than vector bundles, namely bundles whose fibres are finitely generated projective $B$ modules over another graded unital \textCstar-algebra $B$, $B$-bundles for short. Then the twisted operators $D_E,D_F$ agree outside of $U$ and we will obtain a relative index
\[\ind(D\midd E,F)\in \K(\Roe(L;A\grtensor B))\]
in \Cref{defn:TwistedRelInd}.

One of the main results of \cite{WulffTwisted} is that for certain $B$-bundles $E\to M$, the coarse index of the twisted operator can be calculated by a composition product
\[\ind(D_E)=\llbracket D;B\rrbracket \circ\llbracket E\rrbracket\]
in $\EE$-theory \cite[Theorem 4.14]{WulffTwisted}, see \Cref{thm:IndexTwistedOperator}. The condition that $E$ needs to satisfy is that it is given by a smooth projection $P\in\cA(M;B)$ where $\cA(M;B)\subset \Cb(M;B\grtensor\Kom)$ is the norm closure of the subalgebra of all bounded smooth functions $f\colon M\to B\grtensor\Kom$ with bounded gradient. Such bundles are called $B$-bundles of bounded variation. In the formula, $\llbracket E\rrbracket$ is the $\K$-theory class of the projection $P$ and $\llbracket D;B\rrbracket$ is an $\EE$-theory class in $\EE(\cA(M;B),\Roe(M;A\grtensor B))$ which is related to the usual $\K$-homology class of $D$.

Our next main result is that an analogous formula also holds for the relative index for twisted operators.
Assume that $E,F\to M$ are $B$-bundles of bounded variation and that the defining projections $P,Q\in\cA(M;B)$ agree outside of the $R$-neighborhood $U$.
Then
\[P-Q\in \cA(\overline{U},\partial U;B)\subset\cAz(L\Subset M;B)\subset \cA(M;B)\]
where $\cA(\overline{U},\partial U;B)$ denotes the ideal of all functions vanishing outside of $U$ and $\cAz(L\Subset M;B)$ is the ideal of functions which become arbitrary small outside of sufficiently large $R$-neighborhoods of $L$, that is, it is a generalization of the concept of $\Cz$-functions. The projections $P,Q$ therefore determine a $\K$-theory class $\llbracket E\rrbracket - \llbracket F\rrbracket \in \K(\cAz(L\Subset M;B))$ and one can also define an $\EE$-theory class $\llbracket D\midd L;B\rrbracket\in\EE(\cAz(L\Subset M;B),\Roe(L\subset M;A\grtensor B))$.

\begin{thm}[\Cref{thm:TwistedRelIndETheory}]
\(\ind(D\midd E,F)=\llbracket D\midd L;B\rrbracket\circ (\llbracket E\rrbracket-\llbracket F\rrbracket)\)
\end{thm}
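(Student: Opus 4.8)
The plan is to reduce the identity to the non-relative formula \Cref{thm:IndexTwistedOperator} by invoking naturality of the $\EE$-theory composition product with respect to inclusions of ideals, and then to promote the resulting equality from $\K(\Roe(M;A\grtensor B))$ to the $L$-localised group by an inspection of cycles. It is worth noting that \Cref{thm:PhiIndexTheorem} is not directly applicable here: no invertibility of $D_E$ or $D_F$ away from $\overline{U}$ is assumed, so there are no associated local indices to feed into it, which is precisely why the $\EE$-theoretic route is needed.

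First I would assemble the three ingredients appearing on the two sides of the equation. By construction, $\llbracket D\midd L;B\rrbracket$ arises from $\llbracket D;B\rrbracket\in\EE(\cA(M;B),\Roe(M;A\grtensor B))$ by restricting its representing asymptotic morphism to the ideal $\cAz(L\Subset M;B)$, the finite-propagation estimates entering $\llbracket D;B\rrbracket$ guaranteeing that this ideal is mapped into $\Roe(L\subset M;A\grtensor B)$; this both defines $\llbracket D\midd L;B\rrbracket$ and makes the square relating it to $\llbracket D;B\rrbracket$ via the ideal inclusions commute. Dually, $\ind(D\midd E,F)$ is, by \Cref{defn:TwistedRelInd} and its basic properties, a refinement of $\ind(D_E)-\ind(D_F)$ to a class in $\K(\Roe(L\subset M;A\grtensor B))\cong\K(\Roe(L;A\grtensor B))$: since $D_E$ and $D_F$ agree outside $U$, the difference of their index data is supported in an $R$-neighbourhood of $L$, and the image of $\ind(D\midd E,F)$ under $\Roe(L\subset M;A\grtensor B)\hookrightarrow\Roe(M;A\grtensor B)$ is $\ind(D_E)-\ind(D_F)$. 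Finally, $\llbracket E\rrbracket-\llbracket F\rrbracket\in\K(\cAz(L\Subset M;B))$ maps to $[P]-[Q]\in\K(\cA(M;B))$ under the inclusion of ideals.

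Applying \Cref{thm:IndexTwistedOperator} to $D_E$ and to $D_F$ yields $\ind(D_E)-\ind(D_F)=\llbracket D;B\rrbracket\circ([P]-[Q])$ in $\K(\Roe(M;A\grtensor B))$, and naturality of the composition product with respect to the ideal inclusions rewrites the right-hand side as the image of $\llbracket D\midd L;B\rrbracket\circ(\llbracket E\rrbracket-\llbracket F\rrbracket)$ under $\K(\Roe(L\subset M;A\grtensor B))\to\K(\Roe(M;A\grtensor B))$. Thus both sides of the asserted identity already have the same image in $\K(\Roe(M;A\grtensor B))$.

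The main obstacle is that this last map need not be injective, so one still has to establish the equality inside $\K(\Roe(L\subset M;A\grtensor B))$ itself. I would settle this at the level of cycles: unwinding the proof of \Cref{thm:IndexTwistedOperator}, the product $\llbracket D;B\rrbracket\circ[P]$, and likewise with $Q$, is represented by exactly the index cycle of $D_E$, respectively $D_F$, that underlies \Cref{defn:TwistedRelInd}, so that $\llbracket D\midd L;B\rrbracket\circ(\llbracket E\rrbracket-\llbracket F\rrbracket)$ is represented by the \emph{difference} of those two cycles — which is precisely the cycle defining $\ind(D\midd E,F)$ and already lies in $\Roe(L\subset M;A\grtensor B)$. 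Equivalently, one may restrict the whole computation to the coarse subspace $\overline{U}$, which is coarsely equivalent to $L$, so that $\Roe(L\subset M;A\grtensor B)=\Roe(\overline{U}\subset M;A\grtensor B)\cong\Roe(\overline{U};A\grtensor B)$ and similarly for the $\cAz$-ideal, thereby turning the statement into an ``absolute'' one over $\overline{U}$ handled by the argument of \Cref{thm:IndexTwistedOperator}. The real work is in this bookkeeping — checking that every operator produced by the functional calculus and by the asymptotic morphism has the propagation and support properties needed for the cycles to lie in the small ideals, and that all identifications used are the canonical ones — after which the claimed equality drops out.
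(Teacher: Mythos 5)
Your proposal has a genuine gap, and it is located precisely where you defer to ``bookkeeping.'' The class $\ind(D\midd E,F)$ is \emph{defined} via the doubling construction: one glues $W$ to a second copy of $W$ along the boundary to obtain the new manifold $X$, glues the bundles to get $S_X$, and takes $\ind(D_X)$ on $X$, then pushes forward by $f\colon X\to L$. Nothing in \Cref{defn:TwistedRelInd} says that this class is a cycle on $M$, and it is not. So the assertion in your second paragraph --- that $\ind(D\midd E,F)$ is a refinement of $\ind(D_E)-\ind(D_F)$ whose image under $\Roe(L\subset M;A\grtensor B)\hookrightarrow\Roe(M;A\grtensor B)$ \emph{is} $\ind(D_E)-\ind(D_F)$ --- is unproven; indeed it is essentially a corollary of the theorem you are trying to prove, not an available input. (In the special case where things are invertible away from $W$ this would follow from \Cref{thm:PhiIndexTheorem}, but you correctly observe that invertibility is not assumed here.) Likewise, your cycle-level claim that $\llbracket D;B\rrbracket\circ[P]-\llbracket D;B\rrbracket\circ[Q]$ ``is precisely the cycle defining $\ind(D\midd E,F)$'' breaks down: those are cycles built from $\phi(t^{-1}D)\cdot P$ and $\phi(t^{-1}D)\cdot Q$ over $M$, while $\ind(D\midd E,F)$ is built from $\phi(D_X)$ over $X$; the two live over different underlying spaces with different Hilbert modules, and equating them requires exactly the identifications your argument leaves implicit.

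Your alternative --- ``restrict the whole computation to $\overline{U}$'' --- is heading toward the right kind of maneuver, but it cannot be done naively: $D_E$ and $D_F$ restricted to $\overline{U}$ are operators on a manifold with boundary and do not have coarse indices on $\overline{U}$ without some compensating device. The paper's device is the doubling. Its proof introduces $X$ together with bundles $E'$ (glue $E|_W$ to $F|_W$) and $F'$ (glue $F|_W$ to $F|_W$) and the doubled operator $D'$, so that by definition and by \Cref{lem:RelIndDDidNull} one has $f_*\ind(D'_{E'})=\ind(D\midd E,F)$ and $f_*\ind(D'_{F'})=0$; it then applies \Cref{thm:IndexTwistedOperator} on $X$, uses that $\llbracket E\rrbracket-\llbracket F\rrbracket=\llbracket E'\rrbracket-\llbracket F'\rrbracket$ in $\K(\cA(W,\partial W;B))$, and finally invokes \Cref{lem:independence} (independence of extension) to pass from $\llbracket D'|_{\mathring W};B\rrbracket$ on $X$ back to $\llbracket D|_{\mathring W};B\rrbracket$ on $M$. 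That last step is the real content that replaces your ``bookkeeping'': it is what says the $\EE$-theory class localised over $W$ does not care whether $W$ sits inside $M$ or inside the double $X$, which is the precise mechanism by which a class computed on $X$ is identified with one computed on $M$. Without an argument of this type, the passage from $X$ to $M$ --- the heart of the theorem --- is missing from your proof.
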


In particular, this means that the relative index of twisted operators only requires the $\K$-theory class $\llbracket E\rrbracket-\llbracket F\rrbracket$ and not the bundles $E,F$ themselves. 
Just as $\K$-theory classes are an abstraction of vector bundles, $\K$-homology classes are an abstraction of elliptic operators and we will briefly complete this picture in \Cref{sec:twistingKtheory} by taking not the twist of the operator $D$ with the two bundles $E,F$ individually, but by twisting its $\K$-homology class $[D]$ with the $\K$-theory class $\llbracket E\rrbracket-\llbracket F\rrbracket$. The relative index of the twisted operators is then the same as an abstract index map applied to the twist of these two classes.

In order to obtain significant applications of the relative coarse index, we need suitable bundles to twist with, and luckily there are very interesting candidates: Consider a complete submanifold $L\coloneqq N\subset M$ with $\K$-oriented normal bundle and bundles $E,F$ which agree outside of a tubular neighborhood on $N$ and whose difference represents a Thom class $\tau$. It is a priori not clear what the latter should mean, but we will elaborate situations in which one can define $\tau$ meaningfully as an element of $\K(\cA(N\Subset M;\Cl_{0,r}))$ where $r$ denotes the codimension of $N$ in $M$.
Associated to the $\K$-orientation on $V$, i.\,e.\ a \spinC-structure, there is a spior $\Cl_r$-bundle $S_V$ and dual to it a spinor $\Cl_{0,r}$-bundle $S'_V$.
While we we will use $S'_V$ to construct the Thom class, $S_V$ will serve to relate an $A\grtensor\Cl_r$-linear Dirac operator $D_M$ on $M$ with an $A$-linear Dirac operator $D_N$ on $N$.
All of this will be discussed in far more detail in \Cref{sec:Thomclassrelation} and it will lead us to the following main theorem relating the relative coarse index $\ind(D_M\midd E,F)=\llbracket D_M\midd N;\Cl_{0,r}\rrbracket\circ \tau$ of $D_M$ twisted by the Thom class with the coarse index of $D_N$ mapped into $\K(\Roe(N\subset M;A))$ via functoriality under the inclusion map $i\colon N\to M$.

\begin{thm}[\Cref{thm_Thomclassrelation_relaxed}]
Let $M$ be a complete Riemannian manifold and $N\subset M$ a complete Riemannian submanifold with $\K$-oriented normal bundle $V$. We assume that $N$ is uniformly embedded into $M$, $M$ has bounded geometry in some $R$-neighborhood of $N$ and that the spinor $\Cl_{0,r}$-bundle $S'_V$ associated to $V$ has bounded geometry.
Furthermore we assume that $S_M|_N$ with its restricted action of $\Cl(TM)|_N\cong\Cl(TN)\grtensor\Cl(V)$ is isomorphic to $S_N\grtensor S_V$. Then:
\[\llbracket D_M\midd N;\Cl_{0,r}\rrbracket\circ \tau=i_*\ind(D_N)\]
\end{thm}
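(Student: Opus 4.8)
The plan is to recognize this as a coarse Thom‑isomorphism statement and to prove it by localizing everything to a tubular neighborhood of $N$, decomposing $D_M$ there as an external product over $N$, and then performing a uniform fiberwise Bott/Thom computation. First I would fix $\Cl_{0,r}$‑bundles $E,F$ of bounded variation whose defining projections agree outside the $R$‑neighborhood $U$ and with $\llbracket E\rrbracket-\llbracket F\rrbracket=\tau$, so that by \Cref{thm:TwistedRelIndETheory} the left‑hand side is $\llbracket D_M\midd N;\Cl_{0,r}\rrbracket\circ\tau=\ind(D_M\midd E,F)$. Since $\tau\in\K(\cA(N\Subset M;\Cl_{0,r}))$ and the $\EE$‑class $\llbracket D_M\midd N;\Cl_{0,r}\rrbracket$ only see a neighborhood of $N$, I may replace $U$ by (a sub‑bundle of) the total space of the normal bundle $V\to N$ via the tubular neighborhood theorem; the hypotheses that $N$ is uniformly embedded, that $M$ has bounded geometry in an $R$‑neighborhood of $N$, and that $S'_V$ has bounded geometry are exactly what is needed to make this identification, and all the analytic estimates below, uniformly controlled so that every construction stays within the relevant Roe algebras and their ideals.

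Next I would invoke the product decomposition of the Dirac operator on the tubular neighborhood. Using the isomorphism $S_M|_N\cong S_N\grtensor S_V$ together with $\Cl(TM)|_N\cong\Cl(TN)\grtensor\Cl(V)$, extended over $U\cong V$ by parallel transport along the fibers, one writes $D_M|_U$ — up to a homotopy of unbounded operators with uniformly controlled error terms, which does not change the associated $\EE$‑class — as the external product $D_N\grexttensprod D^{\mathrm{vert}}$ of $D_N$ on the base with the fiberwise Euclidean Clifford–Dirac operator $D^{\mathrm{vert}}$ on the fibers of $V$. This is precisely the step where $S_V$ is used to relate $D_M$ with $D_N$, and it yields in $\EE$‑theory a factorization of the form
\[\llbracket D_M\midd N;\Cl_{0,r}\rrbracket=\bigl(i_{*}\circ\llbracket D_N\rrbracket\bigr)\circ\llbracket D^{\mathrm{vert}};\Cl_{0,r}\rrbracket\,,\]
where $\llbracket D^{\mathrm{vert}};\Cl_{0,r}\rrbracket$ is the vertical Dirac class of $V$ and the outer factor carries the information of $\ind(D_N)$ pushed forward along the inclusion $i\colon N\to M$.

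The heart of the argument is then the fiberwise identity $\llbracket D^{\mathrm{vert}};\Cl_{0,r}\rrbracket\circ\tau=\id$. Here I would use that $\tau$ is built from the dual spinor $\Cl_{0,r}$‑bundle $S'_V$: pairing the vertical Dirac operator twisted by $S'_V$ with $\tau$ is exactly Atiyah's rotation trick / the Dirac–dual‑Dirac construction performed simultaneously over all fibers $\R^r$ of $V$. The pair $(S_V,S'_V)$ is dual precisely so that $S_V\grtensor S'_V$ contributes only a $\Matrix_{2^r}(\C)$‑amplification, matching the Clifford cancellation $\Cl_r\grtensor\Cl_{0,r}\sim\Matrix_{2^r}(\C)$ under which the final class lands in $\K(\Roe(N\subset M;A))$; and the vertical composition Bott‑after‑dual‑Bott is homotopic to the identity with estimates uniform in the base point, which is where bounded geometry of $S'_V$ enters. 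Substituting this into the displayed factorization gives $\llbracket D_M\midd N;\Cl_{0,r}\rrbracket\circ\tau=i_{*}\ind(D_N)$.

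\textbf{Main obstacle.}
I expect the genuine difficulty to be the product decomposition of the second paragraph made precise in the coarse setting: off $N$ the fibers of $V$ are non‑compact, and the lower‑order terms (curvature of the normal bundle, the second fundamental form of $N$ in $M$, and the discrepancy between the exponential map and the bundle projection) must be shown to be either uniformly small or to produce only homotopies internal to the Roe algebra of $N\subset M$. This is the point at which the bounded‑geometry‑near‑$N$ hypothesis is indispensable, and it may be cleanest to first treat the model case of a product tubular neighborhood and then reduce the general case to it by a uniform deformation. The remaining bookkeeping — keeping track of gradings and of the isomorphism $\Roe(N\subset M;A\grtensor\Cl_{r,r})\simeq\Roe(N;A)$ so that both sides of the asserted equality are compared in the same group — is routine but must be carried through carefully.
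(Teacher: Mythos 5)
Your proposal is essentially the paper's argument: reduce to a controlled tubular neighborhood identified (via \Cref{cor:boundedgeometrytoassumptionone,lem:assumptiononetoassumptiontwo}) with the total space of the normal bundle, split the Dirac operator on that neighborhood into horizontal and vertical pieces, perform a uniform fiberwise Bott/dual-Dirac cancellation \`a la Guentner--Higson, and then pass from the model case back to the general one by bordism-/deformation-invariance. The one point you should sharpen is the phrase ``external product $D_N\grexttensprod D^{\mathrm{vert}}$'': since $V\to N$ need not be flat and $U$ is not a Riemannian product, $D_M|_U$ is \emph{not} literally an external tensor product. What the paper actually proves, and what your subsequent argument needs, is the weaker statement of \Cref{lem:parallelorthogonal}: $D_M$ decomposes as a sum $D_\|+D_\perp$ of two graded-commuting operators on the same Hilbert module, with $D_\perp$ the fiberwise Euclidean Dirac operator and $D_\|$ agreeing with $D_N$ only on horizontally parallel sections; the product formula is then extracted for the functional calculus $\varphi(rD_\|+sD_\perp)$, and the identification with $D_N$ happens \emph{after} the vertical Bott computation has projected onto the horizontally parallel ground state, rather than via a prior global factorization of the $\EE$-class as $(i_*\circ\llbracket D_N\rrbracket)\circ\llbracket D^{\mathrm{vert}};\Cl_{0,r}\rrbracket$, which is a stronger statement than the paper establishes or needs. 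With that correction your computation goes through as in the paper; also note that the preliminary passage via \Cref{thm:TwistedRelIndETheory} to $\ind(D_M\midd E,F)$ is never used in your argument and can be dropped.
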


The relation between the relative index on $M$ and the index on $N$ opens up the possibility to use the index theory on the submanifold $N$ in order to study the geometry of $M$. We will show how on spin manifolds one can use $i_*\ind(D_N)$ as an obstruction to uniformly positive scalar curvature away from $N$, that is, outside of an $R$-neighborhood of $N$.

To this end we first study in \Cref{sec:indexawayfromL} the coarse index away from $L\subset M$ of an $A$-linear Dirac operator $D$ on $M$. It is the image $\ind(D,L)\in\K(\Roe(M,L;A))$ of the coarse index $\ind(D)\in\K(\Roe(M;A))$ under the canonical map induced by
\[\Roe(M;A)\to\frac{\Roe(M;A)}{\Roe(L\subset M;A)}\mathrel{{=}{:}}\Roe(M,L;A)\,,\]
where the quotient on the right hand side is called the relative Roe algebra.
It is actually not necessary for the operator $D$ to be defined over all of $M$ in order to construct this index. Instead it suffices if it is defined outside of some $R$-neighborhood of $L$, or away from $L$ as we had called it.

The index away from $L$ has interesting geometric consequences. Just like the normal coarse index of the spin Dirac operator $\slashed D$ over a complete Riemannian spin manifold $M$ is an obstruction to uniformly positive scalar curvature, its index away from $L$ is an obstruction to uniformly positive scalar curvature away from $L$, see \Cref{cor_upsc_outside_L}.

If in addition to the $A$-linear Dirac operator defined away from $L$ we are given a $B$-bundle $E$ defined away from $L$, then one can also consider the index away from $L$ of the twisted operator $D_E$.
We give two formulas to calculate this index for special types of bundles.
First, if $E$ has bounded variation, then its defining projection will yield a projection in the quotient \textCstar-algebra
\[\cA(M\div L;B)\coloneqq \cA(M;B)/\cAz(L\Subset M;B)\]
representing a $\K$-theory class $\llbracket E\div L\rrbracket$ of the bundle.
Furthermore we will define an $\EE$-theory class
\[\llbracket D, L;B\rrbracket\in \EE( \cA(M\div L;B),\Roe(M,L;A\grtensor B)) \]
of the operator which allows us to prove the following generalization of \cite[Theorem 7.10]{WulffTwisted}.

\begin{thm}[cf.\ \Cref{thm:indexawayfromLEtheoryproduct}]
Let $E$ be a $B$-bundle of bounded variation defined over the complement of a controlled neighborhood of $L$ in $M$.
Then the index away from $L$ of the twisted operator $D_E$ is given by the formula
\[
\ind(D_E,L)=\llbracket D,L;B\rrbracket\circ\llbracket E\div L\rrbracket\,.
\]
\end{thm}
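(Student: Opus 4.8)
The plan is to deduce the formula from the non-relative product formula $\ind(D_E)=\llbracket D;B\rrbracket\circ\llbracket E\rrbracket$ of \Cref{thm:IndexTwistedOperator} by pushing it forward along the two canonical quotient $*$-homomorphisms
\[
\pi_{\cA}\colon\cA(M;B)\longrightarrow\cA(M\div L;B)
\qquad\text{and}\qquad
\pi_{\Roe}\colon\Roe(M;A\grtensor B)\longrightarrow\Roe(M,L;A\grtensor B),
\]
which represent morphisms in $\EE$-theory, and then appealing to associativity of the composition product.

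\emph{Reduction step.} First I would choose an $A$-linear Dirac operator $\widetilde D$ over all of $M$ restricting to $D$ away from $L$, and a $B$-bundle $\widetilde E$ of bounded variation over all of $M$ restricting to $E$ away from $L$; the latter exists, possibly after a harmless stabilisation, by the same controlled inward-extension argument that underlies the construction of the relative index of twisted operators (cf.\ \Cref{defn:TwistedRelInd}). Then $\widetilde D_{\widetilde E}$ is an extension of $D_E$ away from $L$, so by the construction of the index away from $L$ in \Cref{sec:indexawayfromL} one has $\ind(D_E,L)=(\pi_{\Roe})_*\ind(\widetilde D_{\widetilde E})$. Likewise, any two choices of $\widetilde E$ have defining projections differing by an element supported in a controlled neighborhood of $L$ and hence lying in $\cAz(L\Subset M;B)$, so $(\pi_{\cA})_*\llbracket\widetilde E\rrbracket$ is independent of the choice and equals $\llbracket E\div L\rrbracket$ by definition.

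\emph{The key compatibility.} The heart of the argument is the identity
\[
\pi_{\Roe}\circ\llbracket\widetilde D;B\rrbracket=\llbracket D,L;B\rrbracket\circ\pi_{\cA}
\qquad\text{in }\EE\bigl(\cA(M;B),\Roe(M,L;A\grtensor B)\bigr).
\]
Essentially this is built into the definition of $\llbracket D,L;B\rrbracket$: one takes an asymptotic morphism representing $\llbracket\widetilde D;B\rrbracket$, postcomposes it with $\pi_{\Roe}$, and checks that the result annihilates $\cAz(L\Subset M;B)$ and therefore descends to an asymptotic morphism from $\cA(M\div L;B)$ to $\Roe(M,L;A\grtensor B)$. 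Concretely one must verify that $\llbracket\widetilde D;B\rrbracket$ sends $\cAz(L\Subset M;B)$ into $\Roe(L\subset M;A\grtensor B)$ up to asymptotically vanishing terms: a function that becomes uniformly small outside large $R$-neighborhoods of $L$ is, after the smoothing and finite-propagation cut-offs inherent in the $\EE$-cycle of $\widetilde D$, approximated arbitrarily well by operators supported near $L$, which lie in the ideal $\Roe(L\subset M;A\grtensor B)$. The same localisation estimate shows that the descended class is independent of the extension $\widetilde D$, since two extensions agree away from $L$ and hence their $\EE$-cycles differ by an asymptotic morphism valued in $\Roe(L\subset M;A\grtensor B)$.

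\emph{Conclusion.} Applying $(\pi_{\Roe})_*$ to the non-relative formula of \Cref{thm:IndexTwistedOperator} for the globally defined operator $\widetilde D_{\widetilde E}$ and using associativity of the $\EE$-product together with the compatibility above,
\begin{align*}
\ind(D_E,L)
&=(\pi_{\Roe})_*\bigl(\llbracket\widetilde D;B\rrbracket\circ\llbracket\widetilde E\rrbracket\bigr)
=\bigl(\pi_{\Roe}\circ\llbracket\widetilde D;B\rrbracket\bigr)\circ\llbracket\widetilde E\rrbracket \\
&=\bigl(\llbracket D,L;B\rrbracket\circ\pi_{\cA}\bigr)\circ\llbracket\widetilde E\rrbracket
=\llbracket D,L;B\rrbracket\circ(\pi_{\cA})_*\llbracket\widetilde E\rrbracket
=\llbracket D,L;B\rrbracket\circ\llbracket E\div L\rrbracket,
\end{align*}
as claimed. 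I expect the main obstacle to be the localisation estimate of the middle step, i.e.\ showing that the $\EE$-cycle of $\widetilde D$ carries $\cAz(L\Subset M;B)$ asymptotically into $\Roe(L\subset M;A\grtensor B)$ so that $\llbracket D,L;B\rrbracket$ is well defined and the square commutes; once this (together with the bounded-variation extension of $E$) is in place, the formula follows formally from the non-relative case. A more self-contained alternative would be to rerun the proof of \cite[Theorem~7.10]{WulffTwisted} with the relative Roe algebra in place of the Roe algebra throughout, at the cost of duplicating the analysis rather than citing it.
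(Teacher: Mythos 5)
Your reduction step has a genuine gap. You posit an $A$-linear Dirac operator $\widetilde D$ defined over \emph{all} of $M$ restricting to $D$ away from $L$, but such an extension generally does not exist: the Dirac $A$-bundle $S$ underlying $D$ is only given over $V=M\setminus\mathring W$, and it may carry a non-trivial topological twist that obstructs extension across $W$. (As a concrete obstruction, take $M=\R^5$, $L=\{0\}$, $W$ a ball, $V\simeq S^4\times[1,\infty)$, and twist the spinor bundle of $V$ by a pullback of the stably non-trivial half-spinor bundle of $S^4$.) The paper deliberately replaces $M$ by an auxiliary manifold $\hat M$ (typically the double of $V$) precisely because of this: both $\ind(D,L)$ and $\llbracket D,L;B\rrbracket$ are defined in \Cref{sec:indexawayfromL} by first passing to $(\hat M,\hat D)$, restricting to $V$, and then transferring along the coarse map $\hat M\to M$ via coarse excision. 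Your argument needs to be rerun with $\hat M$ in place of $M$; before you can even write down $\pi_{\Roe}$ you must isolate the $\EE$-cycle over $V$ and feed it through the excision isomorphisms, which is what the paper's Diagram \eqref{eq:SESAsympMorph} packages up.

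Separately, your ``key compatibility'' square $\pi_{\Roe}\circ\llbracket\widetilde D;B\rrbracket=\llbracket D,L;B\rrbracket\circ\pi_{\cA}$ is \emph{not} the definition of $\llbracket D,L;B\rrbracket$ in the paper: that class is defined via the localized class $\llbracket D|_{\mathring V};B\rrbracket$ on $\cA(V,\partial V;B)$ and the quotient by $\cAz(\partial V\Subset V,\partial V;B)$. Even in the favourable case where $D$ does extend to $M$, to obtain your square you must first invoke \Cref{lem:independence} (independence of extension) at the asymptotic-morphism level to compare $\llbracket\widetilde D;B\rrbracket$ restricted to $\cA(V,\partial V;B)$ with $\llbracket D|_{\mathring V};B\rrbracket$, and then check that the quotient constructions agree. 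Asserting the square is ``built into the definition'' papers over exactly this comparison. Your suggested alternative at the end — rerunning the proof of \cite[Theorem~7.10]{WulffTwisted} relative to the quotient Roe algebra — is in fact the argument the paper has in mind, and it is where the actual work lives.
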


Second, one can consider bundles of vanishing variation away from $L$, which are defined as follows. 
Let $\sHigCom_L(M;B)$ be the norm closure of the subalgebra of $\cA(M;B)$ consisting of the smooth functions whose gradient becomes arbitrary small outside of sufficently large $R$-neighborhoods of $L$. We call it the stable Higson compactification of $M$ away from $L$ with coefficients in $B$. It contains $\cAz(L\subset M;B)$ as an ideal and we define the stable Higson corona of $M$ away from $L$ as the quotient \textCstar-algebra
\[\sHigCor_L(M;B)\coloneqq \frac{\sHigCom_L(M;B)}{\cAz(L\Subset M;B)}\subset\cA(M\div L;B)\,.\]
These are direct generalizations of the \textCstar-algebras introduced by Emerson and Meyer in \cite{EmeMey}.

Now, a $B$-bundle of vanishing variation away from $L$ is a $B$-bundle which is represented by a smooth function in $\sHigCom_L(M;B)$ that is projection valued away from $L$. Such bundles clearly have a class $\llbracket E,\sHigCor_L\rrbracket\in\K(\sHigCor_L(M;B))$ represented by this function.
The surprising fact is that here do not need to know the whole $\EE$-theory class of $D$ in order to calculate the index $\ind(D_E,L)$ of the twisted operator $D_E$, but instead it is sufficient to know the index $\ind(D,L)$.
In direct generalization of \cite[Theorem~8.7]{WulffTwisted} it can be calculated using a cap product
\[\cap\colon\K(\Roe(M,L;A))\otimes\K(\sHigCor_L(M;B))\to\K(\Roe(M,L;A\grtensor B))\]
as follows.

\begin{thm}[\Cref{thm:indexcapproduct}]
Let $M$ be a complete Riemannian manifold of bounded geometry, $D$ an $A$-linear Dirac operator defined on the complement of some controlled neighborhood of a closed subset $L\subset M$ and $A,B$ graded \textCstar-algebras.
Letting $\iota\colon\sHigCor_L(M;B)\hookrightarrow\cA(M\div L;B)$ denote the inclusion, then
\begin{equation*}
\llbracket D,L;B\rrbracket \circ\iota_*(y)=\ind(D,L)\cap y
\end{equation*}
for all $y\in \K(\sHigCor_L(M;B))$.
In particular, for every $B$-bundle $E$ of bounded variation away from $L$ the index of the twisted operator $D_E$ can be calculated by
\[\ind(D_E,L)=\ind(D,L)\cap\llbracket E,\sHigCor_L\rrbracket\,.\qedhere\]
\end{thm}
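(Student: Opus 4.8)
The plan is to reduce the general statement to the analogous absolute statement \cite[Theorem~8.7]{WulffTwisted}, which expresses $\llbracket D;B\rrbracket\circ\iota_*(y)$ as $\ind(D)\cap y$ over all of $M$, and then to transport that identity along the quotient maps $\Roe(M;A\grtensor B)\to\Roe(M,L;A\grtensor B)$ and $\cA(M;B)\to\cA(M\div L;B)$. Concretely, I would first pin down the four players as images of their absolute counterparts: the relative $\EE$-theory class $\llbracket D,L;B\rrbracket$ should by its very construction (in \Cref{sec:indexawayfromL}) be obtained from $\llbracket D;B\rrbracket\in\EE(\cA(M;B),\Roe(M;A\grtensor B))$ by composing with the two quotient $*$-homomorphisms, so that the square relating $\llbracket D;B\rrbracket$ and $\llbracket D,L;B\rrbracket$ commutes on the nose in $\EE$-theory; similarly $\iota\colon\sHigCor_L(M;B)\hookrightarrow\cA(M\div L;B)$ is compatible with the inclusion of the absolute stable Higson corona into $\cA(M;B)$ modulo the ideal $\cAz(L\Subset M;B)$, and the cap product on the relative side is defined precisely so that it intertwines with the absolute cap product under the quotient maps on $K$-theory.

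With that bookkeeping in place, the argument is a diagram chase. Given $y\in\K(\sHigCor_L(M;B))$, pick a lift $\tilde y\in\K(\sHigCor(M;B))$ of $y$ along the surjection of coronas (or argue directly with the commuting diagram of short exact sequences, since $\sHigCor_L$ sits between $\cAz(L\Subset M;B)$ and $\cA(M\div L;B)$); apply the absolute theorem to get $\llbracket D;B\rrbracket\circ\iota_*(\tilde y)=\ind(D)\cap\tilde y$ in $\K(\Roe(M;A\grtensor B))$; then push both sides forward along $\Roe(M;A\grtensor B)\to\Roe(M,L;A\grtensor B)$. The left-hand side becomes $\llbracket D,L;B\rrbracket\circ\iota_*(y)$ by the compatibility of the $\EE$-classes and of the inclusions of coronas, while the right-hand side becomes $\ind(D,L)\cap y$ because the image of $\ind(D)$ under the quotient is exactly $\ind(D,L)$ (this is the definition of the index away from $L$ recalled in \Cref{sec:indexawayfromL}) and because the cap product is natural with respect to the quotient map in its first variable. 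The final ``in particular'' clause is then immediate: for a $B$-bundle $E$ of bounded variation away from $L$ one has $\ind(D_E,L)=\llbracket D,L;B\rrbracket\circ\llbracket E\div L\rrbracket$ by the preceding \Cref{thm:indexawayfromLEtheoryproduct}, and $\llbracket E\div L\rrbracket$ lies in the image of $\iota_*$ applied to $\llbracket E,\sHigCor_L\rrbracket$ because $E$ being of vanishing variation away from $L$ means precisely that its defining projection-valued function lies in $\sHigCom_L(M;B)$.

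The one point that needs genuine care, and which I expect to be the main obstacle, is establishing that $\llbracket D,L;B\rrbracket$ really is the composition of $\llbracket D;B\rrbracket$ with the quotient maps at the level of $\EE$-theory, rather than merely after applying $\K$-theory; the cleanest route is to show that the asymptotic morphism (or the completely positive asymptotic family) representing $\llbracket D;B\rrbracket$ descends to one representing $\llbracket D,L;B\rrbracket$, using that it is built from $D$ only through its behaviour away from $L$ and hence sends $\cAz(L\Subset M;B)$ asymptotically into $\Roe(L\subset M;A\grtensor B)$. Once this descent is verified the rest is formal. A secondary technicality is checking that the hypothesis of bounded geometry on $M$ is exactly what is needed to invoke \cite[Theorem~8.7]{WulffTwisted} in its full strength and to guarantee that all the relevant $\cA$-algebras, Higson compactifications and coronas are well-behaved; but this is the same standing assumption under which the absolute theory was developed, so no new work is required there.
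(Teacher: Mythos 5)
The paper disposes of this theorem in a single sentence: ``the following very interesting index theorem can be proven exactly as \cite[Theorem~8.7]{WulffTwisted}.'' That is, one takes the proof of the compact-$L$ case (which compares the two asymptotic morphisms by hand, using bounded geometry to approximate Higson functions by step functions) and reruns it verbatim with the ideal $\Roe(L\subset M;A\grtensor B)$ in place of the compact operators and with $\cAz(L\Subset M;B)$ in place of $\Cz(M)\grtensor B\grtensor\grKom$. Your proposal, by contrast, tries to \emph{deduce} the statement from the compact-$L$ case by functoriality along the quotient maps, and this is where I think it breaks down.

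The fundamental problem is that \cite[Theorem~8.7]{WulffTwisted} is already the special case $L$ compact of the present theorem; for general closed $L$, the algebras $\sHigCor_L(M;B)$, $\cA(M\div L;B)$ and $\Roe(M,L;A\grtensor B)$ are genuinely different objects, not quotients receiving the absolute ones. Two concrete steps in your argument fail. First, $D$ is only assumed to exist on the complement of a controlled neighborhood of $L$, so the ``absolute'' objects $\llbracket D;B\rrbracket\in\EE(\cA(M;B),\Roe(M;A\grtensor B))$ and $\ind(D)\in\K(\Roe(M;A))$ need not exist at all; the definition of $\ind(D,L)$ and of $\llbracket D,L;B\rrbracket$ in \Cref{sec:indexawayfromL} deliberately passes through an auxiliary extension $\hat D$ over a different manifold $\hat M$ precisely because there is no class on $M$ to project. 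Second, even when $D$ is globally defined, there is no surjection one can lift $y$ along. The canonical map from the (compact-$L$) stable Higson corona $\sHigCor(M;B)$ to $\sHigCor_L(M;B)$ is an inclusion of subalgebras, not surjective; and the quotient map $\sHigCom_L(M;B)\to\sHigCor_L(M;B)$ is surjective on algebras, but the induced map on $\K$-theory has a cokernel measured by the boundary map of the extension in \cref{defn_stable_Higson_away} — indeed this boundary map is exactly the coarse co-assembly map $\mu_L$ that drives the applications in the last section, so its nonvanishing is the whole point. Hence neither a lift of $y$ nor of $\iota_*(y)$ exists in general, and the ``diagram chase'' version of the argument has nothing to chase.

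The part of your proposal that is well aimed is the final sanity check: once some version of the identity is established, the ``in particular'' clause does follow formally from \Cref{thm:indexawayfromLEtheoryproduct} together with the observation that $\llbracket E\div L\rrbracket=\iota_*\llbracket E,\sHigCor_L\rrbracket$, and bounded geometry enters exactly where you say it does (in defining the cap product via step-function approximation). But the core identity must be proved directly, by exhibiting the homotopy between the asymptotic morphism representing $\llbracket D,L;B\rrbracket\circ\iota_*$ and the one representing $\ind(D,L)\cap\blank$, not by reducing to the compact case.
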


By combining the previous constructions and results we obtain the following theorem, which establishes an abstract machinery to find submanifold obstructions to positive scalar curvature.

\begin{thm}[\Cref{thm_thom_lifts_application}]
Let $M$ be a complete Riemannian $m$-dimensional spin-manifold and $N\subset M$ be a complete $n$-dimensional submanifold with $\K$-oriented normal bundle~$V$. Hence $N$ is at least \spinC\ and we let $\slashed D_M$ and $\slashed D_N$ be the $\Cl_m$- and $\Cl_n$-linear spin (resp. \spinC) Dirac operators of $M$ and $N$, respectively. Let $i\colon N\hookrightarrow M$ denote the inclusion and $r\coloneqq m-n$ the codimension.

Assume that $N$ is uniformly embedded into $M$, that $M$ has bounded geometry in some $R$-neighbourhood of $N$ and that the line bundle associated to $P_{\unitary_1}(V)$ of the \spinC -structure of $N$ has bounded geometry (which is the case if $V$ is $\KO$-oriented).

If the Thom class $\tau$ lies in the image of the coarse co-assembly map away from $N$
\[\mu_N\colon \K(\sHigCor_N(M;\Cl_{1,r}))\to\K(\cAz(N\Subset M;\Cl_{0,r}))\]
with preimage $\tilde\tau$,
then 
the wrong way map 
\begin{align*}
\K(\Roe(M,N;\Cl_m))&\to  \K(\Roe(N\subset M;\Cl_{m,r}))\\
x&\mapsto \partial(x\cap\tilde\tau)
\end{align*}
maps $\ind(\slashed D_M, N)$ to $i_*\ind(\slashed D_N)$.
Thus, $i_*\ind(\slashed D_N)\not=0$ implies that
 there cannot be a metric of uniformly positive scalar curvature away from $N$ in the same quasi-isometry class as the original one.
\end{thm}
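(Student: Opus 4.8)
The plan is to assemble the theorem by chaining together the three main results that precede it in the paper, carefully tracking the relevant \Cstar-algebras and \EE-theory classes. First I would fix notation: writing $A = \Cl_m$ and $B = \Cl_{0,r}$ as the coefficient algebras, so that the twisted spin Dirac operator on $M$ away from $N$ is $\slashed D_M$ twisted by a $\Cl_{0,r}$-bundle $E$ whose defining projection differs from that of a reference bundle $F$ only inside a tubular neighborhood of $N$, and the class $\llbracket E \div N \rrbracket - \llbracket F \div N \rrbracket$ (equivalently the image in $\K(\cA(M \div N;\Cl_{0,r}))$) is exactly the Thom class $\tau$ under the identifications set up in \Cref{sec:Thomclassrelation}. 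The hypothesis that $\tau = \mu_N(\tilde\tau)$ with $\tilde\tau \in \K(\sHigCor_N(M;\Cl_{1,r}))$ is what allows us to pass from the abstract $\EE$-theory product description to the concrete cap product with an element of the stable Higson corona.

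The core of the argument is a diagram chase. By \Cref{thm:indexawayfromLEtheoryproduct}, the index away from $N$ of the twisted operator is $\ind((\slashed D_M)_E, N) = \llbracket \slashed D_M, N;\Cl_{0,r}\rrbracket \circ \llbracket E \div N\rrbracket$, and since the reference bundle $F$ is (coarsely) trivial away from $N$ its twisted index vanishes in the relative Roe algebra, so the difference is $\llbracket \slashed D_M, N;\Cl_{0,r}\rrbracket \circ \tau$. Next, since $\tau = \iota_*(\mu_N(\tilde\tau))$ factors through the inclusion $\iota\colon \sHigCor_N(M;\Cl_{1,r}) \hookrightarrow \cA(M\div N;\Cl_{0,r})$ — here I would need the compatibility of $\mu_N$ with the co-assembly picture, i.e.\ that the image of the co-assembly map lands in the image of $\iota_*$ modulo the boundary map $\partial$ coming from the ideal $\cAz(N\Subset M;\Cl_{0,r})$ — we may apply \Cref{thm:indexcapproduct} to rewrite $\llbracket \slashed D_M, N;\Cl_{0,r}\rrbracket \circ \iota_*(y) = \ind(\slashed D_M, N) \cap y$. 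Combining with the boundary map $\partial$ that implements the passage from the quotient picture over $M\div N$ to the ideal picture $\Roe(N\subset M;\Cl_{m,r})$, we obtain that $\partial(\ind(\slashed D_M,N) \cap \tilde\tau)$ equals the image in $\K(\Roe(N\subset M;\Cl_{m,r}))$ of the relative index $\ind(\slashed D_M \midd E, F)$ associated with the Thom class.

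Finally I would invoke \Cref{thm_Thomclassrelation_relaxed}: under the stated hypotheses ($N$ uniformly embedded, bounded geometry near $N$, the spinor bundle splitting $S_M|_N \cong S_N \grtensor S_V$, which is guaranteed once the line bundle of $P_{\unitary_1}(V)$ has bounded geometry), that theorem identifies $\llbracket \slashed D_M \midd N;\Cl_{0,r}\rrbracket \circ \tau = i_* \ind(\slashed D_N)$. Tracing through the identification of $\llbracket \slashed D_M \midd N;\Cl_{0,r}\rrbracket \circ \tau$ with the image of $\partial(\ind(\slashed D_M,N)\cap\tilde\tau)$ from the previous step, the wrong way map $x \mapsto \partial(x\cap\tilde\tau)$ sends $\ind(\slashed D_M, N)$ to $i_*\ind(\slashed D_N)$. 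The obstruction statement is then immediate from \Cref{cor_upsc_outside_L}: a metric of uniformly positive scalar curvature away from $N$ in the same quasi-isometry class would force $\ind(\slashed D_M, N) = 0$, hence $i_*\ind(\slashed D_N) = 0$, contradicting the hypothesis.

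I expect the main obstacle to be the bookkeeping in the middle step — reconciling the three different ambient algebras ($\cA(M\div N;\Cl_{0,r})$, its subalgebra $\sHigCor_N(M;\Cl_{1,r})$ via the Bott/Clifford shift hidden in the co-assembly map $\mu_N$, and the target $\Roe(N\subset M;\Cl_{m,r}) \cong \Roe(N;\Cl_{m,r})$) and checking that the various boundary maps, the functoriality map $i_*$, and the Clifford-algebraic degree shifts $\Cl_{1,r}$ versus $\Cl_{0,r}$ versus $\Cl_m$ versus $\Cl_{m,r}$ all compose consistently. In particular one must verify that the $\EE$-theory class $\llbracket \slashed D_M \midd N;\Cl_{0,r}\rrbracket$ appearing in \Cref{thm_Thomclassrelation_relaxed} is the same (after the obvious map from the $N$-local picture to the $M\div N$-relative picture) as the one appearing in \Cref{thm:indexcapproduct}, which should follow from the naturality of these classes established in the earlier sections but deserves an explicit remark. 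Everything else is a formal consequence of the cited theorems.
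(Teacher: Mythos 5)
Your overall strategy — reduce to $\llbracket \slashed D_M\midd N;\Cl_{0,r}\rrbracket\circ\tau$, identify this with $i_*\ind(\slashed D_N)$ via \cref{thm_Thomclassrelation_relaxed}, and on the other side express $\llbracket\slashed D_M,N;\Cl_{1,r}\rrbracket\circ\iota_*(\tilde\tau)$ as $\ind(\slashed D_M,N)\cap\tilde\tau$ via \cref{thm:indexcapproduct} — is exactly the one the paper pursues, and the final appeal to \cref{cor_upsc_outside_L} is also the same.

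However, you have left the crucial middle step as a ``remark to be made'' rather than locating the precise ingredient, and your characterization of that step is misleading. You write that ``one must verify that the $\EE$-theory class $\llbracket\slashed D_M\midd N;\Cl_{0,r}\rrbracket$ ... is the same (after the obvious map from the $N$-local picture to the $M\div N$-relative picture) as the one appearing in \cref{thm:indexcapproduct}.'' But these two classes, $\llbracket D\midd L;B\rrbracket\in\EE(\cAz(L\Subset M;B),\Roe(L\subset M;A\grtensor B))$ and $\llbracket D,L;B\rrbracket\in\EE(\cA(M\div L;B),\Roe(M,L;A\grtensor B))$, live in genuinely different groups and are \emph{not} related by ``an obvious map''; they are defined from an ideal and the corresponding quotient, and what links them is precisely \Cref{prop:Eclassesboundarymorphisms}: a commutative square in which the long exact sequence boundary morphisms $\llbracket\sigma_0\rrbracket$ on the function-algebra side and $\llbracket\sigma_1\rrbracket$ on the Roe-algebra side intertwine the two $\EE$-theory classes. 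The paper's proof extends that square to the diagram \eqref{eq_diag_lifts}, so that $\mu_N=\llbracket\sigma_0\rrbracket\circ\iota_*$ and $\partial$ is the composite of $\llbracket\sigma_1\rrbracket$ with an inclusion, yielding the identity $\llbracket\slashed D_M\midd N;\Cl_{0,r}\rrbracket\circ\mu_N=\partial\circ\llbracket\slashed D_M,N;\Cl_{1,r}\rrbracket\circ\iota_*$; without this your chain $\partial(\ind(\slashed D_M,N)\cap\tilde\tau)=\ind(\slashed D_M\midd E,F)$ is unsupported. Also, you write $\tau=\iota_*(\mu_N(\tilde\tau))$, which conflates the two maps: $\mu_N(\tilde\tau)=\tau$ already lands in $\K(\cAz(N\Subset M;\Cl_{0,r}))$, whereas $\iota_*$ embeds $\K(\sHigCor_N(M;\cdot))$ into $\K(\cA(M\div N;\cdot))$ — they have different targets and do not compose in the order you write.

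A secondary remark: your first step routes through \cref{thm:indexawayfromLEtheoryproduct} and an explicit pair of bundles $E,F$ realizing $\tau$, but this is an unneeded detour. The paper's proof avoids choosing $E,F$ and works entirely with the $\EE$-theory class picture, which is both cleaner and matches the hypotheses (the Thom class $\tau$ is defined directly as a class, not as a difference of actual bundles). This is not wrong, just extra bookkeeping that the paper's argument sidesteps.
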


The coarse co-assembly map away from $N$ that appears in this theorem is a direct generalization of the co-assembly map introduced by Emerson and Meyer in \cite{EmeMey}.
In order to get this abstract machinery started, the important question is as follows.
\begin{question}\label{question}
Under which conditions does the Thom class lie in the image of the coarse co-assembly map away from $N$? 
\end{question}

As our final result, \Cref{thm_multi_partitioned} answers the question positively for multi-partitioned manifolds, thereby recovering results of Schick and Zadeh \cite{schick_zadeh} and \cite[Section~4.3]{siegel_analytic_structure_group}. In these references and also in \cite{bunke_ludewig_callias}, however, the wrong way map is obtained differently, namely by iterated application of a Mayer--Vietoris boundary map.
The relative coarse index was also put into connection with partitioned manifolds in \cite{KZZ}, although in a different manner.

Further instances where \Cref{question} has an affirmative answer will have to be investigated in future research.

\paragraph{Acknowledgements.}
The authors would like to thank Thomas Schick for the inspiration to think about relative coarse indices and Nigel Higson as well as Thorsten Hertl for answering their questions.

Alexander Engel and Christopher Wulff acknowledge financial support by the DFG through the Priority Programme SPP 2026 ``Geometry at Infinity'', project numbers 441426261 and 338480246.

\section{Recapitulation of coarse index theory}
\label{sec:recapitulation}

In this section we recall some basic notions and results of coarse index theory of $A$-linear Dirac operators, $A$ a graded unital \textCstar-algebra. We adopt the set-up of the article \cite{WulffTwisted}, because we will make heavy use of it. In order to keep our presentation concise, we will recall only the most important parts and refer to this reference for further details.

In the following, $A,A_1,A_2,B,C$ will always denote graded, unital, complex \textCstar-algebras.
The graded \textCstar-algebras of bounded and compact operators on a graded right Hilbert $A$-module $\Hilbert$ will be denoted by $\Lin_A(\Hilbert)$ and $\Kom_A(\Hilbert)$, respectively, or $\Lin(\Hilbert)$, $\Kom(\Hilbert)$ if $\Hilbert$ is a graded Hilbert space.
Let $\elltwo\coloneqq\elltwo(\N)$ be our standard choice of separable infinite dimensional Hilbert space and $\grelltwo\coloneqq\elltwo\oplus\elltwo$ the graded Hilbert space with even and odd part isomorphic to $\elltwo$. The associated \textCstar-algebras of compact operators on these Hilbert spaces will be denoted by $\Kom\coloneqq\Kom(\elltwo)$ and $\grKom\coloneqq\Kom(\grelltwo)$.

The letter $\cS$ will denote the \textCstar-algebra $\Cz(\R)$ but equipped with the grading in even and odd functions. The Clifford algebras of $\R^{r+s}$ equipped with the standard quadratic form of signature $(r,s)$ is denoted by $\Cl_{r,s}$.
Furthermore, the symbol $\grtensor$ will be used for the \emph{maximal} graded tensor product of graded \textCstar-algebras.

Finally, $X,X_1,X_2$ will usually denote proper metric spaces, except for a few cases where we use the letter $X$ for vector fields.
We allow the metrics to take the value infinity, because our main spaces of interest are possibly disconnected complete Riemannian manifolds, whose path-metric is of this type. By a \emph{controlled neighborhood} of a subspace $L\subset X$ we mean a neighborhood that is contained in the $R$-neighborhood of $L$ for some $R>0$.

\subsection{The Roe algebra with coefficients}

Roe algebras with coefficients in \textCstar-algebras have been considered previously in, for example, \cite{higson_pedersen_roe,hanke_pape_schick,WulffTwisted,WulffEquivariant} and work in most aspects exactly as in the classical case without coefficents, see \cite[Chapter 6]{HigRoe}. 

\begin{defn}[{\cite[Definition 2.2]{WulffTwisted}, see also \cite[Definition 3.2]{hanke_pape_schick}, \cite[Definitions 5.1--5.3]{higson_pedersen_roe}}]
Let $\Hilbert$ be a separable right Hilbert $A$-module,  $\rho\colon C_0(X)\to\Lin_A(\Hilbert)$ a representation and $T\in\Lin_A(\Hilbert)$.
\begin{itemize}
\item $T$ is \emph{locally compact} if $T\circ\rho(f),\rho(f)\circ T\in\Kom_A(\Hilbert)$ for all $f\in C_0(X)$
\item $T$ has \emph{finite propagation} if there exists $R>0$ such that $\rho(f)T\rho(g)$ vanishes for all $f,g\in C_0(X)$ with $\dist(\supp(f),\supp(g))\geq R$. The smallest such $R$ is called the \emph{propagation} of $T$. 
\item The \emph{Roe-\textCstar-algebra} of $X$ associated with $\rho$ is the sub-\textCstar-algebra $\Roe(X;\Hilbert)\subseteq \Lin_A(\Hilbert)$ generated by all locally compact operators with finite propagation. Its dependence of the representation $\rho$ is understood implicitly.\qedhere
\end{itemize}
\end{defn}

If for each space $X$ a sufficiently large Hilbert $A$-modules $\Hilbert_X$ is chosen, the usual theory of covering isometries goes through, turning $X\mapsto\K(\Roe(X;\Hilbert_X))$ into a functor. We shall not recall it here, but refer to the references above, in particular \cite[Section 4.4]{WulffEquivariant} for a more extensive treatment of the homological properties but also \cite[Definition 2.3--Lemma 2.7]{WulffTwisted} and \cite[Proposition 5.5]{higson_pedersen_roe}.

The most important spaces for us are complete Riemannian manifolds or closures of open subsets therein.
In this case, a convenient choice of sufficiently large Hilbert module is $L^2(X)\grtensor A\grtensor\widehat{\ell^2}$. 
\begin{defn}[{cf.\ \cite[Definition 2.6]{WulffTwisted}}]\label{def:concreteRoealgebra}
We define $\Roe(X;A)$ to be the Roe algebra of $X$ with respect to a sufficiently large Hilbert $A$-module. More concretely, if $X$ is the closure of an open subset of a complete Riemannian manifold and $A$ a graded unital \textCstar-algebra, then the Roe algebra of $X$ with coefficients in $A$ is defined as
\[
\Roe(X;A)\coloneqq \Roe(X;L^2(X)\grtensor A\grtensor\widehat{\ell^2}))\,.\qedhere
\]
\end{defn}

In the context of index theory, however, it is often more appropriate to choose Hilbert modules based on the space of $L^2$-sections of bundles $S\to X$.
\begin{defn}[{\cite[Definition 2.8]{WulffTwisted}}]
Let $X$ be the closure of an open subset of a complete Riemannian manifold. \begin{itemize}
\item By an \emph{$A$-bundle} $S\to X$ we mean a smooth bundle whose fibres are finitely generated, projective, graded, right Hilbert $A$-modules.
\item The Hilbert $A$-module $L^2(S)$ or $L^2(X,S)$ is the completion of the $A$-module $\Gamma_{\operatorname{cpt}}(S)$ of smooth compactly suported sections of $S$ with respect to the $A$-valued scalar product
\[(\xi,\zeta)=\int_M\langle\xi(x),\zeta(x)\rangle d\!\operatorname{vol}\,,\]
where $\langle.,.\rangle\colon \Gamma(S)\times\Gamma(S)\to C^\infty(X;A)$ is the fibrewise $A$-valued inner product.
\item Furthermore, we define $\Roe(X;S)\coloneqq \Roe(X;L^2(S)\grtensor\grelltwo)$.\qedhere
\end{itemize}
\end{defn}

Note that $L^2(S)\grtensor\grelltwo$ might not be sufficiently large even if $S$ is not the zero bundle, because finitely generated projective modules can disregard whole parts of a \textCstar-algebra. Nevertheless, one can always embed $S$ as a subbundle into the trivial bundle $(A\grtensor\grelltwo)\times X\to X$ and thus obtain isometric inclusions of graded Hilbert $A$-modules 
\[L^2(S)\hookrightarrow L^2(S)\grtensor\grelltwo\hookrightarrow L^2(X)\grtensor A\grtensor\grelltwo\grtensor\grelltwo\cong L^2(X)\grtensor A\grtensor\grelltwo\,.\]
By adjoing with them one obtains inclusions of graded \textCstar-algebras
\begin{equation*}\Roe(X;L^2(S))\hookrightarrow \Roe(X;S)\hookrightarrow \Roe(X;A)\,.
\end{equation*}
The maps on $\K$-theory induced by them do not depend on the choice of the embeddings.

Finally, in order to deal with the degree of $\K$-theory by means of Clifford algebras, it is important to make the following observation.
\begin{lem}[{cf.\ \cite[Definition 2.9]{WulffTwisted}}]\label{lem:RoeClifford}
There are canonical isomorphisms
\begin{align*}
\Roe(X;\Hilbert\grtensor\Cl_{r,s})&\cong \Roe(X;\Hilbert)\grtensor\Cl_{r,s}
\end{align*}
which are natural under the functoriality provided by covering isometries. \qed
\end{lem}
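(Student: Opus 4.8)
The plan is to reduce everything to the elementary fact that, $\Cl_{r,s}$ being a finite-dimensional unital \textCstar-algebra, tensoring with it is transparent at the level of adjointable and compact operators, and then to observe that local compactness and finite propagation of an operator on $\Hilbert\grtensor\Cl_{r,s}$ can be read off ``coordinate-wise'' in a Clifford basis.

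First I would write down the candidate isomorphism explicitly: send an elementary tensor $T\grtensor c$ with $T\in\Roe(X;\Hilbert)$ and $c\in\Cl_{r,s}$ to the operator $\xi\grtensor d\mapsto T\xi\grtensor cd$ (with the appropriate Koszul sign) on $\Hilbert\grtensor\Cl_{r,s}$, equipped with the canonical representation $\rho\grtensor\id$ of $C_0(X)$. A short computation with rank-one operators gives the inclusion $\Kom_A(\Hilbert)\grtensor\Cl_{r,s}\subseteq\Kom_{A\grtensor\Cl_{r,s}}(\Hilbert\grtensor\Cl_{r,s})$, and with it one checks that the operator assigned to $T\grtensor c$ has propagation at most $\propagation(T)$ and is locally compact whenever $T$ is; hence the assignment is a well-defined graded $*$-homomorphism into $\Roe(X;\Hilbert\grtensor\Cl_{r,s})$, and it is clearly injective (evaluate on $\xi\grtensor 1$). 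It remains to prove surjectivity.

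For this I would invoke the standard identifications $\Lin_{A\grtensor\Cl_{r,s}}(\Hilbert\grtensor\Cl_{r,s})\cong\Lin_A(\Hilbert)\grtensor\Cl_{r,s}$ and $\Kom_{A\grtensor\Cl_{r,s}}(\Hilbert\grtensor\Cl_{r,s})\cong\Kom_A(\Hilbert)\grtensor\Cl_{r,s}$ — valid precisely because $\Cl_{r,s}$ is finite-dimensional and unital, so that the graded algebraic tensor products are already complete and carry a unique \textCstar-norm — noting that under the first one the representation on $\Hilbert\grtensor\Cl_{r,s}$ is again $\rho\grtensor\id$. Fixing a homogeneous $\C$-basis $c_1,\dots,c_k$ of $\Cl_{r,s}$, every $T\in\Lin_{A\grtensor\Cl_{r,s}}(\Hilbert\grtensor\Cl_{r,s})$ is then uniquely $T=\sum_i T_i\grtensor c_i$ with $T_i\in\Lin_A(\Hilbert)$ and the coordinate projections bounded. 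From $(\rho(f)\grtensor\id)\,T\,(\rho(g)\grtensor\id)=\sum_i(\rho(f)T_i\rho(g))\grtensor c_i$ and $(\rho(f)\grtensor\id)\,T=\sum_i(\rho(f)T_i)\grtensor c_i$, together with the fact that $\Lin_A(\Hilbert)\grtensor\Cl_{r,s}$ and $\Kom_A(\Hilbert)\grtensor\Cl_{r,s}$ are free over the respective coefficient algebra with basis $\{c_i\}$, one concludes that $T$ has finite propagation, resp.\ is locally compact, if and only if each $T_i$ does, resp.\ is. Hence the dense $*$-subalgebra of locally compact finite-propagation operators inside $\Lin_{A\grtensor\Cl_{r,s}}(\Hilbert\grtensor\Cl_{r,s})$ is exactly $\mathcal{L}\otimes_{\mathrm{alg}}\Cl_{r,s}$, where $\mathcal{L}\subseteq\Lin_A(\Hilbert)$ is the dense $*$-subalgebra generating $\Roe(X;\Hilbert)$; taking closures and using that, for the finitely many coordinates, closure commutes with the direct-sum decomposition (all coordinate projections and the inclusion being bounded), the image of the candidate map is the closure of $\mathcal{L}\otimes_{\mathrm{alg}}\Cl_{r,s}$, that is $\Roe(X;\Hilbert)\grtensor\Cl_{r,s}$, and equals all of $\Roe(X;\Hilbert\grtensor\Cl_{r,s})$.

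Finally, naturality under the functoriality of covering isometries is essentially formal: if $V\colon\Hilbert_X\to\Hilbert_Y$ is a covering isometry for a coarse map $X\to Y$, then so is $V\grtensor\id_{\Cl_{r,s}}$, and $(V\grtensor\id)(T\grtensor c)(V\grtensor\id)^{*}=(VTV^{*})\grtensor c$ (no Koszul sign appears because $\id_{\Cl_{r,s}}$ is even); hence the isomorphism above intertwines the map induced by $V\grtensor\id$ with $\Ad_V\grtensor\id_{\Cl_{r,s}}$, which I would spell out to the level of detail at which the functor $X\mapsto\K(\Roe(X;\Hilbert_X))$ is set up in the cited references. The only point requiring care — bookkeeping rather than a genuine obstacle — is keeping the graded tensor products and their Koszul signs honest, and phrasing the slicing argument so that it uses only finite-dimensionality of $\Cl_{r,s}$ and no finiteness assumption on $A$ or $\Hilbert$.
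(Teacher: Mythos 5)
The paper offers no proof of this lemma at all: the statement is followed directly by a $\qed$ with a citation to \cite[Definition~2.9]{WulffTwisted}, i.e.\ it is treated as a routine unwinding of definitions. Your argument is a correct and complete fleshing-out of exactly the unwinding that is being gestured at. The essential ingredients you use — the identifications $\Kom_{A\grtensor\Cl_{r,s}}(\Hilbert\grtensor\Cl_{r,s})\cong\Kom_{A}(\Hilbert)\grtensor\Cl_{r,s}$ and $\Lin_{A\grtensor\Cl_{r,s}}(\Hilbert\grtensor\Cl_{r,s})\cong\Lin_{A}(\Hilbert)\grtensor\Cl_{r,s}$ valid for finite-dimensional unital $\Cl_{r,s}$, the coordinate-wise slicing $T=\sum_i T_i\grtensor c_i$ in a homogeneous basis (for which the coordinate projections are bounded since the sum is finite), the observation that the $C_0(X)$-representation $\rho\grtensor\id$ acts only in the first factor so that propagation and local compactness can be read off each $T_i$ from the full $T$, and the fact that $V\grtensor\id_{\Cl_{r,s}}$ is again a covering isometry — are precisely the ones at work. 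One small addition you might make explicit: to pass from the dense subalgebra of locally compact finite-propagation operators to the closure, you should state outright that each coordinate projection $T\mapsto T_i$ is norm-bounded (which it is, the basis being finite), so that convergence of a sequence in $\Roe(X;\Hilbert\grtensor\Cl_{r,s})$ forces convergence of all its coordinates in $\Roe(X;\Hilbert)$; you do invoke this, but it is the only non-formal point and deserves a full sentence. Apart from that, the sign bookkeeping you flag is indeed the only care needed, and you have handled it correctly (in particular, that no Koszul sign enters when conjugating by $V\grtensor\id$, since $\id_{\Cl_{r,s}}$ is even).
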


\subsection{Dirac operators over \texorpdfstring{\textCstar}{C*}-algebras}

Now let $M$ be a complete Riemannian manifold, possibly with boundary. In this subsection we specify what we mean by $A$-linear Dirac operators over $M$ and recall some basic constructions.

\begin{defn}[{\cite[Definitions 3.1--3.3]{WulffTwisted}}]
\begin{enumerate}
\item Let $S\to M$ be a smooth $A$-bundle. A \emph{connection} on $S$ is a $\C$-linear map $\nabla\colon \Gamma(S)\to\Gamma(S\otimes T^*M)$ which is grading preserving, i.\,e.\ maps $\Gamma(S^\pm)$ to $\Gamma(S^\pm\otimes T^*M)$, satisfies the Leibniz rule $\nabla_X(\xi\cdot f)=\nabla_X(\xi)\cdot f+\xi\cdot \partial_Xf$
for all sections $\xi\in\Gamma(S)$, all smooth $A$-valued functions $f\in C^\infty(M;A)$ and all $X\in TM$, and is metric with respect to the $A$-valued inner product $\langle.,.\rangle$ on the fibres, i.\,e.\ 
$\partial_X\langle\xi,\zeta\rangle=\langle\nabla_X\xi,\zeta\rangle+\langle \xi,\nabla_X\zeta\rangle$ for all $\xi,\zeta\in\Gamma(S)$ and $X\in TM$.

\item 
Let $\nabla^{\mathrm{LC}}$ denote the Levi--Civita connection on $M$.
A \emph{Dirac $A$-bundle} is a smooth $A$-bundle along with 
\begin{itemize}
\item a connection $\nabla\colon \Gamma(S)\to\Gamma(S\otimes T^*M)$;
\item a Clifford multiplication 
$\Cl(TM)\to\End_A(S)$, i.\,e.\ a graded \textCstar-algebra bundle homomorphism;
\end{itemize}
such that connection and Clifford multiplication are related by the Leibniz rule 
$\nabla_X(Y\cdot\xi)=(\nabla^{\mathrm{LC}}_XY)\cdot\xi+Y\cdot\nabla_X\xi$
for all $X\in TM$, $Y\in\Gamma(\Cl(TM))$ and $\xi\in\Gamma(S)$.

\item The \emph{Dirac operator} associated to a Dirac $A$-bundle $S\to M$ is the $A$-linear first order differential operator $D\colon \Gamma_{\operatorname{cpt}}(S)\to\Gamma_{\operatorname{cpt}}(S)$ which is locally given by
\[D=\sum_{i=1}^ne_i\nabla^S_{e_i}\,,\]
where $e_1,\dots, e_n$ is a local orthonormal frame of $TM$.
We call it for short an \emph{$A$-linear Dirac operator}.
\qedhere
\end{enumerate}
\end{defn}

Essential analytic properties of $A$-linear Dirac operators have been worked out in \cite{hanke_pape_schick,Ebert_AnalyticalFoundations}, in particular the following:

\begin{thm}[{\cite[Theorems 3.4,3.5]{WulffTwisted}, cf.\ \cite[Theorem 2.3 \& Lemma~3.6]{hanke_pape_schick} and \cite[Theorem 1.14]{Ebert_AnalyticalFoundations}}]
A Dirac operator $D$ over a complete Riemannian manifold $M$ without boundary\footnote{The condition that $M$ does not have a boundary was forgotten in the formulation of \cite[Theorems 3.4,3.5]{WulffTwisted}.} 
is closable in $L^2(S)$ and the minimal closure is regular  and self-adjoint as unbounded Hilbert-$A$-module operator.  It is the unique self-adjoint extension of $D$ and we denote it by the same letter.
The functional calculus yields a graded $*$-homomorphism
\[\cS\to \Roe(M;L^2(S))\,,\quad f\mapsto f(D)\]
with the property that $f(D)=Dg(D)=g(D)D$ if $f,g\in\cS$ satisfy $f(t)=tg(t)$ for all $t$. 
\qed
\end{thm}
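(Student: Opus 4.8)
The statement bundles together the essential self-adjointness of a Dirac-type operator on a complete manifold, the Hilbert-module regularity of its closure, and the fact that its $\cS$-functional calculus produces locally compact operators of controlled propagation; the plan is to establish these in this order. First I would record that $D$ is symmetric on $\Gamma_{\operatorname{cpt}}(S)$: combining the Leibniz rule for $\nabla$ with the compatibility of $\nabla$ and Clifford multiplication, and using that $M$ has empty boundary so that the boundary terms in the integration by parts vanish, one obtains $(D\xi,\zeta)=(\xi,D\zeta)$ for all $\xi,\zeta\in\Gamma_{\operatorname{cpt}}(S)$. A symmetric operator on a Hilbert module is closable; write $D_{\min}$ for its minimal closure and $D_{\max}=D_{\min}^{*}$ for the distributional extension.

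The hard part is to prove $D_{\min}=D_{\max}$ and that $D_{\min}$ is regular. Here I would follow the standard finite-propagation-speed argument in its Hilbert-module form (see \cite{hanke_pape_schick,Ebert_AnalyticalFoundations}): solve the symmetric hyperbolic system $\partial_t u_t=iDu_t$ by the usual energy-estimate and Picard-iteration scheme — which is insensitive to the \textCstar-coefficients — to obtain a strongly continuous unitary group $(e^{itD})_{t\in\R}$ on $L^2(S)$ with unit finite propagation speed, i.e.\ $\supp(e^{itD}\xi)$ is contained in the $|t|$-neighbourhood of $\supp\xi$ for $\xi\in\Gamma_{\operatorname{cpt}}(S)$. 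Stone's theorem for regular self-adjoint operators on Hilbert modules identifies the generator of this group as a regular self-adjoint operator extending $D|_{\Gamma_{\operatorname{cpt}}(S)}$; that $\Gamma_{\operatorname{cpt}}(S)$ is a core then follows from a cutoff argument, using an exhausting sequence $\chi_n\in\Cc^\infty(M)$ with $0\le\chi_n\le1$ and $\|\D\chi_n\|_\infty\to0$ (which exists precisely because $M$ is complete): for $\xi$ in the domain of the generator one has $\chi_n\xi\to\xi$, and since $[D,\chi_n]$ is Clifford multiplication by $\D\chi_n$ and hence has norm $\le\|\D\chi_n\|_\infty$, also $D(\chi_n\xi)\to D\xi$, after which a local Friedrichs mollification smooths out $\chi_n\xi$ without leaving $\Gamma_{\operatorname{cpt}}(S)$. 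Hence the generator is $D_{\min}$, which is therefore regular and self-adjoint, and essential self-adjointness makes it the unique self-adjoint extension.

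Granting this, the continuous functional calculus gives a $*$-homomorphism $\Cz(\R)\to\Lin_A(L^2(S))$, which is graded for the even/odd grading of $\cS$: since $D$ anticommutes with the grading involution $U$ one has $Uf(D)U=f(-D)$, which equals $f(D)$ for even $f$ and $-f(D)$ for odd $f$, so $f(D)$ has the same degree as $f$. To see that $f(D)\in\Roe(M;L^2(S))$ I would — the Roe algebra being closed, and the $f\in\cS$ with $\hat f\in\Cc(\R)$ being dense — treat such an $f$ with $\supp\hat f\subseteq[-R,R]$: writing $f(D)=\tfrac1{2\pi}\int\hat f(t)\,e^{itD}\,\D t$ and invoking the finite propagation speed of the wave group gives $\propagation(f(D))\le R$, while $f(D)$ is locally compact because $\rho(\phi)(D^2+1)^{-1}\in\Kom_A(L^2(S))$ for every $\phi\in\Cz(M)$ by the Rellich lemma for $A$-bundles (embed $S$ into a trivial $A\grtensor\grelltwo$-bundle and apply the scalar Rellich lemma), the general resolvent-type function being reduced to this case by the commutator identity $\rho(\phi)(D\pm i)^{-1}=(D\pm i)^{-1}\rho(\phi)-(D\pm i)^{-1}[D,\phi](D\pm i)^{-1}$ with $[D,\phi]$ bounded, together with Stone--Weierstrass. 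So the image of a dense subalgebra of $\cS$ consists of locally compact finite-propagation operators, whence the whole $*$-homomorphism lands in $\Roe(M;L^2(S))$.

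Finally, for $f,g\in\cS$ with $f(t)=tg(t)$: as $f$ is bounded, $g(D)$ maps $L^2(S)$ into $\mathrm{dom}(D)$ with $Dg(D)=(t\cdot g)(D)=f(D)$ in the functional calculus, and $g(D)D=f(D)$ holds on $\mathrm{dom}(D)$ and extends to $L^2(S)$ by boundedness of $f(D)$; this is routine. The single genuinely delicate point is the second paragraph: completeness of $M$ enters only there, and in the Hilbert-module setting one must be careful that self-adjointness of the closure does not by itself entail regularity — constructing the wave group directly is what extracts both at once.
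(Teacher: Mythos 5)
This theorem is stated in the paper's recapitulation section with a terminal \(\qed\) and no argument: the paper does not prove it but recalls it from \cite[Theorems 3.4, 3.5]{WulffTwisted} (which in turn rests on \cite{hanke_pape_schick} and \cite{Ebert_AnalyticalFoundations}). There is therefore no ``paper's own proof'' against which to compare line by line.

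That said, your sketch is a faithful reconstruction of the argument used in the cited sources, and the logical spine is correct: symmetry via integration by parts on the boundaryless complete manifold; construction of the strongly continuous unitary wave group \(e^{itD}\) with unit propagation speed by energy estimates and Picard iteration (this is exactly how \cite{Ebert_AnalyticalFoundations} proceeds in the Hilbert-module setting, and you are right that this is where regularity, which is not automatic from self-adjointness over a \(\text{C}^*\)-algebra, is secured); identification of the generator with \(D_{\min}\) via the Gaffney-type cutoff and mollification argument using completeness; Fourier representation of \(f(D)\) via the wave group for \(\hat f\) compactly supported to bound propagation; local compactness via a Hilbert-module Rellich lemma obtained by embedding \(S\) into a trivial \(A\grtensor\grelltwo\)-bundle; and the algebraic identity \(f(D)=Dg(D)=g(D)D\) from the chain rule in the functional calculus. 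Two points you gloss that deserve a little more care if this were written in full: the assertion that well-posedness of the symmetric hyperbolic system is ``insensitive to the \(\text{C}^*\)-coefficients'' hides the need to verify the Friedrichs mollifier estimates in the module-valued Sobolev setting (Ebert does this carefully), and the reduction of the Rellich lemma to the scalar case needs \(A\) unital so that \(\Kom(L^2(M))\grtensor A^n=\Kom_A(L^2(M)\grtensor A^n)\), which holds here. Your closing remark correctly identifies the genuinely delicate step.
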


Recall that in the spectral picture of $\K$-theory \cite{Trout}, elements of $\K(A)$ are homotopy classes of graded $*$-homomorphisms $\cS\to A\grtensor \grKom$. In particular, $*$-homomorphisms $\cS\to A$ define classes by tensoring them with a rank one projection in $\Kom\subset\grKom$. The coarse index has a very simple definition in this picture.

\begin{defn}[{\cite[Definitions 3.6]{WulffTwisted}}]\label{defn:coarseindex}
The coarse index $\ind(D)\in \K(\Roe(M;A))$ of $D$ is the $K$-theory class obtained by composing the $*$-homomorphism of the previous theorem with the inclusion $\Roe(M;L^2(S))\subseteq \Roe(M;A)$.
\end{defn}

We can also consider the class in $\K(\Roe(M;L^2(S)))$ or $\K(\Roe(M;S))$ as the coarse index, but we note that for full functoriality under coarse maps we need to work with the groups $\K(\Roe(M;A))$. 

The construction of twists and external products of Dirac operators works exactly as in the classical case without coefficients.

\begin{lem}[{\cite[Lemmas 3.7 \& 3.9]{WulffTwisted}}]\label{lem:twistedexternalConnection}
\begin{enumerate}
\item Let $S\to M$ be a $A$-bundle with connection $\nabla^S$ and $E\to M$ a $B$-bundle with connection $\nabla^E$. Then there is a unique connection $\nabla^{S\grtensor E}$ on the $A\grtensor B$-bundle $S\grtensor E$ such that 
\begin{equation}\label{eq:twistedConnection}
\nabla^{S\grtensor E}(\xi\grtensor\zeta)= \nabla^S\xi\grtensor\zeta+\xi\grtensor\nabla^E\zeta
\end{equation}
for all section $\xi\in\Gamma(S)$ and $\zeta\in\Gamma(E)$.

If $S$ is even a Dirac $A$-bundle, then $S\grtensor E$ is a  Dirac $A\grtensor B$-bundle with this connection and the obvious Clifford action. 

\item Let $S_i\to M_i$ be $A_i$-bundles with connections $\nabla^{S_i}$ for $i=1,2$. Then there is a unique connection $\nabla^{S_1\grexttensprod S_2}$ on the $A_1\grtensor A_2$-bundle $S_1\grexttensprod S_2\to M_1\times M_2$ such that 
\begin{equation*}\nabla^{S_1\grexttensprod S_2}_{X_1+X_2}(\xi_1\grexttensprod\xi_2)= \nabla^{S_1}_{X_1}\xi_1\grexttensprod\xi_2+ \xi_1\grexttensprod\nabla^{S_2}_{X_2}\xi_2
\end{equation*}
for all section $\xi_i\in\Gamma(S_i)$ and $X_i\in TM_i$.

If $S_{1,2}$ are even Dirac $A_{1,2}$-bundles, then $S_1\grexttensprod S_2$ is a  Dirac $A_1\grtensor A_2$-bundle with this connection and the obvious Clifford action. \qed
\end{enumerate}
\end{lem}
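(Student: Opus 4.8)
The plan is to carry out part (i) by a local argument and then deduce part (ii) from it by pulling everything back along the projections. All the constructions are local, so fix a chart $U$ over which $S$ and $E$ are direct summands of trivial module bundles. Because the fibres of $S$ and $E$ are finitely generated projective, there are finitely many sections $\xi_1,\dots,\xi_k\in\Gamma(S|_U)$ and $\zeta_1,\dots,\zeta_l\in\Gamma(E|_U)$ whose elementary tensors $\xi_i\grtensor\zeta_j$ generate $\Gamma((S\grtensor E)|_U)$ as a module over $C^\infty(U;A\grtensor B)$. This immediately yields the uniqueness assertion in both parts: a connection satisfying the prescribed formula on elementary tensors is determined on the generators $\xi_i\grtensor\zeta_j$, hence---by the Leibniz rule applied to $C^\infty(M;A\grtensor B)$-linear combinations---on all sections over $U$, and therefore globally.

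For existence in part (i), I would define $\nabla^{S\grtensor E}$ on elementary tensors by the stated formula and extend it by the Leibniz rule; to see that this is consistent, pass to a local trivialisation where $\nabla^S=d+\omega^S$ and $\nabla^E=d+\omega^E$ with $\omega^S,\omega^E$ locally defined $1$-forms valued in $\End_A(S)$ and $\End_B(E)$ respectively, and observe that the construction becomes $\nabla^{S\grtensor E}=d+\omega^S\grtensor 1+1\grtensor\omega^E$, a $1$-form valued in $\End_{A\grtensor B}(S\grtensor E)$, which is manifestly well defined and patches because the transition functions of $S\grtensor E$ are the graded tensor products of those of $S$ and $E$. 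That $\nabla^{S\grtensor E}$ is grading preserving follows from the corresponding property of $\nabla^S$ and $\nabla^E$ together with the fact that $T^*M$ carries the trivial grading; metric compatibility with the $A\grtensor B$-valued inner product on $S\grtensor E$ is obtained by expanding $\partial_X\langle\xi_1\grtensor\zeta_1,\xi_2\grtensor\zeta_2\rangle$ on elementary tensors and substituting metric compatibility of the two factor connections, the Koszul signs in the graded inner product being precisely compensated because $\nabla^S$ and $\nabla^E$ preserve degree.

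For the Dirac statement in part (i), equip $S\grtensor E$ with the Clifford multiplication $Y\mapsto c(Y)\grtensor 1$, where $c\colon\Cl(TM)\to\End_A(S)$ is that of $S$; this is a graded \textCstar-algebra bundle homomorphism into $\End_{A\grtensor B}(S\grtensor E)$ since the identity factor on $E$ is even. The Clifford--Leibniz compatibility is then a short computation on elementary tensors, combining the Leibniz rule for $\nabla^{S\grtensor E}$ with the Clifford--Leibniz rule for $\nabla^S$:
\begin{align*}
\nabla^{S\grtensor E}_X\big((c(Y)\grtensor 1)(\xi\grtensor\zeta)\big)
&=\nabla^{S\grtensor E}_X\big((c(Y)\xi)\grtensor\zeta\big)\\
&=\big(\nabla^S_X(c(Y)\xi)\big)\grtensor\zeta+(c(Y)\xi)\grtensor\nabla^E_X\zeta\\
&=\big(c(\nabla^{\mathrm{LC}}_XY)\xi\big)\grtensor\zeta+(c(Y)\grtensor 1)\,\nabla^{S\grtensor E}_X(\xi\grtensor\zeta),
\end{align*}
which is exactly the required identity, so $S\grtensor E$ is a Dirac $A\grtensor B$-bundle.

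Finally, part (ii) I would reduce to part (i): pull $S_i$ back along the projection $\pi_i\colon M_1\times M_2\to M_i$ together with the pullback connection $\pi_i^*\nabla^{S_i}$, which is characterised by $(\pi_i^*\nabla^{S_i})_{X_1+X_2}(\pi_i^*\xi_i)=\pi_i^*(\nabla^{S_i}_{X_i}\xi_i)$. Since $S_1\grexttensprod S_2\cong\pi_1^*S_1\grtensor\pi_2^*S_2$ as $A_1\grtensor A_2$-bundles over $M_1\times M_2$, part (i) furnishes a connection, and unravelling the two defining formulas shows it satisfies the stated external Leibniz rule; uniqueness is as above. When the $S_i$ are Dirac bundles one lets $\Cl(T(M_1\times M_2))\cong\Cl(TM_1)\grtensor\Cl(TM_2)$ act by the graded tensor product $c_1\grtensor c_2$ of the two Clifford actions and verifies Clifford--Leibniz exactly as in part (i). The only step requiring genuine care in the whole argument is the sign bookkeeping in the graded tensor products---in metric compatibility, and in the external Clifford action where a Koszul sign appears when an odd element is moved past a factor on $M_1$---but in each case this sign is forced to cancel by the very definitions of the graded inner product and of $\Cl(TM_1)\grtensor\Cl(TM_2)$, so no real obstacle arises; this is just the coefficient-algebra version of the classical tensor-product and external-product constructions for Dirac bundles.
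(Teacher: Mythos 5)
Your proof is correct and is exactly the standard argument one expects for this construction: uniqueness from local finite generation of $\Gamma(S\grtensor E)$ by elementary tensors combined with the Leibniz rule, existence by patching $d+\omega^S\grtensor 1+1\grtensor\omega^E$ in local trivialisations, metric compatibility and Clifford--Leibniz checked on elementary tensors with the grading-preservation of the factor connections killing all the Koszul signs, and part~(ii) reduced to part~(i) by pulling back along the two projections and then upgrading the Clifford action to $c_1\grtensor c_2$. Note that the paper itself supplies no proof of this lemma (it cites \cite[Lemmas 3.7 \& 3.9]{WulffTwisted} and closes with a \textup{qed} symbol), so there is nothing in the text to compare against; your write-up is a correct expansion of precisely the construction being referenced.
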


\begin{defn}[{\cite[Definitions 3.8 \& 3.10]{WulffTwisted}}]\label{def:twistedoperator}
\begin{enumerate}
\item Let $S\to M$ be a Dirac $A$-bundle, $D$ its associated Dirac operator and $E\to M$ a $B$-bundle with connection. Then the \emph{twisted operator} $D_E$ is the Dirac operator associated to the Dirac $A\grtensor B$-bundle $S\grtensor E$.
\item Let $S_i\to M_i$ be $A_i$-bundles with associated Dirac operators $D_i$ for $i=1,2$.
Then the \emph{external tensor product} $D_1\times D_2$ is the Dirac operator of the Dirac $A_1\grtensor A_2$-bundle $S_1\grexttensprod S_2\to M_1\times M_2$.
\qedhere
\end{enumerate}
\end{defn}

\begin{lem}\label{lem:functionalcalculusproductformula}
The functional calculus $\phi\mapsto \phi(D_1\times D_2)$
is equal to the composition
\begin{alignat*}{2}
\cS\xrightarrow{\Delta}\cS&\grtensor\cS&&\to\Roe(M_1;L^2(S_1))\grtensor \Roe(M_2;L^2(S_2))\subset \Roe(M_1\times M_2;L^2(S_1\grexttensprod S_2))
\\\phi_1&\grtensor\phi_2&&\mapsto\phi_1(D_1)\grtensor\phi_2(D_2)\,.
\end{alignat*}
\end{lem}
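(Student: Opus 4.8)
In essence the lemma says that the external product $D_1\times D_2$ is, as a regular self-adjoint Hilbert-module operator, the graded tensor sum $D_1\grtensor 1+1\grtensor D_2$, and that the comultiplication $\Delta$ on $\cS$ implements ``addition of the generator''. So first I would identify the operator. Under the canonical isomorphism of graded Hilbert modules $L^2(S_1\grexttensprod S_2)\cong L^2(S_1)\grtensor L^2(S_2)$, $\xi_1\grexttensprod\xi_2\mapsto\xi_1\grtensor\xi_2$, \Cref{lem:twistedexternalConnection} and \Cref{def:twistedoperator} show that on the algebraic tensor product $\Gamma_{\operatorname{cpt}}(S_1)\tens\Gamma_{\operatorname{cpt}}(S_2)$ of cores the operator $D_1\times D_2$ acts by $\xi_1\grtensor\xi_2\mapsto(D_1\xi_1)\grtensor\xi_2+(-1)^{|\xi_1|}\xi_1\grtensor(D_2\xi_2)$; that is, it is precisely the graded tensor sum $D_1\grtensor 1+1\grtensor D_2$, the sign $(-1)^{|\xi_1|}$ being exactly the one built into $\grtensor$. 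One then has to check that this algebraic tensor product is again a core, so that $D_1\times D_2$ genuinely coincides with the regular self-adjoint operator ``$D_1\grtensor 1+1\grtensor D_2$'' obtained functorially from $D_1$ and $D_2$. Since $D_1$ and $D_2$ are odd, the cross terms in the square cancel, whence $(D_1\times D_2)^2=D_1^2\grtensor 1+1\grtensor D_2^2$ with the two even summands commuting. I expect this point---the self-adjointness and regularity of the closure defined on the algebraic core---to be the main technical obstacle; the necessary analytic input is contained in \cite{hanke_pape_schick,Ebert_AnalyticalFoundations} but has to be invoked with some care.

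Next, recall (see for instance \cite{Trout}) that the comultiplication $\Delta\colon\cS\to\cS\grtensor\cS$ is the functional calculus $\phi\mapsto\phi(\mathsf X\grtensor 1+1\grtensor\mathsf X)$ of the odd self-adjoint unbounded multiplier $\mathsf X\grtensor 1+1\grtensor\mathsf X$ of $\cS\grtensor\cS$, where $\mathsf X$ is multiplication by the coordinate function, the standard unbounded multiplier of $\cS$. The graded $*$-homomorphism $\psi_i\colon\cS\to\Roe(M_i;L^2(S_i))$, $\phi\mapsto\phi(D_i)$, is by construction the $\cS$-functional calculus of $D_i$, i.\,e.\ it maps $\mathsf X$ to $D_i$ (it sends the resolvent $(x\pm i)^{-1}\in\cS$ to $(D_i\pm i)^{-1}$). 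By naturality of the functional calculus of regular self-adjoint operators and of the graded tensor sum under $*$-homomorphisms, $\psi_1\grtensor\psi_2$ carries $\mathsf X\grtensor 1+1\grtensor\mathsf X$ to $D_1\grtensor 1+1\grtensor D_2$ and intertwines the associated functional calculi; combined with the identification above this yields
\[(\psi_1\grtensor\psi_2)\circ\Delta=\bigl(\phi\mapsto\phi(D_1\times D_2)\bigr)\,.\]
Should one prefer a hands-on verification, it suffices to test this identity on the generating set $\{e^{-t^2x^2},\,xe^{-t^2x^2}:t>0\}$ of $\cS$: using the cancellation $(D_1\times D_2)^2=D_1^2\grtensor 1+1\grtensor D_2^2$ and the elementary identities $\Delta(e^{-t^2x^2})=e^{-t^2x^2}\grtensor e^{-t^2x^2}$ and $\Delta(xe^{-t^2x^2})=(xe^{-t^2x^2})\grtensor e^{-t^2x^2}+e^{-t^2x^2}\grtensor(xe^{-t^2x^2})$, both sides reduce to the same expressions in the heat semigroups $e^{-t^2D_i^2}$ and $D_ie^{-t^2D_i^2}$.

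Finally I would verify that the composition in the statement is well defined, i.\,e.\ that the canonical $*$-homomorphism $\Lin(L^2(S_1))\grtensor\Lin(L^2(S_2))\to\Lin(L^2(S_1\grexttensprod S_2))$ carries $\Roe(M_1;L^2(S_1))\grtensor\Roe(M_2;L^2(S_2))$ into $\Roe(M_1\times M_2;L^2(S_1\grexttensprod S_2))$. On a generator $T_1\grtensor T_2$ with the $T_i$ locally compact of finite propagation this is immediate: the external product has propagation at most $\propagation(T_1)+\propagation(T_2)$ for the product metric (using that $C_0(M_1)\tens C_0(M_2)$ is dense in $C_0(M_1\times M_2)$ and that supports of product functions are products of supports), and it is locally compact since $\Kom(L^2(S_1))\grtensor\Kom(L^2(S_2))\subseteq\Kom(L^2(S_1\grexttensprod S_2))$. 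This is the usual external product of coarse index theory, cf.\ \cite[Chapter~6]{HigRoe}. Composing the displayed identity with this inclusion gives the lemma.
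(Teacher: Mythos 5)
Your proposal is correct and follows the standard route. The paper's own proof is a one-line citation of \cite[Lemma 3.1]{WulffTwisted}; your argument is essentially a reconstruction of what that cited lemma contains (identifying $D_1\times D_2$ with the graded tensor sum $D_1\grtensor 1+1\grtensor D_2$ on the algebraic core and using that $\Delta$ is the functional calculus of the tensor-sum of two copies of the standard unbounded multiplier of $\cS$), and you have correctly flagged the main analytic point that the reference takes care of, namely regularity and self-adjointness of the tensor-sum operator.
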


\begin{proof}
This follows directly from \cite[Lemma 3.1]{WulffTwisted}. \end{proof}

\subsection{The \texorpdfstring{$\EE$}{E}-theory class of an operator}
\label{sec:EEtheoryclass}

Dirac operators over compact manifolds $M$ have a $\K$-homology class $[D]\in\K(M)$ such that the index of the twist of $D$ with a vector bundle $E\to M$ can be calculated by the pairing between $\K$-homology and $\K$-theory: $\ind(D_E)=\langle [D],[E]\rangle\in\Z$. 
For $A$-linear Dirac operators over complete Riemannian manifolds $M$ twisted by $B$-bundles, the generalization of this formula is given by composition with an $\EE$-theory class.
Its definition involves a certain function algebra and the index formula works a priori for a special class of bundles, both of which are given in the following definition.

\begin{defn}[{cf.\ \cite[Definitions 4.3 \& 4.4]{WulffTwisted}}]
\begin{enumerate}
\item For a complete Riemannian manifold $M$, possibly with boundary, we define $\cA^\infty(M;B)$ to be the algebra of bounded smooth functions $M\to B\grtensor \grKom$ whose gradient is bounded. Let $\cA(M;B)$ be its norm-completion to a sub-\textCstar-algebra of $\Cb(M;B\grtensor\grKom)$.
\item A $B$-bundle $E\to M$ is called a \emph{$B$-bundle of bounded variation} if its fibers are the finitely generated projective Hilbert $B$-modules $E_x\coloneqq \im(P(x))\subseteq B\grtensor\grelltwo$ for a evenly graded projection valued function $P\in\cA^\infty(M;B)$. Its $\K$-theory class $\llbracket E\rrbracket\in \K(\cA(M;B))$ is the class of the projection $P$.
\qedhere
\end{enumerate}
\end{defn} 

Even though the given embedding of the bundle $E$ into the trivial bundle $(B\grtensor\grelltwo)\times M$ convenientely determines a connection, which we use to define the twisted Dirac operator $D_E$, bordism invariance (which will also be recalled below) implies that the coarse index of the latter does not depend on it. From this point of view it would be sufficient to consider the bundle $E$ up to isomorphism.
On the other hand, seeing the defining projection $P$ as part of the data has the advantage of unambiguously determining the $\K$-theory class and furthermore we can simply say that two such bundles agree on/outside of a subset of $M$ if they are truely equal, i.\,e.\ if the two projections take the same values there. That is why we choose the latter option.

The key observation is now \cite[Lemma 4.5]{WulffTwisted}: If $M$ is a complete Riemannian manifold without boundary and $S\to M$ is a Dirac-$A$-bundle and $D$ its associated $A$-linear Dirac operator, then
\begin{align}
\cS\grtensor \cA(M;B)&\to\fA( \Roe(M;S\grtensor B))\coloneqq \frac{\Cb([1,\infty); \Roe(M;S\grtensor B))}{\Cz([1,\infty); \Roe(M;S\grtensor B))} \nonumber
\\\phi\grtensor f&\mapsto\left[t\mapsto (\phi(t^{-1}D)\grtensor \id_{B\grtensor\grKom}) \cdot (\id_S\grtensor f)\right] \label{eq:DiracEclassAsympMorph}
\end{align}
is an asymptotic morphism. It is convenient to abbreviate the canonical actions of $\phi(t^{-1}D)\grtensor \id_{B\grtensor\grKom}$ and $\id_S\grtensor f$ on the Hilbert $A\grtensor B$-module $L^2(M,S)\grtensor B\grtensor\grelltwo\cong L^2(M,S\grtensor B\grtensor \grelltwo)$ simply by  $\phi(t^{-1}D)$ and $f$, respectively.

\begin{defn}[{\cite[Definition 4.6]{WulffTwisted}}]\label{def:ETheoryClassBoundedGradient}
The asymptotic morphism \eqref{eq:DiracEclassAsympMorph} determines an $E$-theory class
\[\llbracket D;B\rrbracket\in \EE(\cA(M;B), \Roe(M;S\grtensor B))\,.\]
We denote the image of this class in $\EE(\cA(M;B), \Roe(M;A\grtensor B))$ by the same symbol and call it the \emph{$\EE$-theory class of $D$ with respect to $B$}.
\end{defn}

The following theorem generalizes the index pairing.

\begin{thm}[{\cite[Theorem 4.14]{WulffTwisted}}]\label{thm:IndexTwistedOperator}
Let $M$ be a complete Riemannian manifold, 
$D$ a Dirac operator over $M$
and $E\to M$  a $B$-bundle of bounded variation.
Then the coarse index of the twisted operator $D_E$ is
\[
\pushQED{\qed}
\ind(D_E)=\llbracket D;B\rrbracket\circ \llbracket E\rrbracket\,.\qedhere
\popQED
\]
\end{thm}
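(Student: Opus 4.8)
The plan is to evaluate both sides of the asserted identity as $\K$-theory classes represented by (asymptotic) graded $*$-homomorphisms out of $\cS$, and then to connect the two representatives by a homotopy. Throughout I write $Q\coloneqq\id_S\grtensor P$, a projection in $\cA(M;B)$ acting on $L^2(M,S\grtensor B\grtensor\grelltwo)\cong L^2(S)\grtensor B\grtensor\grelltwo$, and $\tilde D\coloneqq D\grtensor\id$ for the Dirac operator on the trivial $(B\grtensor\grelltwo)$-twist of $S$, so that $\phi(t^{-1}\tilde D)=\phi(t^{-1}D)\grtensor\id$. The inclusion $E=\im P\hookrightarrow M\times(B\grtensor\grelltwo)$ identifies $L^2(M,S\grtensor E)$ with $Q\,L^2(M,S\grtensor B\grtensor\grelltwo)$ and, via the embeddings of Roe algebras recalled in \Cref{sec:recapitulation}, realizes $\Roe(M;L^2(S\grtensor E))$ inside $\Roe(M;S\grtensor B)$ and then inside $\Roe(M;A\grtensor B)$, compatibly on $\K$-theory.

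First I would unwind the composition product. In the spectral picture $\llbracket E\rrbracket\in\K(\cA(M;B))$ is represented by the graded $*$-homomorphism $\cS\to\cA(M;B)\grtensor\grKom$, $f\mapsto f(0)(P\grtensor e_{11})$ (evaluation at $0$ is grading preserving, since odd functions vanish there), while $\llbracket D;B\rrbracket$ is represented by the asymptotic morphism \eqref{eq:DiracEclassAsympMorph}. Forming the composition product amounts to precomposing with the comultiplication $\Delta\colon\cS\to\cS\grtensor\cS$, which is characterized by $\Delta(f)(s,t)=f(s+t)$, then applying $\llbracket E\rrbracket$ in the second tensor factor and $\llbracket D;B\rrbracket$ afterwards; since $(\Delta\phi)(\lambda,0)=\phi(\lambda)$, one obtains that $\llbracket D;B\rrbracket\circ\llbracket E\rrbracket$ is represented by the asymptotic morphism $\phi\mapsto[\,t\mapsto\phi(t^{-1}\tilde D)\,Q\,]$, the auxiliary copies of $\grKom$ being carried along unchanged.

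The decisive step is to replace $\phi(t^{-1}\tilde D)\,Q$ by $Q\,\phi(t^{-1}D_E)\,Q$ modulo an error that vanishes in norm as $t\to\infty$. Because $E$ carries the connection obtained by projecting the trivial connection along $Q$, a computation with the Leibniz rule gives $D_E=Q\tilde DQ$ on $L^2(M,S\grtensor E)$, Clifford multiplication commuting with $Q$. Moreover $[\tilde D,Q]$ is Clifford multiplication by the gradient of $P$, which is bounded precisely because $P\in\cA^\infty(M;B)$; hence the off-diagonal part of $\tilde D$ relative to $Q$ is bounded, so after rescaling it is $O(t^{-1})$. By functional-calculus estimates -- reducing first to $\phi$ a resolvent, or a Schwartz function with compactly supported Fourier transform, and using a Duhamel expansion comparing $t^{-1}\tilde D$ with its $Q$-block-diagonal part $Q\tilde DQ+(1-Q)\tilde D(1-Q)$ -- one gets $\|\phi(t^{-1}\tilde D)Q-Q\phi(t^{-1}D_E)Q\|\to0$, with all intermediate operators remaining in $\Roe(M;S\grtensor B)$ for the same reasons as \eqref{eq:DiracEclassAsympMorph} itself (finite, in fact shrinking, propagation, and local compactness from the $\grKom$-valued factor $Q$). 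Carrying out this estimate carefully for the unbounded regular self-adjoint Hilbert-module operators involved is the main obstacle; the rest is formal.

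Finally, after this replacement the composite $\llbracket D;B\rrbracket\circ\llbracket E\rrbracket$ is represented by $\phi\mapsto[\,t\mapsto\phi(t^{-1}D_E)\,]$ acting on $L^2(M,S\grtensor E)$. For each fixed $t\geq1$ this is an honest graded $*$-homomorphism $\cS\to\Roe(M;L^2(S\grtensor E))$, because $t^{-1}D_E$ is again a regular self-adjoint Dirac-type operator; therefore $(\phi\mapsto\phi(t^{-1}D_E))_{t\in[1,\infty)}$ is a homotopy of $*$-homomorphisms connecting the constant family at $t=1$, i.e.\ $\phi\mapsto\phi(D_E)$, to the above asymptotic morphism. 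Hence $\llbracket D;B\rrbracket\circ\llbracket E\rrbracket$ equals the class of $\phi\mapsto\phi(D_E)$ in $\K(\Roe(M;L^2(S\grtensor E)))$, whose image in $\K(\Roe(M;A\grtensor B))$ is by definition $\ind(D_E)$. Combining the three steps yields the claim; a final routine point is that the Roe-algebra embeddings used induce precisely the maps on $\K$-theory appearing in the statement, which follows from the independence-of-choices results recalled in \Cref{sec:recapitulation}.
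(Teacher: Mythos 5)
The theorem is cited from \cite[Theorem 4.14]{WulffTwisted} and this paper does not reproduce a proof of its own, so there is nothing in the present source to compare against line by line. Your argument is correct and follows exactly the approach one would expect (and which the referenced paper takes, as is also hinted at by the remark preceding \Cref{lem:TwistedKHomologyClassRepresentative}): unwind the composition product using the comultiplication $\Delta(f)(s,t)=f(s+t)$ to get the representative $\phi\mapsto[t\mapsto\phi(t^{-1}\tilde D)Q]$, compare $\phi(t^{-1}\tilde D)Q$ with $Q\phi(t^{-1}D_E)Q$ using that $D_E=Q\tilde DQ$ and that the off-diagonal part $[\tilde D,Q]$ is Clifford multiplication by $\grad P$, hence bounded and $O(t^{-1})$ after rescaling, and conclude via the resolvent identity and a density argument, before finally observing that the resulting asymptotic morphism $\phi\mapsto[t\mapsto\phi(t^{-1}D_E)]$ is in each time slice an honest $*$-homomorphism and therefore gives the same $\K$-theory class as $\phi\mapsto\phi(D_E)$, which by definition represents $\ind(D_E)$.
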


We also need a version of the $\EE$-theory class that is localized over open subsets $U\subset M$, which we define now:
\begin{defn}
Let $\cA(\overline{U},\partial U;B)=\cA(M,M\setminus U;B)\subset \cA(M;B)$ be the sub-\textCstar-algebra of functions vanishing outside of $U$. 
\end{defn}
Note that $\cA(\overline{U},\partial U;B)$ does not depend on the ambient manifold $M$ into which it is Riemann-isometrically embedded, although the closure $\overline{U}=\overline{U}^{M}$ and and the boundary $\partial U=\partial_MU$ do. Furthermore, let $\overline{U}^{\subseteq M}$ denote the closure of $U$ equipped with the restriction of the path-metric on $M$, which, of course, also depends on the ambient manifold.
With the canonical isometric inclusion of Hilbert $A\grtensor B$-modules $V\colon L^2(\overline{U};S|_{\overline{U}}\grtensor B)\grtensor\grelltwo\to L^2(M;S\grtensor B)\grtensor\grelltwo$ there is an asymptotic morphism
\begin{align}
\cS\grtensor \cA(\overline{U},\partial U;B)&\to\fA( \Roe(\overline{U}^{\subseteq M};S|_{\overline{U}}\grtensor B)) \nonumber
\\\phi\grtensor f&\mapsto [t\mapsto V^*\phi(t^{-1}D)fV] \label{eq:DiracEclassAsympMorphSubset}
\end{align}
analogous to \eqref{eq:DiracEclassAsympMorph}, see \cite[Lemma 4.7]{WulffTwisted}.

\begin{defn}[{\cite[Definition 4.8]{WulffTwisted}}]\label{defn:restrictedEtheoryclass}
The asymptotic morphism \eqref{eq:DiracEclassAsympMorphSubset} defines the \emph{$E$-theory class of $D|_U$}
\[
\llbracket D|_{U};B\rrbracket\in \EE(\cA(\overline{U},\partial U;B),\Roe(\overline{U}^{\subseteq M};S|_{\overline{U}}\grtensor B))\,.\qedhere
\]
\end{defn}

This class satisfies
the following basic property.

\begin{lem}[Independence of extension {\cite[Theorem 4.9]{WulffTwisted}}] \label{lem:independence}
Assume that $U$ embedds Riemann-isometrically into two complete Riemannian manifolds $M_1,M_2$ and that $D_1,D_2$ are $A$-linear Dirac operators over $M_1,M_2$ which restrict to the same operator over $U$: $D_1|_U=D_2|_U$.
If the path-metric of $M_1$ restricted to $U$ is greater or equal to the path metric of $M_2$ restricted to $U$, then there is a commutative diagram
\[
\xymatrix@C=2cm{
\cA(\overline{U}^{M_1},\partial_{M_1} U;B)\ar[r]^-{\llbracket D_1|_U;B\rrbracket}\ar@{=}[d]
&\Roe(\overline{U}^{\subseteq M_1};S\grtensor B)\ar[d]^{\substack{\text{inclusion of \textCstar-algebras}\\=(\id\colon \overline{U}^{\subseteq M_1}\to \overline{U}^{\subseteq M_2})_*}}
\\\cA(\overline{U}^{M_2},\partial_{M_2} U;B)\ar[r]_-{\llbracket D_2|_U;B\rrbracket}
&\Roe(\overline{U}^{\subseteq M_2};S\grtensor B)
}
\]
in $\EE$-theory.\qed
\end{lem}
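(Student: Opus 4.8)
The plan is to unwind both $\EE$-theory classes to their defining asymptotic morphisms \eqref{eq:DiracEclassAsympMorphSubset} and to show that, after composing the top one with the right-hand vertical map, the two asymptotic morphisms differ only by an element of $\Cz([1,\infty);-)$; asymptotic equality of this kind is more than enough for equality of the induced $\EE$-classes, which is the asserted commutativity. The left vertical map is literally the identity, since $\cA(\overline{U},\partial U;B)$ does not depend on the ambient manifold, as noted just after its definition. The right vertical map I would build by hand: because the path-metric $d_{M_2}$ on $U$ is dominated by $d_{M_1}$, any pair of supports that is $d_{M_2}$-separated by $R$ is also $d_{M_1}$-separated by $R$, so every operator on $L^2(\overline{U},S|_{\overline{U}}\grtensor B)\grtensor\grelltwo$ with $d_{M_1}$-propagation $\le R$ automatically has $d_{M_2}$-propagation $\le R$, while local compactness depends only on the common topology. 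Hence $\Roe(\overline{U}^{\subseteq M_1};S\grtensor B)$ sits inside $\Roe(\overline{U}^{\subseteq M_2};S\grtensor B)$ as a $\Cstar$-subalgebra, and one checks that this inclusion is realized by the identity covering isometry, i.e.\ coincides with the map labelled $(\id)_*$.

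Next, observe that both asymptotic morphisms have the same shape $\phi\grtensor f\mapsto[t\mapsto V_i^*\phi(t^{-1}D_i)fV_i]$, and that once we localize, the operator entering is the common $D=D_1|_U=D_2|_U$. Since $\supp f\subseteq\overline{U}$ we have $fV_i=V_i\cdot(f|_{\overline{U}})$, so $V_i^*\phi(t^{-1}D_i)fV_i=\bigl(V_i^*\phi(t^{-1}D_i)V_i\bigr)\cdot(f|_{\overline{U}})$, and the problem reduces to showing that $\Delta_t:=V_1^*\phi(t^{-1}D_1)V_1-V_2^*\phi(t^{-1}D_2)V_2$, multiplied on the right by $f|_{\overline{U}}$, is norm-null as $t\to\infty$. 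By continuity of asymptotic morphisms and density, it suffices to treat $\phi\in\cS$ with $\widehat\phi$ compactly supported, say in $[-c,c]$, and $f\in\cA^\infty(\overline{U},\partial U;B)$. For such $\phi$, unit propagation speed of the wave operator of an $A$-linear Dirac operator gives $\propagation(\phi(t^{-1}D_i))\le c/t$ with respect to $d_{M_i}$.

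The heart of the argument is then a domain-of-dependence comparison. If a section $u$ has $\supp u$ far enough inside $U$ that the $(c/t)$-neighbourhood of $\supp u$ in the \emph{larger} metric $d_{M_1}$ — hence a fortiori in $d_{M_2}$ — still lies in $U$, then $\phi(t^{-1}D_1)u$ and $\phi(t^{-1}D_2)u$ are both supported in that neighbourhood and, on $U$, solve the same wave-type Cauchy problem for $D$; by finite propagation and uniqueness of solutions on the complete manifolds they agree. Consequently $\Delta_t=\mathbb 1_{C_t}\Delta_t\mathbb 1_{C_t}$ for a collar $C_t$ of $\partial U$ of width $O(1/t)$, while $\lVert\Delta_t\rVert\le 2\lVert\phi\rVert_\infty$. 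Finally, a function of bounded gradient vanishing outside $U$ satisfies $\lvert f(x)\rvert\le\mathrm{const}\cdot d_{M_1}(x,\partial U)$, so $\lVert\Delta_t\,(f|_{\overline{U}})\rVert\le 2\lVert\phi\rVert_\infty\cdot\lVert f|_{C_t}\rVert_\infty=O(1/t)\to 0$, which completes the proof.

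The main obstacle is the third step: one must make the ``the relevant ball stays inside $U$'' claim precise while simultaneously juggling the two possibly different path-metrics on $U$ and the corresponding balls and collars, and one must verify that $\Delta_t$ is genuinely squeezed into a shrinking collar of $\partial U$ rather than leaking across $M_2\setminus U$ through a shortcut invisible in the $M_1$-picture; this is exactly where the hypothesis $d_{M_1}\ge d_{M_2}$ is used in an essential way (it is also what made the right vertical inclusion well-defined). Everything else — functoriality of Roe algebras under the identity covering isometry, the density reductions on $\cS$ and on $\cA$, and the collar estimate for bounded-gradient functions — is routine.
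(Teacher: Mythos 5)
The paper does not actually prove this lemma; it only quotes it as \cite[Theorem~4.9]{WulffTwisted}, so there is nothing in the present text to compare against. Judged on its own merits, your outline \textendash\ reduce to the asymptotic morphisms, use Fourier/finite--propagation to bound $\propagation(\phi(t^{-1}D_i))$ by $c/t$, compare via a domain--of--dependence argument, and finish with a collar estimate using bounded gradient \textendash\ is the right proof, and the routine parts (the left map being a literal equality, and $d_{M_1}\ge d_{M_2}$ forcing the inclusion of Roe algebras in the indicated direction) are correctly handled.

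However, the crucial metric comparison in the domain--of--dependence step is inverted. The hypothesis is $d_{M_1}\ge d_{M_2}$ on $U$, so $d_{M_2}$-balls are the \emph{larger} ones: $\{y:d_{M_1}(x,y)\le R\}\subseteq\{y:d_{M_2}(x,y)\le R\}$. You write that if the $(c/t)$-neighbourhood of $\supp u$ in ``the larger metric $d_{M_1}$'' stays in $U$ then ``a fortiori'' the $d_{M_2}$-neighbourhood does; that implication goes the wrong way. Since $\phi(t^{-1}D_i)$ has propagation $\le c/t$ with respect to $d_{M_i}$, you must control \emph{both} the $d_{M_1}$- and the $d_{M_2}$-lightcone, and that means requiring the $(c/t)$-$d_{M_2}$-neighbourhood (the bigger one) of $\supp u$ to stay inside $U$; the $d_{M_1}$-neighbourhood is then contained in it automatically. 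Correspondingly the collar $C_t$ must be defined with $d_{M_2}(\cdot,\partial_{M_2}U)\le c/t$, and the final bound must be $\|f(x)\|\le L\cdot d_{M_2}(x,\partial_{M_2}U)$ rather than via $d_{M_1}$. That estimate is still true for $f$ of bounded gradient vanishing at the boundary (the zero extension of $f$ to $M_2$ is $L$-Lipschitz for $d_{M_2}$, and in fact the intrinsic and ambient distances to $\partial U$ coincide for points of $U$), so the proof goes through once the roles of $M_1$ and $M_2$ are swapped at these two places; as written, though, the ``a fortiori'' step fails and the argument does not close.
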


The most important use-case of the $\EE$-theory class localized over open subsets is if $U$ is the interior of a bordism, and indeed it becomes relevant in the proof of the following theorem.
For any interval $I$ we let $S_I$ denote the trivial Dirac bundle with fiber $\Cl_1$ over $I$ and  $D_I$ denote its Dirac operator. 
\begin{thm}[Bordism invariance, {\cite[Theorem 4.12]{WulffTwisted}}]\label{thm:BordismInvariance}
Let $W$ be a complete Riemannian manifold with a boundary $\partial W$ which decomposes into two complete Riemanian manifolds $M_0$, $M_1$ and assume that there are collar neighborhoods $M_0\times[0,\varepsilon)$ of $M_0$ and $M_1\times(-\varepsilon,0]$ of $M_1$ on which the Riemannian metric of $W$ is the product metric.
Let $D_0,D_1$ be $A$-linear Dirac operators over $M_0,M_1$, respectively, and assume that there is an $A\grtensor\Cl_1$-linear Dirac operator over $W$ which is equal to $D_0\times D_{[0,\varepsilon)}$ and $D_1\times D_{(-\varepsilon,0]}$ over the respective collar neighborhoods.
Then for every graded unital \textCstar-algebra $B$ the diagram
\[\xymatrix@C=3em{
&\cA(M_0;B)\ar[r]_-{\llbracket D_0;B\rrbracket}&\Roe(M_0;A\grtensor B)\ar[dr]_{(M_0\subseteq W)_*}&
\\\cA(W;B)\ar[ur]_{\text{restr.}}\ar[dr]^{\text{restr.}}
&&&\Roe(W;A\grtensor B)
\\&\cA(M_1;B)\ar[r]^-{\llbracket D_1;B\rrbracket}&\Roe(M_1;A\grtensor B)\ar[ur]^{(M_1\subseteq W)_*}&
}\]
commutes in $\EE$-theory.
\qed
\end{thm}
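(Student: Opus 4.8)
\emph{Strategy.} Morally, bordism invariance should come from a relative $\EE$-theory class of $D_W$ over the pair $(W,\partial W)$ whose image under the connecting homomorphism of the relative Roe sequence is the difference of the two legs of the square; exactness of that sequence then forces this difference to vanish in $\K(\Roe(W;A\grtensor B))$. To make sense of the relative class with the tools at hand I would first attach infinite cylinders: form the complete boundaryless manifold
\[
\hat W\coloneqq \bigl(M_0\times(-\infty,0]\bigr)\cup_{M_0}W\cup_{M_1}\bigl(M_1\times[0,\infty)\bigr),
\]
and use the product structure on the collars to extend $D_W$ to a regular self-adjoint $A\grtensor\Cl_1$-linear Dirac operator $\hat D$ over $\hat W$ which agrees with $D_0\times D_{(-\infty,\varepsilon)}$ and $D_1\times D_{(-\varepsilon,\infty)}$ on enlarged collars. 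The cylinders provide no shortcuts, so the closure of the interior $W^\circ$ inside $\hat W$, equipped with the restricted path-metric, is just $W$; and each inclusion $\iota_i\colon M_i\hookrightarrow W$ factors through the cylinder end $M_i\times\{0\}$, which is coarsely equivalent to $M_i$ because a closed half-line is coarsely trivial.

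\emph{The relative class and its boundary.} Now form the localized class $\llbracket\hat D|_{W^\circ};B\rrbracket$ of \Cref{defn:restrictedEtheoryclass} and compose it with the quotient map $\Roe(W;\cdot)\to\Roe(W,\partial W;\cdot)$ to obtain a class $\beta$ valued in $\Roe(W,\partial W;A\grtensor\Cl_1\grtensor B)$; the extra $\Cl_1$ is exactly the Clifford degree carried by $D_W$, and it is what the connecting homomorphism $\partial$ of the six-term sequence of $0\to\Roe(\partial W\subseteq W;\cdot)\to\Roe(W;\cdot)\to\Roe(W,\partial W;\cdot)\to0$ absorbs. The core computation is that $\partial\beta$ equals $(\iota_0)_*\llbracket D_0;B\rrbracket\circ\mathrm{restr}-(\iota_1)_*\llbracket D_1;B\rrbracket\circ\mathrm{restr}$, now living in the $\K$-theory of $\Roe(\partial W\subseteq W;A\grtensor B)\cong\Roe(M_0;A\grtensor B)\oplus\Roe(M_1;A\grtensor B)$: over the collar $M_i\times(\mp\infty,\pm\varepsilon)$ one has $\hat D=D_i\times D_{\mathrm{interval}}$, so by the product formula \Cref{lem:functionalcalculusproductformula} its functional calculus splits, and collapsing the coarsely trivial interval factor --- which simultaneously strips off the $\Cl_1$ and realizes the degree drop of $\partial$ --- turns the collar contribution into $(\iota_i)_*\llbracket D_i;B\rrbracket$, with independence of extension \Cref{lem:independence} used to pass between $\hat D$, the half-cylinder operator and $D_i$. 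Finally exactness of
\[
\K\bigl(\Roe(\partial W\subseteq W;\cdot)\bigr)\xrightarrow{\ j_*\ }\K\bigl(\Roe(W;\cdot)\bigr)\longrightarrow\K\bigl(\Roe(W,\partial W;\cdot)\bigr)\xrightarrow{\ \partial\ }\cdots
\]
gives $j_*\partial\beta=0$, and since $j_*$ is precisely $(a,b)\mapsto(\iota_0)_*a+(\iota_1)_*b$, this is the asserted equality of the two legs; the general coefficient algebra $B$ needs no extra argument, everything being natural in $B$.

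\emph{Main obstacle.} The delicate step is the identification of $\partial\beta$ with the difference of the two legs. Here one must reconcile the various function algebras in play --- $\cA(W,\partial W;B)$ underlying $\beta$, the algebras $\cA(M_i;B)$ precomposed with the restrictions $\cA(W;B)\to\cA(M_i;B)$ of the square, and the collar algebras --- which is exactly what the product structure on the collars is for, and one must carry out the ``collapse of the interval factor'' that converts a $D_i\times D_{\mathrm{interval}}$-class into a $D_i$-class while consuming the $\Cl_1$; this is a Bott/suspension argument for the coarsely trivial interval. Throughout, every intermediate asymptotic morphism modelled on \eqref{eq:DiracEclassAsympMorph} and \eqref{eq:DiracEclassAsympMorphSubset} must be checked to be a genuine asymptotic morphism with values in the Roe algebra, i.e.\ with uniformly controlled propagation and local compactness; the requisite estimates are those of \cite[Lemmas 4.5 and 4.7]{WulffTwisted}, but now they have to be run in tandem with the coarse equivalences of the cylinder ends and the tracking of the Clifford degree. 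The collar hypothesis enters precisely to make $\hat D$ well defined and to make the product formula applicable on a full open neighborhood of each end.
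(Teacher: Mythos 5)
There is a genuine gap, and it is concentrated precisely in the step you yourself flag as delicate.

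\textbf{Source-algebra mismatch.} The localized class $\llbracket\hat D|_{W^\circ};B\rrbracket$ of \Cref{defn:restrictedEtheoryclass} has source $\cA(\overline{W^\circ},\partial W^\circ;B)=\cA(W,\partial W;B)$, the \emph{ideal} of functions vanishing on $\partial W$ (and on the attached cylinders), not $\cA(W;B)$. Hence your $\beta$ and $\partial\beta$ are $\EE$-theory morphisms whose domain is this ideal, while the two legs of the diagram you must compare are morphisms out of $\cA(W;B)$. The identification ``$\partial\beta=(\iota_0)_*\llbracket D_0;B\rrbracket\circ\mathrm{restr}-(\iota_1)_*\llbracket D_1;B\rrbracket\circ\mathrm{restr}$'' needs restriction maps $\cA(W,\partial W;B)\to\cA(M_i;B)$, and these are identically zero because elements of $\cA(W,\partial W;B)$ vanish on $M_i\subset\partial W$. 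The claimed identity therefore collapses to $\partial\beta=0$, and the exactness step $j_*\partial\beta=0$ yields no information about the original square. Even apart from this, the ``core computation'' that $\partial\beta$ is given by the collar contributions is asserted rather than carried out; since both it and the absorption of $\Cl_1$ into the degree shift constitute essentially the whole content, the proof as written does not close.

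\textbf{Comparison with the paper's route.} The paper does not prove \Cref{thm:BordismInvariance} itself (it cites \cite[Theorem~4.12]{WulffTwisted}), but it does prove the $L$-local analogue \Cref{thm:LocalIndexBordismInvariance} by a method it explicitly models on that reference, and this method avoids the domain issue entirely. Rather than passing to the relative Roe algebra and its boundary map, one attaches infinite cylinders to obtain $\widehat{W}$, chooses a position function $p\colon\widehat{W}\to\R$ with bounded gradient, and works with the asymptotic morphism
\[
\cS\grtensor\Cz(\R)\to\fA\bigl(\Roe(\widehat{W};L^2(\widehat{S}))\bigr),\qquad
\phi\grtensor f\mapsto\bigl[t\mapsto\phi(\widehat{D})\,p^*f_t\bigr].
\]
One shows that on $\cS\grtensor\Cz((1,\infty))$ it agrees with a product morphism built from $D_1\times D_\R$ and on $\cS\grtensor\Cz((-\infty,-1))$ with one built from $D_0\times D_\R$, using a finite-propagation density argument together with \Cref{lem:functionalcalculusproductformula}. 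The key step is then that the inclusions $\Cz((-\infty,-1))\hookrightarrow\Cz(\R)\hookleftarrow\Cz((1,\infty))$ are mutually homotopic, which equates the two ends after composing with the Bott generator of $\EE(\Cz(\R),\Cl_1)$. This homotopy of $\Cz(\R)$-subalgebras is what implements the ``interval collapse'' you gesture at, and it does so without ever forming a relative $\EE$-theory class and hence without leaving the function algebra $\cA(W;B)$ (or, after the cylinder attachment, $\cA(\widehat{W};B)$) as the domain.

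If you want to pursue your boundary-map idea, the fix would have to start one level up: work with $\llbracket\widehat{D};B\rrbracket\in\EE(\cA(\widehat{W};B),\Roe(\widehat{W};\cdot))$, show the restriction $\cA(\widehat{W};B)\to\cA(W;B)$ is onto and that both legs of the square factor through it, and only then bring in the exact sequence of Roe ideals; but at that point one is essentially re-deriving the collar homotopy that the paper's method makes explicit.
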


Composing the diagram in the theorem with the canonical element $1_W\in \K(\cA(W;\C))$ of the trivial rank-one bundle over $W$ implies that the coarse index is also bordism invariant, generalizing \cite{WulffBordism} to $A$-linear Dirac operators. 
Furthermore, the theorem implies that 
the $\EE$-theory class $\llbracket D;B\rrbracket\in \EE(\cA(M;B),\Roe(M;A\grtensor B))$ depends only on the principle symbol of $D$, i.\,e.\ it is independent of the choice of the Dirac connection on the bundle $S$, see \cite[Corollary 4.13]{WulffTwisted}.

\subsection{Localization algebras and the \texorpdfstring{$\K$}{K}-homology class of an operator}
\label{sec:localizationalgebrasKhomology}

Closely related to the $\EE$-theory class of an opertor is its $\K$-homology class when defined via localization algebras, which is an alternative approach to coarse index theory that was discussed quite heavily in \cite{zeidler_secondary,WilletYuHigherIndexTheory}. 
Here we will briefly recall one version of the localization algebra and the relation to the $\EE$-theory class developed in \cite[Section 5]{WulffTwisted}, but we reproduce it with more general coefficient \textCstar-algebras $A$ than just the Clifford algebras.

\begin{defn}
\begin{itemize}
\item The \emph{localization algebra} $\Loc(X;\Hilbert_X)$ of a proper metric space $X$ with respect to a representation of $\Cz(X)$ on a Hilbert $A$-module $\Hilbert_X$ is sub-\textCstar-algebra of $\Cb([1,\infty);\Roe(X;\Hilbert_X))$ generated by the uniformly continuous functions $L\colon [1,\infty)\to\Roe(X;A)$ such that the propagation of $L_t\coloneqq L(t)$ is finite for all $t$ and tends to zero as $t\to \infty$.

As before, we denote it by $\Loc(X;A)$ if a sufficiently large Hilbert $A$-module $\Hilbert_X$ has been chosen. 
In particular, if $X$ is the closure of an open subset of a complete Riemannian manifold, we choose $\Hilbert_X\coloneqq L^2(X)\grtensor A\grtensor\grelltwo$.

We define the \emph{$\K$-homology} of $X$ with coefficients in $A$ as the $\K$-theory group $\K_0(X;A)\coloneqq\K(\Loc(X;A))$.

\item Let $D$ be an $A$-linear Dirac operator on a complete Riemannian manifold $M$. We define its $\K$-homology class as the class $[D]\in\K_0(M;A)$ represented by the $*$-homomorphism 
\[\cS\to \Loc(M;L^2(S))\subset \Loc(M;A)\,,\quad \phi\mapsto \big(t\mapsto \phi(t^{-1}D)\big)\,.\]

\item The \emph{assembly map} $\ind\colon \K_0(X;A)\to\K(\Roe(X;A))$ is induced by evaluation at~$1$:
\[\ev_1\colon \Loc(X;A)\to\Roe(X;A)\]
\end{itemize}
Clearly, $\ind([D])=\ind(D)$.
\end{defn}

If the complete Riemannian manifold $M$ has bounded geometry (or more generally is a proper metric space with continuously bounded geometry, in the terminology of \cite[Definition 4.1]{EngelWulffZeidler}), then it was proven in \cite[Lemma 5.6]{WulffTwisted} that 
\begin{equation}\label{eq:descentasymptoticmorphism}
\Loc(M;A)\grtensor\cA(M;B)\to\fA(\Roe(M;A\grtensor B))\,,\quad L\grtensor f\mapsto [t\mapsto L_tf]
\end{equation}
defines an asymptotic morphism. It gives rise to a \emph{descent homomorphism} (cf.\ \cite[Definition 5.7]{WulffTwisted})
\[\nu_B\colon\K_0(M;A)\to\EE(\cA(M;B),\Roe(M;A\grtensor B))\]
and it is clear from the definitions that $\nu_B([D])=\llbracket D;B\rrbracket$.

\section{Operators invertible away from closed subsets}
\label{sec:invertibleawayfromclosedsubsets}

Let $X$ be a proper metric space whose Roe algebra is constructed with respect to a representation $\rho$ on a Hilbert $A$-module $\Hilbert_X$. We recall the definition of two ideals associated to a closed subspace $L\subset X$.
\begin{defn}
\begin{itemize}
\item An operator $T\in\Lin_A(\Hilbert_X)$ is \emph{supported near $L$} if there is $R>0$ such that $\rho(f)T=0=T\rho(f)$ for all $f\in\Cz(X)$ with $\dist(\supp(f),L)\geq R$.
\item Let $\Roe(L\subset X;\Hilbert_X)$ denote the norm closure of all operators in $\Roe(X;\Hilbert_X)$ that are supported near $L$. 
\item We define the ideal $\Loc(X\mid L;\Hilbert_X)\subset \Loc(X;\Hilbert_X)$ as the preimage of $\Roe(L\subset X;\Hilbert_X)$ under the evaluation at $1$ map $\ev_1$ from \Cref{sec:localizationalgebrasKhomology}.
\end{itemize}
As before we write them as $\Roe(L\subset X;A)$ and $\Loc(X\mid L;A)$ if $\Hilbert_X$ is sufficiently large.
\end{defn}

It is well-known that $\Roe(L\subset X;\Hilbert_X)$ and $\Loc(X\mid L;\Hilbert_X)$ are indeed ideals in $\Roe(X;\Hilbert_X)$ and $\Loc(X;\Hilbert_X)$, respectively.
We also recall that there are canonical natural isomorphisms 
\[\K(\Roe(L;A))\cong\K(\Roe(L\subset X;A))\,.\]
The ideal $\Loc(X\mid L;A)$ is what would be called $\Loc[L](X;A)$ in the terminology of \cite[Definition 3.1]{zeidler_secondary}. There is also another ideal called $\Loc(L\subset X;A)$ defined as the closure of all $L$ such that the support of $L_t$ becomes arbitrary close to $L$ as $t\to\infty$, but it is not relevant for our work.

We now return to the case of complete Riemannian manifolds $M$ and $A$-linear Dirac bundles $S$ with Dirac operator $D$ over it.
The following definition picks up the key idea behind \cite[Lemma 2.3]{Roe_Relative} (see also \cite[Lemma 4.4]{zeidler_secondary} and \cite[Proposition 3.15]{hanke_pape_schick}).

\begin{defn}\label{defn:invertibleawayfromL}
For all $\varepsilon>0$, let $\cS_\varepsilon\subset \cS$ denote the sub-\textCstar-algebra of all functions supported in $[-\varepsilon,\varepsilon]$.
We say that $D$ is \emph{invertible away from a closed subset $L\subset M$} if there is $\varepsilon>0$ such that
\begin{equation}\label{eq_defn_invertible_away_L}
\phi(D)\in\Roe(L\subset M;L^2(S))\subset \Roe(L\subset M;A)
\end{equation}
for all $\phi\in\cS_\varepsilon$. 
\end{defn}

\begin{example}\label{ex_invertible_upsc_outside_L}
From \cite[Lemma 2.3]{Roe_Relative} we can see that the spin Dirac operator (possibly twisted with a flat bundle) on a complete Riemannian spin manifold $M$ is invertible away from $L\subset M$ if $M$ has uniformly positive scalar curvature on the complement of a controlled neighborhood of $L$.
\end{example}

Before we define the corresponding index, let us recall the following simple facts.

\begin{lem}[{cf.\ \cite[Lemma 2.4]{zeidler_secondary}}]\label{lem:comultiplicationepsilon}
The inclusion $*$-homomorphism $\iota_\varepsilon\colon\cS_\varepsilon\hookrightarrow\cS$ 
is a homotopy equivalence of graded \textCstar-algebras.
Let $\psi_\varepsilon\colon\cS\to\cS_\varepsilon$ denote a homotopy inverse of $\iota_\varepsilon$.
The comultiplication  $\Delta\colon\cS\to\cS\grtensor\cS$ restricts to a $*$-homomorphism $\Delta_\varepsilon\colon\cS_\varepsilon\to\cS_\varepsilon\grtensor\cS_\varepsilon$ and $\Delta_\varepsilon\circ (\psi_\varepsilon\grtensor\psi_\varepsilon)$ is homotopic to $\psi_\varepsilon\circ\Delta$.\qed
\end{lem}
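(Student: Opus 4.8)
The plan is to treat the two assertions in turn, the second being essentially formal once the first is in place.

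\emph{The homotopy equivalence.} I would identify $\cS=\Cz(\R)$ with its parity grading and note that $\cS_\varepsilon$ is precisely the ideal $\Cz\bigl((-\varepsilon,\varepsilon)\bigr)\subseteq\Cz(\R)$ (continuity forces a function supported in $[-\varepsilon,\varepsilon]$ to vanish at $\pm\varepsilon$). Fix an odd increasing homeomorphism $\beta\colon\R\xrightarrow{\ \sim\ }(-\varepsilon,\varepsilon)$ and let $\psi_\varepsilon\colon\cS\to\cS_\varepsilon$ be the pullback $f\mapsto f\circ\beta^{-1}$; as $\beta^{-1}$ is a proper odd homeomorphism, $\psi_\varepsilon$ is a graded $*$-isomorphism, hence a homotopy equivalence. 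To see that it is a homotopy inverse of $\iota_\varepsilon$ I would produce one deformation witnessing both one-sided homotopies simultaneously: choose a norm-continuous family $(\gamma_s)_{s\in[0,1]}$ of odd increasing homeomorphisms of $\R$ onto open symmetric intervals with $\gamma_0=\id_\R$, with image of $\gamma_1$ equal to $(-\varepsilon,\varepsilon)$, and with $\gamma_s\bigl((-\varepsilon,\varepsilon)\bigr)\subseteq(-\varepsilon,\varepsilon)$ for all $s$ — concretely $\gamma_s(t)=\varepsilon t/(\varepsilon+s\lvert t\rvert)$ works. Pulling back along $\gamma_s$ and extending by zero yields a path of graded $*$-endomorphisms of $\cS$ from $\id_\cS$ to $\iota_\varepsilon\psi_\varepsilon$, and because each $\gamma_s$ carries $(-\varepsilon,\varepsilon)$ into itself the very same path restricts to a path of graded $*$-endomorphisms of $\cS_\varepsilon$ from $\id_{\cS_\varepsilon}$ to $\psi_\varepsilon\iota_\varepsilon$; oddness of the $\gamma_s$ keeps everything grading preserving. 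The one genuinely technical point is norm-continuity of $s\mapsto f\circ\gamma_s^{-1}$ at the degenerate end $s=0$, where $\gamma_s$ crushes all of $\R$ into a bounded interval; this follows from uniform continuity of $f$ together with its vanishing at infinity and the fact that $\gamma_s^{-1}$ is expanding, so that the comparison of $f\circ\gamma_s^{-1}$ with $f$ happens either over a fixed compact set (on which $\gamma_s^{-1}\to\id$ uniformly) or in a region where both functions are already small.

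\emph{The comultiplication.} Recall that $\Delta$ is the $*$-homomorphism sending the canonical odd self-adjoint generator $X$ of $\cS$ (multiplication by the coordinate) to $X\grtensor 1+1\grtensor X$; since the two summands are odd they anticommute, so $(X\grtensor1+1\grtensor X)^2=X^2\grtensor1+1\grtensor X^2$. Consequently, for $f\in\cS_\varepsilon$ the element $\Delta(f)=f(X\grtensor1+1\grtensor X)$ is supported, via functional calculus, over $\{X^2\grtensor1+1\grtensor X^2\le\varepsilon^2\}$; under the identification $\cS\grtensor\cS\cong\Cz(\R^2)$ this is the closed $\varepsilon$-disc, which lies inside the $\varepsilon$-square corresponding to $\cS_\varepsilon\grtensor\cS_\varepsilon\subseteq\cS\grtensor\cS$. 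Hence $\Delta$ corestricts to a graded $*$-homomorphism $\Delta_\varepsilon\colon\cS_\varepsilon\to\cS_\varepsilon\grtensor\cS_\varepsilon$, and tautologically $(\iota_\varepsilon\grtensor\iota_\varepsilon)\circ\Delta_\varepsilon=\Delta\circ\iota_\varepsilon$. For the final homotopy I would precompose both maps with $\iota_\varepsilon$: using $\psi_\varepsilon\iota_\varepsilon\simeq\id_{\cS_\varepsilon}$, functoriality of $\grtensor$ and its compatibility with homotopies, and the identity just noted, one gets $(\psi_\varepsilon\grtensor\psi_\varepsilon)\circ\Delta\circ\iota_\varepsilon\simeq\bigl((\psi_\varepsilon\iota_\varepsilon)\grtensor(\psi_\varepsilon\iota_\varepsilon)\bigr)\circ\Delta_\varepsilon\simeq\Delta_\varepsilon$ and $\Delta_\varepsilon\circ\psi_\varepsilon\circ\iota_\varepsilon\simeq\Delta_\varepsilon$; since $\iota_\varepsilon$ is a homotopy equivalence with inverse $\psi_\varepsilon$, two graded $*$-homomorphisms that become homotopic after precomposition with $\iota_\varepsilon$ are themselves homotopic, whence $(\psi_\varepsilon\grtensor\psi_\varepsilon)\circ\Delta\simeq\Delta_\varepsilon\circ\psi_\varepsilon$.

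I do not expect a serious obstacle: the statement collects standard facts about $\cS$ and $\cS_\varepsilon$ (compare \cite[Lemma~2.4]{zeidler_secondary}), and the only place that calls for care is the bookkeeping in the first part — producing a single grading-preserving deformation that simultaneously contracts $\cS$ onto $\cS_\varepsilon$ and $\cS_\varepsilon$ onto itself, and verifying its norm-continuity at the collapsing endpoint. After that, the restriction and compatibility of $\Delta$ follow from the ``square of the sum of the generators equals the sum of the squares'' identity and from purely formal manipulation with the homotopy equivalence.
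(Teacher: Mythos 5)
The paper itself gives no proof of this lemma --- it just appends a \(\qed\) and cites Zeidler's Lemma~2.4 --- so there is nothing to compare against line by line, and I am judging your argument on its own terms. Your overall structure is sound: a one-parameter family of odd homeomorphisms collapsing \(\R\) to \((-\varepsilon,\varepsilon)\) gives both one-sided homotopies simultaneously, the anticommutativity identity \((X\grtensor 1+1\grtensor X)^2=X^2\grtensor 1+1\grtensor X^2\) is the right engine for the corestriction of \(\Delta\), and the last step --- a purely formal deduction from \(\psi_\varepsilon\iota_\varepsilon\simeq\id\) and \(\iota_\varepsilon\psi_\varepsilon\simeq\id\), using that precomposition with a homotopy equivalence reflects the homotopy relation --- is correct and is exactly the slick way to finish.

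One technical point deserves tightening. You write ``under the identification \(\cS\grtensor\cS\cong\Cz(\R^2)\)'', but no such identification exists: the \emph{graded} tensor product of two copies of \(\Cz(\R)\) with the parity grading is not commutative (take homogeneous elements \(a_1\grtensor b_1\), \(a_2\grtensor b_2\) with \(a_1,b_2\) even and \(a_2,b_1\) odd and compare the two orders of multiplication --- a sign appears). So ``the \(\varepsilon\)-disc sits inside the \(\varepsilon\)-square'' cannot literally be read off from a spatial picture. The substance of your argument survives, but it should be phrased in terms of the central, \emph{even} elements \(X^2\grtensor 1\) and \(1\grtensor X^2\), which do commute: from \((X\grtensor 1+1\grtensor X)^2=X^2\grtensor 1+1\grtensor X^2\ge X^2\grtensor 1\) (and likewise for the other factor) one gets, via spectral projections, that for \(\phi\) supported in \([-\varepsilon',\varepsilon']\) with \(\varepsilon'<\varepsilon\) the element \(\Delta(\phi)\) is annihilated on both sides by any \(g\grtensor 1\) or \(1\grtensor g\) with \(g\) vanishing on \([-\varepsilon,\varepsilon]\); hence \(\Delta(\phi)\in\cS_\varepsilon\grtensor\cS_\varepsilon\). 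Approximating a general \(\phi\in\cS_\varepsilon\) in norm by functions supported in smaller closed intervals then gives \(\Delta(\cS_\varepsilon)\subseteq\cS_\varepsilon\grtensor\cS_\varepsilon\). With that repair your proof is complete and, as far as I can tell, matches the standard argument suggested by the cited reference.

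Two smaller remarks: your norm-continuity discussion of \(s\mapsto f\circ\gamma_s^{-1}\) has the endpoints swapped (the potentially delicate point is really the transition between \(s>0\), where the image is a bounded interval, and \(s=0\), where it is all of \(\R\), but your estimates handle it correctly regardless of the labelling), and note that the displayed equality in the lemma statement is written with the composition order apparently reversed --- your reading \((\psi_\varepsilon\grtensor\psi_\varepsilon)\circ\Delta\simeq\Delta_\varepsilon\circ\psi_\varepsilon\) is the only one that typechecks and is certainly the intended statement.
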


\begin{defn}\label{defn:invertibleawayfromLindex}
The \emph{$L$-local coarse index} and the \emph{$L$-local $\K$-homology class} of an operator $D$ that is invertible away from $L\subset M$ are defined to be the classes 
\begin{align*}
\ind(D\mid L)&\in\K(\Roe(L\subset X;A))\cong\K(\Roe(L;A))
\\ [D\mid L]&\in\K_0(M\mid L;A)\coloneqq\K(\Loc(M\mid L;A))
\end{align*}
(or in $\K(\Roe(L\subset X;L^2(S)))$ and $\K(\Loc(X\mid L;L^2(S)))$, respectively) associated to the $*$-homomorphism
\begin{align*}
\cS_\varepsilon&\to \Roe(L\subset M;L^2(S))& \phi&\mapsto \phi(D)\,,
\\\cS_\varepsilon&\to \Loc(M\mid L;L^2(S))& \phi&\mapsto \big(t\mapsto \phi(t^{-1}D)\big)\,,
\end{align*}
respectively,
composed with a homotopy inverse $\psi_\varepsilon\colon\cS\to\cS_\varepsilon$.
\end{defn}

The $L$-local index and the $L$-local $\K$-homology class are independent of the choice of the sufficiently small $\varepsilon>0$ and the homotopy inverse $\psi_\varepsilon$, because the resulting $*$-homomorphisms will be homotopic. Furthermore, we see that the canonical maps in the diagram
\begin{equation}\label{eq_diag_Lloc_classes}
\xymatrix{
\K_0(M\mid L;A)\ar[r]^-{\ind}\ar[d]&\K(\Roe(L\subset M;A))\ar[d]
\\
\K_0(M;A)\ar[r]^-{\ind}&\K(\Roe(M;A))
}
\end{equation}
map the classes $[D\mid L],[D],\ind(D\mid L)$ and $\ind(D)$ onto each other and the vertical maps are isomorphisms if $L=M$.

Unfortunately, it is not possible to define an $\EE$-theory class in $\EE(\cA(M;B), \Roe(L\subset M;S\grtensor B))$ by a formula like \eqref{eq:DiracEclassAsympMorph}, because the functions $s\mapsto\phi(t^{-1}s)$ for $\phi\in\cS$ will not lie in $\cS$ for large $t$ and hence $\phi(t^{-1}D)$ is only known to lie in the ideal $\Roe(L\subset M;A)$ for $t=1$. 
Hence we need to put up with the $\K$-homology class defined via the taylor-made ideal in the localization algebra instead.

Before working with this notion of being invertible away from a closed subset, we point out a few conditions under which it is satisfied.
The first one is a direct consequence of \cref{lem:functionalcalculusproductformula} with \cref{lem:comultiplicationepsilon}.

\begin{lem}\label{lem:ProductOpInvertible}
Let $D_1,D_2$ be $A_1$-linear resp.\ $A_2$-linear Dirac operators complete Riemannian manifolds $M_1,M_2$ and assume that they are invertible away from closed subsets $L_1\subset M_1$, $L_2\subset M_2$. Then $D_1\times D_2$ is invertible away from $L_1\times L_2\subset M_1\times M_2$.\qed
\end{lem}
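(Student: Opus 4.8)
The plan is to reduce the statement to the product formula for functional calculus recorded in \Cref{lem:functionalcalculusproductformula} together with the factorization property of the comultiplication on the small algebras $\cS_\varepsilon$ from \Cref{lem:comultiplicationepsilon}. First I would unwind the definitions: by \Cref{defn:invertibleawayfromL} there are $\varepsilon_1,\varepsilon_2>0$ such that $\phi_i(D_i)\in\Roe(L_i\subset M_i;L^2(S_i))$ for all $\phi_i\in\cS_{\varepsilon_i}$; put $\varepsilon\coloneqq\min(\varepsilon_1,\varepsilon_2)$, so both hold with the common $\varepsilon$. I want to show that $\phi(D_1\times D_2)\in\Roe(L_1\times L_2\subset M_1\times M_2;L^2(S_1\grexttensprod S_2))$ for every $\phi\in\cS_\varepsilon$.

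The key point is that the comultiplication restricts to $\Delta_\varepsilon\colon\cS_\varepsilon\to\cS_\varepsilon\grtensor\cS_\varepsilon$, as noted in \Cref{lem:comultiplicationepsilon}: if $\phi$ is supported in $[-\varepsilon,\varepsilon]$, then $\Delta\phi$ is supported in $\{(s,t):|s|\le\varepsilon,\ |t|\le\varepsilon\}$ (this is immediate from the formula for $\Delta$ on $\cS=\Cz(\R)$, whose defining property is $\Delta\phi(s,t)$ being determined by $\phi$ evaluated on combinations of $s,t$ with the relevant support control; the cited lemma records exactly that $\Delta$ restricts as claimed). Combining this with \Cref{lem:functionalcalculusproductformula}, the $*$-homomorphism $\phi\mapsto\phi(D_1\times D_2)$ restricted to $\cS_\varepsilon$ factors as
\[
\cS_\varepsilon\xrightarrow{\Delta_\varepsilon}\cS_\varepsilon\grtensor\cS_\varepsilon\to \Roe(M_1;L^2(S_1))\grtensor\Roe(M_2;L^2(S_2)),\qquad \phi_1\grtensor\phi_2\mapsto\phi_1(D_1)\grtensor\phi_2(D_2).
\]
Now $\phi_1(D_1)\in\Roe(L_1\subset M_1;L^2(S_1))$ and $\phi_2(D_2)\in\Roe(L_2\subset M_2;L^2(S_2))$ by hypothesis, so the elementary tensor $\phi_1(D_1)\grtensor\phi_2(D_2)$ lies in $\Roe(L_1\subset M_1;L^2(S_1))\grtensor\Roe(L_2\subset M_2;L^2(S_2))$, which maps into $\Roe(L_1\times L_2\subset M_1\times M_2;L^2(S_1\grexttensprod S_2))$ under the external product inclusion — an operator supported near $L_i$ in each factor has external product supported near $L_1\times L_2$, and local compactness and finite propagation are likewise preserved. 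Passing to the norm closure of the span of such elementary tensors, the image of $\Delta_\varepsilon$ lands in this ideal, so $\phi(D_1\times D_2)$ does too for all $\phi\in\cS_\varepsilon$. Finally, to match \Cref{defn:invertibleawayfromL} one precomposes with a homotopy inverse $\psi_\varepsilon\colon\cS\to\cS_\varepsilon$ as in \Cref{defn:invertibleawayfromLindex}, but for verifying invertibility away from $L_1\times L_2$ it suffices to exhibit the displayed containment for $\phi\in\cS_\varepsilon$, which we have.

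The only mildly delicate step is the claim that the external product of operators supported near $L_1$ and near $L_2$ is supported near $L_1\times L_2$ in the sense of the relevant ideal, and that the external tensor product map actually carries $\Roe(L_1\subset M_1;L^2(S_1))\grtensor\Roe(L_2\subset M_2;L^2(S_2))$ into $\Roe(L_1\times L_2\subset M_1\times M_2;L^2(S_1\grexttensprod S_2))$; but this is a routine support and propagation bookkeeping argument of exactly the kind already used implicitly in \Cref{lem:functionalcalculusproductformula} and in the standard external product constructions for Roe algebras, so I expect no real obstacle — the content of the lemma is genuinely just the combination of the two cited facts.
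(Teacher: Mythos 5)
Your proof is correct and follows exactly the route the paper indicates: the paper gives no standalone proof, merely stating that the lemma ``is a direct consequence of \Cref{lem:functionalcalculusproductformula} with \Cref{lem:comultiplicationepsilon}'', and you have supplied precisely that argument, spelling out the support and propagation bookkeeping for the external product $\Roe(L_1\subset M_1;L^2(S_1))\grtensor\Roe(L_2\subset M_2;L^2(S_2))\to\Roe(L_1\times L_2\subset M_1\times M_2;L^2(S_1\grexttensprod S_2))$ which the paper leaves implicit. This is the same approach, just with the implicit details made explicit.
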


\begin{lem}\label{lem:InvertibilityCheckOutsideL}
Let $M_1,M_2$ be complete Riemannian manifolds and $S_1\to M_1,S_2\to M_2$ two Dirac $A$-bundles with $A$-linear Dirac operators $D_1,D_2$.
Let furthermore $L_1\subset M_1,L_2\subset M_2$ be two closed subsets and $\psi\colon M_1\setminus L_1\to M_2\setminus L_2$ an isometry which is covered by a bundle isomorphism $\Psi\colon S_1|_{M_1\setminus L_1}\to S_2|_{M_2\setminus L_2}$ that preserves all structure of the Dirac $A$-bundles. 
Then $D_1$ is invertible away from $L_1$ iff $D_2$ is invertible away from $L_2$.
\end{lem}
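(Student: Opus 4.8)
The plan is to reduce the equivalence to a comparison of band-limited truncations of $\phi(D_1)$ and $\phi(D_2)$ via finite propagation speed. By symmetry it suffices to show that if $D_1$ is invertible away from $L_1$, witnessed by some $\varepsilon>0$, then $\phi(D_2)\in\Roe(L_2\subset M_2;L^2(S_2))$ for every $\phi\in\cS_\varepsilon$; this means $D_2$ is invertible away from $L_2$ with the same $\varepsilon$. The set of $\phi\in\cS_\varepsilon$ with this property is norm-closed, since the ideal is closed and $\phi\mapsto\phi(D_2)$ is continuous, so it is enough to treat $\phi$ smooth with compact support in $(-\varepsilon,\varepsilon)$; for these $\widehat\phi$ is a Schwartz function.

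First I record a purely metric observation: the Riemannian isometry $\psi$ preserves the distance to $L_i$, i.e.\ $\dist_{M_1}(x,L_1)=\dist_{M_2}(\psi(x),L_2)$ for all $x\in M_1\setminus L_1$. Indeed, a minimizing geodesic in $M_1$ from $x$ to a nearest point of $L_1$ lies in $M_1\setminus L_1$ apart from its endpoint; its image under $\psi$ is a curve of the same length in $M_2\setminus L_2$ which is Cauchy for the length metric, hence converges in the complete manifold $M_2$ to a point that must belong to $L_2$ (otherwise $\psi^{-1}$ would extend continuously across it), so $\dist_{M_2}(\psi(x),L_2)\le\dist_{M_1}(x,L_1)$; the reverse inequality follows from the same argument applied to $\psi^{-1}$. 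Consequently $\psi$ maps the $r$-neighborhood of $L_1$ onto that of $L_2$ for every $r$, and a metric ball $B_r(x)\subset M_1$ with $\dist(x,L_1)>2r$ onto $B_r(\psi(x))\subset M_2$. Writing $P^{(i)}_s$ for multiplication by the indicator of $\{\dist(\cdot,L_i)>s\}$ and $U\colon L^2(M_1\setminus L_1,S_1)\to L^2(M_2\setminus L_2,S_2)$ for the unitary induced by $\Psi$ (extended by zero to $L^2(S_1)$), this gives $P^{(2)}_sU=UP^{(1)}_s$ and $P^{(2)}_s=UP^{(1)}_sU^*$ for all $s>0$.

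For $R>0$ let $\phi_R$ be the function with $\widehat{\phi_R}=\widehat\phi\cdot\mathbf 1_{[-R,R]}$; then $\phi_R(D_i)$ has propagation at most $R$, $\|\phi(D_i)-\phi_R(D_i)\|\le\delta(R):=\tfrac1{\sqrt{2\pi}}\int_{|\xi|>R}|\widehat\phi|$, and $\delta(R)\to0$ as $R\to\infty$. Since $\psi$ and $\Psi$ identify $D_1$ with $D_2$ over $M_1\setminus L_1\cong M_2\setminus L_2$, the domain-of-dependence property of the wave operators $e^{i\xi D_i}$ (finite propagation speed for $A$-linear Dirac operators) yields $U\phi_R(D_1)P^{(1)}_{4R}=\phi_R(D_2)UP^{(1)}_{4R}$. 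Chaining this with the propagation bound (which gives $P^{(2)}_s\phi_R(D_2)=P^{(2)}_s\phi_R(D_2)P^{(2)}_{s-R}$) and the intertwining relations for $U$, one obtains $\|P^{(2)}_s\phi_R(D_2)\|\le\|P^{(1)}_s\phi_R(D_1)\|$ whenever $s\ge5R$. As $\phi(D_1)\in\Roe(L_1\subset M_1;L^2(S_1))$ forces $\|P^{(1)}_s\phi(D_1)\|\to0$ as $s\to\infty$, we get $\limsup_{s\to\infty}\|P^{(2)}_s\phi(D_2)\|\le2\delta(R)$ for every $R$, hence $\|P^{(2)}_s\phi(D_2)\|\to0$, and likewise $\|\phi(D_2)P^{(2)}_s\|\to0$ by applying the above to $\bar\phi$. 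Finally, choosing a continuous cutoff $\chi_s$ equal to $1$ on $B_s(L_2)$ and supported in $B_{2s}(L_2)$, we have $\phi(D_2)=\rho(\chi_s)\phi(D_2)\rho(\chi_s)+E_s$ with $\|E_s\|\le\|P^{(2)}_s\phi(D_2)\|+\|\phi(D_2)P^{(2)}_s\|\to0$, while $\rho(\chi_s)\phi(D_2)\rho(\chi_s)$ is supported near $L_2$ and lies in $\Roe(M_2;L^2(S_2))$ (bounded continuous functions multiply the Roe algebra); therefore $\phi(D_2)\in\Roe(L_2\subset M_2;L^2(S_2))$.

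The main obstacle is the comparison step: keeping precise track of which neighborhoods of $L_1$ and $L_2$ correspond under $\psi$ and correctly chaining the propagation estimate with the $U$-intertwining, together with the standard but indispensable domain-of-dependence property of $e^{i\xi D_i}$, which is exactly what makes $\phi_R(D_1)$ and $\phi_R(D_2)$ genuinely agree away from $L_i$. The symmetry reduction, the density reduction to smooth compactly supported $\phi$, and the final cut-down argument realizing membership in the ideal are all routine.
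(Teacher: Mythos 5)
Your proof is correct and, at its core, follows the same route as the paper's: reduce by density to band-limited $\phi$, exploit finite propagation speed of the wave operators $e^{i\xi D_k}$ to identify truncations of $\phi(D_1)$ and $\phi(D_2)$ away from $L_1,L_2$ via $\Psi$, and push the error to zero.

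The paper's proof is more terse: it introduces cutoffs $\chi_k^r$, uses the characterization $\phi(D_k)\in\Roe(L_k\subset M_k)\Leftrightarrow\lim_r\phi(D_k)\chi_k^r=0$, splits $\phi(D_k)\chi_k^r=\chi_k^0\phi(D_k)\chi_k^r+(1-\chi_k^0)\phi(D_k)\chi_k^r$, and asserts $\chi_1^0\phi(D_1)\chi_1^r=\chi_2^0\phi(D_2)\chi_2^r$ under ``the canonical identification'' for band-limited $\phi$ and $r\ge R+1$. Where your write-up adds genuine value is in making two things explicit that the paper leaves implicit: first, that the isometry $\psi$ preserves distance to $L_i$ (so the $r$-neighborhoods of $L_1$ and $L_2$ really do correspond under $\psi$, which is what makes the cutoffs on the two sides compatible — the paper's equality of truncated operators quietly depends on choosing $\chi_1^r$ and $\chi_2^r$ $\psi$-compatibly); second, the precise intertwining $U\phi_R(D_1)P^{(1)}_{4R}=\phi_R(D_2)UP^{(1)}_{4R}$ coming from the domain-of-dependence argument, rather than just asserting agreement. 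You also check both the left and the right cut-downs $P_s\phi(D_2)$ and $\phi(D_2)P_s$, whereas the paper only verifies $\phi(D_k)\chi_k^r\to 0$ (which does suffice for membership in the $*$-ideal, but your version spells out why). In short: same idea, more careful bookkeeping on your side; no gap.
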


\begin{proof}
For each $k=1,2$ and $r\geq 0$ let $\chi_k^r\colon M_k\to [0,1]$ be a smooth function which vanishes on the $r$-neighborhood of $L_k$ and is equal to one outside of the $(r+1)$-neighborhood. Then $\phi(D_k)\in\Roe(L_k\subset M_k;L^2(S_k))$ if and only if $\lim_{r\to\infty}\phi(D_k)\chi_k^r=0$. 
To understand this limit, we write
\[\phi(D_k)\chi_k^r= \chi_k^0\phi(D_k)\chi_k^r + (1-\chi_k^0)\phi(D_k)\chi_k^r\]
and note that the second summand converges to zero, because $\phi(D_k)$ is the norm limit of finite propagation operators.
The first summands can be compared for $k=1,2$, because they only act on the subspaces $L^2(S_k|_{M_k\setminus L_k})$ which can be identified via $\Psi$.

If $\phi\in\cS$ is a Schwarz function with Fourier transform supported in $[-R,R]$, then $\phi(D_k)$ has propagation bounded by $R$ and we immediately see that $\chi_1^0\phi(D_1)\chi_1^r=\chi_2^0\phi(D_2)\chi_2^r$ (by means of the cannonical identification) for all $r\geq R+1$. Since functions of this type are dense in $\cS$ we have
\[\lim_{r\to\infty}\chi_1^0\phi(D_1)\chi_1^r=\lim_{r\to\infty}\chi_2^0\phi(D_2)\chi_2^r\]
for all $\phi\in\cS$, in particular for all $\phi\in\cS_\varepsilon$.
The claim follows immediately.
\end{proof}

\begin{thm}[Bordism invariance]\label{thm:LocalIndexBordismInvariance}
Let $W$ be a complete Riemannian manifold with a boundary $\partial W$ which decomposes into two complete Riemanian manifolds $M_0,M_1$ and assume that there are collar neighborhoods $M_0\times[0,\varepsilon)$ of $M_0$ and $M_1\times(-\varepsilon,0]$ of $M_1$ on which the Riemannian metric of $W$ is the product metric.
By attaching infinite cylinders to it we obtain the complete Riemannian manifold $\widehat{W}\coloneqq M_0\times (-\infty,0]\cup_{M_0}W\cup_{M_1}M_1\times [0,\infty)$.
Let $D_0,D_1$ be $A$-linear Dirac operators over $M_0,M_1$ which are invertible away from closed subsets $L_1\subset M_1,L_2\subset M_2$, respectively, and assume that there is an $A\grtensor\Cl_1$-linear Dirac operator over $\widehat W$ which is equal to $D_0\times D_{(-\infty,\varepsilon)}$ and $D_1\times D_{(-\varepsilon,\infty)}$ over the respective cylinders and invertible away from a subset $\widehat{L}\coloneqq L_0\times(-\infty,0]\cup L\cup L_1\times [0,\infty)\subset \widehat W$ with $L\subset W$ containing $L_1\cup L_2$. Let $i_k\colon M_k\hookrightarrow W$ denote the canonical inclusions, restricting to the inclusions $L_k\hookrightarrow L$. Then 
\[(i_0)_*\ind(D_0\mid L_0)=(i_1)_*\ind(D_1\mid L_1)\in\K(\Roe(L;A))\,.\]
\end{thm}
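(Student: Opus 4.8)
The plan is to use the $A\grtensor\Cl_1$-linear Dirac operator $\widehat D$ on the cylindered manifold $\widehat W$ as a coarse cobordism linking the two boundary operators $D_0$ and $D_1$, the decisive geometric point being that the two infinite half-cylinders occurring in $\widehat L$, namely $L_0\times(-\infty,0]$ and $L_1\times[0,\infty)$, are flasque coarse subspaces of $\widehat W$, so that the ideals of operators supported near them have vanishing $K$-theory by an Eilenberg swindle. First I would fix, using \Cref{lem:comultiplicationepsilon} together with the $\varepsilon$-independence of the $L$-local classes, a single $\varepsilon>0$ witnessing that $D_0$, $D_1$ and $\widehat D$ are invertible away from $L_0$, $L_1$ and $\widehat L$ simultaneously. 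Then I would record that over each collar $M_k\times I$ (with $I$ the relevant half-line) the product formula \Cref{lem:functionalcalculusproductformula} identifies $\widehat D$ with $D_k\times D_I$ on the nose, so that — exactly as in the proof of \Cref{lem:ProductOpInvertible} — for $\phi\in\cS_\varepsilon$ the restriction of $\phi(\widehat D)$ to such a collar is the image of a product $\phi_1(D_k)\grtensor\phi_2(D_I)$.

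The technical core would then be the coarse Mayer--Vietoris (excision) sequence for Roe algebras attached to the decomposition of $\widehat W$ into $W$ and its two infinite cylinders, in the $L$-local form obtained by passing throughout to the ideals of operators supported near the corresponding pieces $L$, $L_0\times(-\infty,0]$ and $L_1\times[0,\infty)$ of $\widehat L$. Since the two cylinder pieces are flasque, the sequence degenerates and its connecting homomorphism becomes a map $\partial\colon\K(\Roe(\widehat L\subset\widehat W;A\grtensor\Cl_1))\to\K(\Roe(L_0;A))\oplus\K(\Roe(L_1;A))$ (the $\Cl_1$-degree of $\widehat D$ cancelling against the degree shift of $\partial$), while the next map in the sequence is $(a,b)\mapsto(i_0)_*a-(i_1)_*b$ into $\K(\Roe(L;A))$. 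Using the collar identifications of the first paragraph one computes $\partial(\ind(\widehat D\mid\widehat L))=(\ind(D_0\mid L_0),\ind(D_1\mid L_1))$ — this is the $L$-local analogue of the fact that $\ind(\widehat D)$ restricts to $\ind(D_k)$ at the two boundary pieces, which one proves by an $L$-local adaptation of the argument behind \Cref{thm:BordismInvariance}, using \Cref{lem:functionalcalculusproductformula} and the flasqueness of the cylinders. Exactness of the Mayer--Vietoris sequence then yields $(i_0)_*\ind(D_0\mid L_0)-(i_1)_*\ind(D_1\mid L_1)=0$, which is the assertion.

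There is an alternative route that sidesteps the obstruction observed after \Cref{defn:invertibleawayfromLindex} (the non-existence of an $L$-local $\EE$-theory class): prove first the corresponding statement for the $L$-local $\K$-homology classes, $(i_0)_*[D_0\mid L_0]=(i_1)_*[D_1\mid L_1]$ in $\K(\Loc(W\mid L;A))$, via the analogous Mayer--Vietoris machinery for the localization algebras, and then apply the assembly map $\ev_1$, which is natural — hence commutes with the pushforwards $(i_k)_*$ — and carries $[D_k\mid L_k]$ to $\ind(D_k\mid L_k)$.

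I expect the main obstacle to be the identity $\partial(\ind(\widehat D\mid\widehat L))=(\ind(D_0\mid L_0),\ind(D_1\mid L_1))$ and the surrounding bookkeeping: setting up the $L$-local Mayer--Vietoris sequence with the nested ideals $\Roe(L\subset W;\blank)\subseteq\Roe(L\subset\widehat W;\blank)\subseteq\Roe(\widehat L\subset\widehat W;\blank)$, verifying that the cylinder pieces are flasque \emph{as subspaces of} $\widehat W$ (so that the relevant ideals really are $K$-trivial), keeping the Clifford/degree shift straight, and checking that the connecting map sends $\ind(\widehat D\mid\widehat L)$ to the two boundary indices and not to something off by a unit or a sign. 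The remaining points — independence of the cut-off functions and of $\varepsilon$, compatibility of the covering isometries implementing $(i_k)_*$, and the precise form of the coarsely excisive decomposition — are routine.
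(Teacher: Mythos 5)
Your proposal is correct in outline but is organized around a genuinely different mechanism from the paper's proof. You set up an $L$-local coarse Mayer--Vietoris sequence for the coarsely excisive decomposition of $\widehat W$ into $W$ and the two half-cylinders (with $L$-pieces $L$, $L_0\times(-\infty,0]\sqcup L_1\times[0,\infty)$, and intersection piece $L_0\sqcup L_1$), use flasqueness of the cylindrical piece to kill one group in the six-term sequence, compute the connecting map applied to $\ind(\widehat D\mid\widehat L)$, and read off the claim from exactness at $\K(\Roe(L_0\sqcup L_1;A))$. The paper avoids Mayer--Vietoris entirely: it constructs a single asymptotic morphism $\alpha\colon\cS\grtensor\Cz(\R)\to\fA(\Roe(\widehat W;L^2(\widehat S)))$, $\phi\grtensor f\mapsto[t\mapsto\phi(\widehat D)p^*f_t]$ with $p$ reading off the cylinder parameter, shows that on $\cS\grtensor\Cz((1,\infty))$ resp.\ $\cS\grtensor\Cz((-\infty,-1))$ it coincides with product-type asymptotic morphisms $\alpha_1,\alpha_0$ built from $D_k\times D_\R$ via \Cref{lem:functionalcalculusproductformula}, restricts to $\cS_\varepsilon$ to land in $\Roe(L\subset\widehat W;\blank)$, and then deduces $\llbracket\alpha_0\rrbracket=\llbracket\alpha_1\rrbracket$ in $\EE(\Cz(\R),\Roe(L\subset\widehat W;A\grtensor\Cl_1))$ from the homotopy between the three $*$-homomorphisms $\Cz(\R)\cong\Cz((\pm1,\pm\infty))\hookrightarrow\Cz(\R)$ and $\id$, finishing with Bott periodicity to cancel the $\Cl_1$-degree. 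Your route is more modular once the $L$-local MV sequence and its boundary map are in place, but as you yourself flag, the hard step $\partial\bigl(\ind(\widehat D\mid\widehat L)\bigr)=(\ind(D_0\mid L_0),\ind(D_1\mid L_1))$ is an $L$-local partitioned-manifold computation whose analytic content (collar identifications, product formula, controlled supports for $\phi\in\cS_\varepsilon$) largely coincides with what the paper does explicitly in constructing $\alpha$ and the homotopy; the paper just packages it without invoking MV. Your alternative via localization algebras and naturality of $\ev_1$ is also plausible and close in spirit to \Cref{sec:localizationalgebrasKhomology}, but faces the same irreducible boundary-map computation, so neither variant is a shortcut — only a repackaging.
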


\begin{proof}
We cannot preceed exactly as in the proof of \Cref{thm:BordismInvariance} (see \cite[Theorem 4.12]{WulffTwisted}), because, as noted before, we do not have the corresponding $\EE$-theory classes at our disposal. However, we can define a similar asymptotic morphism which captures the relevant information in a similar manner.

To this end, choose a smooth function $p\colon\widehat{W}\to\R$ which is equal to $(y,z)\mapsto z$ on the subsets $M_0\times(-\infty,0]$ and $M_1\times [0,\infty)$, with $p(W)\subset[-1,1]$ and whose gradient is bounded. 
For each function $f\in\Cz(\R)$ and every $t\geq 1$ we define the function $f_t\in\Cz(\R)$ by $f_t(z)\coloneqq f(t^{-1}z)$. Then $p^*f_t\in\Cb(\widehat{W})$ is given by $f_t(x)\coloneqq f(t^{-1}p(x))$ and we claim that the formula
\begin{equation}
\label{eq:LocalIndexBordismAsymptoticMorphism}
\alpha\colon \cS\grtensor\Cz(\R)\to\fA(\Roe(\widehat{W};L^2(\widehat{S})))\,,\quad \phi\grtensor f\mapsto [t\mapsto\phi(\widehat{D})p^*f_t]
\end{equation}
defines an asymptotic morphism. To verify that the universal property of the maximal tensor product indeed yields such an asymptotic morphism, we need to show that the graded commutators $[\phi(\widehat{D}),p^*f_t]$ converge in norm to zero as $t\to\infty$ and by a density argument it is sufficient to consider the generators $\phi_\pm(x)\coloneqq(x\pm i)^{-1}$ of $\cS$ and compactly supported smooth functions $f$ on $\R$. Then 
\[[\phi_{\pm}(\widehat{D}),p^*f_t]=-\phi_\pm(\widehat{D})[\widehat{D},p^*f_t]\phi_\pm(\widehat{D})\]
and the commutator $[\widehat{D},p^*f_t]$ is Clifford multiplication with the gradient of $p^*f_t$, which is of order $t^{-1}$, because the gradients of $p$ and $f$ are bounded.

Now we claim that on the sub-\textCstar-algebra $\cS\grtensor\Cz((1,\infty))$, the asymptotic morphism $\alpha$ agrees with the composition 
\begin{align*}
\alpha_1\colon \cS\grtensor\Cz(\R)\xrightarrow{\Delta\grtensor\id}\cS\grtensor\cS\grtensor\Cz(\R)&\to\fA(\Lin_{A\grtensor\Cl_1}(L^2(\widehat{S}))\\\phi_1\grtensor\phi_2\grtensor f&\mapsto[t\mapsto V_1(\phi_1(D_1)\grtensor(\phi_2(D_\R)f_t))V_1^*]
\end{align*}
where 
$V_1\colon L^2(S_1)\grtensor L^2(\R;\Cl_1)\to L^2(\widehat S)$ is any isometry which is the identity on the subspace $L^2(\widehat{S}|_{M_1\times [0,\infty})\cong L^2(S_1)\grtensor L^2([1,\infty);\Cl_1)$.
Again we check it on a dense subset, this time by assuming that $\phi\in\cS$ has Fourier transform supported in the compact interval $[-R,R]$. Then $\phi(\widehat{D})$ and $\phi(D_1\times D_\R)$ have propagation bounded by $R$ and hence they agree on sections supported in $M_1\times[R,\infty)$. But if $f\in\Cz((1,\infty))$, then $p^*f_t$ is supported in this subset for all $t\geq R$ and we immediately see that 
\[V_1\phi(\widehat{D})V_1^*(p^*f_t)=V_1\phi(D_1\times D_\R)V_1^*(p^*f_t)=V_1\phi(D_1\times D_\R)(\id\grtensor f_t)V_1^*\,.\]
The claim now follows from \cref{lem:functionalcalculusproductformula}.

Completely analogously we find an asymptotic morphism
\begin{align*}
\alpha_0\colon \cS\grtensor\Cz(\R)\xrightarrow{\Delta\grtensor\id}\cS\grtensor\cS\grtensor\Cz(\R)&\to\fA(\Lin_{A\grtensor\Cl_1}(L^2(\widehat{S}))\\\phi_1\grtensor\phi_2\grtensor f&\mapsto[t\mapsto V_0(\phi_1(D_0)\grtensor(\phi_2(D_\R)f_t))V_0^*]
\end{align*}
which agrees with $\alpha$ on $\cS\grtensor \Cz((-\infty,-1))$.

For the density arguments above, it was convenient to be able to consider special functions in $\cS$ which do not lie in $\cS_\varepsilon$. Only now that we have derived these formulas, we will restrict the asymptotic morphisms to $\cS_\varepsilon\grtensor\Cz(\R)$ for $\varepsilon>0$ small enough.

If $\phi\in\cS_\varepsilon$ and $f$ is compactly supported, then $\phi(\widehat{D})\in\Roe(\widehat{L}\subset\widehat{W};L^2(\widehat{S}))$ and $p^*f_t$ is suported in a controlled neighborhood of $W$, from which we see that 
\[\phi(\widehat{D})(p^*f_t)\in\Roe(L\subset\widehat{W};L^2(\widehat{S}))\,.\]
Furthermore, $\phi_1(D_k)\in\Roe(L_k\subset M_k;S_k)$ for $k=0,1$ and 
\[\phi_2(D_\R)f_t\in\Kom_{\Cl_1}(L^2(\R;\Cl_1))=\Kom(L^2(\R))\grtensor\Cl_1\,,
\] so that 
\[\phi_1(D_k)\grtensor (\phi_2(D_\R)f_t)\in\Roe(L_k\times\{0\}\subset M_k\times\R;S_k\grexttensprod S_\R)\,.\]
Now recall that all the solid canonical maps in the diagram 
\[\xymatrix{
\K(\Roe(L_k\times\{0\}\subset M_k\times\R;S_k\grexttensprod S_\R))\ar[d]\ar@{-->}[r]&\K(\Roe(L\subset \widehat{W};\widehat{S}))\ar[d]
\\\K(\Roe(L_k\times\{0\}\subset M_k\times\R;A\grexttensprod \Cl_1))\ar[d]\ar[d]^{\cong}&\K(\Roe(L\subset \widehat{W};A\grtensor \Cl_1)\ar[d]^{\cong}
\\\K(\Roe(L_k;A\grexttensprod \Cl_1))\ar[r]&\K(\Roe(L;A\grtensor\Cl_1))
}\]
are induced by adjoining with suitably chosen isometries. With this in mind it is readily verified that one can choose the isometries $V_0,V_1$ above in such a way that adjoining with them induces the upper dashed arrow in such a way that the diagram commutes.
In particular we then have that the images of the asymptotic morphisms $\alpha_0,\alpha_1$ restricted to $\cS_\varepsilon\grtensor\Cz(\R)$ are also contained in $\fA(\Roe(\widehat{W};L^2(\widehat{S})))$, just like the image of $\alpha$.

Let $\beta_0,\beta_1$ denote the canonical $*$-homomorphisms
\[\Cz(\R)\cong\Cz((-\infty,-1))\subset \Cz(\R)\,,\quad \Cz(\R)\cong\Cz((1,\infty))\subset \Cz(\R)\,,\]
respectively. 
Since $\beta_0,\beta_1$ are homotopic to each other and the identity, we obtain homotopies between asymptotic morphisms
\begin{align*}
\alpha_0\circ(\iota_\varepsilon\grtensor\id)&\simeq\alpha_0\circ(\iota_\varepsilon\grtensor\beta_0)=\alpha\circ(\iota_\varepsilon\grtensor\beta_0)
\\&\simeq \alpha\circ(\iota_\varepsilon\grtensor\beta_1)=\alpha_1\circ(\iota_\varepsilon\grtensor\beta_1)\simeq \alpha_1\circ(\iota_\varepsilon\grtensor\id)\,.
\end{align*}
Precomposing the left- and rightmost of these with $\psi_\varepsilon$ therefore yields the same class
\[\llbracket\alpha_0\rrbracket=\llbracket\alpha_1\rrbracket\in\EE(\Cz(\R),\Roe(L\subset\widehat{W};A\grtensor\Cl_1))\]
in $\EE$-theory.

Let $d\in\EE(\Cz(\R),\Cl_1)$ be the class of the asymptotic morphism
\[\cS\grtensor\Cz(\R)\to\fA(\Kom(L^2(\R))\grtensor\Cl_1)\,,\quad \phi\grtensor f\mapsto [t\mapsto \phi(D_\R)f_t]\]
which appears in the definition of the $\alpha_k$. 
In fact, using \Cref{lem:comultiplicationepsilon} it is straightforward to verify that $\llbracket\alpha_k\rrbracket=(\Ad_{V_k})_*\circ(\ind(D_k\mid L_k)\grtensor d)$.

Via rescaling of $\R$ it is easy to see that $d$ is also represented by the asymptotic morphism $\phi\grtensor f\mapsto [t\mapsto \phi(t^{-1}D_\R)f]$, such that it is the well-known canonical generator of $\EE(\Cz(\R),\Cl_1)$.
Furthermore, let $b\in\K(\Cz(\R)\grtensor\Cl_{0,1})$ be the Bott element and recall that $c\coloneqq (d\grtensor\id_{\Cl_{0,1}})\circ b$ is the canonical generator of $\K(\Cl_{1,1})$, see \cite{guentnerhigson}.
Then the two elements
\[(\llbracket \alpha_k\rrbracket\grtensor\id_{\Cl_{0,1}})\circ b=(\Ad_{V\grtensor\id_{\Cl_{0,1}}})_*\circ(\ind(D_k\mid L_k)\grtensor c)\]
in $\K(\Roe(L\subset \widehat{W};\widehat{S}\grtensor\Cl_{0,1}))$ agree for $k=0,1$ and they map canonically to 
\[(i_k)_*(\ind(D_k\mid L_k)\grtensor c)\in\K(\Roe(L;A\grtensor\Cl_{1,1}))\]
The claim now follows, because 
\[\xymatrix{
\K(\Roe(L_k;A))\ar[r]^{(i_k)_*}\ar[d]_{\blank\grtensor c}^{\cong}
&\K(\Roe(L;A))\ar[d]_{\blank\grtensor c}^{\cong}
\\\K(\Roe(L_k;A\grtensor\Cl_{1,1}))\ar[r]^{(i_k)_*}
&\K(\Roe(L;A\grtensor\Cl_{1,1}))
}\]
commutes and the vertical maps are isomorphisms.
\end{proof}

\section{The relative coarse index}

\subsection{Definition and the relative coarse index theorem}
\label{sec:DefRelInd}

We can now generalize the relative index defined in \cite[Section 4]{gromov_lawson_complete} to the coarse geometric set-up presented in the previous sections.

Let $M_1,M_2$ be a complete Riemannian manifold, $A$ a graded unital \textCstar-algebra and $S_1\to M_1,S_2\to M_2$ two Dirac $A$-bundles with $A$-linear Dirac operators $D_1,D_2$.
Let furthermore $W_1\subset M_1,W_2\subset M_2$ be two closed submanifolds with boundary of codimension zero and $\psi\colon M_1\setminus \mathring W_1\to M_2\setminus \mathring W_2$ an isometry which is covered by a bundle isomorphism $\Psi\colon S_1|_{M_1\setminus \mathring W_1}\to S_2|_{M_2\setminus \mathring W_2}$ that preserves all structure of the Dirac $A$-bundles.

By changing the Riemannian metrics and accordingly the Dirac bundles and Dirac operators within in a $1$-neighborhood of the boundaries $\partial W_1\subset M_1, \partial W_2\subset M_2$, we may assume that they are of product structure on a collar neighborhood $\partial W_1\times(-\varepsilon,\varepsilon),\partial W_2\times(-\varepsilon,\varepsilon)$ with $0<\varepsilon<1$ while leaving the coarse geometry of $M_1,M_2$ unchanged.
By product structure we mean that the bundles can be identified with a pullback of a bundle over $\partial W_{1,2}$ such that the grading, Clifford action and connection in $\partial W_{1,2}$-direction are constant along the interval and the connection in interval direction is simply the directional derivative.
Due to bordism invariance, this has no effect on the coarse indices that we are going to consider.

\begin{lem}
Let $S\to N\times (-\varepsilon,\varepsilon)$ be a Dirac $A$-bundle of product structure and $\tau\colon N\times (-\varepsilon,\varepsilon)\to N\times (-\varepsilon,\varepsilon)$ the diffeomorphism which reflects the interval.
The following three Dirac $A$-bundles are canonically isomorphic:
\begin{enumerate}
\item The pullback bundle $\tau^*S$ with its canonical connection and Clifford multiplication.\item The bundle $\tilde S$ which we obtain from $S$ by changing the Clifford action of vectors in the interval direction to its negative while leaving the Clifford action of vectors in $N$-direction unchanged.
\item The bundle $\bar S$ which we obtain from $S$ by reversing the grading and also changing the Clifford multiplication to its negative. 
\end{enumerate}
\end{lem}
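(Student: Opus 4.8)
The plan is to produce the three isomorphisms directly and explicitly, using the product structure, which makes all three bundles as smooth $A$-bundles literally the same pullback of a bundle $S_0 \to N$ along the projection $N\times(-\varepsilon,\varepsilon)\to N$; the content is just comparing Clifford actions, gradings and connections. Throughout, write $t$ for the interval coordinate and $\partial_t$ for the corresponding unit vector field, so that $\Cl(TM)|_{N\times\{t\}}\cong \Cl(TN)\grtensor\Cl(\R\partial_t)$, and let $e$ denote Clifford multiplication by $\partial_t$, which by the product assumption is a self-adjoint odd involution (up to sign conventions) commuting with the $N$-directional Clifford action and covariantly constant in the $t$-direction.

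First I would set up the comparison of $\tau^*S$ with $\tilde S$. The differential of $\tau$ is the identity on $TN$ and $-\id$ on $\R\partial_t$, so for a vector $X = X_N + s\partial_t$ one has $d\tau(X) = X_N - s\partial_t$. Hence the pullback Clifford action on $\tau^*S$ is exactly: act by $\tau^{-1}$-transport, then Clifford-multiply by $d\tau(X)$. Since the product structure lets us identify $S$ at the point $(n,t)$ with $S_0(n)$ \emph{independently of $t$}, the parallel transport along $\tau$ is the identity under this identification, and the pullback Clifford action becomes $X_N\cdot - s\,e$, i.e.\ exactly the Clifford action of $\tilde S$. The grading and the $A$-module structure are untouched by $\tau$, and one checks the connection matches because in the product structure both the $N$-directional covariant derivative (which is $\nabla^{S_0}$) and the $t$-directional one (which is $\partial_t$) are manifestly $\tau$-equivariant up to the sign already accounted for. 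This gives $\tau^*S\cong\tilde S$.

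Next I would compare $\tilde S$ with $\bar S$. The candidate isomorphism $\Phi\colon \tilde S\to\bar S$ is (fibrewise) multiplication by the odd involution $e$ itself: since $\bar S$ has the opposite grading, $e$ — being odd on $S$ — is an \emph{even} (grading-preserving) map $\tilde S\to\bar S$, and it is $A$-linear and unitary because $e$ is. It remains to check it intertwines the two Clifford actions. For $X_N\in TN$, Clifford multiplication on $\bar S$ is $-X_N\cdot$; and $e(X_N\cdot \xi) = -(X_N\cdot)(e\xi)$ because $e$ anticommutes with $X_N\cdot$ in $\Cl(TM)$, so $\Phi$ intertwines the $N$-directional actions. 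For the $\partial_t$ direction: on $\tilde S$ it is $-e$, on $\bar S$ it is $-(-e)=e$; and $e(-e\,\xi) = \xi$ while we need $e\cdot(e\xi)=\xi$, so these agree. Finally one checks $\Phi$ is parallel: the connection on $\bar S$ is the same $\C$-linear operator as on $S$ (reversing the grading and Clifford sign does not change $\nabla$), $e$ is covariantly constant by the product assumption, so $\nabla^{\bar S}(e\xi) = e\,\nabla^{\tilde S}\xi$, and the metric/Hilbert-module compatibility is immediate since $e$ is unitary. This yields $\tilde S\cong\bar S$, and composing gives the full cycle of canonical isomorphisms.

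The genuinely delicate point — and the one I would be most careful about — is \emph{naturality/canonicity and sign bookkeeping}: one must verify that each identification uses only data manifestly determined by $S$ and the product structure (so that the isomorphisms are canonical, as claimed, not just existent), and that the Clifford-algebra sign conventions ($v\cdot v = \pm\|v\|^2$, Koszul signs in $\Cl(TN)\grtensor\Cl(\R\partial_t)$, and the grading involution's interaction with odd elements) are applied consistently so that "$-e$" really is the right operator in part (ii) and "reverse grading and negate Clifford multiplication" in part (iii) produces exactly $\bar S$ as defined. A clean way to organize this is to fix, once and for all, the identification $S\cong \pi^*S_0$ and express all three bundles' structures as explicit formulas in $(S_0, \nabla^{S_0}, \text{Clifford action of }\Cl(TN), e)$, after which the three isomorphisms are visibly the identity on the underlying module and $e$ respectively, and the verification reduces to finitely many Clifford-relation identities with no analysis involved.
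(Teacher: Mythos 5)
Your proposal matches the paper's proof: both first identify $\tau^*S$ with $\tilde S$ using the product structure (under which the interval-direction Clifford action picks up a sign), and then exhibit $\tilde S\cong\bar S$ via multiplication by the unit interval-direction vector $e$, checking grading, Clifford-action and connection compatibility. The only blemish is a sign slip in your verbalization of the $\partial_t$-Clifford action on $\bar S$ (it should be $-e$, not $e$, since $\bar S$ negates the action of $S$), but the computation you carry out actually uses the correct sign, so the argument is sound.
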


\begin{proof}
Since $S$ has product structure, the bundle $\tau^*S$ canonically identifies with $S$, but we see that the Clifford action of vectors in interval direction will turn into its negative under this identification.

For the equivalence between $\tilde S$ and $\bar S$, let $e$ be the canonical unit vector field in positive interval direction.
Then $S\to S,\xi\mapsto e\xi$ is a grading reversing bundle isomorphism which preserves the connection and for all tangent vectors $w+\lambda e$ with $w$ in $N$-direction we calculate
\[e(w-\lambda e)\xi=-(w+\lambda e)e\xi\,,\]
so it is actually an isomorphism between $\tilde S$ and $\bar S$ which preserves all structure.
\end{proof}

Now we can glue $W_1,W_2$ together by identifying their boundaries via $\psi$ to obtain a complete Riemannian manifold $X$. 
The lemma shows that we can glue the bundle $S_1|_{W_1}$ to the bundle $S_2|_{W_2}$ with grading and Clifford action reversed and obtain a Dirac $A$-bundle $S_X\to X$ with Dirac operator $D_X$.

We arrive at a direct generalization of the relative index of Gromov and Lawson:
\begin{defn}\label{defn:PhiRelInd}
The $\Psi$-relative index of $D_1,D_2$ is 
\[
\ind(D_1,D_2\mid\Psi)\coloneqq \ind(D_X)\in\K(\Roe(X;A))\,.\qedhere
\]
\end{defn}

\begin{rem}
If $M$ is even dimensional and orientable, which is the usual case considered in index theory, then multiplication with the volume element is a grading preserving automorphism of the bundle $S_2$ which anticommutes with the Clifford action of tangent vectors. Thus, in this case it suffices to only reverse the grading on $S_2$ and not the Clifford action in order to be able to glue the bundles together.
\end{rem}

\begin{lem} \label{lem:RelIndDDidNull}
Consider the situation above with $M=M_1=M_2$, $S=S_1=S_2$, $W=W_1=W_2$, $D=D_1=D_2$ and $\psi,\Psi$ being the respective identities. Let $f\colon X\to W$ the continuous coarse map which collapses the two halves. Then
\(f_*\ind(D,D\mid \id)=0\).\end{lem}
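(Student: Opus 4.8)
\textit{Plan of proof.} The plan is to realize the glued manifold $X$, together with its Dirac operator $D_X$, as the boundary of a bordism over which the coarse index becomes trivial, and then to identify the induced map on $\K$-theory with $f_*$.

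First I would take $Z\coloneqq W\times[0,1]$, a complete Riemannian manifold with corners, and round its corners. Since the construction preceding \Cref{defn:PhiRelInd} has already arranged that the metric and the Dirac $A$-bundle $S$ have product structure on a collar $\partial W\times(-\varepsilon,\varepsilon)$, the rounded boundary $\partial Z$ is, after a modification supported in that collar which by \Cref{thm:BordismInvariance} does not affect any coarse index, isometric to $X=W\cup_{\partial W}W$: the faces $W\times\{0\}$ and $W\times\{1\}$ become the two copies of $W$, and the rounded face $\partial W\times[0,1]$ becomes a collar of $\partial W$ inside $X$; moreover the metric on $Z$ is of product form $X\times[0,\delta)$ near $\partial Z$. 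I would then equip $Z$ with the $A\grtensor\Cl_1$-linear Dirac operator $D_Z\coloneqq D\times D_{[0,1]}$, the external product with the Dirac operator of the trivial $\Cl_1$-bundle over $[0,1]$.

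The crucial and most delicate step is to verify that near $\partial Z$ the operator $D_Z$ has exactly the shape $D_X\times D_{[0,\delta)}$ demanded by the hypotheses of \Cref{thm:BordismInvariance}. Over a collar of the face $W\times\{0\}$ inside $Z$ (with inward normal $+\partial_t$) one has $D_Z=D\times D_{[0,\delta)}$ directly, which matches $D_X$ on the first copy of $W$. Over a collar of the face $W\times\{1\}$ the inward normal is $-\partial_t$, so after reflecting the interval coordinate one obtains $D$ crossed with the Dirac operator of the $\Cl_1$-line with reversed interval orientation; by the lemma on reflecting the interval that precedes \Cref{defn:PhiRelInd}, together with the product structure of $S$ on the collar, this is precisely $D$ with reversed grading and Clifford action --- i.e.\ exactly the operator glued in on the second copy of $W$ in the definition of $D_X$. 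Over the rounded face near $\partial W$ everything is of the form $\partial W\times(\text{rounded quarter-plane})$ with all Dirac data constant in the $\partial W$-directions, so $D_Z$ restricts there to $D_{\partial W}$ crossed with a one-dimensional Dirac operator, matching the product structure of $D_X$ near $\partial W$; any residual mismatch confined to the $\varepsilon$-collar is irrelevant. This sign-and-orientation bookkeeping is where the real work lies; the rest is formal.

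Granting this, I would apply bordism invariance of the coarse index (\Cref{thm:BordismInvariance} together with the remark following it, with empty $M_0$) to the bordism $Z$ with $\partial Z=X$ and conclude that the inclusion $j\colon X=\partial Z\hookrightarrow Z$ satisfies $j_*\ind(D_X)=0$ in $\K(\Roe(Z;A))$. Finally, let $\pi\colon Z=W\times[0,1]\to W$ be the projection, which is a coarse equivalence. On each of the two copies of $W$ inside $X$ the composite $\pi\circ j$ is the identity, and on the collar of $\partial W$ inside $X$ it differs from $f$ by at most $\varepsilon$; hence $\pi\circ j$ and $f$ are close coarse maps and therefore induce the same homomorphism on the $\K$-theory of Roe algebras. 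Consequently $f_*\ind(D,D\mid\id)=\pi_*j_*\ind(D_X)=\pi_*(0)=0$, as claimed. I expect the corner-rounding and operator-matching step of the third paragraph to be the main obstacle; everything else is routine.
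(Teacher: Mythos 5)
Your proof is correct and follows essentially the same route as the paper: realize $X$ as the boundary of a bordism of the form $W\times I$ carrying the product operator $D\times D_I$, apply bordism invariance of the coarse index, and push forward via the coarse equivalence from the bordism to $W$. The paper constructs the bordism by embedding $X$ into $M\times\R$ and taking the codimension-zero region it bounds there (so the corner-rounding and the operator-matching near $\partial W$ happen implicitly, since both the metric and $D\times D_\R$ are simply restricted from the ambient $M\times\R$), whereas you build $W\times[0,1]$ directly and round corners by hand; this is a purely cosmetic difference, and both arguments rely on the same collar-modification remark to achieve the product structure required by \Cref{thm:BordismInvariance}.
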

 
\begin{proof}

We can embed $X$ into $M\times \R$ in such a way that the smaller collar neighborhood $\partial W\times (-\frac\varepsilon2,\frac\varepsilon2)\subset X$ maps canonically to $\partial W\times (-\frac\varepsilon2,\frac\varepsilon2)\subset M\times \R$ and away from the colar neighborhood $\partial W\times (-\varepsilon,\varepsilon)\subset X$ it is canonically mapped into $W\times\{+1,-1\}$. If done in the obvious way, $X$ bounds a closed submanifold $V\subset M\times\R$ of codimension zero which is coarsely equivalent to $V$. On $V$ we consider the product Dirac operator $D_V\coloneqq D\times D_\R$. After changing the Riemannian metric on $V$ near $X$ without changing its coarse structure we can assume that it has product structure near $X$. After making the corresponding modifications to the operator $D_V$, it agrees with $D_X\times D_\R$ on this product neighborhood. Hence we can apply the bordism invariance of the coarse index which yields that $\iota_*\ind(D,D\mid \id)=0$, where $\iota\colon X\hookrightarrow V$ is the inclusion constructed above. The result follows, because $f$ is close to $\iota$ composed with the canonical coarse equivalence $V\to W, (x,t)\mapsto x$.
\end{proof} 

The relative index theorem \cite[Theorem 4.35]{gromov_lawson_complete} now has the following coarse geometric generalization.

\begin{thm}\label{thm:PhiIndexTheorem}
Assume that $D_1,D_2$ are invertible away from $W_1,W_2$, respectively.
Then 
\[\ind(D_1,D_2\mid\Psi)=(\iota_1)_*\ind(D_1\mid W_1)-(\iota_2)_*\ind(D_2\mid W_2)\]
where $\iota_{1,2}\colon W_{1,2}\to X$ denote the canonical inclusions.
\end{thm}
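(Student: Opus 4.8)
The strategy is to reduce the relative index theorem to the bordism invariance for $L$-local coarse indices (\Cref{thm:LocalIndexBordismInvariance}) by exhibiting $X$ together with $D_X$ as one end of a suitable bordism whose other end realizes the difference $(\iota_1)_*\ind(D_1\mid W_1)-(\iota_2)_*\ind(D_2\mid W_2)$. The key structural observation is that $D_X$ is invertible away from $W_1\cup_{\partial}W_2\subset X$: outside a controlled neighborhood of this subset, $D_X$ looks like $D_1$ on $M_1\setminus \mathring W_1$ (invertible away from $W_1$) glued via $\Psi$ to $D_2$ on $M_2\setminus\mathring W_2$, so \Cref{lem:InvertibilityCheckOutsideL}, applied on each side, gives the localization; denote by $W\subset X$ the image of $W_1\sqcup W_2$ and by $\iota\colon W\hookrightarrow X$ the inclusion, so that there is a well-defined class $\ind(D_X\mid W)\in\K(\Roe(W;A))$ with $\ind(D_X)=\iota_*\ind(D_X\mid W)$ via diagram~\eqref{eq_diag_Lloc_classes}.

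First I would set up the bordism. Form the manifold $M_1\sqcup M_2$ with operator $D_1\sqcup D_2$, which is invertible away from $W_1\sqcup W_2$ with $L$-local index $\ind(D_1\mid W_1)\oplus\ind(D_2\mid W_2)\in\K(\Roe(W_1\sqcup W_2;A))$. I want a bordism $W$ (in the sense of \Cref{thm:LocalIndexBordismInvariance}, after attaching cylinders to get $\widehat W$) from $M_1\sqcup M_2$ to $X$, carrying an $A\grtensor\Cl_1$-linear Dirac operator restricting to $D_1\times D_{(-\infty,\varepsilon)}\sqcup D_2\times D_{(-\varepsilon,\infty)}$ and to $D_X\times D_{(-\varepsilon,\infty)}$ on the respective cylinders, and invertible away from a closed subset $\widehat L$ whose core piece $L\subset W$ contains the images of $W_1\sqcup W_2$ and of $W$. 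The natural candidate is obtained by cutting $M_1\sqcup M_2$ along $\partial W_1\sqcup\partial W_2$ and reglueing with a "pair of pants" that on the submanifold side is $(W_1\sqcup W_2)\times[0,1]$ and on the complement side is $(M_1\setminus\mathring W_1)\times[0,1]\cong(M_2\setminus\mathring W_2)\times[0,1]$ (via $\psi$) attached so that the two free boundary components recombine into $X$; the grading/Clifford reversal on the $W_2$-part, which is exactly the data entering \Cref{defn:PhiRelInd}, is what makes $X$ rather than $M_1\sqcup M_2$ appear. After the product-structure normalizations already performed before \Cref{defn:PhiRelInd}, the operator $D_1\sqcup D_2$ extends over this bordism; invertibility away from $\widehat L$ on $W$ again follows from \Cref{lem:InvertibilityCheckOutsideL}, because away from a controlled neighborhood of $L$ the whole bordism is just $(M_1\setminus\mathring W_1)\times[0,1]$ with the pulled-back operator, which is invertible there.

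Then \Cref{thm:LocalIndexBordismInvariance} applied to this bordism yields
\[
(j_1)_*\bigl(\ind(D_1\mid W_1)\oplus\ind(D_2\mid W_2)\bigr)=(j_X)_*\ind(D_X\mid W)\in\K(\Roe(L;A)),
\]
where $j_1,j_X$ are the boundary inclusions into $L$. Pushing forward to $\K(\Roe(W;A))$ (note $L$ is coarsely equivalent to $W$, and $W$ deformation-retracts coarsely onto the image of $W_1\sqcup W_2$) and then into $\K(\Roe(X;A))$ along $\iota$, the left side becomes $(\iota_1)_*\ind(D_1\mid W_1)+(\iota_2)_*\bigl(\text{index of the Clifford-reversed }D_2\bigr)$ and the right side becomes $\ind(D_X)=\ind(D_1,D_2\mid\Psi)$. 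It remains to identify the contribution of the grading- and Clifford-reversed $D_2$ on $W_2$ with $-\ind(D_2\mid W_2)$: this is the standard sign change in coarse index theory (reversing grading and Clifford multiplication negates the $K$-theory class), which can be deduced from the $L$-local version of \Cref{lem:RelIndDDidNull} together with additivity, exactly as in Gromov--Lawson's original argument that $\ind(D,D\mid\id)=0$.

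I expect the main obstacle to be the careful construction of the bordism $\widehat W$ and its Dirac operator with the correct cylinder structure at both ends and simultaneous control of the invertibility region $\widehat L$ — in particular verifying that the "pair of pants" can be equipped with a metric of product structure near all boundary components without enlarging the coarse support of the non-invertibility, so that \Cref{thm:LocalIndexBordismInvariance} genuinely applies. The sign bookkeeping for the reversed bundle is conceptually routine but must be done consistently with the glueing conventions fixed before \Cref{defn:PhiRelInd}.
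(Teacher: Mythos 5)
Your strategy is essentially the paper's: cut $M_1\sqcup M_2$ along $\partial W_1\sqcup\partial W_2$, build a bordism to $X$ whose ``far part'' (outside a controlled neighborhood of $W_1\cup W_2\cup X$) is a product $(M_1\setminus\mathring W_1)\times I$, check invertibility away from the core via \Cref{lem:ProductOpInvertible} and \Cref{lem:InvertibilityCheckOutsideL}, and invoke \Cref{thm:LocalIndexBordismInvariance}. The paper phrases the bordism as a surgery on $M_1\coprod M_2$ along $\partial W_1\coprod\partial W_2$ followed by an explicit capping of the fourth boundary component (the double $(M_1\setminus\mathring W_1)\cup_\psi(M_2\setminus\mathring W_2)$, which bounds as in the proof of \Cref{lem:RelIndDDidNull}); your ``pair of pants'' description is this same object with the capping folded into one step, and that step deserves to be made explicit, since the residual double is exactly what must be absorbed before the bordism-invariance theorem applies.

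One concrete imprecision: you derive the preliminary identity with a plus sign from \Cref{thm:LocalIndexBordismInvariance} and then assert that the subsequent pushforward turns the $\ind(D_2\mid W_2)$ contribution into the index of the Clifford-reversed operator. A pushforward along a coarse map does not reverse gradings or Clifford actions, so this step cannot supply the sign. The sign is not a correction applied after bordism invariance; it is encoded in the statement that the bordism operator $D_V$ restricts to $D_X\times D_{\R}$ near $X$, where $D_X$ by construction already has grading and Clifford action reversed on the $W_2$ half (this is the lemma preceding \Cref{defn:PhiRelInd}). So the verification that matters — and the one the paper's brief proof also leaves largely implicit — is that the surgery construction together with the re-gluing via $\psi,\Psi$ produces, near the $X$-boundary, precisely the bundle $S_X$ with that reversal on $W_2$, so that \Cref{thm:LocalIndexBordismInvariance} already outputs the signed identity directly. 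Your plan will work once this is the place where the reversal is tracked, rather than in the pushforward.
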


\begin{proof}
By doing surgery on $M_1 \coprod M_2$ along the submanifold $\partial W_1\coprod\partial W_2$ we obtain a complete Riemannian manifold whose boundary consists of $M_1,M_2,X$ and a fourth part obtained by gluing $M_1\setminus\mathring W_1$ to $M_2\setminus\mathring W_2$ along $\psi|_{\partial W_1}\colon\partial W_1\to\partial W_2$. Since $\psi$ is an isometry, the fourth part is the boundary of another complete Riemannian manifold as in the proof of the previous lemma.
Thus we obtain a bordism $V$ between $M_1\coprod M_2$ and $X$ which can be written as $V=L\cup (V\setminus L)$ with isometries $V\setminus L\cong (M_1\setminus\mathring W_1)\times I\cong  (M_2\setminus\mathring W_2)\times I$ with a closed interval $I$ and inclusion $X\subset L$ being a coarsely equivalence.

\noindent
\setlength{\unitlength}{0.04\textwidth}
\begin{picture}(24,9)(-12,-4)
\qbezier(0,3)(2,3)(2,2)
\qbezier(0,3)(-2,3)(-2,2)
\qbezier(0,1)(2,1)(2,2)
\qbezier(0,1)(-2,1)(-2,2)

\qbezier(-10,-3)(1,-3)(12,-3)
\qbezier[240](-12,-1)(-1,-1)(10,-1)

\qbezier(2,2)(2,0)(3,0)
\qbezier(-2,2)(-2,0)(-3,0)

\qbezier(3,0)(4,0)(4.5,1)
\qbezier(-3,0)(-4,0)(-4.5,1)

\qbezier(4.5,1)(5,2)(6,2)
\qbezier(-4.5,1)(-5,2)(-6,2)

\qbezier(6,2)(8.5,2)(11,2)
\qbezier(-6,2)(-8.5,2)(-11,2)

\qbezier(3,0)(4,0)(4.5,-3)
\qbezier(-3,0)(-2,0)(-1.5,-3)

\qbezier[20](3,0)(2.5,0)(2,-1)
\qbezier[20](-3,0)(-3.5,0)(-4,-1)

\qbezier[20](4.5,1)(4.75,1.5)(6,1.5)
\qbezier[20](-4.5,1)(-4.75,1.5)(-6,1.5)

\qbezier[50](6,1.5)(8.3,1.5)(10.6,1.5)
\qbezier[55](-6,1.5)(-8.7,1.5)(-11.4,1.5)

\qbezier(4.5,1)(4.45,0.9)(4.5,0.8)
\qbezier(-4.5,1)(-4.45,0.9)(-4.5,0.8)

\qbezier(4.5,0.8)(4.65,0.5)(8,0.5)
\qbezier(-4.5,0.8)(-4.65,0.5)(-8,0.5)

\qbezier(8,0.5)(9.7,0.5)(11.4,0.5)
\qbezier(-8,0.5)(-9.3,0.5)(-10.6,0.5)

\qbezier(7,2)(9,2)(9,-3)
\qbezier(-8,2)(-6,2)(-6,-3)

\qbezier[45](7,2)(6,2)(6,-1)
\qbezier[45](-8,2)(-9,2)(-9,-1)

\put(-11.8,-1.7){$M_2$}
\put(-9.8,-3.7){$M_1$}
\put(2.1,2){$X$}

\put(-8,3){\line(0,1){1}}
\put(7,3){\line(0,1){1}}

\put(0,3.5){\vector(1,0){7}}
\put(-1,3.5){\vector(-1,0){7}}

\put(-0.75,3.3){$L$}
\end{picture}

Using the usual surgery construction for Dirac operators and the ideas of the previous lemma, we obtain a Dirac operator $D_V$ on $V$ which agrees with $D_1\times D_\R$ and/or $D_2\times D_\R$ on $V\setminus L$ and near $M_1,M_2$, while it agrees with $D_X\times D_\R$ near $X$. 

If we now attach infinite cylinders to the boundary of $V$ we obtain a complete Riemanian manifold $\widehat{V}$ to which $D_V$ extends canonically and we consider the closed subset $\widehat{L}$ which we obtain from $L$ by attaching infinite cylinders over $W_1,W_2$ and $X$. Then the canonical extension $D_{\widehat{V}}$ is invertible away from $\widehat{L}$ because of \Cref{lem:ProductOpInvertible,lem:InvertibilityCheckOutsideL} and the prerequisits of \Cref{thm:LocalIndexBordismInvariance} are satisfied.
It implies that 
\[i_*\ind(D_1,D_2\mid\Psi)=(i_1)_*\ind(D_1\mid W_1)-(i_2)_*\ind(D_2\mid W_2)\]
where $i\colon X\hookrightarrow L$, $i_{1,2}\colon W_{1,2}\hookrightarrow L$ are the inclusions into the three parts of the boundary of $V$.
Now note that there is a coarse equivalence $f\colon L\to X$ such that $f\circ i=\id,f\circ i_{1,2}=\iota_{1,2}$ and the claim follows.
\end{proof}

\subsection{The relative index for twisted operators}

Now we specialize to the case that we have one Dirac $A$-bundle $S\to M$ with $A$-linear Dirac operator $D$ over a complete Riemannian manifold $M$ which we twist with two different smooth $B$-bundles $E,F\to M$ that are isomorphic away from a closed subset $L\subset M$.
More precisely, we assume that there is a controlled neighborhood $U$ of $L$ and a bundle isomorphism $\Theta\colon E|_{M\setminus U}\to F|_{M\setminus U}$ and we equip the bundles with connections that are preserved under $\Theta$.

By choosing a uniformly close smooth approximation of the distance function $\dist(\blank,L)\colon M\to[0,\infty)$ and picking a sufficiently large regular value, we find a closed submanifold with boundary of codimension zero $W$ containing $U$ and with inclusion $L\hookrightarrow W$ being a coarse equivalence. Choosing $\psi$ to be the identity and $\Psi\coloneqq \id_S\grtensor\Theta|_{M\setminus \mathring W}\colon S\grtensor E|_{W\setminus\mathring W}\to S\grtensor F|_{W\setminus\mathring W} $, we have everything required for \Cref{defn:PhiRelInd}.

\begin{defn}\label{defn:TwistedRelInd}
The \emph{relative index for twisted operators} is
\[\ind(D\midd E,F)\coloneqq f_*\ind(D_E,D_F\mid\id_S\grtensor \Theta|_{M\setminus \mathring W})\in\K(\Roe(L;A\grtensor B))\]
where $f\colon X\to L$ identifies the two copies of $W$ in $X$ followed by a coarse inverse of the inclusion $L\hookrightarrow W$.
\end{defn}

The isomorphism $\Theta$ is understood implicitly and therefore we omitted it from the notation. In the following, it will usually be the identity anyway.
It follows readily from bordism invariance that this index is independent of the choice of connections on $E,F$ and the submanifold $W$.

Our focus in the rest of the paper will be on even more special $B$-bundles, for which the relative index for twisted operators has a very convenient alternative description. We now introduce them along with some related terminology.
\begin{defn}\label{defn_A0_subsubset}
\begin{itemize}
\item We say that $E,F\to M$ are \emph{$B$-bundles of bounded variation which agree away from $L$} if their defining projection valued functions $P,Q\in\cA(M;B)$ are equal on the complement of some controlled neighborhood $U$ of~$L$.

\item Let $\cAz(L\Subset M;B)\subset \cA(M;B)$ denote the closure of all functions whose support lies in some controlled neighborhood of $L$, i.\,e. it is the closure of the union of all $\cA(\overline{U},\partial U;B)$ where $U$ runs over all controlled neighborhoods of $L$.

\item Since $P-Q\in\cA(\overline{U},\partial U;B)\subset \cAz(L\Subset M;B)$, these projections give rise to a $\K$-theory class $\llbracket E\rrbracket-\llbracket F\rrbracket\in\K(\cAz(L\Subset M;B))$.
\qedhere
\end{itemize}
\end{defn}

We use the symbol $\Subset$ above to indicate that the inclusion is only supposed to be understood in a coarse geometric sense, because otherwise one might misinterpret it as an algebra of functions supported in $L$, which it is not.
It is an ideal which generalizes $\Cz$-functions. Indeed, if $L$ is compact and intersects all connected components of $M$, then $\cAz(L\Subset M;B)=\Cz(M)\grtensor B\grtensor\grKom$.

Clearly, \eqref{eq:DiracEclassAsympMorph} restricts to an asymptotic morphism
\[
\cS\grtensor \cAz(L\Subset M;B)\to\fA( \Roe(L\subset M;S\grtensor B))\,,\]
because if $f\in\cAz(L\Subset M;B)$ is contained in an $R$-neighborhood of $L$ and $\phi\in\cS$ has Fourier transform supported in $[-S,S]$, then $\phi(t^{-1}D)$ has propagation bounded by $t^{-1}S$ and $(\phi(t^{-1}D)\grtensor \id_{B\grtensor\grKom}) \cdot (\id_S\grtensor f)$ is supported within the $(R+t^{-1}S)$-neighborhood of $L$.

\begin{defn} 
The $\EE$-theory class of $D$ localized near $L$ is the class
\[\llbracket D\midd L; B\rrbracket \in\EE(\cAz(L\Subset M;B),\Roe(L\subset M;A\grtensor B))\]
represented by \eqref{eq:DiracEclassAsympMorph} with restricted domain and target.
\end{defn}

\begin{rem}\label{rem:notationE}
Now that we have introduced several very similar and perhaps confusing notations, we should probably provide a mnemonic for how we use them: In short, we use a single bar if something is already ``$L$-local'' and a double bar if we perform a procedure to ``localize'' it near $L$. 
So in contrast to the $L$-local index $\ind(D\mid L)$ and the $L$-local $\K$-homology class $[D\mid L]$, where the locality was already inherent to $D$, here we ``localize'' near $L$ to obtain the class $\llbracket D\midd L; B\rrbracket$ (and similarly the index $\ind(D\midd E,F)$) by restricting to the algebra $\cAz(L\Subset M;B)$ of functions which are already $L$-local. 
\end{rem}

We now have the following alternative description of the relative index for twisted operators, where we have made the canonical identification $\K(\Roe(L;A\grtensor B))\cong\K(\Roe(L\subset M;A\grtensor B))$.

\begin{thm}\label{thm:TwistedRelIndETheory}
\(\ind(D\midd E,F)=\llbracket D\midd L;B\rrbracket\circ (\llbracket E\rrbracket-\llbracket F\rrbracket)\)
\end{thm}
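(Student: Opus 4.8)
The plan is to unfold both sides of the claimed equation into concrete asymptotic-morphism representatives and match them on the nose. First I would recall from \Cref{defn:TwistedRelInd} that the left-hand side is obtained by gluing $D_E|_{W}$ to $\overline{D_F|_{W}}$ (grading and Clifford action reversed) along $\partial W$ to get the Dirac operator $D_X$ over the double $X = W\cup_{\partial W} W$, taking $\ind(D_X)$, and pushing forward along the collapse map $f\colon X\to L$. The right-hand side is the composition in $\EE$-theory of $\llbracket D\midd L;B\rrbracket$ with the class $\llbracket E\rrbracket - \llbracket F\rrbracket$ of the projection $P-Q \in \cA(\overline U,\partial U;B)\subset\cAz(L\Subset M;B)$.

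The key technical step is to relate the coarse index of the glued \emph{twisted} operator $D_X$ to the $L$-local coarse index of the \emph{untwisted} operator $D_X^{\mathrm{untw}}$ over $X$ twisted by a single bundle on $X$ whose defining projection, under the two halves of $X$ being identified with $M$, is $P$ on the first copy and $Q$ on the second copy. More precisely: over $X$ one has a canonical $B$-bundle $G$ of bounded variation built from the pair $(P,Q)$ — on the first copy of $W$ it is $\im P$, on the second copy (with reversed grading) it corresponds to $Q$ — and pushing forward along $f$ collapses the ambient $X$ back to $M$. Because $P$ and $Q$ agree outside $U$, the difference $P-Q$ lives in $\cAz(L\Subset M;B)$, and I would show that $f_*\ind(D_X) = \llbracket D\midd L;B\rrbracket\circ\big(\llbracket E\rrbracket - \llbracket F\rrbracket\big)$ by the following chain. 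Apply \Cref{thm:IndexTwistedOperator} on each half: over the first copy of $M$ the twisted index is $\llbracket D;B\rrbracket\circ\llbracket E\rrbracket$, over the second copy (with reversed grading/Clifford, which negates the $\K$-homology class) it is $-\llbracket D;B\rrbracket\circ\llbracket F\rrbracket$; the gluing along $\partial W$ together with \Cref{lem:RelIndDDidNull} (which kills the part of the index supported away from $L$, i.e. where $P=Q$) shows that after $f_*$ the two contributions combine to $\llbracket D;B\rrbracket\circ(\llbracket E\rrbracket - \llbracket F\rrbracket)$, and this factors through the ideal $\Roe(L\subset M;A\grtensor B)$ precisely because $P-Q$ is supported near $L$, yielding $\llbracket D\midd L;B\rrbracket\circ(\llbracket E\rrbracket - \llbracket F\rrbracket)$.

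I expect the main obstacle to be the careful bookkeeping in the gluing step: one must verify that the asymptotic morphism \eqref{eq:DiracEclassAsympMorph} for $D_X$ over $X$, restricted to functions coming from $\cA(\overline U,\partial U;B)$ (where $P-Q$ is nonzero), genuinely agrees — after identifying $\overline U\subset X$ with $\overline U\subset M$ via the collapse $f$ — with the restricted asymptotic morphism \eqref{eq:DiracEclassAsympMorph} defining $\llbracket D\midd L;B\rrbracket$, up to the homotopies provided by \Cref{lem:independence} (independence of extension) and \Cref{thm:BordismInvariance}. The grading/Clifford reversal on the second half must be tracked so that the sign in $\llbracket E\rrbracket-\llbracket F\rrbracket$ (rather than $\llbracket E\rrbracket+\llbracket F\rrbracket$) emerges correctly; this is where the lemma on the three isomorphic forms of $\overline S$ and the resulting negation of the Dirac $\K$-homology class under grading reversal is used. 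Once these identifications are in place the statement follows by combining \Cref{thm:PhiIndexTheorem}, \Cref{thm:IndexTwistedOperator}, and the definition of $\llbracket D\midd L;B\rrbracket$.
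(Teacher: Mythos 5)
Your proposal finds the correct starting point and most of the right ingredients — pass to the double $X$, observe that the glued twisted operator $D_X$ is the untwisted glued operator $D'$ twisted by a single $B$-bundle $G$ on $X$ whose projection is $P$ on one copy of $W$ and $Q$ on the other, then invoke \Cref{thm:IndexTwistedOperator}, \Cref{lem:RelIndDDidNull}, and \Cref{lem:independence}. This is indeed the same route the paper takes ($G$ is what the paper calls $E'$). However, the crucial middle step as you have written it does not go through.

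There are two genuine problems. First, you propose to ``apply \Cref{thm:IndexTwistedOperator} on each half'' of $X$, but that theorem is a statement about a Dirac operator over a complete manifold \emph{without boundary}; it cannot be applied to the two copies of $W$ separately, and the coarse index of $D_X$ over $X$ is not a sum of indices computed over the two halves. The legitimate move is to apply the theorem once, to $D'_{E'}$ over all of $X$, giving $\ind(D'_{E'}) = \llbracket D';B\rrbracket\circ\llbracket E'\rrbracket$; but then $\llbracket E'\rrbracket$ lives in $\K(\cA(X;B))$, and one still has to explain how to land in $\cAz(L\Subset M;B)$. The paper closes this gap by introducing a second bundle $F'$ on $X$ (two copies of $F|_W$ glued), noting that $f_*\ind(D'_{F'}) = \ind(D\midd F,F) = 0$ by \Cref{lem:RelIndDDidNull}, and then subtracting: $\llbracket E'\rrbracket - \llbracket F'\rrbracket$ is supported in a single copy of $\mathring W$ (where $P\neq Q$) and identifies with $\llbracket E\rrbracket - \llbracket F\rrbracket$, after which \Cref{lem:independence} and the collapse map complete the argument. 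Your sketch gestures at \Cref{lem:RelIndDDidNull} as ``killing the part of the index supported away from $L$'' but never constructs this auxiliary bundle $F'$ and never performs the actual subtraction, so the argument does not become a proof.

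Second, your explanation of where the minus sign in $\llbracket E\rrbracket - \llbracket F\rrbracket$ comes from — the grading/Clifford reversal on the second half negating the $\K$-homology class — is incorrect. The grading reversal is absorbed into the definition of $S_X$ and hence of $D'$; the bundle $E'$ itself carries no sign, and in fact both $E'$ and $F'$ carry $Q$ on the second copy. The minus sign appears entirely through the difference $\llbracket E'\rrbracket - \llbracket F'\rrbracket$ between the two bundles on $X$, in exactly the same way that the difference of two projections $P - Q$ represents a $\K$-theory class of an ideal. This is purely a $\K$-theory difference-class mechanism; the grading reversal on $S$ plays no role in producing it.
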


\begin{proof}
Let $E'$ denote the bundle obtained by gluing together a copy of $E|_W$ and a copy of $F|_W$ along the boundary and $F'$ the bundle obtained by gluing two copies of $F$ together. Furthermore, let $D'$ denote the operator on $X$ obtained by gluing together two copies of the Dirac bundle underlying $D$. Then the indices of the twisted operators $D'_{E'}$, $D'_{F'}$ are by definition of the relative indices and \Cref{lem:RelIndDDidNull}
\[f_*\ind(D'_{E'})=\ind(D\midd E,F)\,,\quad f_*\ind(D'_{F'})=\ind(D\midd F,F)=0\,.\]
On the other hand we have $\llbracket E\rrbracket-\llbracket F\rrbracket=\llbracket E'\rrbracket-\llbracket F'\rrbracket\in\K(\cA( W,\partial W; B))$ and hence
\begin{align*}
f_*(\ind(D'_{E'})-\ind(D'_{F'}))&=f_*(\llbracket D';B\rrbracket\circ (\llbracket E'\rrbracket-\llbracket F'\rrbracket))
\\&=g_*\llbracket D'|_{\mathring W};B\rrbracket\circ (\llbracket E'\rrbracket-\llbracket F'\rrbracket)
\\&=g_*\llbracket D|_{\mathring W};B\rrbracket\circ (\llbracket E\rrbracket-\llbracket F\rrbracket)
\\&=\llbracket D\midd L;B\rrbracket\circ (\llbracket E\rrbracket-\llbracket F\rrbracket)
\end{align*}
where $g\colon W\to L$ denotes a coarse inverse of the inclusion, the third equality is a consequence of \Cref{lem:independence} and the last equality is clear from the definitions.
\end{proof}

\subsection{Twisting at the level of \texorpdfstring{$\K$}{K}-homology classes}
\label{sec:twistingKtheory}

If $M$ has bounded geometry, then the asymptotic morphism \eqref{eq:descentasymptoticmorphism} clearly restricts to 
\begin{equation}\label{eq:restricteddescentasymptoticmorphism}
\Loc(M;A)\grtensor\cA(L\Subset M;B)\to\fA(\Roe(L\subset M;A\grtensor B))
\end{equation}
thereby giving rise to a descent homomorphism
\[\nu_{B,L}\colon\K_0(M;A)\to\EE(\cA(L\Subset M;B),\Roe(L\subset M;A\grtensor B))\]
and again it is clear that $\nu_{B,L}([D])=\llbracket D\midd L;B\rrbracket$.
Combining this with \Cref{thm:PhiIndexTheorem} and \Cref{thm:TwistedRelIndETheory} 
we see that if $D_E,D_F$ are invertible away from $L$ and $E,F\to M$ agree away from $L$, then 
\begin{align*}
\ind(D_E\mid L)-\ind(D_F\mid L)&=\ind(D\midd E,F)
\\&=\llbracket D\midd L;B\rrbracket\circ (\llbracket E\rrbracket-\llbracket F\rrbracket)
\\&=\nu_{B,L}([D])\circ (\llbracket E\rrbracket-\llbracket F\rrbracket)
\end{align*}  
This raises the question whether there exists the dashed homomorphism $\tw$ making the following diagram commute:
\begin{equation}\label{eq:Khomologytwistingdiagram}
\xymatrix{
\K_0(M;A)\otimes\K(\cAz(L\Subset M;B))\ar[d]^-{\nu_{B,L}\otimes \id}\ar@{-->}[r]^-{\tw}
&\K_0(M\mid L;A\grtensor B)\ar[d]_-{\ind}
\\{\begin{matrix}\EE(\cAz(L\Subset M;B),\Roe(L\subset M;A\grtensor B))\\\otimes\K(\cAz(L\Subset M;B))\end{matrix}}\ar[r]^-{\circ}
&\K(\Roe(L\subset M;A\grtensor B))
}\end{equation}  
The map $\tw$ is supposed to 
satisfy 
\begin{equation}\label{eq:Khomologytwistingformula}
\tw([D]\grtensor (\llbracket E\rrbracket-\llbracket F\rrbracket))=[D_E\mid L]- [D_F\mid L]
\end{equation}
and, following Atiyah's idea from \cite{atiyah_global_theory_elliptic_operators} that elements of $\K$-homology should be seen as generalized elliptic operators, it should therefore be understood as ``twisting the generalized operator $[D]$ by the virtual bundle $\llbracket E\rrbracket-\llbracket F\rrbracket$ which vanishes away from $L$ to obtain the generalized operator $[D_E\mid L]- [D_F\mid L]$ that is invertible away from $L$''.
Note that this is something which does not work at the level of differential operators itself. 
It should also be remarked that the twisting map $\tw$ could also be interpreted (and written) as a cap product, but we will not do it here.

Defining the map is straightforward. Note that the image of asymptotic morphism \eqref{eq:restricteddescentasymptoticmorphism} is contained in $\Loc(M\mid L;A\grtensor B)/\Cz([1,\infty);\Roe(L\subset M;A\grtensor B))$ and hence it induces a map
\begin{align*}
\tw\colon \K_0(M;A)\otimes\K(\cAz(L\Subset M;B))&\to\K\left(\frac{\Loc(M\mid L;A\grtensor B)}{\Cz([1,\infty);\Roe(L\subset M;A\grtensor B))}\right)
\\&\cong \K(\Loc(M\mid L;A\grtensor B))\cong\K_0(M\mid L;A\grtensor B)\,.
\end{align*}
Moreover, it is readily verified that it makes the diagram \eqref{eq:Khomologytwistingdiagram} commute.
The hard part is to prove that formula \eqref{eq:Khomologytwistingformula} holds.

We start by noting that the following lemma can be shown by essentially the same calculations as those in the proof of \cite[Theorem 4.14]{WulffTwisted}.
\begin{lem}\label{lem:TwistedKHomologyClassRepresentative}
Under the isomorphisms above, the $\K$-homology class $[D_E\mid L]$ is represented by the $*$-homomorphism
\begin{align*}
\beta_P\colon \cS_\varepsilon&\to \frac{\Loc(M\mid L;A\grtensor B)}{\Cz([1,\infty);\Roe(L\subset M;A\grtensor B))}
\\\phi&\mapsto \left(t\mapsto (V^*\phi(t^{-1}D) V)\grtensor\id_{B\grtensor\grelltwo})\circ(\id\grtensor P) \right)
\end{align*}
up to rearrangement of the tensor factors. Analogously we obtain a $*$-homomorphism $\beta_Q$ representing $[D_F\mid L]$ for $F,Q$ instead of $E,P$.\qed
\end{lem}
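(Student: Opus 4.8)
The plan is to reproduce, one step up — in the localization algebra rather than in the Roe algebra — the computation that proves \cite[Theorem~4.14]{WulffTwisted} (reproduced here as \Cref{thm:IndexTwistedOperator}), while carrying the ideal localized near $L$ along at every stage. Recall first that, by \Cref{defn:invertibleawayfromLindex}, $[D_E\mid L]$ is represented by the $*$-homomorphism
\[
\delta_P\colon\cS_\varepsilon\to\Loc(M\mid L;L^2(S\grtensor E))\,,\qquad \phi\mapsto\bigl(t\mapsto\phi(t^{-1}D_E)\bigr)\,,
\]
composed with the inclusion $\Loc(M\mid L;L^2(S\grtensor E))\hookrightarrow\Loc(M\mid L;A\grtensor B)$ and precomposed with a homotopy inverse $\psi_\varepsilon\colon\cS\to\cS_\varepsilon$; by \Cref{lem:comultiplicationepsilon} I may work with domain $\cS_\varepsilon$ throughout and reinstate $\psi_\varepsilon$ only at the end. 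Since the quotient map
\[
q\colon\Loc(M\mid L;A\grtensor B)\to\Loc(M\mid L;A\grtensor B)/\Cz\bigl([1,\infty);\Roe(L\subset M;A\grtensor B)\bigr)
\]
has a contractible kernel it is a $\K$-theory isomorphism, so it is enough to show that $q\circ\delta_P$ and $\beta_P$ are homotopic as $*$-homomorphisms $\cS_\varepsilon\to\Loc(M\mid L;A\grtensor B)/\Cz([1,\infty);\Roe(L\subset M;A\grtensor B))$.

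The comparison proper is the calculation underlying \cite[Theorem~4.14]{WulffTwisted}. Realizing $L^2(M;S\grtensor E)$, suitably stabilized, as the range of $\id\grtensor P$ inside $L^2(M;S)\grtensor B\grtensor\grelltwo$ by means of the isometry $V$ (which also compresses everything to the controlled neighbourhood of $L$ entering the definition of $\beta_P$), the operator $D_E$ becomes the compression of the trivially twisted operator $D\grtensor\id$ — here one uses that the Grassmann connection on $E=\im P$ is obtained by compressing the trivial connection with $P$ and that $P(\mathrm dP)P=0$. One then has to compare $\phi(t^{-1}D_E)$ with the compression of $\phi(t^{-1}D)$; by a density argument this reduces to the resolvents $\phi_\pm(x)\coloneqq(x\pm i)^{-1}$, for which the defect is controlled by the graded commutator of $\id\grtensor P$ with $\phi_\pm(t^{-1}D)\grtensor\id$. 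That commutator is $-t^{-1}$ times a product of two such resolvents with Clifford multiplication by the gradient of $P$ sandwiched in between, hence of norm $O(t^{-1})$ because $P\in\cA(M;B)$ has bounded gradient; the same bound disposes of the defect coming from the compression to the neighbourhood of $L$. Consequently $\beta_P(\phi)-\delta_P(\phi)\in\Cz([1,\infty);\Roe(M;A\grtensor B))$ for every $\phi\in\cS$.

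The step I expect to require the most care is upgrading this from a statement in the ambient algebra $\Roe(M;A\grtensor B)$ to one in the $L$-localized quotient. One first checks that $\beta_P(\phi)$ and $\delta_P(\phi)$ really do lie in $\Loc(M\mid L;A\grtensor B)$ for $\phi\in\cS_\varepsilon$: for $\delta_P$ this is precisely the invertibility of $D_E$ away from $L$, which gives $\ev_1\delta_P(\phi)=\phi(D_E)\in\Roe(L\subset M;L^2(S\grtensor E))$; for $\beta_P$ the compression built into $V$ makes $\beta_P(\phi)(t)$ supported in a controlled neighbourhood of $L$ for every $t$. It then remains to see that the difference path, which lies in $\Loc(M\mid L;A\grtensor B)\cap\Cz([1,\infty);\Roe(M;A\grtensor B))$, is null in the quotient by $\Cz([1,\infty);\Roe(L\subset M;A\grtensor B))$ — that is, that $\beta_P$ and $\delta_P$ agree not merely asymptotically in $\Roe(M;A\grtensor B)$ but genuinely in the localized quotient. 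I would handle this exactly as in the proof of \Cref{thm:LocalIndexBordismInvariance}, where the analogous tension between the ambient Roe algebra and the ideal supported near $L$ was resolved by a careful choice of covering isometries: one deforms the difference path through $\Loc(M\mid L;A\grtensor B)$, without altering its class modulo $\Cz([1,\infty);\Roe(L\subset M;A\grtensor B))$, to a path taking values in $\Roe(L\subset M;A\grtensor B)$, which is therefore null there. Composing the resulting homotopy with $\psi_\varepsilon$ and with the unitary rearranging the tensor factors of the two Hilbert-module models yields the claim; the statement for $\beta_Q$ is proved identically.
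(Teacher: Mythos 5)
Your overall strategy — reproduce the computations of \cite[Theorem~4.14]{WulffTwisted} and carry the $L$-localization along — is the approach the paper intends, and the asymptotic comparison in $\Roe(M;A\grtensor B)$ that you describe (Grassmann realization, commutator estimate on resolvents) is correct as far as it goes. But the crucial step of upgrading from the ambient quotient $\Loc(M;A\grtensor B)/\Cz([1,\infty);\Roe(M;A\grtensor B))$ to the $L$-localized quotient $\Loc(M\mid L;A\grtensor B)/\Cz([1,\infty);\Roe(L\subset M;A\grtensor B))$ is not correctly handled, and this is exactly where the content of the lemma lies.

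Two specific problems. First, the assertion that ``the compression built into $V$ makes $\beta_P(\phi)(t)$ supported in a controlled neighbourhood of $L$'' is wrong: the isometry $V$ appearing in $\beta_P$ is the same standard isometry as in \eqref{eq:DiracEclassAsympMorph}, realizing $\phi(t^{-1}D)$ inside the chosen Hilbert module $L^2(M)\grtensor A\grtensor\grelltwo$; it is not a compression to a neighbourhood of $L$. Nor is $P\in\cA(M;B)$ supported near $L$ — in general $E$ is a bundle defined on all of $M$ and $P$ has full support. So $\beta_P(\phi)(t)$ is \emph{not} a priori in $\Roe(L\subset M;A\grtensor B)$ for any $t$ (including $t=1$): the fact that $\beta_P$ determines a class in the $L$-localized quotient is itself a nontrivial consequence of the invertibility of $D_E$ away from $L$ and of the comparison with $\delta_P$; it is not a pointwise support statement. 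Consequently the difference path $\beta_P(\phi)-\delta_P(\phi)$ is only known to lie in $\Cz([1,\infty);\Roe(M;A\grtensor B))$, not in $\Loc(M\mid L;A\grtensor B)\cap\Cz([1,\infty);\Roe(M;A\grtensor B))$, and even if it were in that intersection the intersection is strictly larger than $\Cz([1,\infty);\Roe(L\subset M;A\grtensor B))$ — a path in $\Loc(M\mid L;\cdot)$ only has its value \emph{at $t=1$} constrained to $\Roe(L\subset M;\cdot)$. Second, the final ``deform the difference path exactly as in the proof of \Cref{thm:LocalIndexBordismInvariance}'' is not a usable step: that proof constructs a specific asymptotic morphism out of a smooth function $p\colon\widehat W\to\R$ together with propagation estimates for functions with compactly supported Fourier transform, and there is no analogue of that construction available for the present difference path. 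What is actually needed — and what ``essentially the same calculations as \cite[Theorem 4.14]{WulffTwisted}'' is meant to deliver — is to re-run the homotopies of that proof directly in the algebra $\Loc(M;A\grtensor B)/\Cz([1,\infty);\Roe(L\subset M;A\grtensor B))$ and check, using the invertibility of $D_E$ away from $L$ at each stage, that every intermediate $*$-homomorphism factors through the subalgebra $\Loc(M\mid L;A\grtensor B)/\Cz([1,\infty);\Roe(L\subset M;A\grtensor B))$. That verification is the substance of the lemma and is missing from your argument.
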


At this point we have to recall how exactly two $*$-homomorphisms $\alpha^+,\alpha^-\colon\cS\to D$ into a \textCstar-algebra $D$ which agree modulo an ideal $J$ determine a difference element in $\K(J)$. 
Applied to the two $*$-homomorphisms $\alpha_P,\alpha_Q\colon\cS\to \cA(M;B)$ given by $\alpha_P(\phi)\coloneqq\phi(0)\cdot P$ and $\alpha_Q(\phi)\coloneqq\phi(0)\cdot Q$ this will be the construction that determines $\llbracket E\rrbracket-\llbracket F\rrbracket\in\K(\cA(L\Subset M;B))$.
It can also be applied to the $*$-homomorphisms $\beta_P,\beta_Q$ from the preceding lemma in slightly greater generality: Even if $D_E,D_F$ are not invertible away from $L$ and hence their images are only contained in $\frac{\Loc(M;A\grtensor B)}{\Cz([1,\infty);\Roe(L\subset M;A\grtensor B))}$, they clearly agree modulo the ideal $\frac{\Loc(M\mid L;A\grtensor B)}{\Cz([1,\infty);\Roe(L\subset M;A\grtensor B))}$ and we can use the construction to define a difference element $[D_E]-[D_F]\in\K_0(M\mid L;A\grtensor B)$. 

Let $M_{1,1}(\C)$ denote the \textCstar-algebra of $2\times 2$-matrices with the diagonal-off-diagonal grading and consider the mapping cone \textCstar-algebra
\[C=C(\pi)\coloneqq\left\{(c,f)\in D\oplus \Cz\left([0,\infty);D/J)\right)\,\middle|\, \pi(c)=f(0) \right\}\]
of the quotient map $\pi\colon D\to D/C$.
Then $J$ embeds canonically as the ideal $\{(j,0)\mid j\in J\}$ into $C$ and the quotient $C/J\cong\Cz([0,\infty);D/J) $ is contractible, so we have an isomorphism
\[\K(J)\cong \K(C)\cong\K(C\grtensor M_{1,1}(\C))\]
where, to be specific, the second isomorphism is obtained by the map $C\to C\grtensor M_{1,1}(\C)$ which tensors with $(\begin{smallmatrix}1&0\\0&0\end{smallmatrix})$. (Tensoring with $(\begin{smallmatrix}0&0\\0&1\end{smallmatrix})$ instead would yield the negative of this isomorphism.)

For $s\in[1,\infty)$ the $*$-homomorphisms
\[\psi_s\colon \cS\to M_{1,1}(\C)\,,\quad \phi\mapsto \phi\begin{pmatrix}s&0\\0&s\end{pmatrix}=\begin{pmatrix}\phi_0(s)&\phi_1(s)\\\phi_1(s)&\phi_0(s)\end{pmatrix}\,,\]
where $\phi_0,\phi_1$ denote the even and odd parts of $\phi$, respectively, constitute a homotopy between $\psi_0\colon\phi\mapsto\phi(0)\cdot(\begin{smallmatrix}1&0\\0&1\end{smallmatrix})$ and $\psi_\infty=0$. Let $\bar\alpha\coloneqq\pi\circ\alpha^+=\pi\circ\alpha^-\colon \cS\to D/J$ be the quotient $*$-homomorphism of $\alpha^+$ and $\alpha^-$. Then the $*$-homomorphisms $(\bar\alpha\grtensor\psi_s)\circ\Delta\colon\cS\to D/J\grtensor M_{1,1}$ constitute a homotopy between $(\bar\alpha\grtensor\psi_0)\circ\Delta=\bar\alpha\grtensor (\begin{smallmatrix}1&0\\0&1\end{smallmatrix})$ and the zero $*$-homomorphism. Combining this homotopy with the $*$-homomorphism $(\begin{smallmatrix}\alpha^+&0\\0&\alpha^-\end{smallmatrix})\colon\cS\to D\grtensor M_{1,1}$ now yields an $*$-homomophism $\cS\to C_1\grtensor M_{1,1}$ which gives us the element of $\K(J)$.
It is easy to check that if $\alpha^+,\alpha^-$ are $*$-homomorphisms with image in the ideal $J$ itself, then this construction yields $[\alpha^+]-[\alpha^-]\in\K(J)$ and hence is indeed the one we want.

\begin{thm}
With the difference elements $\llbracket E\rrbracket-\llbracket F\rrbracket\in\K(\cA(L\Subset M;B))$ and $[D_E]-[D_F]\in\K_0(M\mid L;A\grtensor B)$ as above, the formula
\[\tw([D]\grtensor (\llbracket E\rrbracket-\llbracket F\rrbracket))=[D_E]- [D_F]\]
holds.
\end{thm}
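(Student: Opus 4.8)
\section*{Proof proposal}

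The plan is to recognise both sides of the claimed equation as difference elements produced by the relative-$\K$-theory recipe recalled just above the statement, and to match them by naturality of that recipe under composition with the $\EE$-theory class $\llbracket D\midd L;B\rrbracket$. Concretely: $\llbracket E\rrbracket-\llbracket F\rrbracket$ is built from the pair $\alpha_P,\alpha_Q\colon\cS\to\cA(M;B)$ and $[D_E]-[D_F]$ from the pair $\beta_P,\beta_Q$ of \cref{lem:TwistedKHomologyClassRepresentative} by the same construction; the two pairs are intertwined by the asymptotic morphism defining $\tw$; and the recipe is natural with respect to this intertwining.

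First I would unwind $\tw$. By construction, $\tw([D]\grtensor\blank)$ is the composition product with $\nu_{B,L}([D])=\llbracket D\midd L;B\rrbracket$, taken with the finer target $\K_0(M\mid L;A\grtensor B)$ that is available because the representing asymptotic morphism $\phi\grtensor f\mapsto[t\mapsto\phi(t^{-1}D)f]$ has values in $\Loc(M\mid L;A\grtensor B)/\Cz([1,\infty);\Roe(L\subset M;A\grtensor B))$. The same asymptotic morphism with $f$ ranging over all of $\cA(M;B)$ represents $\nu_B([D])=\llbracket D;B\rrbracket$, with values in $\Loc(M;A\grtensor B)/\Cz([1,\infty);\Roe(L\subset M;A\grtensor B))$, and since $\phi(t^{-1}D)$ has finite propagation tending to $0$ it carries the ideal $\cAz(L\Subset M;B)$ into the ideal $\Loc(M\mid L;A\grtensor B)/\Cz([1,\infty);\Roe(L\subset M;A\grtensor B))$. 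Hence $\nu_B([D])$ is a morphism of the $\Cstar$-extension $0\to\cAz(L\Subset M;B)\to\cA(M;B)\to\cA(M;B)/\cAz(L\Subset M;B)\to0$ to the extension determined by the ideal inclusion $\Loc(M\mid L;A\grtensor B)/\Cz([1,\infty);\Roe(L\subset M;A\grtensor B))\subseteq\Loc(M;A\grtensor B)/\Cz([1,\infty);\Roe(L\subset M;A\grtensor B))$, and on the ideals it restricts to (the refinement of) $\llbracket D\midd L;B\rrbracket$.

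Second, I would invoke \emph{naturality} of the difference-element construction along this morphism of extensions: it induces a map of mapping cones compatible with the canonical isomorphisms $\K(\text{ideal})\cong\K(\text{mapping cone})$, and therefore sends the difference element of the pair $\alpha_P,\alpha_Q$ that defines $\llbracket E\rrbracket-\llbracket F\rrbracket$ to the difference element --- taken with respect to the target extension --- of the composite asymptotic morphisms $\nu_B([D])\circ\alpha_P$ and $\nu_B([D])\circ\alpha_Q$. Thus $\tw([D]\grtensor(\llbracket E\rrbracket-\llbracket F\rrbracket))=[\nu_B([D])\circ\alpha_P]-[\nu_B([D])\circ\alpha_Q]$ in $\K_0(M\mid L;A\grtensor B)$. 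It remains to identify the composites: forming the composition product brings in the comultiplication $\Delta$ on $\cS$, and using its counit property (evaluation at $0$) together with the fact that the functional calculus is a $*$-homomorphism, one computes $\nu_B([D])\circ\alpha_P$ to be represented by $\phi\mapsto(t\mapsto\phi(t^{-1}D)\cdot P)$, which under the canonical identification of the Hilbert modules involved (exactly as around \eqref{eq:DiracEclassAsympMorph}) is the $*$-homomorphism $\beta_P$ of \cref{lem:TwistedKHomologyClassRepresentative}; likewise $\nu_B([D])\circ\alpha_Q$ is represented by $\beta_Q$. This is the same computation that underlies \cref{lem:TwistedKHomologyClassRepresentative} and \cite[Theorem~4.14]{WulffTwisted}. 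Since the difference element of $\beta_P,\beta_Q$ with respect to the target extension is by definition $[D_E]-[D_F]$, the formula follows.

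The step I expect to be the main obstacle is the second one: one must make the naturality of the difference-element recipe precise in the case where the morphism of extensions is represented not by honest $*$-homomorphisms but by the asymptotic morphism $\nu_B([D])$, which takes values in a quotient of an algebra of $\Roe$-algebra-valued functions on $[1,\infty)$ and, being itself a composition product, carries its own copy of $\Delta$. Consequently the mapping-cone manipulations and the auxiliary homotopies $\psi_s$ entering the recipe have to be transported through carefully, with coassociativity of $\Delta$ handling the bookkeeping. A minor point worth keeping in mind is the $\cS$-versus-$\cS_\varepsilon$ distinction: since $D_E,D_F$ need not be invertible away from $L$, one works throughout with the $\cS$-versions of $\beta_P,\beta_Q$, which a priori represent classes only in $\K(\Loc(M;A\grtensor B)/\Cz([1,\infty);\Roe(L\subset M;A\grtensor B)))$, and it is their difference --- not the individual classes --- that refines to $\K_0(M\mid L;A\grtensor B)$.
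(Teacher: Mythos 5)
Your proposal is correct and follows essentially the same route as the paper: the paper likewise extends the descent $*$-homomorphism to a map of mapping cones $\Loc(M;A)\grtensor C_1\grtensor M_{1,1}(\C)\to C_2\grtensor M_{1,1}(\C)$ (precisely the naturality of the difference-element recipe you invoke) and then identifies the resulting representatives with $\beta_P,\beta_Q$ via \cref{lem:TwistedKHomologyClassRepresentative}. The subtlety you flag about $\nu_B([D])$ being an asymptotic rather than a genuine $*$-homomorphism is resolved in the paper exactly as you anticipate, by working at the level of the honest $*$-homomorphism $\Loc(M;A)\grtensor\cAz(L\Subset M;B)\to\Loc(M\mid L;A\grtensor B)/\Cz([1,\infty);\Roe(L\subset M;A\grtensor B))$ before plugging in $[D]$.
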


\begin{proof}
Let $C_1$ be the mapping cone of the quotient map
\[
\pi_1\colon \cA(M;B)\to\cA(M;B)/\cA(L\Subset M;B)
\]
and $C_2$ the mapping cone of the quotient map 
\[\pi_2\colon\frac{\Loc(M;A\grtensor B)}{\Cz([1,\infty);\Roe(L\subset M;A\grtensor B))}\to\frac{\Loc(M;A\grtensor B)}{\Loc(M\mid L;A\grtensor B)}\,.\]
Then the $*$-homomorphism 
\[\Loc(M;A)\grtensor\cA(L\Subset M;B)\to\frac{\Loc(M\mid L;A\grtensor B)}{\Cz([1,\infty);\Roe(L\subset M;A\grtensor B))}\]
inducing the twist map $\tw$ (cf.\ \eqref{eq:restricteddescentasymptoticmorphism}) is readily seen to extend to a $*$-homomorphism \(\Loc(M;A)\grtensor C_1\grtensor M_{1,1}\to C_2\grtensor M_{1,1}\) and so we obtain a commutative diagram
\[\xymatrix{
\K_0(M;A)\otimes\K(\cAz(L\Subset M;B))\ar[r]^-{\tw}\ar[d]^{\cong}&\K_0(M\mid L;A\grtensor B)\ar[d]^{\cong}
\\\K_0(M;A)\otimes\K(C_1\grtensor M_{1,1}(\C))\ar[r]&\K(C_2\grtensor M_{1,1}(\C))\,.
}\]
It therefore suffices to check that the representatives for the classes in the lower row obtained are mapped to each other, but this is obvious from the formulas for $\beta_P,\beta_Q$ in \Cref{lem:TwistedKHomologyClassRepresentative}.
\end{proof}

\section{Relation to the Thom class}
\label{sec:Thomclassrelation}

Let $M$, as always, be a complete Riemannian manifold and let $N\subset M$ be a complete submanifold of codimension $r$ with $\K$-orientable normal bundle $\pi\colon V\to N$.  
We are going to consider a Dirac $A\grtensor\Cl_r$-bundle $S_M\to M$ with Dirac operator $D_M$ and a Dirac $A$-bundle $S_N\to N$ with Dirac operator $D_N$.
We assume that they are related as follows.
Fix a $\K$-orientation on $V$, i.\,e.\ a principle $\SpinC_r$-bundle $P_{\SpinC_r}(V)\to N$ and a principle $\unitary_1$-bundle $P_{\unitary_1}(V)$ together with a $\SpinC_r$-equivariant bundle map $P_{\SpinC_r}(V)\to P_{\SO_r}(V)\times P_{\unitary_1}(V)$, where $P_{\SO_r}(V)$ denotes the principle $\SO_r$-bundle of $V$.
Let $\ell$ denote the left multiplication of $\SpinC_r\subset \Cl_r$ on $\Cl_r$ and 
\[S_V\coloneqq P_{\SpinC_r}(V)\times_\ell\Cl_r\]
the associated spinor $\Cl_r$-bundle.
The normal bundle $V$ inherits a bundle metric and metric connection from $TM|_N$.
It induces a connection on $\Cl(V)$ and together with the choice of a connection on $P_{\unitary_1}(V)$ it also gives rise to a $\Cl_r$-linear connection on $S_V$ which is compatible with the action of $\Cl(V)$.
The relation between $S_M$ and $S_N$ that we require is that the bundles $S_M|_N$ and $S_N\grtensor S_V$ are isomorphic via an isomorphism that preserves the $\Cl(TM)|_N\cong\Cl(TN)\grtensor\Cl(V)$-actions.

Now, let $U\subset M$ be a controlled tubular neighborhood of $N$ and consider two $\Cl_{0,r}$-bundles $E,F\to M$ which agree outside of $U$ and such that their difference $\tau\coloneqq \llbracket E\rrbracket-\llbracket F\rrbracket$ represents a Thom class. We leave it open for now to specify the $\K$-theory group in which $\tau$ lives.
Independent of that, being a Thom class means that for every compact subset $K\subset N$ such that $V$ is trivial over $N$, the restriction of $\tau$ to $U\cap \pi^{-1}(K)$ is a generator of the free $\K(K)$-module $\K^r(U\cap \pi^{-1}(K))$, cf \cite[Definition C.6 with Theorem C.7]{lawson_michelsohn}.
The goal of this section is to show that the relative index with respect to these bundles is 
\begin{equation}\label{eq_computation_Thom_one}
\ind(D_M\midd E,F)=i_*\ind(D_N)\,,
\end{equation}
where $i\colon N\hookrightarrow M$ denotes the inclusion inducing a canonical homomorphism
\begin{equation}\label{eq_inclusion_N_to_M}
i_*\colon\K(\Roe(N;A))\to \K(\Roe(N\subset M;A)) \cong \K(\Roe(N\subset M; A\grtensor\Cl_r\grtensor\Cl_{0,r}))\,.
\end{equation}

We will approach this task via our formulas in $\EE$-theory. First we note that if $E,F$ are bundles of bounded variation so that $\tau$ is an element of $\K(\cAz(N\Subset M;\Cl_{0,r}))$, then the equality above is equivalent to
\begin{equation}\label{eq_computation_Thom_two}
\llbracket D_M\midd N;\Cl_{0,r}\rrbracket\circ \tau=i_*\ind(D_N)
\end{equation}
by \Cref{thm:TwistedRelIndETheory}.
The advantage of \eqref{eq_computation_Thom_two} over \eqref{eq_computation_Thom_one} is that it does not mention the bundles $E$ and $F$ anymore.
There is always a nice description of the Thom class $\tau$ in all dimensions (see \eqref{eq_Thom_one} and \eqref{eq_Thom_two} below), which can be used for the calculations, whereas the bundles $E,F$ themselves are only inherent to the even dimensions.

\subsection{Proof under additional assumptions}

In this section we are going to prove \eqref{eq_computation_Thom_two} under additional assumptions, which facilitate the computations. In the subsequent section we are going to discuss how to relax them.
The assumptions are that the bundle $S_M$ and hence also the Dirac operator $D_M$ take on a particularly nice form on a controlled tubular neighborhood of $N$.

First of all, the total space of the normal bundle $V$ carries a canonical Riemannian metric such that the canonical isomorphisms $T_wV\to T_{\pi(w)}N\oplus V_{\pi(w)}$ given by the connection are isometric.

\begin{assumption}\label{assump_thom_one}
We assume that $U$ is isometric to the $\varepsilon$-disc bundle in $V$ for some $\varepsilon>0$.
\end{assumption}

Second, note that the pullback bundle $\pi^*(S_N\grtensor S_V)$ is canonically a Dirac $A\grtensor \Cl_r$-bundle over $V$. 

\begin{assumption}\label{assump_thom_two}
We assume that $S_M$ agrees with $\pi^*(S_N\grtensor S_V)$ on the tubular neighborhood $U$.
\end{assumption}

The third assumption that we are going to make will enable us to define a suitable Thom class.
Note that the two spin groups $\Spin_r\subset\ClR_r$ and $\Spin_{0,r}\subset\ClR_{0,r}$ are canonically isomorphic by mapping $v_1\cdots v_{2s}\to (-1)^sv_1\cdots v_{2s}$ and hence the same is true for the groups $\SpinC_r\subset\Cl_r$ and $\SpinC_{0,r}\subset\Cl_{0,r}$. We can therefore consider  $P_{\SpinC_r}(V)$ also as a principle $\SpinC_{0,r}$-bundle and via the left multiplication $\ell'$ of $\SpinC_{0,r}$ on $\Cl_{0,r}$ we obtain the associated $\Cl_{0,r}$-bundle 
\[S'_V\coloneqq P_{\SpinC_r}(V)\times_{\ell'}\Cl_{0,r}\,.\]
The bundle $\Cl(-V)$ whose fiber at $x\in N$ is the Clifford algebra of $V_x$ formed with the negative scalar product is isomorphic to the associated bundle
\( P_{\SpinC_r}(V)\times_{\Ad}\Cl_{0,r} \) with respect to the adjoint action $\Ad$ and hence $S'_V$ is a bundle of left modules over the bundle of algebras $\Cl(-V)$, cf.\ \cite[Proposition 3.8]{lawson_michelsohn}. In fact, $\Cl(-V)$ is equal to the bundle $\Lin_{\Cl_{0,r}}(S'_V)$ of $\Cl_{0,r}$-linear operators on the fibers of $S'_V$.

The inclusion of $V$ into $\Cl(-V)$ gives rise to a canonical section $C\in \Gamma(\pi^*\Cl(-V))$ and we thus obtain a $*$-homomorphism 
\begin{equation}\label{eq_Thom_one}
\beta\colon \cS\to  \Cb(V;\pi^*\Cl(-V))=\Cb(V;\pi^*\Lin_{\Cl_{0,r}}(S'_V))\,,\quad \varphi\mapsto \varphi(C)
\end{equation}
into the \textCstar-algebra of bounded continuous sections of the bundle $\pi^*\Cl(-V)=\pi^*\Lin_{\Cl_{0,r}}(S'_V)\to V$.
Over each fiber $V_x$ it restricts to the Bott element (see \cite[Definition 1.26]{guentnerhigson}) and therefore it is clear that it will represent a Thom class in any meaningful $\K$-theory group.

\begin{assumption}\label{assump_thom_three}
We assume that $S'_V$ can be embedded isometrically as a direct summand of bounded variation into a trivial bundle $(\Cl_{0,r}\grtensor\grelltwo)\times N\to N$ such that under the induced inclusion $\Cb(V;\pi^*\Lin_{\Cl_{0,r}}(S'_V))\subset \Cb(V;\Cl_{0,r}\grtensor\grKom)$ the image of $\beta$ will be contained in $\cAz(N\Subset V;\Cl_{0,r})$.
\end{assumption}

Consider as in \Cref{sec:invertibleawayfromclosedsubsets} a homotopy inverse $\psi_\varepsilon$ of the inclusion $\cS_\varepsilon\subset\cS$.
Since we assumed that $U$ is the $\varepsilon$-disk bundle of $V$, we have $\varphi(C)\in\cA(\overline U,\partial U;\Cl_{0,r})$ for all $\varphi\in\cS_\varepsilon$ and $\beta$ is clearly homotopic to
\begin{equation}\label{eq_Thom_two}
\beta'\coloneqq\beta\circ\psi_\varepsilon \colon\cS\to \cA(\overline U,\partial U;\Cl_{0,r})
\end{equation}
followed up with the inclusion $\cA(\overline U,\partial U;\Cl_{0,r})\subset\cA(N\Subset V;\Cl_{0,r})$.

\begin{defn}\label{def:Thomclassassumptions}
The Thom class $\tau$ for our set-up is the image of $\tau'\coloneqq [\beta']$ under the canonical map $\K(\cA(\overline U,\partial U;\Cl_{0,r}))\to \K(\cAz(N\Subset M;\Cl_{0,r}))$ induced by the inclusion $*$-homomorphism.
\end{defn}

Note that this Thom class is independent of the sufficiently small $\varepsilon>0$, i.\,e.\ the size of the tubular neighborhood $U$, because the $\psi_\varepsilon\colon\cS\to\cS_\varepsilon\subset\cS$ are all homotopic to each other.

Now we can start the proof of \eqref{eq_computation_Thom_two} under the assumptions above by observing some general facts.
\begin{lem}\label{lem:parallelorthogonal}
Assume \Cref{assump_thom_one,assump_thom_two}.
Let $(e_1,\dots,e_n,f_1,\dots,f_r)$ be a local orthonormal frame of $TU\subset TV$ which is synchronous at a point $w\in U$ and such that $e_1,\dots,e_n\in TN, f_1,\dots,f_r\in V$ in the canonical decomposition. 
\begin{enumerate}
\item At the point $w$, the Levi-Civita connection $\nabla^M$ of $M$ satisfies $[\nabla^M_{e_k},\nabla^M_{f_l}]=0$.
\item The same is true for the connection $\nabla^{S_M}$ of the bundle $S_M$.
\item The Dirac operator $D_M$ of $S_M$ decomposes over $U$ as $D_M=D_\| + D_\perp$ such that locally 
\[D_\|=\sum_{k=1}^n e_k\nabla^{\pi^*(S_N\grtensor S_V)}_{e_k}\,,\quad D_\perp=\sum_{l=1}^r f_l\nabla^{\pi^*(S_N\grtensor S_V)}_{f_l}\,.\]
Note that $D_\|$ and $D_\perp$ can actually be defined on the whole bundle $\pi^*(S_N\grtensor S_V)$ over $V$ and not only over $U$.

\item Let $\xi$ and $\zeta$ be local sections of $S_N$ and $\pi^*(S_V)$, respectively. Then
\[
D_\perp(\pi^*\xi\grtensor\zeta)=\pi^*\xi\grtensor D_V\zeta\,,
\]
where the operator $D_V=\sum_{l=1}^r f_l\partial_{f_l}$ is the family of Dirac operators on the fibers of $V$.
\item\label{enum:parallelorthogonalhorizontalpart} If in addition $\zeta$ is parallel in the horizontal direction (i.\,e.\ $\nabla^{\pi^*S_V}_{e_k}\zeta=0$ for all $k=1,\dots,n$), then $D_\|(\pi^*\xi\grtensor\zeta)=\pi^*(D_N\xi)\grtensor\zeta$.
\item The graded commutator $[D_\|,D_\perp]=D_\|D_\perp+D_\perp D_\|$ vanishes. Hence for all functions $\varphi,\psi\in \cS$, the graded commutator $[\varphi(D_\|),\psi(D_\perp)]$ vanishes.

\item\label{enum:parallelorthogonaluvformula} Consider the generators $u(t)\coloneqq e^{-t^2}$ and $v(t)\coloneqq te^{-t^2}$ of $\cS$. Then for all $r,s>0$:
\begin{align*}
u(rD_\|+sD_\perp)&=u(rD_\|)u(sD_\perp)\\v(rD_\|+sD_\perp)&=u(rD_\|)v(sD_\perp)+v(rD_\|)u(sD_\perp)
\end{align*}
As a consequence, the $*$-homomorphism $\varphi\mapsto \varphi(rD_\|+sD_\perp)$ is equal to the composition of the comultiplication $\Delta\colon \cS\to\cS\grtensor\cS$ with $\varphi\grtensor\psi\mapsto\varphi(rD_\|)\psi(sD_\perp)$.

\item\label{enum:parallelorthogonalseparatecontinuity} For all $\varphi\in\cS$, the function
\[(0,\infty)^2\to \Roe(V;A\grtensor\Cl_r)\,,\quad (r,s)\mapsto \varphi(rD_\|+sD_\perp) \]
is continuous.
\end{enumerate}
\end{lem}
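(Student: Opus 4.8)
The plan is to reduce, via \Cref{assump_thom_one,assump_thom_two}, every assertion over the tubular neighbourhood $U$ to a computation with the product connection on $\pi^*(S_N\grtensor S_V)$, and then to deduce the functional-calculus statements (vii)--(viii) formally from the anticommutation in (vi). For the structural parts (iii)--(v): by \Cref{assump_thom_one} the connection metric splits $TU$ orthogonally as $\pi^*TN\oplus\pi^*V$, hence $\Cl(TU)\cong\pi^*\Cl(TN)\grtensor\pi^*\Cl(V)$, and by \Cref{assump_thom_two} the bundle $S_M|_U$ is $\pi^*(S_N\grtensor S_V)$ with its product connection and tensor Clifford action. I would pick a local orthonormal frame of $TU$ consisting of horizontal lifts $e_1,\dots,e_n$ of a local frame of $TN$ and vertical fields $f_1,\dots,f_r$; the formula $D=\sum_i\epsilon_i\nabla_{\epsilon_i}$ then splits as $D_M=D_\|+D_\perp$ with $D_\|=\sum_k e_k\nabla_{e_k}$ and $D_\perp=\sum_l f_l\nabla_{f_l}$, and since $\pi^*(S_N\grtensor S_V)$ and its product connection are defined over all of $V$, so are $D_\|$ and $D_\perp$. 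For (iv), the Leibniz rule in a vertical direction $f_l$ applied to $\pi^*\xi\grtensor\zeta$ kills the pulled-back factor ($d\pi\circ f_l=0$), leaving $\pi^*\xi\grtensor\nabla^{\pi^*S_V}_{f_l}\zeta$; Clifford multiplication by $f_l$ acts on the $S_V$-factor, so summing over $l$ identifies $D_\perp$ on each fibre with $D_V=\sum_l f_l\partial_{f_l}$. For (v), the same computation in a horizontal direction $e_k$ with $\zeta$ horizontally parallel gives $\nabla^{\pi^*S_V}_{e_k}\zeta=0$, so only $\pi^*\xi$ is differentiated, and since the horizontal Clifford action acts on the $S_N$-factor the sum is $\pi^*(D_N\xi)\grtensor\zeta$.

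\emph{Geometric core (i), (ii) and the anticommutation (vi).} In a frame synchronous at $w$ all first covariant derivatives of the frame fields vanish at $w$, so $[e_k,f_l]|_w=0$ and the operators $[\nabla^M_{e_k},\nabla^M_{f_l}]$ and $[\nabla^{S_M}_{e_k},\nabla^{S_M}_{f_l}]$ reduce at $w$ to the curvature endomorphisms $R^M(e_k,f_l)|_w$ and $R^{\nabla^{S_M}}(e_k,f_l)|_w$, respectively. I would obtain (ii) from \Cref{assump_thom_two}: over $U$ the connection on $S_M$ is pulled back along $\pi$, and the curvature of a pulled-back connection annihilates vertical tangent vectors, so $R^{\nabla^{S_M}}(e_k,f_l)|_w=0$; (i) is the analogous statement for $TM$, which I would read off from the explicit Levi--Civita connection of the connection metric on $V$, using that each fibre $V_x$ and the zero section $N$ are totally geodesic. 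Granting (ii), part (vi) follows by expanding the graded anticommutator $D_\|D_\perp+D_\perp D_\|$ at $w$ in the synchronous frame: the terms in which $\nabla_{e_k}$ or $\nabla_{f_l}$ differentiates a frame field or the Clifford multiplication drop out by synchronicity, and, using $e_k\cdot f_l=-f_l\cdot e_k$, the remainder is $\sum_{k,l}e_k\cdot f_l\cdot[\nabla_{e_k},\nabla_{f_l}]|_w=0$. As $w\in U$ is arbitrary this gives $D_\|D_\perp+D_\perp D_\|=0$ on $U$, hence on all of $V$ by the global formulas of (iii). Since $V$ with the connection metric is complete and $D_\|$, $D_\perp$ have bounded propagation speed, both are essentially self-adjoint, and as they anticommute the graded commutators $[\varphi(D_\|),\psi(D_\perp)]$ vanish for all $\varphi,\psi\in\cS$ (write even and odd functions of $D_\bullet$ in terms of $D_\bullet^2$ and $D_\bullet$).

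\emph{Functional-calculus identities (vii)--(viii).} By (vi), $D_\|$ and $D_\perp$ anticommute, so $(rD_\|+sD_\perp)^2=r^2D_\|^2+s^2D_\perp^2$ and $D_\|^2$, $D_\perp^2$ commute; hence $u(rD_\|+sD_\perp)=e^{-r^2D_\|^2}e^{-s^2D_\perp^2}=u(rD_\|)u(sD_\perp)$, and left-multiplying by $rD_\|+sD_\perp$ and using $D_\perp e^{-r^2D_\|^2}=e^{-r^2D_\|^2}D_\perp$ gives $v(rD_\|+sD_\perp)=u(rD_\|)v(sD_\perp)+v(rD_\|)u(sD_\perp)$. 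Since $u,v$ generate $\cS$ and the comultiplication satisfies $\Delta(u)=u\grtensor u$ and $\Delta(v)=v\grtensor u+u\grtensor v$, this shows, exactly as in \Cref{lem:functionalcalculusproductformula}, that $\varphi\mapsto\varphi(rD_\|+sD_\perp)$ coincides with $\Delta$ followed by $\varphi\grtensor\psi\mapsto\varphi(rD_\|)\psi(sD_\perp)$. For (viii), the maps $(0,\infty)\to\cS$ given by $r\mapsto(t\mapsto e^{-r^2t^2})$ and $s\mapsto(t\mapsto te^{-s^2t^2})$ are norm-continuous (the functions converge uniformly on $\R$ for the parameter bounded away from $0$), so $r\mapsto\varphi(rD_\|)$ and $s\mapsto\psi(sD_\perp)$ are norm-continuous families of bounded operators, whence the family of $*$-homomorphisms $\varphi\grtensor\psi\mapsto\varphi(rD_\|)\psi(sD_\perp)$ varies continuously in the point-norm topology (check on elementary tensors, then extend by contractivity and density); evaluating it at the fixed element $\Delta(\varphi)$ yields joint norm-continuity of $(r,s)\mapsto\varphi(rD_\|+sD_\perp)$, whose values lie in $\Roe(V;A\grtensor\Cl_r)$ since $rD_\|+sD_\perp$ is again a Dirac-type operator over $V$.

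\emph{Main obstacle.} The parts (iii)--(v) are routine Leibniz-rule bookkeeping and (vii)--(viii) are formal functional calculus. The real work is (i)--(ii): making the Levi--Civita connection of the connection metric on $V$ explicit enough to control the mixed horizontal--vertical curvature, and being precise about the connection that $\pi^*(S_N\grtensor S_V)$ carries in \Cref{assump_thom_two}, so that $\sum_{k,l}e_k\cdot f_l\cdot[\nabla_{e_k},\nabla_{f_l}]$ vanishes and the anticommutation (vi) holds. This identity is exactly what the Thom-class computations of the following sections rest on, so it is the step where care is required.
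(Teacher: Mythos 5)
Your proposal matches the paper's proof in essentially every step for parts (iii)--(viii): the Leibniz-rule bookkeeping for (iii)--(v), the expansion of the graded anticommutator in a synchronous frame for (vi), the $\Delta(u)=u\grtensor u$, $\Delta(v)=u\grtensor v+v\grtensor u$ calculation borrowed from Higson--Kasparov--Trout for (vii), and the point-norm continuity of $r\mapsto u(r\,\blank)$, $s\mapsto v(s\,\blank)$ for (viii) (the paper also offers a second route for (viii) via the resolvents $\varphi_\pm$, but yours is the first of its two arguments). You also correctly flag (i)--(ii) as the geometric core.

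Where you diverge is the treatment of (i) and (ii). For (ii), you give a direct argument: over $U$ the connection on $\pi^*(S_N\grtensor S_V)$ is the pullback along $\pi$, so its curvature kills vertical vectors, and $f_l$ is vertical. This is cleaner than the paper, which instead reduces (ii) to (i). Since part (vi) only needs (ii), your logical route is sound even if (i) is not nailed down. For (i) itself, however, your sketch (``read off from the explicit Levi--Civita connection of the connection metric, using that the fibres and the zero section are totally geodesic'') is not enough: totally geodesic fibres and zero section do not by themselves force the mixed horizontal--vertical curvature to vanish at an arbitrary $w\in U$ (the Sasaki metric on $TS^2$ is the standard counterexample to that hope), and you yourself label this the ``main obstacle''. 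The paper instead argues (i) very concretely: by the choice of connection metric, the flow of the basic field $e_k$ through the fibre $V_{\pi(w)}$ yields an isometric embedding of a product strip $(-\varepsilon,\varepsilon)\times V_{\pi(w)}$, on which $\nabla^M_{e_k}$ and $\nabla^M_{f_l}$ become commuting partial derivatives. That isometric-strip argument is the one ingredient your write-up does not supply, and you would need either it or your (ii)-only bypass to be made rigorous before the proof is complete.
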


\begin{proof}
Due to the choice of the Riemannian metric on $V$, the flow of the vector field $e_k$ induces an isometric embedding $(-\varepsilon,\varepsilon)\times \R^r\cong (-\varepsilon,\varepsilon)\times V_{\pi(v)}\hookrightarrow V$ under which the derivatives $\nabla^M_{e_k},\nabla^M_{f_l}$ ($k$ fixed, $l=1,\dots,r$) correspond to partial derivatives in $\R^{r+1}$ and hence commute with each other. This shows the first claim.

The second claim follows readily from the first one, because the connection of the bundle $\pi^*(S_N\grtensor S_V)$ is derived from the Levi-Civita connection on $M$.

The third to fifth claim is obvious, considering that $\nabla^{\pi^*(S_N\grtensor S_V)}_{f_l}$ acts over a fiber of $V$ simply as the directional derivative $\partial_{f_l}$ on a trivial bundle.

The first part of the sixth claim follows at each arbitrarily chosen point $w$ from the second claim and the fact, that the Clifford action of the $e_k$ anticommutes with the Clifford action of the $f_l$. The second part is a direct consequence of a well-known property of the functional calculus, since $D_\|$ graded commutes with $D_\perp$.

The first part of the seventh claim is proven just like in \cite[Appendix A.4]{higsonkasparovtrout}: Let $\varphi\in\cS$ be a compactly supported even function. Then the operators $D_\|$ and $D_\perp$ are bounded on the subspace 
\[H_\varphi\coloneqq \varphi(D_\|)\varphi(D_\perp)L^2(\pi^*(S_N\grtensor S_V))=\varphi(D_\perp)\varphi(D_\|)L^2(\pi^*(S_N\grtensor S_V))\]
 and one can verify the equalities on $H_\varphi$ by approximating the operators by polynomials in $D_\|$ and $D_\perp$. Since the union of the $H_\varphi$ is dense in $L^2(S_M)$, they are true in general.
The second part follows because of the well-known formulas $\Delta u=u\grtensor u$ and $\Delta v=u\grtensor v+v\grtensor u$.
 
The last claim can be checked on the generators $u,v$ and then the continuity is obvious from the formulas of the seventh claim and norm continuity of functional calculus. Alternatively, one can verify it on the functions $\varphi_\pm(t)\coloneqq (t\pm i)^{-1}$, which also generate $\cS$. Here, we first note that we have positive operators
\begin{align*}
0\leq &(rD_\|\varphi_\pm(rD_\|+sD_\perp))^*(rD_\|\varphi_\pm(rD_\|+sD_\perp))
\\&=\varphi_\pm(rD_\|+sD_\perp)^*r^2D_\|^2 \varphi_\pm(rD_\|+sD_\perp)
\\&\leq \varphi_\pm(rD_\|+sD_\perp)^*(r^2D_\|^2+s^2D_\perp^2) \varphi_\pm(rD_\|+sD_\perp)
\\&= \varphi_\pm(rD_\|+sD_\perp)^*(rD_\|+sD_\perp)^2 \varphi_\pm(rD_\|+sD_\perp)
\\&=\psi(rD_\|+sD_\perp)
\end{align*}
with $\psi(t)=\frac{t^2}{1+t^2}$ and hence $\|rD_\|\varphi_\pm(rD_\|+sD_\perp)\|\leq 1$. Then for all $r,r'>0$ and $s>0$ we have
\begin{align*}
\|\varphi_\pm&(r'D_\|+sD_\perp)-\varphi_\pm(rD_\|+sD_\perp)\|=
\\&=\|\varphi_\pm(r'D_\|+sD_\perp)(rD_\|-r'D_\|)\varphi_\pm(rD_\|+sD_\perp)\|
\\&\leq \|\varphi_\pm(r'D_\|+sD_\perp)\| \cdot \frac{|r-r'|}{r}\cdot  \|rD_\|\varphi_\pm(r'D_\|+sD_\perp)\|
\\&\leq \frac{|r-r'|}{r}\,,
\end{align*}
showing equicontinuity in $r$ when seen as a family of functions parametrized in $s$
and we obtain a similar formula for equicontinuity in $s$.
\end{proof}

\begin{thm}\label{thm:Thomclassrelation}
Under the \cref{assump_thom_one,assump_thom_two,assump_thom_three} 
we get
\begin{equation}\label{eq_thom_main}
\llbracket D_M\midd N;\Cl_{0,r}\rrbracket\circ \tau=i_*\ind(D_N)\,.
\end{equation}
\end{thm}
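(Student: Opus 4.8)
The plan is to carry everything onto the total space of the normal bundle $V$ and split the operator there into a tangential part, which will turn out to be $D_N$, and a normal part, which is a fibrewise Bott--Dirac operator contributing the unit by Bott periodicity. First I would reduce to $V$. By \Cref{assump_thom_one,assump_thom_two} the operator $D_M$ restricted to the tubular neighbourhood $U$ agrees with the restriction of the globally defined operator $D_V\coloneqq D_\|+D_\perp$ on $V$ (\Cref{lem:parallelorthogonal}), while the representative $\beta'$ of the Thom class has image in $\cA(\overline{U},\partial U;\Cl_{0,r})$, so the functions $\psi_\varepsilon(\psi)(C)$ entering the composition are all supported in the $\varepsilon$-disc bundle. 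Since the operators $\phi(t^{-1}D_M)$ from \eqref{eq:DiracEclassAsympMorph} are norm limits of finite-propagation operators and occur only multiplied by these $U$-supported functions, a support-and-propagation estimate together with \Cref{lem:independence} identifies $\llbracket D_M\midd N;\Cl_{0,r}\rrbracket\circ\tau$, via the canonical isomorphisms $\K(\Roe(N\subset M;\blank))\cong\K(\Roe(N;\blank))\cong\K(\Roe(N\subset V;\blank))$, with the analogous product $\llbracket D_V\midd N;\Cl_{0,r}\rrbracket\circ\tau_V$ formed entirely on $V$, where $\tau_V\coloneqq[\beta']$ is the Thom class on $V$. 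It thus suffices to show that $\llbracket D_V\midd N;\Cl_{0,r}\rrbracket\circ\tau_V$ corresponds, under these isomorphisms, to $i_*\ind(D_N)$.

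Next I would spell out the composite asymptotic morphism. Working with $\phi\in\cS_\varepsilon$ throughout (\Cref{lem:comultiplicationepsilon}), it is $\phi\mapsto[t\mapsto\sum_i\phi_i'(t^{-1}D_V)\,\psi_\varepsilon(\phi_i'')(C)]$ with $\Delta\phi=\sum_i\phi_i'\grtensor\phi_i''$. By the exact functional-calculus product formulas of \Cref{lem:parallelorthogonal}(vii) for $D_V=D_\|+D_\perp$ and coassociativity of $\Delta$, this equals $\phi\mapsto[t\mapsto\sum\phi^{(1)}(t^{-1}D_\|)\,\phi^{(2)}(t^{-1}D_\perp)\,\psi_\varepsilon(\phi^{(3)})(C)]$ with $(\Delta\grtensor\id)\Delta\phi=\sum\phi^{(1)}\grtensor\phi^{(2)}\grtensor\phi^{(3)}$. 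Here $\phi^{(1)}(t^{-1}D_\|)$ commutes exactly with $\phi^{(2)}(t^{-1}D_\perp)$ and, up to terms of order $t^{-1}$ arising from the \spinC-connection of $V$, also with $\psi_\varepsilon(\phi^{(3)})(C)$; hence, up to homotopy of asymptotic morphisms, the tangential factor $\phi^{(1)}(t^{-1}D_\|)$ may be commuted past the normal factor $\phi^{(2)}(t^{-1}D_\perp)\,\psi_\varepsilon(\phi^{(3)})(C)$.

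The normal factor is built from the $N$-family of pairs consisting of the Euclidean Dirac operator $D_\perp|_{V_x}$ and Clifford multiplication $C|_{V_x}$ by the position vector on the fibres $V_x\cong\R^r$, i.e.\ from the fibrewise Bott--Dirac operator. I would run the standard rescaling argument of \cite{higsonkasparovtrout,guentnerhigson} in families over $N$ --- exactly as in the proof of \cite[Theorem 4.14]{WulffTwisted}, with \Cref{assump_thom_three} ensuring that $\beta$ genuinely lands in $\cAz(N\Subset V;\Cl_{0,r})$ and represents $\tau_V$ --- to see that, after comultiplication, the normal factor is homotopic through asymptotic morphisms valued in $\fA(\Roe(N\subset V;\blank))$ to $\phi^{(2)}\mapsto[t\mapsto\phi^{(2)}(0)\cdot P_N]$, where $P_N$ projects onto $L^2(N;S_N)\grtensor(\text{fibrewise Gaussian ground states})$. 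The essential structural point is that the hypothesis $S_M|_N\cong S_N\grtensor S_V$, together with the simultaneous use of the $\Cl_r$-bundle $S_V$ for $D_\perp$ and of the dual $\Cl_{0,r}$-bundle $S'_V$ for $\tau$, makes the ground-state bundle over $N$ the canonically trivial, graded Morita trivial $\Cl_r\grtensor\Cl_{0,r}\cong\Cl_{r,r}$-bundle, so that the fibrewise Bott class is the genuine generator and no residual twist survives over $N$. Substituting back, $\phi^{(1)}(t^{-1}D_\|)$ preserves $\im P_N$, and there --- the ground states being of the form $\pi^*\xi\grtensor\zeta_0$ with $\zeta_0$ parallel in the horizontal direction --- $D_\|$ acts as $D_N$ by \Cref{lem:parallelorthogonal}(v). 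After collapsing the comultiplications and the Morita identification $\Cl_{r,r}\sim\C$, the composite asymptotic morphism factors through $\Roe(N;S_N)$ via $\phi\mapsto\phi(D_N)$, hence represents $i_*\ind(D_N)$; this is \eqref{eq_thom_main}.

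The main obstacle is the twisted fibrewise Bott periodicity in this last step: carrying out the rescaling argument in families over a possibly non-trivial normal bundle while bookkeeping the $\SpinC$-twisting --- the $\Spin$-part, which must match $S_M|_N\cong S_N\grtensor S_V$ in order to trivialize the ground-state bundle, and the $\unitary_1$-part, which must cancel between $S_V$ and $S'_V$ so that the fibrewise Bott class is exactly the generator (here \Cref{assump_thom_three} is decisive). A subsidiary point, also needed there, is checking that the fibrewise ground state $\zeta_0$ is parallel in the horizontal direction, so that \Cref{lem:parallelorthogonal}(v) applies; a stray curvature term at this stage would tensor the tangential operator with the wrong line bundle, so this must be verified carefully from \Cref{assump_thom_two} and the compatibility of the chosen connections.
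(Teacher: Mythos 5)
Your plan takes essentially the same route as the paper: reduce to the total space $V$ via \Cref{lem:independence}, split $D_V = D_\| + D_\perp$ using \Cref{lem:parallelorthogonal}, run the Guentner--Higson--(Kasparov--Trout) rescaling fibrewise to collapse the normal factor to the ground-state projection, and use horizontal parallelism of the ground state to identify $D_\|$ with $D_N$ on its range. You also correctly identify the two delicate structural points --- the graded $\Cl_{r,r}\cong\Cl_r\grtensor\Cl_{0,r}$ Morita triviality coming from the opposite spin representations on $S_V$ and $S'_V$, and the horizontal parallelism of the Gaussian ground state --- both of which the paper addresses exactly as you predict (via \Cref{lem:parallelorthogonal}(v) and the transfer from graded Hilbert spaces to graded $\Cl_{r,r}$-modules).

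There is, however, a genuine gap in the step where you ``run the rescaling argument on the normal factor and substitute back.'' You write both factors with the \emph{same} parameter, $\phi^{(1)}(t^{-1}D_\|)\,\phi^{(2)}(t^{-1}D_\perp)\,\theta(C)$, and then apply the Bott-periodicity asymptotic homotopy to the $D_\perp$-factor alone. That homotopy is itself an asymptotic homotopy in $t$, and the $D_\|$-factor also carries a $t^{-1}$; a $1$-homotopy of the inner factor does not in general persist after multiplication with a $t$-dependent outer factor, so the ``substitute back'' is not justified as stated. The paper circumvents this precisely by passing from \eqref{eq:DiracEclassAsympMorph} to an \emph{iterated} asymptotic morphism with values in $\fA^2(\Roe(N\subset V;\blank))$, using the exact $u,v$-formulas of \Cref{lem:parallelorthogonal}\ref{enum:parallelorthogonaluvformula} to split off $D_\perp$ and the separate continuity from \Cref{lem:parallelorthogonal}\ref{enum:parallelorthogonalseparatecontinuity} to justify the change of representatives. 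In that two-parameter picture the tangential scale $s$ is frozen while the inner (fast) parameter $t$ runs through the fibrewise Bott homotopy, which is the content that makes your ``substitution'' legitimate. Adding this step --- i.e.\ passing to the $\fA^2$-representative $\varphi\grtensor\psi\grtensor\vartheta\mapsto[s\mapsto[t\mapsto\varphi(s^{-1}D_\|)\psi(t^{-1}D_\perp)\vartheta(C)]]$ before invoking the fibrewise rescaling --- would close the gap and bring your argument into line with the paper's.
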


\begin{proof}
Since the path metric of $V$ restricted to $U$ is clearly greater or equal to the path metric of $M$ restricted to $U$ and the $1$-Lipschitz identity map $\id\colon \overline{U}^{V}\to\overline{U}^M$ canonically identifies with the inclusion $i\colon N\hookrightarrow M$ up to coarse equivalence, we can apply \Cref{lem:independence} and see that 
\[\llbracket D_M\midd N;\Cl_{0,r}\rrbracket\circ \tau=\llbracket D_M|_U;\Cl_{0,r}\rrbracket\circ \tau'=i_*\circ \llbracket \hat D|_U;\Cl_{0,r}\rrbracket\circ \tau'\]
where $\hat D$ denotes the Dirac operator of the Dirac bundle $\pi^*(S_N\grtensor S_V)\to V$, i.\,e.\ the canonical extension of $D_M|_U$ to all of $V$. (We do not denote it by $D_V$ here, because we have already used the latter notation differently.)
We may therefore assume that $M=V$ and $S_M=\pi^*(S_N\grtensor S_V)$ everywhere. Then $i$ is an isometry onto its image and $i_*$ as defined in \eqref{eq_inclusion_N_to_M} becomes the canonical isomorphism.
Moreover $\tau=[\beta]$, simplifying the calculations.

The composition product $\llbracket D_M\midd N;\Cl_{0,r}\rrbracket\circ \tau$ is represented by the composition of the comultiplication $\Delta\colon\cS\to\cS\grtensor\cS$ with
\begin{align*}
\cS\grtensor\cS&\xrightarrow{\id\grtensor \beta}\cS\grtensor \cAz(N\Subset V;\Cl_{0,r})\to\fA(\Roe(N\subset V;S_M\grtensor\Cl_{0,r}))
\\\varphi\grtensor\vartheta&\mapsto\left[t\mapsto (\varphi(t^{-1}D_M)\grtensor\id_{\pi^*S'_V})\circ(\id_{S_M}\grtensor\vartheta(C))\right]\,.
\end{align*}
Note that the operator appearing in this formula operates on $L^2(S_M\grtensor\pi^*S'_V)$, which sits inside $L^2(S_M)\grtensor\Cl_{0,r}\grtensor\grelltwo$ and hence we indeed end up in the correct Roe algebra. Because of the continuity in \Cref{lem:parallelorthogonal}\ref{enum:parallelorthogonalseparatecontinuity} and the second part of \Cref{lem:parallelorthogonal}\ref{enum:parallelorthogonaluvformula}, the same $\EE$-theory class is then also represented by the composition of $(\Delta\grtensor\id_\cS)\circ\Delta=(\id_\cS\grtensor\Delta)\circ\Delta$ with 
\begin{align*}
\cS\grtensor\cS\grtensor\cS&\to\fA^2(\Roe(N\subset V;S_M\grtensor\Cl_{0,r}))
\\\varphi\grtensor\psi\grtensor \vartheta&\mapsto\left[s\mapsto \left[t\mapsto(\varphi(s^{-1}D_\|)\psi(t^{-1}D_\perp)\grtensor\id_{\pi^*S'_V})\circ(\id_{S_M}\grtensor\vartheta(C))\right]\right]\,.
\end{align*}

Now we note that applying the calculations in \cite[Sections 1.12 \& 1.13]{guentnerhigson} fiberwise, the composition of $\Delta$ with the asymptotic morphism
\begin{align*}
\psi\grtensor \vartheta&\mapsto \left[t\mapsto(\psi(t^{-1}D_\perp)\grtensor\id_{\pi^*S'_V})\circ(\id_{S_M}\grtensor\vartheta(C))\right]
\end{align*}
is 1-homotopic to the $*$-homomorphism
\(\psi\mapsto \psi(0)(\id_{\pi^*S_N}\grtensor P\grtensor Q)\),
where $P$ denotes the fiberwise orthogonal projection onto the subspace of $L^2(V_x)$ spanned by the function $\zeta_1\colon w\mapsto \exp(-\frac12\|w\|^2)$ and $Q\in\Gamma(\Lin_{\Cl_{r,r}}(S_V\grtensor S'_V))$ denotes the fiberwise orthogonal projection onto a section of the form $\zeta_2\cdot\Cl_{r,r}$, where $\zeta_2$ is a parallel section of the evenly graded part of $S_V\grtensor S'_V$.
Indeed, the main difference is that Higson and Guentner have defined the Clifford operator $C$ and the Dirac operator $D$ via anticommuting representations of $\Cl_{0,r}$ and $\Cl_r$ on $\Cl_{0,r}$ seen as a graded Hilbert space, whereas we are working with $\Cl_{r,r}$-linear Clifford and Dirac operators defined by the canonical anticommuting actions on a principle $\Cl_{r,r}$-bundle. Therefore, we just have to transfer their results by exploiting the equivalence between the categories of graded Hilbert spaces and graded right $\Cl_{r,r}$-modules.

Since $\zeta_1$ is clearly horizontally parallel and $\zeta_2$ is parallel, \Cref{lem:parallelorthogonal}\ref{enum:parallelorthogonalhorizontalpart} implies that $D_\|$ acts on the image of $\id_{S_N}\grtensor P\grtensor Q$ as $D_N\grtensor\id$. We now collect all the parts to see that the asymptotic morphism representing the composition product $\llbracket D_M\midd N;\Cl_{0,r}\rrbracket\circ \tau$ is $1$-homotopic to the tensor product of the asymptotic morphism
$\varphi\mapsto [t\mapsto \varphi(t^{-1}D_N)]$ and a projection onto a canonical trivial irreducible $\Cl_{r,r}$-submodule.
Therefore, $\ind(D_N)$ is being mapped to $\llbracket D_M\midd N;\Cl_{0,r}\rrbracket\circ \tau$ under the canonical map
\[\K(\Roe(N;A))\xrightarrow{i_*}\K(\Roe(N\subset M; A))\cong\K(\Roe(N\subset V; A\grtensor\Cl_{r,r}))\,.\qedhere\]
\end{proof}

\subsection{Relaxing the assumptions}

We want to relax the strong \cref{assump_thom_one,assump_thom_two,assump_thom_three} in \cref{thm:Thomclassrelation}. As it turns out, manifolds of bounded geometry with uniformly embedded submanifolds are a good starting point.

\begin{defn}[{cf.\ \cite[Chapter 2]{eldering_normallyhyperbolic}}]\label{defn:boundedgeometry}
Let $M$ be a Riemannian manifold and $N\subset M$ a submanifold.
\begin{enumerate}
\item\label{it:boundedgeometrymanifold} We say that $M$ has \emph{bounded geometry} on an open subset $U$ if the injectivity radius is uniformly positive on $U$ and the Riemannian curvature tensor as well as all its covariant derivatives are uniformly bounded on $U$.

\item \label{it:uniformlyembedded} The submanifold $N\subset M$ is \emph{uniformly embedded} if there exists an $\delta>0$ such that for all $x\in M$ the intersection $N\cap B_\delta(x)$ is represented in normal coordinates on $B_\delta(x)\subset M$ by the graph of a smooth function $h_x\colon T_xM\to V_x$, the family of functions $\{h_x\}_{x\in M}$ are equi-uniformly bounded and the latter also holds for the families of each of their derivatives.

\item\label{it:boundedgeometrybundle} A $B$-bundle over $M$ equipped with a metric connection is said to have \emph{bounded geometry} if its curvature tensor as well as all its covariant derivatives are uniformly bounded. 
\qedhere
\end{enumerate}
\end{defn}

The notion of bundles of bounded geometry was actually defined slightly differently in \cite[Definition 2.14]{eldering_normallyhyperbolic} and \cite[Appendix 1.1]{shubin}, namely via the existence of certain preferred trivializations. For bundles over manifolds of bounded geometry, their definition agrees with our \ref{it:boundedgeometrybundle} above, as one can readily verify by adapting the proof of the equivalence between b) and b') of \cite[Definition 1.1 in Appendix 1.1]{shubin} from tangent bundles to more general bundles.

\begin{lem}\label{lem:boundedgeometryproperties}
Let $M$ be a complete Riemannian manifold and $N\subset M$ a complete submanifold with normal bundle $V$. We assume that $N$ is uniformly embedded into $M$ and $M$ has bounded geometry in some $R$-neighborhood of $N$. Then:
\begin{enumerate}
\item The submanifold $N$ with the restricted Riemannnian metric has bounded geometry. 
\item\label{item_implies_tubed} There exists $0<R_1<R$ such that the exponential map of $M$ restricts to a diffeomorphism $\Xi$ from the $R_1$-disc bundle in $V$ onto a tubular neighborhood of $N$ in $M$ such that $\Xi$ and $\Xi^{-1}$ as well as each of their derivatives are uniformly bounded.
\item\label{it:normalbundleboundedgeometry} The normal bundle $V$ has bounded geometry.
\end{enumerate}
\end{lem}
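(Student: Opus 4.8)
\section*{Proof proposal}

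The plan is to reduce all three statements to the single fact supplied by the uniform-embedding hypothesis---that the second fundamental form of $N\subset M$ and all its covariant derivatives are uniformly bounded over $N$---and then feed this into the Gauss--Codazzi--Ricci formalism and the uniform tubular neighbourhood theorem. Concretely: since $M$ has bounded geometry on an $R$-neighbourhood of $N$, the normal coordinate charts of $M$ centred at points $x\in N$ are defined on balls of a fixed radius and have metric coefficients, Christoffel symbols and all their derivatives uniformly controlled; by hypothesis $N$ is in such a chart the graph of $h_x$ with $\{h_x\}$ and all derivatives equi-bounded. Plugging this into the graph formula for the second fundamental form $\mathrm{I\!I}$ of $N$ and using the uniform bounds on the ambient Christoffel symbols shows that $\mathrm{I\!I}$ and all its $\nabla^N$-covariant derivatives are uniformly bounded---essentially the content of \cite[Chapter~2]{eldering_normallyhyperbolic}.

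For (i): the Gauss equation expresses the curvature $R^N$ as the tangential part of $R^M|_N$ plus a quadratic term in $\mathrm{I\!I}$; differentiating it (and noting that $\nabla^N$ is the tangential projection of $\nabla^M$) bounds every $(\nabla^N)^k R^N$ in terms of covariant derivatives of $R^M$ along $N$---bounded, since $M$ has bounded geometry near $N$---and of $\mathrm{I\!I}$. For the injectivity radius, the bounded-geometry normal charts together with the uniform graph bounds make the intrinsic distance on $N$ biLipschitz to the ambient distance on a fixed scale, so the uniform lower bound on $\mathrm{inj}(M)$ near $N$ precludes short geodesic loops in $N$; combined with the curvature bound just obtained, this gives $\mathrm{inj}(N)>0$ uniformly (via the usual Klingenberg-type estimate, or directly by quoting \cite[Chapter~2]{eldering_normallyhyperbolic}).

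For (ii): consider the normal exponential map $\Xi\colon(x,v)\mapsto\exp^M_x(v)$ on the disc bundle of $V$. Its differential is the identity along the zero section, and Jacobi-field comparison using the bounds on $R^M$ and $\mathrm{I\!I}$ produces $R_1\in(0,R)$ on whose disc bundle $d\Xi$ is invertible with uniformly bounded inverse, so $\Xi$ is a uniform local diffeomorphism there. Global injectivity at scale $R_1$ follows by combining a uniform lower bound on the focal/normal injectivity radius along $N$ (depending only on the $\mathrm{I\!I}$- and $R^M$-bounds) with the uniform embeddedness, exactly as in Eldering's uniform tubular neighbourhood construction; the uniform bounds on $\Xi^{-1}$ and on all derivatives of $\Xi^{\pm1}$ then follow from the inverse/chain rule applied to the quantitative bounds on $d\Xi$ and its derivatives. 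This is the step I expect to be the main obstacle, since the non-compactness of $M$ forbids any compactness argument and all constants must be tracked explicitly; in practice the proof will amount to verifying that our hypotheses match those of the uniform tubular neighbourhood theorem of \cite[Chapter~2]{eldering_normallyhyperbolic} and quoting it.

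For (iii): the connection on $V$ is induced from $\nabla^M$ by orthogonal projection onto $TN^\perp$, and the Ricci equation expresses its curvature as the normal part of $R^M|_N$ plus a quadratic term in $\mathrm{I\!I}$, hence uniformly bounded; differentiating the Ricci equation and invoking (i) bounds all covariant derivatives of the curvature of $V$. Together with (i), this is exactly the assertion that the $\mathrm{O}_r$-bundle $V\to N$ has bounded geometry in the sense of \Cref{defn:boundedgeometry}\ref{it:boundedgeometrybundle}---equivalently, in the trivialisation sense of \cite[Appendix~1.1]{shubin}, with preferred trivialisations given by parallel transport of an orthonormal frame of $V_x$ over the normal charts.
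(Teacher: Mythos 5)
Your argument is correct, and for parts (i) and (ii) it is essentially the paper's argument: the paper simply cites \cite[Lemma~2.27 \& Theorem~2.31]{eldering_normallyhyperbolic} and observes that bounded geometry on an $R$-neighborhood of $N$ (rather than all of $M$) suffices; you flesh out what Eldering's hypotheses demand and why the uniform-embedding condition supplies them, but you end up in the same place.

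For part (iii) you take a genuinely different route. You derive bounded geometry of $V$ intrinsically from the Ricci equation: the normal connection is the $\nabla^M$-projection to $TN^\perp$, its curvature is the normal component of $R^M|_N$ plus a quadratic term in the second fundamental form, and covariant derivatives of all of these are uniformly bounded from the hypotheses plus part (i). The paper instead computes the curvature of $V$ and its derivatives \emph{in terms of the derivatives of the tubular diffeomorphism $\Xi$} and applies part (ii). Your approach has the virtue of being local-to-$N$ and not depending on (ii) at all (only on (i) and the Gauss--Codazzi bounds on $\mathrm{I\!I}$); the paper's approach is shorter on the page because it recycles the quantitative control on $\Xi^{\pm1}$ that (ii) already established. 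Both are valid; yours is the more self-contained formulation, while the paper's emphasizes that $V$ inherits its geometry from the already-constructed uniform tubular neighborhood.
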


\begin{proof}
The first two points are just \cite[Lemma 2.27, Theorem 2.31]{eldering_normallyhyperbolic} where we note that we do not need bounded geometry on all of $M$, but having it on an $R$-neighborhood of $N$ is clearly sufficient.
To prove the third point, calculate the curvature of $V$ and its derivatives in terms of the derivatives of $\Xi$ and apply \ref{item_implies_tubed}.
\end{proof}

\begin{cor}\label{cor:boundedgeometrytoassumptionone}
Given $M,N,V,R,R_1,\Xi$ as in the lemma and $0<R_2<R_1$, we can equip $M$ with a new complete Riemannian metric which agrees with the old one on $N$ and outside of the $R_1$-neighborhood of $N$ and such that $\Xi$ is an isometry from the $R_2$-disk bundle in $V$ onto the $R_2$-neighborhood of $N$ in $M$ equipped with the new metric. 
In other words: The manifold $M$ equipped with the new Riemannian metric satisfies \Cref{assump_thom_one} for the $R_2$-disk bundle.

Moreover, the old and new Riemannian metric are bi-Lipschitz equivalent
and hence have the same Roe algebras and $\cA$-function algebras.
\end{cor}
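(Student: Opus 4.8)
The plan is to obtain $g_{\mathrm{new}}$ by interpolating, via a cut-off depending only on the fibrewise norm, between the given metric $g_M$ and the push-forward under $\Xi$ of the canonical metric $g_V$ on the total space of $V$ (the one recalled before \Cref{assump_thom_one}, for which the connection-induced splittings $T_wV\cong T_{\pi(w)}N\oplus V_{\pi(w)}$ are isometric). Concretely, after shrinking $R_1$ slightly I would assume $\Xi$ is a diffeomorphism of the \emph{closed} $R_1$-disc bundle, fix radii $R_2<R_3<R_1$, pick a smooth $\rho\colon[0,\infty)\to[0,1]$ with $\rho\equiv 1$ on $[0,R_3]$ and $\rho\equiv 0$ on $[R_1,\infty)$, and put $\chi\coloneqq\rho(\lVert\Xi^{-1}(\cdot)\rVert)$ on $\Xi(V^{R_1})$ and $\chi\equiv 0$ on the rest of $M$. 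This $\chi$ is smooth on all of $M$ (near $N$ it is constantly $1$, near the $R_1$-sphere it is constantly $0$, and in between $\lVert\Xi^{-1}(\cdot)\rVert$ is smooth away from $N$), and the convex combination
\[
g_{\mathrm{new}}\coloneqq \chi\cdot\Xi_*g_V+(1-\chi)\cdot g_M
\]
(interpreted as $g_M$ where $\chi=0$) is again a Riemannian metric because the set of inner products on each tangent space is convex.

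Next I would check the asserted local properties. On $\Xi(V^{R_3})$ we have $\chi\equiv1$, hence $g_{\mathrm{new}}=\Xi_*g_V$ there, so $\Xi$ restricts to an isometry $(V^{R_3},g_V)\xrightarrow{\ \cong\ }(\Xi(V^{R_3}),g_{\mathrm{new}})$ and in particular to an isometry from the $R_2$-disc bundle onto the tubular neighbourhood $\Xi(V^{R_2})$ of $N$. That $g_{\mathrm{new}}$ agrees with $g_M$ along $N$ follows since $\chi\equiv1$ on $N$ and the differential of $\Xi=\exp^\perp$ at a zero vector $0_x$ is the identity under the canonical identifications $T_{0_x}V\cong T_xN\oplus V_x\cong T_xM$, whence $(\Xi_*g_V)|_x=(g_V)|_{0_x}=g_{TN,x}\oplus g_{V,x}=g_M|_x$; and it agrees with $g_M$ outside the $R_1$-neighbourhood of $N$ because there $\chi\equiv0$ (using $\Xi(V^{R_1})\subseteq\{\,\dist_{g_M}(\cdot,N)\le R_1\,\}$, as radial geodesics of $\exp^\perp$ are unit speed). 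To see that $\Xi(V^{R_2})$ is moreover \emph{exactly} the $g_{\mathrm{new}}$-ball $B_{R_2}(N)$, I would use the elementary fact that the fibrewise norm $\lVert\cdot\rVert$ is $1$-Lipschitz for $g_V$ (its horizontal derivative vanishes since the connection is metric, its vertical derivative is the Euclidean radial one), so that $\dist_{g_V}(w,N)=\lVert w\rVert$ on $V^{R_1}$: one inclusion is then immediate from the radial geodesic, and for the other a $g_{\mathrm{new}}$-path from $N$ of length $<R_2<R_3$ cannot leave $\Xi(V^{R_3})$ — inside which $g_{\mathrm{new}}=\Xi_*g_V$, where the norm is again $1$-Lipschitz — and so must terminate in $\Xi(V^{R_2})$.

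Finally the bi-Lipschitz equivalence, from which completeness of $g_{\mathrm{new}}$ and the coincidence of the Roe algebras and $\cA$-function algebras follow by standard arguments (a bi-Lipschitz identity map is a coarse equivalence and leaves $C_0(M)$, local compactness and finite propagation untouched, and distorts $L^2$-norms and gradients only boundedly). Since $g_{\mathrm{new}}$ is a convex combination of $g_M$ and $\Xi_*g_V$ and equals $g_M$ off $\Xi(V^{R_1})$, it suffices to show that $g_V$ and $\Xi^*g_M$ are uniformly comparable on the disc bundle $V^{R_1}$. This is the one place where \Cref{lem:boundedgeometryproperties} is really used: by \Cref{lem:boundedgeometryproperties}\ref{item_implies_tubed} the maps $\Xi,\Xi^{-1}$ and their first derivatives are uniformly bounded, so $d\Xi$ and $d\Xi^{-1}$ are uniformly bounded as bundle maps between $(TV,g_V)$ and $(TM,g_M)$ over $V^{R_1}$, which is exactly the two-sided estimate sought. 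I expect this comparison to be the only substantive point; everything else is bookkeeping. (One can additionally note that $\chi$ has uniformly bounded gradient — again by the uniform bounds on $\Xi^{-1}$ — so that $g_{\mathrm{new}}$ inherits the good regularity of $g_M$ near $N$, though this is not needed for the statement.)
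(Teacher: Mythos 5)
Your proof takes essentially the same route as the paper's: push forward the canonical metric on the disc bundle via $\Xi$ and interpolate with the old metric using a smooth radial cut-off, then invoke \Cref{lem:boundedgeometryproperties}\ref{item_implies_tubed} for the bi-Lipschitz comparison. You supply considerably more detail than the paper's three-sentence argument (the intermediate radius $R_3$, the $1$-Lipschitz fiber-norm argument for identifying the $R_2$-neighborhood, and the explicit derivative bounds for bi-Lipschitz equivalence), but the underlying idea is identical.
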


\begin{proof}
The diffeomorphism $\Xi$ transfers the Riemannian metric on the $R_1$-disk bundle in $V$ to a Riemannian metric on the $R_1$-neighborhood of $N$ in $M$. Now use a smooth cut-off function $M\to[0,1]$ which only depends on the distance from $N$, is one on the $R_2$-neighborhood of $N$ and vanishes outside of the $R_1$-neighborhood to interpolate between this metric and the old metric on $M$. The metric we obtain has the desired properties.
\end{proof}

While we have formulated the last definition and statements for arbitrary manifolds, let us now return to the set-up presented at the very beginning of this \Cref{sec:Thomclassrelation} and not repeat it as a prerequisit in every lemma.

\begin{lem}\label{lem:assumptiononetoassumptiontwo}
If \Cref{assump_thom_one} is satisfied, i.\,e.\ the tubular neighborhood $U$ is isometric to the $R_2$-disc bundle in $V$ for some $R_2>0$, and $S_M|_N$ with its restricted action of $\Cl(TM)|_N\cong\Cl(TN)\grtensor\Cl(V)$ is isomorphic to $S_N\grtensor S_V$, then the connection on $S_M$ can be modified over $U$ in such a way that the Dirac bundle $S_M$ with this new connection is isomorphic to the Dirac bundle $\pi^*(S_N\grtensor S_V)$ on a slightly smaller disc bundle $U'$ of radius $0<R_3<R_2$, i.\,e. \Cref{assump_thom_two} is satisfied on $U'\subset U$.
\end{lem}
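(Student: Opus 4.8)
The plan is to split the construction into an algebraic‑topological part — matching the \emph{underlying} Clifford module bundles over all of $U$ — and an elementary connection‑theoretic part — correcting the connection by an interpolation supported in $U$.

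First I would use \Cref{assump_thom_one} to regard $U$ as the $R_2$‑disc bundle of $V$. The chosen connection on $V$ provides the isometric splitting $TU=TV|_U\cong\pi^*(TN\oplus V)$, which restricts to the identity over the zero section $N$; applying the Clifford functor turns this into an isomorphism of graded metric algebra bundles from $\Cl(TU)$ onto the pulled‑back Clifford algebra bundle $\mathcal{C}_U\coloneqq\pi^*\Cl(TN\oplus V)$, again the identity over $N$. Via this identification both $S_M|_U$ (with the restriction of its $\Cl(TM)$‑action) and $\pi^*(S_N\grtensor S_V)$ (with the $\Cl(TV)|_U$‑action underlying its canonical Dirac bundle structure over $V$) become module bundles over the \emph{same} algebra bundle $\mathcal{C}_U$, and both restrict over $N$ to $S_M|_N$, respectively $S_N\grtensor S_V$, with their natural $\Cl(TN\oplus V)$‑actions. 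The fibrewise contraction $H_s(w)\coloneqq sw$, $s\in[0,1]$, is a smooth deformation retraction of the disc bundle $U$ onto $N$ with $\pi\circ H_s=\pi$, so $H_s^*\mathcal{C}_U=\mathcal{C}_U$ for every $s$; hence the standard homotopy‑invariance argument for fibre bundles (Tietze extension / partitions of unity) applies verbatim to module bundles over the \emph{fixed} algebra bundle $\mathcal{C}_U$ and shows that any such bundle is isomorphic to the $\pi$‑pullback of its restriction to $N$. Since $S_M|_N\cong S_N\grtensor S_V$ as $\Cl(TN\oplus V)$‑module bundles by hypothesis, this yields an isomorphism of $\Cl(TU)$‑module bundles $\Phi\colon S_M|_U\xrightarrow{\ \sim\ }\pi^*(S_N\grtensor S_V)$ over all of $U$; replacing $\Phi$ by the unitary part of its polar decomposition (the modulus $(\Phi^*\Phi)^{1/2}$ is again $\Cl(TU)$‑linear) I may assume $\Phi$ is fibrewise isometric.

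Next I would transport the canonical Dirac connection $\hat\nabla$ of $\pi^*(S_N\grtensor S_V)$ back to $S_M|_U$ by setting $\nabla'_X\xi\coloneqq\Phi^{-1}\hat\nabla_X(\Phi\xi)$. Because $\Phi$ is an isometric isomorphism of $\Cl(TU)$‑module bundles and the Levi--Civita connections of $M$ and of the disc bundle $V$ agree on $U$ by \Cref{assump_thom_one}, $\nabla'$ is again a metric Dirac connection for $S_M$ over $U$, so the difference $\omega\coloneqq\nabla'-\nabla^{S_M}|_U$ is a $1$‑form with values in the grading‑preserving, $\Cl(TM)$‑linear, skew‑adjoint endomorphisms of $S_M|_U$. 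Fixing $0<R_3<R_2$ and a smooth cut‑off $\chi\colon M\to[0,1]$ that depends only on $\dist(\blank,N)$, equals $1$ on the $R_3$‑disc bundle $U'$ and vanishes outside $U$, the form $\chi\omega$ extends by zero to a smooth endomorphism‑valued $1$‑form on all of $M$ of the same type; therefore $\tilde\nabla\coloneqq\nabla^{S_M}+\chi\omega$ is a metric Dirac connection on $S_M$ which equals $\nabla^{S_M}$ outside $U$ and $\nabla'$ on $U'$. Equipping $S_M$ with $\tilde\nabla$, the map $\Phi|_{U'}$ is an isomorphism of Dirac $A\grtensor\Cl_r$‑bundles $S_M|_{U'}\xrightarrow{\ \sim\ }\pi^*(S_N\grtensor S_V)|_{U'}$, which is precisely \Cref{assump_thom_two} on $U'$.

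I expect the only genuinely delicate point to be the first step, and more precisely the bookkeeping that makes homotopy invariance applicable: one must compare $S_M|_U$ and $\pi^*(S_N\grtensor S_V)$ as module bundles over one and the same algebra bundle, and this is possible exactly because \Cref{assump_thom_one} permits the retraction of $U$ onto $N$ along the fibres of $V$, under which the pulled‑back Clifford algebra bundle $\mathcal{C}_U$ is preserved. The connection correction in the second step is then routine — the whole modification takes place inside the affine space of metric Dirac connections, so the Leibniz identities defining a Dirac bundle are retained automatically — and the passage from ``isomorphic on all of $U$'' to ``isomorphic on a smaller disc bundle $U'$ of radius $R_3$'' is free, so no uniformity estimate is needed at this stage.
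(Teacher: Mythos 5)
Your argument is correct, but it reaches the global bundle isomorphism over $U$ by a genuinely different route than the paper. The paper constructs the isomorphism concretely by extending the given isomorphism over $N$ via parallel transport along the radial lines of the disc bundle, and then checks Clifford--equivariance directly by comparing parallel sections that agree at $N$; because parallel transport of metric connections is unitary, the resulting map is automatically smooth, isometric and $\Cl(TU)$-linear, so no further correction is needed. You instead identify both $S_M|_U$ and $\pi^*(S_N\grtensor S_V)$ as module bundles over the common Clifford algebra bundle $\mathcal{C}_U=\pi^*\Cl(TN\oplus V)$ and invoke homotopy invariance along the fibrewise contraction of the disc bundle; this is an obstruction-theoretic shortcut that a reader might find cleaner conceptually, but it produces an a priori only continuous isomorphism that need not be isometric, which is why you then need the polar-decomposition step (and, strictly speaking, a routine smoothing step which you don't mention, since transferring the connection requires a smooth $\Phi$). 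The connection-interpolation step in the second half is essentially identical in both proofs: your $\nabla^{S_M}+\chi\omega$ is the same as the paper's $\theta\tilde\nabla^M+(1-\theta)\nabla^M$, and your observation that $\omega$ is $\Cl(TM)$-linear and skew-adjoint is exactly what makes the convex combination again a metric Dirac connection. In short: same endgame, but the paper trades your appeal to soft homotopy invariance plus polar decomposition for an explicit parallel-transport construction that gives smoothness and unitarity for free.
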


\begin{proof}
We extend the given bundle isomorphism $S_N\grtensor S_V\to S_M|_N$ by parallel transport along the radial lines in $V$ to a bundle isomorphism $F\colon \pi^*(S_N\grtensor S_V)|_U\to S_M|_U$. 
Let $X\colon [0,1]\to \Cl(TU)$ and $\xi\colon [0,1]\to \pi^*(S_N\grtensor S_V)$ be parallel sections along such a radial line $\gamma\colon [0,1]\to U$ starting in $N$.
Then the section $F\circ \xi$ of $S_M|_U$ along $\gamma$ is also parallel by choice of $F$ and the compatibility of Clifford action with connection on a Dirac bundle imply that $X\cdot \xi$ and  $X\cdot (F\circ \xi)$ are parallel as well.
By assumption, $F$ preserves the Clifford action over $N$, so the initial values $F(X(0)\cdot \xi(0))=X(0)\cdot F(\xi(0))$ agree and we conlcude $F(X\cdot\xi)=X\cdot (F\circ \xi)$.
We have thus shown that the whole bundle isomorphism preserves the Clifford actions.

Now we can use $F$ to transfer the connection of $\pi^*(S_N\grtensor S_V)$ and obtain a connection $\tilde \nabla^M$ on $S_M|_U$ which, just like $\nabla^M$, is compatible with the Clifford action. Choose a smooth cutoff function $\theta\colon M\to [0,1]$ which is one on $U'$ and vanishes outside of $U$. Then $\theta\cdot\tilde\nabla^M+(1-\theta)\cdot\nabla^M$ defines a new connection on $S_M$ which clearly is compatible with the Clifford action, too. 
With this connection, the claim holds.
\end{proof}

The connection on $V$ together with the choice of the connection on $P_{\unitary_1}(V)$ also gives rise to a $\Cl_{0,r}$-linear connection on $S'_V$ which is compatible with the action of $\Cl(-V)$, the latter equipped with the induced connection.

\begin{lem}\label{lem_thom_N_bounded_geom}
If the submanifold $N$ has bounded geometry and $S'_V$ is a bundle of bounded geometry, then there exists an isometric inclusion as a direct summand of bounded variation of a trivial bundle
\[G\colon S'_V\hookrightarrow (\Cl_{0,r}\grtensor\C^k)\times N\subset (\Cl_{0,r}\grtensor\grelltwo)\times N\]
such that all derivatives of $G$ as well as its dual projection $(\Cl_{0,r}\grtensor\C^k)\times N\to S'_V$ with respect to synchronous framings are bounded.
As a consequence, the image of $\beta$ under the inclusion into $\Cb(V;\Cl_{0,r}\grtensor\C^{k\times k})$ is contained in $\cAz(N\Subset V;\Cl_{0,r})$ and hence \Cref{assump_thom_three} is satisfied.
Moreover, the inclusion and therefore also $\beta\colon\cS\to \cAz(N\Subset V;\Cl_{0,r})$ is unique up to homotopy. \end{lem}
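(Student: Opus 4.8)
The plan is to build the embedding $G$ by the standard partition-of-unity construction available over a manifold of bounded geometry, then to read off the assertions about $\beta$, and finally to settle uniqueness up to homotopy by a rotation argument. For the existence of $G$: since $N$ has bounded geometry we may choose $\delta>0$ below a uniform lower bound for its injectivity radius and a cover of $N$ by normal balls $\{U_\alpha=B_\delta(x_\alpha)\}$ which is uniformly locally finite, say of multiplicity at most $N_0$, together with a subordinate smooth partition of unity $\{\chi_\alpha\}$ all of whose covariant derivatives are bounded uniformly in $\alpha$. Over each $U_\alpha$ we trivialise $S'_V$ by radial parallel transport of an orthonormal frame of $S'_{V,x_\alpha}$; as the connection on $S'_V$ is metric and grading-preserving this yields a grading-preserving isometric isomorphism $\phi_\alpha\colon S'_V|_{U_\alpha}\xrightarrow{\cong}\Cl_{0,r}\times U_\alpha$, and since $S'_V$ has bounded geometry the transition from $\phi_\alpha$ to any synchronous framing, along with all its derivatives, is bounded uniformly in $\alpha$. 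The nerve of the cover has vertex degree at most $N_0$, hence admits a colouring by some $k\le N_0+1$ colours so that balls of the same colour are disjoint; for each colour $c$ set $G_c\coloneqq\sum_{\mathrm{col}(\alpha)=c}\sqrt{\chi_\alpha}\,\phi_\alpha$ (well defined and smooth because the summands have disjoint supports) and $G\coloneqq\bigoplus_c G_c\colon S'_V\to(\Cl_{0,r}\grtensor\C^k)\times N$. Then $\langle Gv,Gw\rangle=\sum_\alpha\chi_\alpha\langle\phi_\alpha v,\phi_\alpha w\rangle=\langle v,w\rangle$, so $G$ is an isometric bundle embedding realising $S'_V$ as an orthogonal direct summand, and the derivatives of $G$ and of its retraction $G^*$ (equivalently, of the projection $P\coloneqq GG^*$) are controlled by those of the $\chi_\alpha$ and $\phi_\alpha$, so $P\in\cA^\infty(N;\Cl_{0,r})$ and $S'_V$ is embedded as a direct summand of bounded variation.

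For the consequence about $\beta$: via $G$ the algebra $\Cb(V;\pi^*\Lin_{\Cl_{0,r}}(S'_V))$ includes into $\Cb(V;\Cl_{0,r}\grtensor\C^{k\times k})\subset\Cb(V;\Cl_{0,r}\grtensor\grKom)$, and since $\beta$ is a $*$-homomorphism it suffices to show $\varphi(C)\in\cAz(N\Subset V;\Cl_{0,r})$ for $\varphi$ in the dense subalgebra of smooth functions in $\cS$. The section $C$ is smooth, horizontally parallel with $\lVert\nabla C\rVert\le1$, and at $w\in V$ the fibrewise operator $C(w)$ is self-adjoint with $C(w)^2=\lVert w\rVert^2$, so its spectrum is contained in $\{-\lVert w\rVert,\lVert w\rVert\}$. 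Hence $\varphi(C)$ is a bounded smooth $\Cl_{0,r}\grtensor\grKom$-valued function whose gradient is bounded — by the usual norm estimate for the functional calculus of a self-adjoint section of bounded variation, using smoothness of $\varphi$ together with boundedness of $\nabla C$, $\nabla G$ and $\nabla P$ — and whose norm outside the $R$-disc bundle of $V$ is at most $\sup_{\lvert t\rvert\ge R}\lvert\varphi(t)\rvert$, which tends to $0$ uniformly over $N$ because the spectral picture is uniform. Since, for the canonical Riemannian metric on $V$, the $R$-disc bundle is precisely the $R$-neighbourhood of $N$, such a function belongs to $\cAz(N\Subset V;\Cl_{0,r})$; by continuity of $\beta$ and closedness of $\cAz(N\Subset V;\Cl_{0,r})$ the whole image of $\beta$ does, which is \Cref{assump_thom_three}.

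For uniqueness up to homotopy: given two embeddings $G_0\colon S'_V\to\Cl_{0,r}\grtensor\C^{k_0}$ and $G_1\colon S'_V\to\Cl_{0,r}\grtensor\C^{k_1}$ as above, regard both as maps into $\Cl_{0,r}\grtensor(\grelltwo\oplus\grelltwo)\cong\Cl_{0,r}\grtensor\grelltwo$ with $G_0$ landing in the first and $G_1$ in the second summand, and interpolate by $G_\theta\coloneqq(\cos\theta\cdot G_0)\oplus(\sin\theta\cdot G_1)$ for $\theta\in[0,\tfrac\pi2]$. Since the summands are orthogonal, each $G_\theta$ is again an isometric embedding of bounded variation with derivatives bounded uniformly in $\theta$, so $\varphi\mapsto G_\theta\,\varphi(C)\,G_\theta^*$ is a norm-continuous path of $*$-homomorphisms $\cS\to\cAz(N\Subset V;\Cl_{0,r})$ connecting the one built from $G_0\oplus0$ to the one built from $0\oplus G_1$; as amplifying $G_i$ by a finite-rank stabilisation only precomposes $\beta$ with the canonical (hence homotopically trivial) corner inclusion, the $*$-homomorphisms $\beta$ coming from $G_0$ and $G_1$ are homotopic. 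I expect the real work to be in the existence step — producing $G$ with \emph{all} derivatives bounded uniformly over $N$, the bounded-geometry analogue of embedding a bundle into a trivial one — where one must assemble a uniform cover, a uniform partition of unity, uniformly controlled synchronous trivialisations and a colouring of the nerve and check that every bound survives; the decay of $\varphi(C)$ in the fibre direction and the rotation homotopy are comparatively routine once bounded geometry is in force.
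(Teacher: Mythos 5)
Your proof follows essentially the same route as the paper's: the embedding $G$ is built exactly as in the reference the paper invokes (a uniformly locally finite cover of the bounded-geometry manifold $N$, uniform synchronous trivialisations of $S'_V$, a colouring of the nerve and a coloured partition-of-unity sum), the membership of $\beta$'s image in $\cAz(N\Subset V;\Cl_{0,r})$ is deduced from a density argument plus horizontal parallelism and vertical decay of $\varphi(C)$, and uniqueness is settled by the rotation homotopy $\cos\theta\, G_0\oplus\sin\theta\, G_1$. The one place you should tighten is the density step for $\beta$: the paper takes $\varphi\in\cS_R$ compactly supported in $[-R,R]$, so that $\varphi(C)$ is \emph{exactly} supported in the $R$-disc bundle and lies in $\cA(\overline{U_R},\partial U_R;\Cl_{0,r})$ outright, whereas you take ``smooth functions in $\cS$'' and argue via $\sup_{|t|\ge R}|\varphi(t)|\to 0$; for that to put $\varphi(C)$ in $\cAz$ you still need an explicit cut-off (i.e.\ multiply by a bounded-gradient fibre cut-off supported near $N$ and let the radius go to $\infty$) and, for a general smooth $\varphi\in\Cz(\R)$, the gradient bound on $\varphi(C)$ is not automatic unless you restrict to, say, compactly supported or Schwartz $\varphi$ — which is precisely why the paper works with $\cS_R$.
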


If $M$ has bounded geometry in an $R$-neighborhood of $N$, then the normal bundle $V$ also has bounded geometry by \Cref{lem:boundedgeometryproperties}\ref{it:normalbundleboundedgeometry}, but this does not automatically imply that $S'_V$ has bounded geometry. Indeed, $S'_V$ will have bounded geometry if and only if the line bundle associated to $P_{\unitary_1}(V)$ has bounded geometry. This is of course satisfied if the \spinC{} structure comes from a spin structure, i.\,e.\ if the normal bundle is $\KO$-oriented.

\begin{proof}
The embedding as direct summand of bounded variation is done for this bundle of $\Cl_{0,r}$-modules exactly as in \cite[Prop. 3.17]{engel_uniform_Kth} for vector bundles.

Since the connection on $S'_V$ is compatible with the action of $\Cl(-V)$, for each smooth $\varphi\in\cS_R$ supported in $[-R,R]$ the section $\beta(\varphi)$ is a smooth section of $\pi^*\Lin_{\Cl_{0,r}}(S'_V)$ which is supported in the $R$-neighborhood of $N$, horizontally parallel and has uniformly bounded derivatives in vertical direction.
Therefore the section $G \beta(\varphi) G^*$, which is the image of $\beta(\varphi)$ under the inclusion into $\Cb(V;\Cl_{0,r}\grtensor\C^{k\times k})$, also has bounded derivatives and is supported in the $R$-neighborhood of $N$. Such functions $\varphi$ are dense in $\cS$ and we conclude that the image of $\beta$ will be contained in $\cAz(N\Subset V;\Cl_{0,r})$ under the inclusion.

Finally, if $G,G'\colon S'_V\hookrightarrow (\Cl_{0,r}\grtensor\C^k)\times N$ are two such inclusions, then $G_s\coloneqq \cos(s)G\oplus\sin(s)G'$ defines a homotopy of inclusions of the same type between $G\oplus 0$ and $0\oplus G'$. The resulting maps $G_s\colon \Cb(N;S'_V)\to\Cb(N(\Cl_{0,r}\grtensor\grelltwo))$ clearly depend continuously in operator norm on $s$
and thus the induced inclusions $\Cb(V;\pi^*\Lin_{\Cl_{0,r}}(S'_V))\subset \Cb(V;\Cl_{0,r}\grtensor\grKom)$ are homotopic as $*$-homomorphisms.
\end{proof}

\begin{defn}\label{def:Thomclassboundedgeometry}
Assume that $M$ has bounded geometry in an $R$-neighborhood of $N$, $N\subset M$ is uniformly embedded and the bundle $S'_V$ has bounded geometry. Then \Cref{lem:boundedgeometryproperties} implies that $N$ has bounded geometry and yields the diffeomorphism $\Xi$ from the $R_1$-disk bundle $U$ in $V$ to the $R_1$-neighborhood $W\subset M$ of $N$. \Cref{lem_thom_N_bounded_geom} applies and we obtain a $*$-homomorphism
\[\cS\xrightarrow{\psi_{R_1}}\cS_{R_1}\xrightarrow{\beta|_{\cS_{R_1}}}\cA(\overline{U},\partial U;\Cl_{0,r})\xrightarrow{(\Xi^{-1})^*}\cA(\overline{W},\partial W;\Cl_{0,r})\subset \cA(N\Subset M;\Cl_{0,r})\]
which is unique up to homotopy. We define the \emph{Thom class of $N$ in $M$} as the uniquely determined element in $\K(\cA(N\Subset M;\Cl_{0,r}))$ represented by this $*$-homomorphism.
\end{defn}

It is obvious that if $M$ also fullfills \Cref{assump_thom_one}, then $\Xi$ will be the identity and therefore the Thom class defined here agrees with the one defined in \Cref{def:Thomclassassumptions}.

\begin{thm}\label{thm_Thomclassrelation_relaxed}
Let $M$ be a complete Riemannian manifold and $N\subset M$ a complete Riemannian submanifold with $\K$-oriented normal bundle $V$. We assume that $N$ is uniformly embedded into $M$, $M$ has bounded geometry in some $R$-neighborhood of $N$ and that the spinor $\Cl_{0,r}$-bundle $S'_V$ associated to $V$ has bounded geometry.
Furthermore we assume that $S_M|_N$ with its restricted action of $\Cl(TM)|_N\cong\Cl(TN)\grtensor\Cl(V)$ is isomorphic to $S_N\grtensor S_V$. Then:
\begin{equation}
\label{eq_Thom_final}
\llbracket D_M\midd N;\Cl_{0,r}\rrbracket\circ \tau=i_*\ind(D_N)
\end{equation}
\end{thm}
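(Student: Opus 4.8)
The plan is to deduce the relaxed statement from \Cref{thm:Thomclassrelation} by arranging, one after another, that \Cref{assump_thom_one,assump_thom_two,assump_thom_three} hold, at the cost of modifications of the Riemannian metric on $M$ and of the connection on $S_M$ that do not affect any of the terms appearing in \eqref{eq_Thom_final}. First I would invoke \Cref{lem:boundedgeometryproperties} to conclude that $N$ and $V$ have bounded geometry and to obtain $0<R_1<R$ together with the tubular-neighbourhood diffeomorphism $\Xi$ from the $R_1$-disk bundle in $V$ onto an $R_1$-neighbourhood of $N$. Then, fixing $0<R_2<R_1$, I would use \Cref{cor:boundedgeometrytoassumptionone} to replace the metric on $M$ by a bi-Lipschitz equivalent one that agrees with the original metric on $N$ and outside the $R_1$-neighbourhood of $N$ and for which $\Xi$ becomes an isometry of the $R_2$-disk bundle onto the $R_2$-neighbourhood of $N$; this puts us in the situation of \Cref{assump_thom_one}. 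Next, since the restricted metric on $N$ (and with it the data $V$, $S_V$, $S'_V$ and the isomorphism $S_M|_N\cong S_N\grtensor S_V$) is untouched, \Cref{lem:assumptiononetoassumptiontwo} lets me modify the connection on $S_M$ over a slightly smaller disk bundle $U'$ so that \Cref{assump_thom_two} holds there, and \Cref{lem_thom_N_bounded_geom} yields \Cref{assump_thom_three} from the bounded geometry of $N$ and $S'_V$. At that point \Cref{thm:Thomclassrelation} applies and gives \eqref{eq_Thom_final} for the modified data.

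The bookkeeping I would then carry out is to check that the two sides of \eqref{eq_Thom_final} are unchanged by these modifications, so that the conclusion transfers back to the original data. The right-hand side $i_*\ind(D_N)$ is unaffected because the metric is fixed on $N$, so $D_N$ is literally unchanged, and a bi-Lipschitz change of the metric on $M$ leaves $\Roe(N\subset M;A)$ and the homomorphism $i_*$ canonically unchanged. The Thom class $\tau$ of \Cref{def:Thomclassboundedgeometry} is built only from $V$, $S'_V$, the Clifford section entering $\beta$ and the diffeomorphism $\Xi$, none of which change, so it remains the same element of $\K(\cAz(N\Subset M;\Cl_{0,r}))$ (and, by the remark after \Cref{def:Thomclassboundedgeometry}, agrees with the Thom class of \Cref{def:Thomclassassumptions} once \Cref{assump_thom_one} holds). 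The connection change in the third step does not alter the principal symbol of $D_M$, hence leaves $\llbracket D_M\midd N;\Cl_{0,r}\rrbracket$ unchanged — the localized analogue of the symbol-only dependence of $\llbracket D;B\rrbracket$ recalled after \Cref{thm:BordismInvariance}, which follows either from the same bordism reasoning or from the observation that \eqref{eq:DiracEclassAsympMorph} only feels $D_M$ near $N$ in the $t\to\infty$ limit.

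The one genuinely nontrivial point — and the step I expect to be the main obstacle — is verifying that $\llbracket D_M\midd N;\Cl_{0,r}\rrbracket$ is also unaffected by the \emph{metric} modification of the second step, which is awkward precisely because that modification is supported in a neighbourhood of $N$, i.e.\ exactly where this localized class is concentrated. I would establish this by interpolating: choose a path $(g_s)_{s\in[0,1]}$ of complete metrics joining the old and new ones, all coinciding outside the $R_1$-neighbourhood of $N$ and mutually bi-Lipschitz with uniform constants, with the associated Dirac operators $D_{M,s}$ depending continuously on $s$ in the norm-resolvent sense, uniformly enough that $\phi(t^{-1}D_{M,s})$ is norm-continuous in $s$ for each $\phi\in\cS$ and $t\ge 1$. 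Then the asymptotic morphisms \eqref{eq:DiracEclassAsympMorph} attached to $g_0$ and $g_1$ are linked by a homotopy of asymptotic morphisms and define the same $\EE$-theory class. (Equivalently, one could encode the interpolation as a bordism over $M\times[0,1]$ with a product metric near the ends and appeal to a localized bordism invariance established along the lines of \Cref{thm:LocalIndexBordismInvariance}.) Getting the continuity estimates uniform enough to run this argument — or, in the bordism formulation, setting up the localized bordism invariance — is the only part that is not routine.
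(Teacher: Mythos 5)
Your proposal is correct and follows essentially the same strategy as the paper's proof: use \Cref{lem:boundedgeometryproperties} and \Cref{cor:boundedgeometrytoassumptionone} to replace the metric on $M$ by a bi-Lipschitz one satisfying \Cref{assump_thom_one}, use \Cref{lem:assumptiononetoassumptiontwo} and \Cref{lem_thom_N_bounded_geom} to arrange \Cref{assump_thom_two,assump_thom_three}, apply \Cref{thm:Thomclassrelation}, and then argue that the three terms of \eqref{eq_Thom_final} are unaffected. The paper handles the invariance of $\llbracket D_M\midd N;\Cl_{0,r}\rrbracket$ by exactly the route you suggest as an alternative, namely a localized analogue of \Cref{thm:BordismInvariance} proved by adapting the argument of \cite[Theorem~4.12]{WulffTwisted}, and this single bordism covers the metric adjustment and the connection adjustment in one stroke; it does not need the norm-resolvent interpolation you sketch as your primary option (which would likely work but requires additional uniform continuity estimates that the paper avoids). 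One small inaccuracy on your side: you assert that $i_*\ind(D_N)$ is unchanged because ``$D_N$ is literally unchanged'', but the paper explicitly flags that in the course of applying \Cref{lem:assumptiononetoassumptiontwo} the connection on $S_N$ may get altered — the hypothesis only requires the isomorphism $S_M|_N\cong S_N\grtensor S_V$ to intertwine the Clifford actions, not the connections, so aligning the Dirac bundle structure of $S_M$ with $\pi^*(S_N\grtensor S_V)$ near $N$ can force a different $S_N$-connection. The paper repairs this by noting the new $D_N$ is bordant to the old one, so $\ind(D_N)$ is still unaffected; you should add that observation rather than claim literal equality.
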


\begin{proof}
First we modify the Riemannian metric on $M$ according to \Cref{cor:boundedgeometrytoassumptionone}, such that \Cref{assump_thom_one} is satisfied, and modify the Clifford action and connection on $S_M$ accordingly. Afterwards we modify the connection on $S_M$ as in \Cref{lem:assumptiononetoassumptiontwo} such that \Cref{assump_thom_two} is also satisfied.
These modifications do not change the Roe algebras and $\cA$-function algebras and the new Dirac operator of the modified bundle $S_M$ is clearly bordant to the old Dirac operator.
Therefore, an analogue of \Cref{thm:BordismInvariance} for the localized $\EE$-theory classes, which can be shown by adapting the proof in \cite[Theorem 4.12]{WulffTwisted}, implies that the class
\[\llbracket D_M\midd N;\Cl_{0,r}\rrbracket\in\EE(\cA(N\Subset M;\Cl_{0,r}),\Roe(N\subset M;A\grtensor\Cl_{r,r}))\]
remains the same.

While applying \Cref{lem:assumptiononetoassumptiontwo} in order to satisfy \Cref{assump_thom_two}, we might have changed the connection on $S_N$. However, the Dirac operator $D_N$ associated to the new connection is also bordant to the one with respect to the old connection, so $\ind(D_N)$ also stays unchanged.

Finally, the change of Riemannian metric does also not change the Thom class, because it just turns the representative from \Cref{def:Thomclassboundedgeometry} to the one from \Cref{def:Thomclassassumptions}.
Therfore, after these modifications we can simply apply \Cref{thm:Thomclassrelation} and the claim follows.
\end{proof}

\section{The index away from \texorpdfstring{$L$}{L}}
\label{sec:indexawayfromL}

In this section we present a direct generalization of the theory presented in \cite[Sections 7 \& 8]{WulffTwisted} about coarse indices of operators which are defined over complements of compact subsets.

\subsection{Definition and calculations using \texorpdfstring{$\EE$}{E}-theory}
We consider an $A$-linear Dirac operator $D$ over the complement of a controlled neighborhood of the closed subset $L$ in $M$ and want to define its index in the $\K$-theory of the relative Roe algebra, whose definition we recall right away.

\begin{defn}
The \emph{relative Roe algebra} of a closed subset $L$ in a proper metric space $X$ is the quotient \textCstar-algebra 
\[\Roe(X,L;A)\coloneqq \Roe(X;A)/\Roe(L\subset X;A)\]
(or $\Roe(X,L;\Hilbert_X)\coloneqq \Roe(X;\Hilbert_X)/\Roe(L\subset X;\Hilbert_X)$, if the choice of Hilbert module is relevant). 
\end{defn}

Now, by choosing a uniformly close smooth approximation of the distance function $\dist(\blank,L)\colon M\to[0,\infty)$ and picking a sufficiently large regular value, we find a closed submanifold with boundary of codimension zero $W$ containing the set on which $D$ is not defined and such that the inclusion $L\hookrightarrow W$ is a coarse equivalence.
Then the Dirac operator $D$ over $V\coloneqq M\setminus \mathring W$ extends to an $A$-linear Dirac operator $\hat D$ over a complete Riemannian manifold $\hat M$ which contains an Riemann-isometrically embedded copy of $V$ and such that the identity map on $V$ extends to a coarse map $\hat M\to M$.

Indeed, the manifold $\hat M$ can for example be chosen as the double of $V$ and the map $\hat M\to M$ collapses the two halves. We have adressed how to construct the operator $\hat D$ in a similar context somewhat more detailed in \Cref{sec:DefRelInd}. It involves changing the Riemannian metric in a 1-neighborhood of $\partial W$ and the Dirac bundle accordingly, which does not change the coarse indices under consideration.
If $D$ is defined on all of $M$ from the beginning, then we can also simply choose $\hat M=M$.

Since $M$ is a complete Riemannian manifold, it is straightforward to see that the decomposition $M=V\cup W$ into the two submanifolds $V,W$ with common boundary $\partial V=\partial W$ is coarsely excisive. Hence we have
\begin{align*}
\Roe(M,L;A)&=\frac{\Roe(M;A)}{\Roe(L\subset M;A)}=\frac{\Roe(V\subset M;A)+\Roe(W\subset M;A)}{\Roe(W\subset M;A)}
\\&\cong\frac{\Roe(V\subset M;A)}{\Roe(V\subset M;A)\cap\Roe(W\subset M;A)}=\frac{\Roe(V\subset M;A)}{\Roe(\partial V\subset M;A)}
\end{align*}
and similarly we obtain an isomorphism 
\[\Roe(\hat M,\hat M\setminus\mathring V;A)\cong \Roe(V\subset \hat M;A)/\Roe(\partial V\subset \hat M;A)\,.\]
The coarse map $\hat M\to M$ therefore gives rise to a $*$-homomorphism
\[\Roe(\hat M;A)\to \Roe(\hat M,\hat M\setminus\mathring V;A)\to\Roe(M,L;A)\,.\]

\begin{defn}
The \emph{coarse index away from $L$} of the operator $D$ is the image 
$\ind(D,  L)\in \K(\Roe(M,L;A))$
of the coarse index $\ind(\hat D)$ under the canonical map induced by the $*$-homomorphism described above.
\end{defn}

We will see in a moment that the coarse index away from $L$ is independent of the choice of extension $\hat M,\hat D$.
In the case of compact $L$ it is exactly the index $\ind_{/\Kom}(D)$ defined in \cite[Section 7]{WulffTwisted}.

If $D$ is defined everywhere on $M$, then $\ind(D, L)$ is the image of $\ind(D)$ under the canonical map $\K(\Roe(M;A))\to \K(\Roe(M,L;A))$ and is therefore an obstruction to invertibility of $D$ away from $L$. Applied to the Dirac operator of a spin structure on $M$ we get:
\begin{cor}\label{cor_upsc_outside_L}
Let $M$ be an $m$-dimensional complete Riemannian spin manifold without boundary and denote by $\slashed{D}$ its spin Dirac operator.

If $M$ has uniformly positive scalar curvature outside of a controlled neighbourhood of $L$, then $\ind(\slashed{D},L) = 0$.
\end{cor}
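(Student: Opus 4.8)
The statement is a direct analogue of the classical fact that uniformly positive scalar curvature outside a compact set forces the coarse index to vanish, transported to the relative Roe algebra. The plan is to combine \Cref{ex_invertible_upsc_outside_L} with the functoriality of the coarse index under the quotient map defining $\Roe(M,L;A)$.

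First I would recall from \Cref{ex_invertible_upsc_outside_L} (which in turn rests on \cite[Lemma 2.3]{Roe_Relative}) that if $M$ has uniformly positive scalar curvature on the complement of a controlled neighbourhood of $L$, then the spin Dirac operator $\slashed D$ is invertible away from $L$ in the sense of \Cref{defn:invertibleawayfromL}: there is $\varepsilon>0$ with $\phi(\slashed D)\in\Roe(L\subset M;L^2(S))\subset\Roe(L\subset M;A)$ for all $\phi\in\cS_\varepsilon$. Since $\slashed D$ is defined on all of $M$, we may take $\hat M = M$ and $\hat D = \slashed D$ in the definition of $\ind(\slashed D,L)$, so that $\ind(\slashed D,L)$ is simply the image of $\ind(\slashed D)\in\K(\Roe(M;A))$ under the map induced by the quotient $*$-homomorphism $\pi\colon\Roe(M;A)\to\Roe(M;A)/\Roe(L\subset M;A)=\Roe(M,L;A)$.

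Next I would use the diagram \eqref{eq_diag_Lloc_classes} and \Cref{defn:invertibleawayfromLindex}: the $L$-local coarse index $\ind(\slashed D\mid L)\in\K(\Roe(L\subset M;A))$ maps to $\ind(\slashed D)\in\K(\Roe(M;A))$ under the map induced by the inclusion $\Roe(L\subset M;A)\hookrightarrow\Roe(M;A)$. Composing with the quotient map $\pi_*$ and using that $\pi$ annihilates the ideal $\Roe(L\subset M;A)$, the composite $\K(\Roe(L\subset M;A))\to\K(\Roe(M;A))\xrightarrow{\pi_*}\K(\Roe(M,L;A))$ is zero. Hence
\[
\ind(\slashed D,L) = \pi_*\ind(\slashed D) = \pi_*\bigl(\text{image of }\ind(\slashed D\mid L)\bigr) = 0\,.
\]
This is the whole argument; the only substantive input is \Cref{ex_invertible_upsc_outside_L}, i.e.\ the Lichnerowicz/Roe estimate showing $\slashed D$ is invertible away from $L$ under the scalar curvature hypothesis.

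There is no real obstacle here — the potential subtlety is purely bookkeeping: one must make sure that the factorization of $\ind(\slashed D)$ through $\K(\Roe(L\subset M;A))$ really is the same as the image of $\ind(\slashed D\mid L)$, which is exactly what diagram \eqref{eq_diag_Lloc_classes} records, and that one uses the same representation $L^2(S)$ (hence the same inclusion $\Roe(L\subset M;L^2(S))\subset\Roe(L\subset M;A)$) on both sides, so that the $*$-homomorphisms defining the two indices are literally compatible. Once that is granted, exactness of $0\to\Roe(L\subset M;A)\to\Roe(M;A)\xrightarrow{\pi}\Roe(M,L;A)\to 0$ at the level of the relevant $*$-homomorphisms from $\cS_\varepsilon$ gives the vanishing immediately.
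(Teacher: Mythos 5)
Your proposal is correct and follows exactly the same route as the paper's own proof: invoke \Cref{ex_invertible_upsc_outside_L} to get invertibility away from $L$, use the lift through $\K(\Roe(L\subset M;\Cl_m))$ recorded in Diagram~\eqref{eq_diag_Lloc_classes}, and conclude by exactness that the image in $\K(\Roe(M,L;\Cl_m))$ vanishes. Your write-up merely spells out the bookkeeping (the choice $\hat M=M$, the compatibility of representations) that the paper leaves implicit.
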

\begin{proof}
In \cref{ex_invertible_upsc_outside_L} we have discussed that in this situation $\slashed{D}$ will be invertible away from $L$. From the discussion surrounding the Diagram~\eqref{eq_diag_Lloc_classes} we conclude that $\ind(\slashed{D}) \in \K(\Roe(M;\Cl_m))$ has a lift to a class $\ind(\slashed{D}\mid L) \in \K(\Roe(L \subset M;\Cl_m))$ and therefore $\ind(\slashed{D},L) = 0$.
\end{proof}

The coarse index away from $L$ can also be computed using an $\EE$-theory class.
Formula \eqref{eq:DiracEclassAsympMorphSubset} for $\hat D,\hat M, V,\partial V$ instead of $D,M,\overline{U},\partial U$ provides us with an asymptotic morphism
\begin{align*}
\cS\grtensor \cA(V,\partial V;B)&\to\fA( \Roe(V^{\subseteq \hat M};A\grtensor B))\end{align*}
which represents the $\EE$-theory class $\llbracket \hat D|_{\mathring V};B\rrbracket$. After composing it with the canonical inclusion $\Roe(V^{\subseteq \hat M};A\grtensor B))\subset \Roe(V^{\subseteq M};A\grtensor B)$, the class becomes independent of the chosen extension by \Cref{lem:independence}.
Even more, the proof of the Lemma given in  \cite[Theorem 4.9]{WulffTwisted} shows that this independence already holds at the level of asymptotic morphisms.
If we restrict the domain to the ideal 
\begin{align*}
\cAz(\partial V\Subset V,\partial V\mid L;B)&\coloneqq \cA(V,\partial V;B)\cap\cAz(\partial V\Subset V;B)
\\&\subset \cA(V,\partial V;B)\subset \cA(M;B)\,,
\end{align*}
see \Cref{defn_A0_subsubset} for notation,
then we end up in the ideal $\Roe(\partial V\subset V^{\subseteq M};A\grtensor B)\subset \Roe(V^{\subseteq M};A\grtensor B)$. It is straightforward to see the canonical isomorphisms
\begin{align*}
\Roe(M,L;A\grtensor B)&\cong\frac{\Roe(V^{\subseteq M};A\grtensor B)}{\Roe(\partial V\subset V^{\subseteq M};A\grtensor B)}
\\\cA(M\div L;B)&\coloneqq\frac{\cA(M;B)}{\cAz(L\Subset M;B)}\cong \frac{\cA(V,\partial V;B)}{\cAz(\partial V\Subset V,\partial V;B)}\,.
\end{align*}
The function algebra $\cA(M\div L;B)$ should be seen as relative to $L$ in a coarse geometric sense, but we do not denote it by $\cA(M, L;B)$, because the latter could also mean the subalgebra of $\cA(M;B)$ of functions vanishing on $L$.
We end up with the following commutative diagram with exact columns:
\begin{equation}\label{eq:SESAsympMorph}
\xymatrix{
0\ar[d]&0\ar[d]
\\\cS\grtensor \cAz(\partial V\Subset V,\partial V;B)\ar[r]\ar[d]&\fA(\Roe(\partial V\subset V^{\subseteq M};A\grtensor B))\ar[d]
\\\cS\grtensor \cA(V,\partial V;B)\ar[r]\ar[d]&\fA(\Roe(V^{\subseteq M};A\grtensor B))\ar[d]
\\\cS\grtensor \cA(M\div L;B)\ar[r]\ar[d]&\fA(\Roe(M,L;A\grtensor B))\ar[d]
\\0&0
}\end{equation}
The first two horizontal arrows represent the $\EE$-theory classes
$\llbracket D|_{\mathring V}\midd \partial V;B\rrbracket$ and $\llbracket D|_{\mathring V};B\rrbracket$, respectively.
\begin{defn}
The $\EE$-theory class away from $L$ of $D$ is the class
\[\llbracket D, L;B\rrbracket\in \EE( \cA(M\div L;B),\Roe(M,L;A\grtensor B)) \]
represented by the bottom horizontal arrow in Diagram \eqref{eq:SESAsympMorph}.
\end{defn}
All three of these $\EE$-theory classes are independent of the chosen extension $\hat D,\hat M$.

\begin{rem}
There is undoubtedly a linguistic inconvenience in naming the various indices, \textCstar-algebras and $\EE$-theory classes. All of them are justifiable relative in one way or another, but with respect to which subsets?
For example, the relative Roe algebra $\Roe(M,L;A)$, the index $\ind(D,L)$ away from $L$ and the corresponding $\EE$-theory class $\llbracket D,L;B\rrbracket$ can be seen as relative to $L$, because the information above these subspaces is neglected or cancelled out. In contrast, the generalizations of the relative coarse indices $\ind(D\mid L)$ and $\ind(D\midd E,F)$ as well as their corresponding $\EE$-theory classes should rather be interpreted as ``relative to a coarse geometric complement of L'' or, in other words, ``local/localized over $L$''.

One should thus keep close attention to the different notations and namings and accept that they partially have historical roots and might not be appropriate depending on the perception in one's mind.
\end{rem}

It is straightforward to directly generalize \cite[Theorem 7.10]{WulffTwisted} and its proof to obtain the following theorem.
\begin{thm}\label{thm:indexawayfromLEtheoryproduct}
Let $E$ be a $B$-bundle of bounded variation defined over the complement of a controlled neighborhood of $L$ in $M$, which clearly has a $\K$-theory class
\[
\llbracket E\div L\rrbracket\in\K(\cA(M\div L;B))\,.
\]
Then the index away from $L$ of the twisted operator $D_E$, which is also defined on the complement of a controlled neighborhood of $L$, is given by the formula
\[
\pushQED{\qed} \ind(D_E,L)=\llbracket D,L;B\rrbracket\circ\llbracket E\div L\rrbracket\,.\qedhere
\popQED
\]
\end{thm}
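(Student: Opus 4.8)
The plan is to reduce the identity to \Cref{thm:IndexTwistedOperator} applied over a chosen extension $\hat M$ and then transport the resulting formula through the quotient maps assembled in Diagram~\eqref{eq:SESAsympMorph}, following the proof of \cite[Theorem~7.10]{WulffTwisted} essentially verbatim.

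First I would fix the auxiliary data used in the construction of the index away from $L$: a codimension-zero submanifold with boundary $W$ containing a controlled neighborhood of $L$ outside of which both $D$ and $E$ are defined, the submanifold $V\coloneqq M\setminus\mathring W$, and the extension $\hat D$ of $D|_V$ to a Dirac operator over a complete Riemannian manifold $\hat M$ (say the double of $V$) together with the coarse map $\hat M\to M$ extending $\id_V$. Since $E$ has bounded variation over $V$ and its defining projection $P\in\cA^\infty(V;B)$ is already defined near $\partial W$, it extends (for instance by reflection across $\partial W$) to a projection-valued $\hat P\in\cA^\infty(\hat M;B)$, hence to a $B$-bundle $\hat E\to\hat M$ of bounded variation with $\hat E|_V=E$ and $(\hat D)_{\hat E}|_V=D_E$. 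Thus $(\hat D)_{\hat E}$ is an admissible extension of $D_E$, so by definition $\ind(D_E,L)$ is the image of $\ind((\hat D)_{\hat E})$ under the canonical homomorphism $\K(\Roe(\hat M;A\grtensor B))\to\K(\Roe(M,L;A\grtensor B))$, and by \Cref{thm:IndexTwistedOperator} applied over $\hat M$ we have $\ind((\hat D)_{\hat E})=\llbracket\hat D;B\rrbracket\circ\llbracket\hat E\rrbracket$.

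It remains to identify the image of $\llbracket\hat D;B\rrbracket\circ\llbracket\hat E\rrbracket$ in $\K(\Roe(M,L;A\grtensor B))$ with $\llbracket D,L;B\rrbracket\circ\llbracket E\div L\rrbracket$, which I would do at the level of representing (asymptotic) morphisms. Any bounded extension of $P$ over $M$ represents $\llbracket E\div L\rrbracket\in\K(\cA(M\div L;B))$ as the class of $\phi\mapsto\phi(0)\,[P]$ (tensored with a fixed rank-one projection), because $P^2-P$ vanishes on $V$ and hence lies in $\cAz(L\Subset M;B)$; combining this with $(\id_\cS\grtensor\ev_0)\circ\Delta=\id_\cS$ and the bottom row of Diagram~\eqref{eq:SESAsympMorph}, the product $\llbracket D,L;B\rrbracket\circ\llbracket E\div L\rrbracket$ is represented by $\phi\mapsto\bigl[t\mapsto\overline{\phi(t^{-1}\hat D)\,P}\bigr]$, where the bar denotes the image in $\Roe(M,L;A\grtensor B)$ of the indicated operator on the trivial $B$-extension of $\hat S|_V$. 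For the other side, the rescaling homotopy $s\mapsto\bigl[t\mapsto\phi((1+s(t-1))^{-1}(\hat D)_{\hat E})\bigr]$ shows that the $*$-homomorphism $\phi\mapsto\phi((\hat D)_{\hat E})$ representing $\ind((\hat D)_{\hat E})$ is equivalent to $\phi\mapsto\bigl[t\mapsto\phi(t^{-1}(\hat D)_{\hat E})\bigr]$. The bounded-variation hypothesis gives $\|[\hat D\grtensor\id,\hat P]\|\le\|\grad\hat P\|_\infty<\infty$, so $t^{-1}[\hat D\grtensor\id,\hat P]\to0$; the functional-calculus estimates already used in the proof of \Cref{thm:IndexTwistedOperator} then show that $\phi(t^{-1}(\hat D)_{\hat E})$, $\hat P\,\phi(t^{-1}(\hat D\grtensor\id))\,\hat P$ and $\phi(t^{-1}(\hat D\grtensor\id))\,\hat P$ differ pairwise by norm-null terms as $t\to\infty$. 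Since $\hat P|_V=P$ and the homomorphism $\Roe(\hat M;A\grtensor B)\to\Roe(M,L;A\grtensor B)$ factors through restriction to $V$, pushing forward identifies $\bigl[t\mapsto\phi(t^{-1}(\hat D\grtensor\id))\,\hat P\bigr]$ with the representative $\bigl[t\mapsto\overline{\phi(t^{-1}\hat D)\,P}\bigr]$ above, so both sides agree in $\K(\Roe(M,L;A\grtensor B))$.

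The main obstacle is the bookkeeping in this last step: one must check that the isomorphisms in Diagram~\eqref{eq:SESAsympMorph}, the identifications $\Roe(M,L;A\grtensor B)\cong\Roe(V^{\subseteq M};A\grtensor B)/\Roe(\partial V\subset V^{\subseteq M};A\grtensor B)$ and $\cA(M\div L;B)\cong\cA(V,\partial V;B)/\cAz(\partial V\Subset V,\partial V;B)$, and the chosen inclusion of $\hat E$ into the trivial bundle are mutually compatible, so that the two asymptotic morphisms are genuinely equal and not merely conjugate by unspecified covering isometries. No new analytic input is needed beyond \Cref{thm:IndexTwistedOperator}; this is precisely the sense in which the generalization of \cite[Theorem~7.10]{WulffTwisted} and its proof is straightforward.
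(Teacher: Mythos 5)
Your proposal is correct and follows essentially the same route the paper intends: the paper's proof of this theorem is a one-line reference to generalizing \cite[Theorem~7.10]{WulffTwisted} and its proof, and what you spell out — extend $D$ and the defining projection to $\hat M$, invoke \Cref{thm:IndexTwistedOperator} there, then transport through the quotient maps assembled in Diagram~\eqref{eq:SESAsympMorph} and check compatibility of the representing asymptotic morphisms — is exactly that generalization made explicit.
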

In particular, for the the constant rank one vector bundle $E$ it calculates $\ind (D,L)$, showing that the latter is also independent of the choice of extension.

We conclude this subsection by discussing the compatibility of the $\EE$-theory classes of $D$ with boundary morphisms in $\EE$-theory.
The short exact sequences of Diagram \eqref{eq:SESAsympMorph} without the ``decorations'' $\cS\grtensor\blank$ and $\fA(\blank)$ give rise to boundary $\EE$-theory classes
\begin{align*}
\llbracket\sigma_0\rrbracket &\in \EE(\Cz(0,1)\grtensor \cA(M\div L;B), \cAz(\partial V\Subset V,\partial V;B))
\\\llbracket\sigma_1\rrbracket &\in \EE(\Cz(0,1)\grtensor \Roe(M,L;A\grtensor B),\Roe(\partial V\subset V^{\subseteq M};A\grtensor B))
\end{align*}
For separable \textCstar-algebras this would be \cite[Section 5]{guentnerhigsontrout}, but since we are dealing with non-separable \textCstar-algebras, we have to use the $\EE$-theory model presented in \cite{Wulff_NonseperableEtheory} which fixes the problems with exactness, see \cite[Section 7]{Wulff_NonseperableEtheory}.
Moreover, the tensor factor $\Cz(0,1)$ can be replaced by $\Cl_1$, because these graded \textCstar-algebras are canonically isomorphic in the $\EE$-theory category, and since they are finite dimensional it is obvious that they can be incorporated into the coefficient \textCstar-algebra $B$ as in the following proposition.

\begin{prop}\label{prop:Eclassesboundarymorphisms}
The following diagram commutes in the $\EE$-theory category:
\[\xymatrix{
\cA(M\div L;B\grtensor \Cl_1)\ar[r]^{\llbracket\sigma_0\rrbracket}\ar[d]^{\llbracket D,L; B\grtensor\Cl_1\rrbracket} & \cAz(\partial V\Subset V,\partial V;B))\ar[d]^{\llbracket D|_{\mathring V}\midd\partial V;B\rrbracket}
\\\Roe(M,L;A\grtensor B\grtensor\Cl_1)\ar[r]^{\llbracket\sigma_1\rrbracket}&\Roe(\partial V\subset V^{\subseteq M};A\grtensor B)
}\]
\end{prop}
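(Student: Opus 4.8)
The plan is to recognize the asserted square as an instance of the naturality of the $\EE$-theory connecting class with respect to morphisms of short exact sequences. Diagram~\eqref{eq:SESAsympMorph} already exhibits precisely such a morphism: after deleting the decorations $\cS\grtensor\blank$ and $\fA(\blank)$, its left and right columns are the two short exact sequences
\[0\to\cAz(\partial V\Subset V,\partial V;B)\to\cA(V,\partial V;B)\to\cA(M\div L;B)\to0\]
and
\[0\to\Roe(\partial V\subset V^{\subseteq M};A\grtensor B)\to\Roe(V^{\subseteq M};A\grtensor B)\to\Roe(M,L;A\grtensor B)\to0\]
constructed earlier in this section, while the three horizontal arrows of \eqref{eq:SESAsympMorph} are $*$-homomorphisms into the respective asymptotic algebras and hence represent the $\EE$-theory classes $\llbracket D|_{\mathring V}\midd\partial V;B\rrbracket$, $\llbracket D|_{\mathring V};B\rrbracket$ and $\llbracket D,L;B\rrbracket$. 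The commutativity of \eqref{eq:SESAsympMorph} says exactly that these three classes form a morphism of the two short exact sequences in the $\EE$-theory category, and the boundary classes $\llbracket\sigma_0\rrbracket,\llbracket\sigma_1\rrbracket$ are by definition the connecting classes of the two (undecorated) columns.

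First I would invoke the naturality of the connecting class: any morphism of short exact sequences of (not necessarily separable) \textCstar-algebras in the $\EE$-theory category induces a commuting square of boundary classes. In the non-separable setting this is part of the formalism developed in \cite[Section~7]{Wulff_NonseperableEtheory} (cf.\ \cite[Section~5]{guentnerhigsontrout} in the separable case); it follows by applying the mapping cone construction to \eqref{eq:SESAsympMorph} and using that the inclusion of the ideal into the mapping cone is an $\EE$-equivalence while the suspension of the quotient maps canonically into the mapping cone, so that the connecting class is the evidently natural composite $\Sigma Q\to C(\pi)\xleftarrow{\ \simeq\ }I$. Applied to our situation this gives
\[\llbracket D|_{\mathring V}\midd\partial V;B\rrbracket\circ\llbracket\sigma_0\rrbracket=\llbracket\sigma_1\rrbracket\circ\bigl(\id_{\Cl_1}\grtensor\llbracket D,L;B\rrbracket\bigr)\]
in $\EE\bigl(\Cl_1\grtensor\cA(M\div L;B),\Roe(\partial V\subset V^{\subseteq M};A\grtensor B)\bigr)$, where we have used the identification $\Sigma\cong\Cl_1$ in the $\EE$-theory category explained in the paragraph preceding the proposition.

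It remains to move the suspension factor $\Cl_1$ into the coefficients. By \Cref{lem:RoeClifford} we have $\Roe(M,L;A\grtensor B\grtensor\Cl_1)\cong\Roe(M,L;A\grtensor B)\grtensor\Cl_1$, and since $\Cl_1$ is finite dimensional the obvious analogue $\cA(M\div L;B\grtensor\Cl_1)\cong\cA(M\div L;B)\grtensor\Cl_1$ holds for the function algebra as well (tensoring with $\Cl_1$ commutes with passing to bounded-gradient closures and with the quotient defining $\cA(M\div L;\blank)$). Under these canonical identifications $\id_{\Cl_1}\grtensor\llbracket D,L;B\rrbracket$ becomes $\llbracket D,L;B\grtensor\Cl_1\rrbracket$, which is immediate from the defining asymptotic morphism of type~\eqref{eq:DiracEclassAsympMorphSubset} (the operator $\phi(t^{-1}D)$ acts on the $L^2$-factor only, so enlarging the coefficient algebra by the finite dimensional factor $\Cl_1$ merely tensors the whole construction with $\id_{\Cl_1}$), and rewriting the displayed identity through these identifications yields precisely the commutativity claimed. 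The only non-formal ingredient is the naturality of the $\EE$-theory connecting class in the non-separable framework, for which we rely on \cite{Wulff_NonseperableEtheory}; accordingly I expect the main care to go into checking that \eqref{eq:SESAsympMorph} genuinely satisfies the hypotheses of that naturality statement — that the columns are honestly exact and that the horizontal asymptotic morphisms are strictly compatible with them — which is exactly what the construction in \eqref{eq:SESAsympMorph} provides.
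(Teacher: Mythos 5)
Your proposal is correct and takes essentially the same route as the paper: the paper simply cites the naturality of the $\EE$-theory connecting class in the non-separable framework (\cite[Proposition 7.6]{Wulff_NonseperableEtheory}, cf.\ \cite[Proposition 5.8]{guentnerhigsontrout}) applied to the morphism of short exact sequences given by \eqref{eq:SESAsympMorph}, and remarks that the $\cS\grtensor\blank$ and $\fA(\blank)$ decorations disappear upon passing to the $\EE$-theory category. Your more detailed unwinding of the naturality argument, the mapping-cone picture, and the transfer of the $\Cl_1$ factor into the coefficients all correctly fill in what the paper leaves implicit.
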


\begin{proof}
This follows from \cite[Proposition 7.6]{Wulff_NonseperableEtheory} (cf.\ \cite[Proposition 5.8]{guentnerhigsontrout}). The additional ``decoration'' in \eqref{eq:SESAsympMorph} is readily seen to disappear once one passes from the simpletotic category to the $\EE$-theory category.
\end{proof}

\subsection{Bundles of vanishing variation}

In the previous sections we have seen that indices of twisted operators $D_E$ can be calculated from $\EE$-theory classes of $D$ by composition products.
It is in general not possible to calculate the index of $D_E$ from the index of $D$ alone. Indeed, there are examples even in classical Fredholm index theory on compact manifolds that the index of $D$ vanishes while the index of $D_E$ does not.

The big insight of \cite[Section 8]{WulffTwisted} is that for special types of bundles $E$, the bundles of vanishing variation, it is possible to calculate the index $\ind(D_E,L)$ from $\ind(D,L)$ via a module multiplication (which should be seen as a cap product) if $L$ is compact. We are now going to generalize this theory to arbitary closed subsets $L\subset M$.

We first need to introduce the generalization of the stable Higson compactification and corona.
There are two ways of introducing it, a topological one in the spirit of \cite[Definition 8.1]{WulffTwisted} and an analytic one. 
They agree for complete Riemannian manifolds of bounded geometry by a direct generalization of \cite[Lemma 8.2]{WulffTwisted}.
Here we opt for the shorter analytic definition out of convenience, because it is the one which is relevant to our index theoretic considerations.
We leave it to the reader to elaborate the topological approach which works for all proper metric spaces, noting that it might become relevant in future papers, for example to analyze properties of the generalized co-assembly map which we will introduce below.

\begin{defn}\label{defn_stable_Higson_away}
Let $M$ be a complete Riemannian manifold, $L\subset M$ a closed subspace and $B$ a graded \textCstar-algebra.
\begin{itemize}
\item We say that a smooth function $f\colon M\to B\grtensor\grKom$ has \emph{vanishing variation away from $L$} if for all $\varepsilon>0$ there is a controlled neighborhood $U$ of $L$ such that $\|\grad f(x)\|<\varepsilon$ for all points $x$ outside of $U$.

\item The \emph{stable Higson compactification of $M$ away from $L$ with coefficients in $B$} is the norm closure $\sHigCom_L(M;B)$ of
\[\sHigCom^\infty_L(M;B)\coloneqq\{f\in\cA^\infty(M;B)\mid f \text{ has vanishing variation away from } L\}\,.\]
in $\cA(M;B)$.
It is a \textCstar-algebra containing $\cAz(L\Subset M;B)$ as an ideal and we define the \emph{stable Higson corona of $M$ away from $L$} as the quotient \textCstar-algebra
\[\sHigCor_L(M;B)\coloneqq \frac{\sHigCom_L(M;B)}{\cAz(L\Subset M;B)}\subset\cA(M\div L;B)\,.\qedhere\]
\end{itemize}
\end{defn}

\begin{rem}\label{rem_bounded_variation_need}
It is debatable if one should really include the condition of bounded variation $f\in\cA^\infty(M;B)$ into the definition as we have done, or if $f\in\Cb^\infty(M;B\grtensor\grKom)$ plus vanishing variation away from $L$ is more appropriate, but since we only work with $\cA$-functions in this article, it seemed adequate to add it.
\end{rem}

\begin{defn}[{cf.\ \cite[Definition 8.4]{WulffTwisted}}]
A \emph{$B$-bundle of vanishing variation away from $L$} is a $B$-bundle defined on the complement of a controlled neighborhood of $L$ in $M$ whose fiber at a point $x$ is the image of a function $P\in\sHigCom^\infty_L(M;B)$ which is projection valued outside of the controlled neighborhood.
The defining function $P$ determines a class $\llbracket E,\sHigCor_L\rrbracket\in \K(\sHigCor_L(M;B))$ which is a preimage of $\llbracket E\div L\rrbracket$ under the canonical map.
\end{defn}

The following main results of this section essentially work exactly as the corresponding statements in \cite[Section 8]{WulffTwisted} and hence we only have to give little details of the proofs.

\begin{lem}[{cf.\ \cite[(8.1), Lemma 8.5, Corollary 8.6]{WulffTwisted}}]
Let $M$ be a complete Riemannian manifold of bounded geometry, $L\subset M$ a closed subset and $A,B,C$ graded \textCstar-algebras.
\begin{enumerate}
\item There is a $*$-homomorphism
\[\nabla_{B,C}\colon\sHigCor_L(M;B)\grtensor\sHigCor_L(M;C)\to\sHigCor_L(M;B\grtensor C)\]
which multiplies functions pointwise via a $*$-isomorphism $A\grtensor \grKom\grtensor B\grtensor\grKom\cong A\grtensor B\grtensor\grKom$, and a $*$-homomorphism
\[\fm_{A,B}\colon\Roe(M,L;A)\grtensor\sHigCor_L(M;B)\to\Roe(M,L;A\grtensor B)\,,\quad [T]\grtensor[f]\to[Tf]\]
where the operator $T\in\Roe(M;A)\subset\Lin(L^2(M)\grtensor A\grtensor\grelltwo)$ and the function $f\in\sHigCom_L(M;B)\subset\Cb(M;B\grtensor\grelltwo)$ act on the Hilbert module $L^2(M)\grtensor A\grtensor \grelltwo\grtensor B\grtensor \grelltwo\cong L^2(M)\grtensor A\grtensor B\grtensor \grelltwo$.

They are canonical up to homotopy, because the isomorphisms $\grelltwo\grtensor\grelltwo\cong\grelltwo$ and $\grKom\grtensor\grKom\cong\grKom$ are.

\item These $*$-homomorphisms induce a cup product 
\[\cup\colon\K(\sHigCor_L(M;B))\otimes\K(\sHigCor_L(M;C))\to\K(\sHigCor_L(M;B\grtensor C))\]
and a cap product
\[\cap\colon\K(\Roe(M,L;A))\otimes\K(\sHigCor_L(M;B))\to\K(\Roe(M,L;A\grtensor B))\]
which are associate with respect to each other, i.\,e.\ for all $x,y,z$ we have
\[
x\cap(y\cup z)=(x\cap y)\cap z\,.
\]
\end{enumerate}
\end{lem}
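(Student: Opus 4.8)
The strategy is to build the two $*$-homomorphisms $\nabla_{B,C}$ and $\fm_{A,B}$ by adapting the constructions of \cite[(8.1) and Lemma~8.5]{WulffTwisted} from the compact-$L$ case to arbitrary closed $L$, and then to derive the associativity of the induced cup and cap products purely formally from the compatibility of these $*$-homomorphisms with one another. The key point throughout is that all the relevant function algebras are defined by \emph{pointwise} conditions on gradients together with a support/smallness condition measured by ``$R$-neighborhoods of $L$'', so the passage from compact $L$ to general $L$ only replaces ``outside a compact set'' by ``outside a controlled neighborhood of $L$'' and does not affect any estimate.

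First I would construct $\nabla_{B,C}$. Fix once and for all a $*$-isomorphism $\grKom\grtensor\grKom\cong\grKom$; together with the flip it yields $B\grtensor\grKom\grtensor C\grtensor\grKom\cong (B\grtensor C)\grtensor\grKom$, hence a pointwise-multiplication map $\Cb(M;B\grtensor\grKom)\times\Cb(M;C\grtensor\grKom)\to\Cb(M;(B\grtensor C)\grtensor\grKom)$. One checks that if $f\in\sHigCom_L^\infty(M;B)$ and $g\in\sHigCom_L^\infty(M;C)$, then the product $fg$ is smooth, bounded, with bounded gradient (Leibniz rule: $\grad(fg)=(\grad f)g+f(\grad g)$), and has vanishing variation away from $L$: given $\varepsilon>0$, pick controlled neighborhoods outside of which $\|\grad f\|<\varepsilon/(2\|g\|_\infty)$ and $\|\grad g\|<\varepsilon/(2\|f\|_\infty)$ and take their union. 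Thus multiplication descends to $\sHigCom_L(M;B)\grtensor\sHigCom_L(M;C)\to\sHigCom_L(M;B\grtensor C)$ (using the universal property of $\grtensor$, since the two factors commute pointwise), it carries the ideal $\cAz(L\Subset M;B)\grtensor\sHigCom_L(M;C)$ into $\cAz(L\Subset M;B\grtensor C)$, and hence passes to the quotient coronas to give $\nabla_{B,C}$. For $\fm_{A,B}$ I would realize $T\in\Roe(M;A)$ and $f\in\sHigCom_L(M;B)$ as operators on $L^2(M)\grtensor A\grtensor\grelltwo\grtensor B\grtensor\grelltwo\cong L^2(M)\grtensor(A\grtensor B)\grtensor\grelltwo$: $f$ acts by pointwise multiplication, $T$ acts on the first three tensor legs. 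One must check $Tf\in\Roe(M;A\grtensor B)$: finite propagation of $T$ and the fact that multiplication by $f$ has propagation zero give finite propagation of $Tf$, and local compactness follows because for $\chi\in C_0(M)$ one has $\chi T\chi$ compact and $f$ bounded. Finally, if $T\in\Roe(L\subset M;A)$ then $Tf\in\Roe(L\subset M;A\grtensor B)$ (the support-near-$L$ condition is preserved by multiplying with the bounded operator $f$ on the right), and if $f\in\cAz(L\Subset M;B)$ then $Tf$ can be approximated, using that $T$ is a norm-limit of finite-propagation operators, by operators supported in controlled neighborhoods of $L$, so $Tf\in\Roe(L\subset M;A\grtensor B)$ in that case too. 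Hence $\fm_{A,B}$ is well-defined on the quotients. Canonicity up to homotopy of both maps follows from connectedness (up to homotopy through $*$-isomorphisms) of the space of identifications $\grelltwo\grtensor\grelltwo\cong\grelltwo$, exactly as in \cite{WulffTwisted}; I would cite that reference rather than repeat the rotation-trick argument.

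The second part is then formal. The cup product $\cup$ is defined as the composite of the exterior product $\K(\sHigCor_L(M;B))\otimes\K(\sHigCor_L(M;C))\to\K(\sHigCor_L(M;B)\grtensor\sHigCor_L(M;C))$ with $(\nabla_{B,C})_*$, and the cap product $\cap$ as the composite of the exterior product $\K(\Roe(M,L;A))\otimes\K(\sHigCor_L(M;B))\to\K(\Roe(M,L;A)\grtensor\sHigCor_L(M;B))$ with $(\fm_{A,B})_*$. The associativity identity $x\cap(y\cup z)=(x\cap y)\cap z$ reduces, by naturality of the exterior product in $\K$-theory, to the commutativity of the diagram of $*$-homomorphisms
\[\xymatrix@C=1.3em{
\Roe(M,L;A)\grtensor\sHigCor_L(M;B)\grtensor\sHigCor_L(M;C)\ar[r]^-{\id\grtensor\nabla_{B,C}}\ar[d]_-{\fm_{A,B}\grtensor\id}
&\Roe(M,L;A)\grtensor\sHigCor_L(M;B\grtensor C)\ar[d]^-{\fm_{A,B\grtensor C}}
\\\Roe(M,L;A\grtensor B)\grtensor\sHigCor_L(M;C)\ar[r]_-{\fm_{A\grtensor B,C}}
&\Roe(M,L;A\grtensor B\grtensor C)
}\]
up to homotopy, which in turn is immediate from the definitions: both composites send a triple $[T]\grtensor[f]\grtensor[g]$ to $[Tfg]$, acting on $L^2(M)\grtensor A\grtensor\grelltwo\grtensor B\grtensor\grelltwo\grtensor C\grtensor\grelltwo$, the only discrepancy being the choice of identification $\grelltwo\grtensor\grelltwo\grtensor\grelltwo\cong\grelltwo$, which is canonical up to homotopy. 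I expect the main (though still routine) obstacle to be the bookkeeping of these tensor-leg identifications and the verification that $\fm$ lands in the correct relative Roe algebra --- i.e.\ that multiplying by a vanishing-variation function does not destroy the decay encoded by $\Roe(L\subset M;\blank)$ --- but both are handled exactly as in \cite[Section~8]{WulffTwisted}, so the write-up can largely consist of pointing to those arguments and noting that the replacement of compact sets by controlled neighborhoods of $L$ changes nothing.
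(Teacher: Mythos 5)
There is a genuine gap in your construction of $\fm_{A,B}$. You check that $Tf$ lands in $\Roe(M;A\grtensor B)$, and that $Tf$ lands in the ideal $\Roe(L\subset M;A\grtensor B)$ whenever $T$ is supported near $L$ or $f\in\cAz(L\Subset M;B)$. But these are only the linearity/adjoint/well-definedness checks. To get a $*$-\emph{homomorphism} out of the maximal graded tensor product via the formula $[T]\grtensor[f]\mapsto[Tf]$, you must verify multiplicativity, i.e.\ that
\[
[T_1f_1T_2f_2]=[T_1T_2f_1f_2]\quad\text{in }\Roe(M,L;A\grtensor B),
\]
which comes down to showing that the graded commutator $[T,f]$ lies in the ideal $\Roe(L\subset M;A\grtensor B)$. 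This is the heart of the lemma and is precisely where both hypotheses ``$f$ has vanishing variation away from $L$'' and ``$M$ has bounded geometry'' are used: one approximates $f$ by a step function $\check f$ whose levels are constant on balls of a uniformly positive radius (this needs bounded geometry), observes that $[T,\check f]$ for $T$ of finite propagation is supported near the union of the boundaries of those blocks lying in a controlled neighborhood of $L$ (this needs vanishing variation away from $L$, so that off a controlled neighborhood the blocks can be taken coarser and coarser), and controls the error terms $T(f-\check f)$ and $(f-\check f)T$. Your write-up never invokes bounded geometry at all, yet it is an explicit hypothesis of the statement; dropping it would make the commutator estimate fail.

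A related imprecision: you identify ``the main obstacle'' as showing that multiplying by a vanishing-variation function does not destroy the decay encoded by $\Roe(L\subset M;\blank)$. That is a much easier property (as you in fact show in one line with the support-near-$L$ argument). The real obstacle is the commutator being in the ideal. Once that is in place, the rest of your argument — the construction of $\nabla_{B,C}$, the passage to quotients, and the reduction of the associativity identity to the (up-to-homotopy) commutativity of the square $\fm_{A,B\grtensor C}\circ(\id\grtensor\nabla_{B,C})\simeq\fm_{A\grtensor B,C}\circ(\fm_{A,B}\grtensor\id)$ — matches the paper's approach.
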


\begin{proof}
That $\nabla_{B,C}$ is a $*$-homomorphism is clear. 

For $\fm_{A,B}$ it is straightforward to see that $Tf,fT\in\Roe(M;A\grtensor B)$ and we have to prove that the commutators $[T,f]$ are contained in the ideal $\Roe(L\subset M;A\grtensor B)$. 
To this end we may assume that $T$ has finite propagation and use bounded geometry to approximate $f$ by a step function $\check f$ just as in the proof of \cite[Lemma 8.5]{WulffTwisted} and then the calculations therein show that $[T,\check f]$ can be approximated by operators supported in controlled neighborhoods of $L$ and the same is true for $T(f-\check f)$ and $(f-\check f)T$. Hence we obtain a $*$-homomorphism
\[\Roe(M;A)\grtensor\sHigCom_L(M;B)\to\Roe(M,L;A\grtensor B)\]
and it is again straightforward to see that $Tf$ is supported near $L$ if $T$ is or if $f$ vanishes away from $L$. Hence it factors through the claimed quotient algebras.

For the associativity of the products one proves as in \cite[Lemma 8.5]{WulffTwisted} that $\fm_{A,B\grtensor C}\circ(\id\grtensor \nabla_{B,C})$ and $\fm_{A\grtensor B,C}\circ(\fm_{A,B}\grtensor \id)$ are homotopic.
\end{proof}

Finally, the following very interesting index theorem can be proven exactly as \cite[Theorem 8.7]{WulffTwisted}.

\begin{thm}\label{thm:indexcapproduct}
Let $M$ be a complete Riemannian manifold of bounded geometry, $D$ an $A$-linear Dirac operator defined on the complement of some controlled neighborhood of a closed subset $L\subset M$ and $A,B$ graded \textCstar-algebras.
Letting $\iota\colon\sHigCor_L(M;B)\hookrightarrow\cA(M\div L;B)$ denote the inclusion, then
\begin{equation}
\label{eq_cap_with_vanishing_var}
\llbracket D,L;B\rrbracket \circ\iota_*(y)=\ind(D,L)\cap y
\end{equation}
for all $y\in \K(\sHigCor_L(M;B))$.
In particular, for every $B$-bundle $E$ of bounded variation away from $L$ the index of the twisted operator $D_E$ can be calculated by
\[\ind(D_E,L)=\ind(D,L)\cap\llbracket E,\sHigCor_L\rrbracket\,.\qedhere\]
\end{thm}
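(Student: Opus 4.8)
First, the ``in particular'' assertion follows at once from the main formula \eqref{eq_cap_with_vanishing_var}: applying it to $y=\llbracket E,\sHigCor_L\rrbracket$ and recalling both $\iota_*\llbracket E,\sHigCor_L\rrbracket=\llbracket E\div L\rrbracket$ and \Cref{thm:indexawayfromLEtheoryproduct}, which gives $\ind(D_E,L)=\llbracket D,L;B\rrbracket\circ\llbracket E\div L\rrbracket$, yields the claim. So everything rests on \eqref{eq_cap_with_vanishing_var}, and the plan is to retrace the proof of \cite[Theorem 8.7]{WulffTwisted} in the present generality. Both $\llbracket D,L;B\rrbracket\circ\iota_*(\blank)$ and $\ind(D,L)\cap\blank$ are group homomorphisms $\K(\sHigCor_L(M;B))\to\K(\Roe(M,L;A\grtensor B))$; hence, by additivity, stability and the Clifford description of odd $\K$-theory, it suffices to treat a class $y=[g]$ represented by a projection $g\in\sHigCom_L(M;B)$ whose image in $\sHigCor_L(M;B)$ is a projection (i.e.\ $g$ is projection valued away from a controlled neighborhood of $L$). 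By the independence of extension built into \eqref{eq:SESAsympMorph} one may moreover carry out the argument with the canonical extension $\hat D$ over $\hat M$ and push everything forward along $\Roe(\hat M;\blank)\to\Roe(M,L;\blank)$; below I suppress this and write as though $D$ were defined on all of $M$. Throughout, $\overline{T}$ denotes the image of $T$ in the relative Roe algebra and $[f]$ the class of $f\in\sHigCom_L(M;B)$ in $\sHigCor_L(M;B)$.

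The next step is to unwind both sides explicitly. Let $\epsilon\colon\cS\to\C$ be evaluation at $0$, so that $(\id_\cS\grtensor\epsilon)\circ\Delta=\id_\cS$. Feeding $y$, represented by $\phi\mapsto\phi(0)[g]$, into the asymptotic morphism \eqref{eq:DiracEclassAsympMorph} restricted as in \eqref{eq:SESAsympMorph}, the counit identity collapses the comultiplication and shows that $\llbracket D,L;B\rrbracket\circ\iota_*(y)$ is represented by
\[
\phi\ \longmapsto\ \bigl[\,t\mapsto \overline{\phi(t^{-1}D)\,g}\,\bigr]\ \in\ \fA\bigl(\Roe(M,L;A\grtensor B)\bigr)\,.
\]
On the other hand $\ind(D,L)$ is represented by the $*$-homomorphism $\phi\mapsto\overline{\phi(D)}$, and since $\ind(D,L)\cap\blank$ is $\fm_{A,B}$-multiplication by it, the same counit identity shows that $\ind(D,L)\cap y$ is represented by the $*$-homomorphism $\phi\mapsto\fm_{A,B}\bigl(\overline{\phi(D)}\grtensor[g]\bigr)=\overline{\phi(D)\,g}$. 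Thus the theorem reduces to showing that the asymptotic morphism $t\mapsto\overline{\phi(t^{-1}D)g}$ and the constant $*$-homomorphism $\phi\mapsto\overline{\phi(D)g}$ define the same class.

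The key point — and this is precisely where the Lemma preceding the theorem (the generalization of \cite[(8.1), Lemma~8.5]{WulffTwisted}) enters — is that for every scale $\lambda\in(0,\infty)$ the assignment
\[
\beta_\lambda\colon\cS\longrightarrow\Roe(M,L;A\grtensor B)\,,\qquad \phi\longmapsto \fm_{A,B}\bigl(\overline{\phi(\lambda D)}\grtensor[g]\bigr)=\overline{\phi(\lambda D)\,g}
\]
is an honest $*$-homomorphism: it is the composite of the $*$-homomorphism $\phi\mapsto\overline{\phi(\lambda D)}$ (rescaled functional calculus, which lands in $\Roe(M;L^2(S))\subset\Roe(M;A)$) with the $*$-homomorphism $\fm_{A,B}(\blank\grtensor[g])$, the latter being multiplicative because $[g]$ is a projection. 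Moreover $\lambda\mapsto\beta_\lambda$ is norm-continuous on $(0,\infty)$ and uniformly bounded. Consequently
\[
(s,t)\ \longmapsto\ \beta_{(1-s)t^{-1}+s}\,,\qquad s\in[0,1]\,,\ t\in[1,\infty)\,,
\]
is a homotopy of asymptotic morphisms (all scales occurring lie in $(0,1]$, so continuity of functional calculus is all that is needed) from $t\mapsto\beta_{t^{-1}}$, which represents $\llbracket D,L;B\rrbracket\circ\iota_*(y)$, to the constant asymptotic morphism $\beta_1$, which represents $\ind(D,L)\cap y$. This establishes \eqref{eq_cap_with_vanishing_var}.

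The one genuinely substantial point is hidden in the cited Lemma: the verification that $\fm_{A,B}$ actually takes values in the \emph{relative} Roe algebra, equivalently that the commutators $[\phi(\lambda D),g]$ vanish in $\Roe(M,L;A\grtensor B)$. Here bounded geometry enters through the step-function approximation of $g$ on a uniformly locally finite cover, whose error is controlled by $\grad g$; combined with vanishing variation away from $L$ this error, and hence the commutator, becomes supported in controlled neighborhoods of $L$ up to arbitrarily small norm, so it lies in $\Roe(L\subset M;A\grtensor B)$. This is exactly the computation of \cite[Lemma~8.5, Theorem~8.7]{WulffTwisted}, and it — together with the routine bookkeeping of the various quotients and isometric inclusions in \eqref{eq:SESAsympMorph} after passing to $\hat D$, and the standard reduction to a projection representative $g$ — is the only place where care is required; none of it presents a new difficulty beyond \cite[Section~8]{WulffTwisted}.
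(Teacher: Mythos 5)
Your proof is correct and follows the same route the paper intends: the paper simply states that the theorem "can be proven exactly as \cite[Theorem 8.7]{WulffTwisted}", and your reconstruction — reducing to a projection representative, writing both $\llbracket D,L;B\rrbracket\circ\iota_*(y)$ and $\ind(D,L)\cap y$ as (asymptotic) morphisms of the form $\phi\mapsto\overline{\phi(\lambda D)g}$, and connecting them by the scaling homotopy $\lambda=(1-s)t^{-1}+s$, with the commutator/bounded-geometry lemma supplying the only substantive input — is exactly that argument transported to the present generality.
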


\section{Application to positive scalar curvature}

In this section we will use our calculations to find obstructions to uniformly positive scalar curvature away from a closed submanifold $N$.
A key ingredient to our main theorem is the following version of the coarse co-assembly map introduced in \cite{EmeMey}. 

\begin{defn}
The \emph{coarse co-assembly map away from $N$} is the boundary map
\[\mu_N\colon \K(\sHigCor_N(M;B\grtensor\Cl_1))\to\K(\cAz(N\Subset M;B)) \]
associated to the short exact sequence 
\[
0\to\cAz(N\Subset M;B)\to\sHigCom_N(M;B)\to\sHigCor_N(M;B)\to 0
\]
from \cref{defn_stable_Higson_away}.
\end{defn}

Here we have implemented the index shift again by tensoring the coefficient \textCstar-algebra by $\Cl_1$ in order to stay in the language of \Cref{prop:Eclassesboundarymorphisms}.

\begin{thm}\label{thm_thom_lifts_application}
Let $M$ be a complete Riemannian $m$-dimensional spin-manifold and $N\subset M$ be a complete $n$-dimensional submanifold with $\K$-oriented normal bundle~$V$. Hence $N$ is at least \spinC\ and we let $\slashed D_M$ and $\slashed D_N$ be the $\Cl_m$- and $\Cl_n$-linear spin (resp. \spinC) Dirac operators of $M$ and $N$, respectively. Let $i\colon N\hookrightarrow M$ denote the inclusion and $r\coloneqq m-n$ the codimension.

Assume that $N$ is uniformly embedded into $M$, that $M$ has bounded geometry in some $R$-neighbourhood of $N$ and that the line bundle associated to $P_{\unitary_1}(V)$ of the \spinC -structure of $N$ has bounded geometry (which is the case if $V$ is $\KO$-oriented).

If the Thom class $\tau$ lies in the image of the coarse co-assembly map away from $N$
\[\mu_N\colon \K(\sHigCor_N(M;\Cl_{1,r}))\to\K(\cAz(N\Subset M;\Cl_{0,r}))\]
with preimage $\tilde\tau$,
then 
the wrong way map 
\begin{align*}
\K(\Roe(M,N;\Cl_m))&\to  \K(\Roe(N\subset M;\Cl_{m,r}))\\
x&\mapsto \partial(x\cap\tilde\tau)
\end{align*}
maps $\ind(\slashed D_M, N)$ to $i_*\ind(\slashed D_N)$.
Thus, $i_*\ind(\slashed D_N)\not=0$ implies that
 there cannot be a metric of uniformly positive scalar curvature away from $N$ in the same quasi-isometry class as the original one.
\end{thm}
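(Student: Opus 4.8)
The strategy is to assemble the statement from the structural results established earlier, reading off the commutative diagram that relates the index away from $N$ on $M$ to the index on $N$, and then invoking Corollary~\ref{cor_upsc_outside_L} for the geometric consequence. First I would set $B\coloneqq\Cl_{0,r}$ (so $B\grtensor\Cl_1\cong\Cl_{1,r}$, up to the canonical $\EE$-theory identification) and recall that, under the hypotheses on $N\subset M$ (uniform embedding, bounded geometry near $N$, and boundedness of the line bundle of $P_{\unitary_1}(V)$), Theorem~\ref{thm_Thomclassrelation_relaxed} applies and yields
\[
\llbracket \slashed D_M\midd N;\Cl_{0,r}\rrbracket\circ\tau=i_*\ind(\slashed D_N)\in\K(\Roe(N\subset M;\Cl_m\grtensor\Cl_{0,r}))\cong\K(\Roe(N\subset M;\Cl_{m,r}))\,,
\]
where $\tau\in\K(\cAz(N\Subset M;\Cl_{0,r}))$ is the Thom class of \Cref{def:Thomclassboundedgeometry} (and we use that $\cA(N\Subset M;\Cl_{0,r})$ and $\cAz(N\Subset M;\Cl_{0,r})$ carry the same relevant $\K$-theory data here, via the inclusion into $\cAz$). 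Since $\slashed D_M$ is defined on all of $M$, its index away from $N$ is just the image of $\ind(\slashed D_M)\in\K(\Roe(M;\Cl_m))$ in $\K(\Roe(M,N;\Cl_m))$, and by \Cref{thm:indexawayfromLEtheoryproduct} applied to the trivial bundle we have the compatibility that lets us feed $\ind(\slashed D_M,N)$ into the $\EE$-theory class $\llbracket\slashed D_M,N;B\rrbracket$.

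The core of the argument is the following chain of equalities, which I would justify step by step. Let $\tilde\tau\in\K(\sHigCor_N(M;\Cl_{1,r}))$ be a preimage of $\tau$ under $\mu_N$. By the cap-product index theorem \Cref{thm:indexcapproduct}, for $y=\iota_*(\text{something})$ one has $\llbracket\slashed D_M,N;B\rrbracket\circ\iota_*(y')=\ind(\slashed D_M,N)\cap y'$ for $y'\in\K(\sHigCor_N(M;B))$; the index-shift bookkeeping (incorporating the $\Cl_1$ factor into the coefficients as in \Cref{prop:Eclassesboundarymorphisms}) turns this into a statement comparing $\ind(\slashed D_M,N)\cap\tilde\tau$ with the composition involving $\llbracket\slashed D_M,N;\Cl_{0,r}\rrbracket$. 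Then \Cref{prop:Eclassesboundarymorphisms}, with the identification $B\grtensor\Cl_1\cong\Cl_{1,r}$, gives the commuting square relating the boundary map $\partial=\llbracket\sigma_1\rrbracket$ on the Roe-algebra side to the boundary map $\llbracket\sigma_0\rrbracket$ on the function-algebra side, and since $\mu_N$ is precisely the latter boundary map, $\llbracket\sigma_0\rrbracket$ applied to $\tilde\tau$ recovers $\tau$. Chasing the square,
\[
\partial\bigl(\ind(\slashed D_M,N)\cap\tilde\tau\bigr)=\llbracket \slashed D_M|_{\mathring V}\midd\partial V;\Cl_{0,r}\rrbracket\circ\mu_N(\tilde\tau)=\llbracket \slashed D_M\midd N;\Cl_{0,r}\rrbracket\circ\tau\,,
\]
where the last identification uses that $\partial V$ and $N$ are coarsely equivalent and that, by the independence-of-extension \Cref{lem:independence}, the localized class $\llbracket\slashed D_M|_{\mathring V}\midd\partial V;\Cl_{0,r}\rrbracket$ agrees with $\llbracket\slashed D_M\midd N;\Cl_{0,r}\rrbracket$ after the canonical identifications. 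Combining with Theorem~\ref{thm_Thomclassrelation_relaxed} yields $\partial(\ind(\slashed D_M,N)\cap\tilde\tau)=i_*\ind(\slashed D_N)$, which is exactly the claim that the displayed wrong-way map sends $\ind(\slashed D_M,N)$ to $i_*\ind(\slashed D_N)$.

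For the geometric conclusion: a metric of uniformly positive scalar curvature away from $N$ in the same quasi-isometry class has the same Roe algebras and $\cA$-function algebras (quasi-isometry invariance), so the wrong-way map above is unchanged; by \Cref{ex_invertible_upsc_outside_L} the spin Dirac operator for such a metric is invertible away from $N$, hence $\ind(\slashed D_M,N)=0$ by \Cref{cor_upsc_outside_L} (more precisely, by the lifting in Diagram~\eqref{eq_diag_Lloc_classes}), and therefore its image $i_*\ind(\slashed D_N)$ vanishes; contrapositively, $i_*\ind(\slashed D_N)\neq0$ rules out such a metric.

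\textbf{Main obstacle.} I expect the routine geometric input to be unproblematic; the delicate point is the bookkeeping around the two index shifts and the passage $B\grtensor\Cl_1\leftrightarrow\Cl_1\grtensor B\leftrightarrow\Cl_{1,r}$, together with making sure that the boundary map $\mu_N$ appearing in the statement really is the map $\llbracket\sigma_0\rrbracket$ that \Cref{prop:Eclassesboundarymorphisms} connects to $\partial=\llbracket\sigma_1\rrbracket$ — i.e. that the short exact sequence $0\to\cAz(N\Subset M;\Cl_{0,r})\to\sHigCom_N\to\sHigCor_N\to 0$ defining $\mu_N$ is compatible with the column-sequences of Diagram~\eqref{eq:SESAsympMorph} via the inclusion $\sHigCor_N(M;B)\hookrightarrow\cA(M\div N;B)$. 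Once that identification is pinned down, together with the fact (from \Cref{thm:indexcapproduct}) that $\llbracket D,L;B\rrbracket\circ\iota_*$ equals capping with $\ind(D,L)$, everything reduces to a diagram chase.
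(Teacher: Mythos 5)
Your argument follows the paper's proof almost exactly: both assemble the result from the commutative diagram extending \Cref{prop:Eclassesboundarymorphisms} to include $\mu_N$ as the boundary map of the stable-Higson short exact sequence, combine it with \Cref{thm:indexcapproduct} to rewrite $\llbracket\slashed D_M,N;\Cl_{1,r}\rrbracket\circ\iota_*(\tilde\tau)$ as $\ind(\slashed D_M,N)\cap\tilde\tau$, apply \Cref{thm_Thomclassrelation_relaxed}, and invoke \Cref{cor_upsc_outside_L} for the geometric conclusion. The only blemishes are cosmetic: the intermediate expression $\llbracket\slashed D_M|_{\mathring V}\midd\partial V;\Cl_{0,r}\rrbracket\circ\mu_N(\tilde\tau)$ does not literally type-check (one should route through $\llbracket\sigma_0\rrbracket\circ\iota_*(\tilde\tau)$ before including into $\cAz(N\Subset M;\Cl_{0,r})$), and the commutativity of the right square in Diagram~\eqref{eq_diag_lifts} is an immediate consequence of restricting the asymptotic morphism rather than an application of \Cref{lem:independence}.
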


\begin{proof}
The coassembly map $\mu_N$ is given by composition with an $\EE$-theory element which we denote by the same letter and then the diagram in \Cref{prop:Eclassesboundarymorphisms} is readily seen to extend to the commutative diagram
\begin{equation}\label{eq_diag_lifts}
\xymatrix{
\sHigCor_N(M;\Cl_{1,r})\ar[d]^{\iota_*}\ar[drr]^{\mu_N}&&
\\
\cA(M\div N;\Cl_{1,r})\ar[r]^-{\llbracket\sigma_0\rrbracket}\ar[d]^{\llbracket \slashed D_M,N; \Cl_{1,r}\rrbracket}
& \cAz(\partial V\Subset V,\partial V;\Cl_{0,r}))\ar[d]^{\llbracket \slashed D_M|_{\mathring V}\midd\partial V;\Cl_{0,r}\rrbracket}\ar@{}[r]|{\subset}
& \cAz(N\Subset M;\Cl_{0,r})\ar[d]^{\llbracket \slashed D_M\midd N;\Cl_{0,r}\rrbracket}
\\
\Roe(M,N;\Cl_{m+1,r})\ar[r]^-{\llbracket\sigma_1\rrbracket} 
& \Roe(\partial V\subset V^{\subseteq M};\Cl_{m,r})\ar@{}[r]|{\subset}
& \Roe(N\subset M;\Cl_{m,r})
}
\end{equation}
where the bottom row composes to yield the boundary $\EE$-theory element $\partial$ associated to the short exact sequence of Roe algebras
\[0\to\Roe(N\subset M;\Cl_{m,r})\to\Roe(M;\Cl_{m,r})\to \Roe(M,N;\Cl_{m,r})\to 0\,.\]

Together with \Cref{thm_Thomclassrelation_relaxed} we can now calculate
\begin{align*}
0& \stackrel{\phantom{\eqref{eq_diag_lifts}}}{\not=} i_*\ind(\slashed D_N) \stackrel{\eqref{eq_Thom_final}}= \llbracket \slashed D_M\midd N;\Cl_{0,r}\rrbracket\circ \tau=\llbracket \slashed D_M\midd N;\Cl_{0,r}\rrbracket\circ \mu_N\circ \tilde\tau
\\
& \stackrel{\eqref{eq_diag_lifts}}= \partial\circ\llbracket \slashed D_M,N;\Cl_{1,r}\rrbracket\circ\iota_*\circ \tilde\tau \stackrel{\eqref{eq_cap_with_vanishing_var}}= \partial(\ind(\slashed D_M,N)\cap\tilde\tau)
\end{align*}
and the second claim follows by \cref{cor_upsc_outside_L}.
\end{proof}

\Cref{thm_thom_lifts_application} has the assumption that the Thom class $\tau$ of the submanifold $N$ in $M$ lifts through the coarse co-assembly map away from $N$
\[\mu_N\colon \K(\sHigCor_N(M;\Cl_{1,r}))\to\K(\cAz(N\Subset M;\Cl_{0,r}))\,.\]
We describe now a situation where we can guarantee this: We treat multi-partitioned manifolds and recover the results from \cite{schick_zadeh} and \cite[Section~4.3]{siegel_analytic_structure_group}.

\begin{defn}
We say that a complete Riemannian manifold $M$ is \emph{multi-partitioned} by a submanifold $N$ if there exists a function $f\colon M \to \R^r$ satisfying:
\begin{enumerate}
\item\label{it:mp:uniformlycontinuous} $f$ is uniformly continuous.
\item\label{it:mp:smoothboundedder} $f$ is smooth on an $R$-neighborhood of $N$ with uniformly bounded derivatives thereon.
\item\label{it:mp:uniformlyregular} $N=f^{-1}(0)$ and $0$ is a \emph{uniformly regular value} of $f$. By the latter we mean that $df$ is fiberwise invertible on the normal bundle $V$ of $N$ in $M$ and that the isomorphisms $df|_{V_x}$ as well as their inverses $(df|_{V_x})^{-1}$ are uniformly bounded in $x\in N$.
\item\label{it:mp:properness} $f$ is \emph{uniformly proper away from $N$}, that is, preimages of compact subsets are contained in controlled neighborhoods of $N$.
\end{enumerate}
Condition \ref{it:mp:uniformlyregular} implies that $df|_V$ provides a trivialization of the normal bundle $V$ and we equip it with the $\KO$-orientation coming from this trivialization.
\end{defn}

Note that we do not assume $f$ to be proper in the usual sense, i.e. the submanifold~$N$ may be non-compact.

\begin{rem}
Schick and Zadeh consider in \cite{schick_zadeh} only compact submanifolds $N$. In this case, the function $f$ only has to be proper, uniformly continuous, 
smooth near $N=f^{-1}(0)$ and $0$ has to be a regular value of $f$. Indeed, uniform properness will become properness in the usual sense under compactness of $N$ and the uniform boundedness of the derivatives in \ref{it:mp:smoothboundedder} and of the norms of the isomorphisms in \ref{it:mp:uniformlyregular} are automatically satisfied then.

In \cite[Lemma 1.2]{schick_zadeh} it is shown that these conditions are satisfied if there exist $r$-many separating hypersurfaces intersecting suitably with each other, therefore explaining the name \emph{multi-partitioned}. The components of the function $f$ are the signed distances from the hypersurfaces.
They actually do not mention that the resulting function is uniformly continuous, although it clearly is, but instead only the slightly weaker property that it is continuous and uniformly expansive, also called controlled (meaning that for every $R > 0$ exists $S > 0$ such that $d(x,y) \le R$ implies $d(f(x),f(y)) \le S$).
From a practical point of view, this is not a big difference, because if $M$ has bounded geometry, then it is straightforward to modify the function $f$ such that it becomes uniformly continuous while retaining the other properties.
\end{rem}

\begin{thm}\label{thm_multi_partitioned}
If $M$ is multi-partitioned by a submanifold $N$, then the Thom class of $N$ in $M$ lifts through the coarse co-assembly map away from $N$.

Consequently, if $M$ is spin and has bounded geometry in some $R$-neighbourhood of~$N$, then there is a wrong way map
\[
\K(C^*(M,N;\Cl_m)) \to \K(C^*(N \subset M;\Cl_{m,r}))
\]
mapping $\ind(\slashed D_M,N)$ to $i_*\ind(\slashed D_N)$.
\end{thm}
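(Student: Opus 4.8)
The plan is to produce an explicit lift of the Thom class through $\mu_N$ by exhibiting a function in $\sHigCom_N(M;\Cl_{1,r})$ whose boundary is the Thom class, using the multi-partitioning function $f\colon M\to\R^r$ as the ``coordinate'' that allows one to interpolate between the Bott-type formula defining $\tau$ and a constant projection. Recall from \Cref{def:Thomclassboundedgeometry} that $\tau$ is represented by the $*$-homomorphism $\varphi\mapsto (\Xi^{-1})^*\beta(\psi_{R_1}(\varphi))$ with $\beta(\varphi)=\varphi(C)$, where $C$ is the Clifford section of $\pi^*\Cl(-V)\cong\pi^*\Lin_{\Cl_{0,r}}(S'_V)$ coming from the inclusion $V\hookrightarrow\Cl(-V)$. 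The key point is that via $df|_V$, the multi-partitioning trivializes $V$, and the $\KO$-orientation is precisely the one induced by this trivialization; hence $S'_V$ is a trivial $\Cl_{0,r}$-bundle and the Clifford section $C$ transported to $M$ is (up to the diffeomorphism $\Xi$ and a uniformly bounded rescaling) literally the Clifford multiplication by the vector $f(x)\in\R^r\cong V_x$. So on the nose, away from $N$, one has a natural candidate $\hat C\colon M\setminus(\text{ctrld.\ nbhd.\ of }N)\to\Cl_{0,r}$, $x\mapsto$ Clifford mult.\ by $f(x)/\|f(x)\|$ (a self-adjoint unitary), extended smoothly across $N$.

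The heart of the argument is then: I would show that $x\mapsto \varphi\bigl(c\cdot\hat C(x)\bigr)$ — or more precisely the assignment $\varphi\mapsto[x\mapsto\varphi(\text{Clifford mult.\ by }f(x))]$ after reconciling it with $\beta'$ via the homotopy $\psi_{R_1}$ — takes values in $\sHigCom_N(M;\Cl_{1,r})$ rather than merely in $\cAz(N\Subset M;\Cl_{0,r})$. The gradient estimate is where the hypotheses enter: condition \ref{it:mp:smoothboundedder} (uniformly bounded derivatives of $f$ near $N$) controls the gradient on the $R$-neighborhood, while condition \ref{it:mp:properness} (uniform properness away from $N$) forces $\|f(x)\|\to\infty$ as $x$ leaves larger and larger controlled neighborhoods of $N$, so that $\grad\bigl(\varphi(f(\cdot))\bigr)$, which schematically looks like $\varphi'(\|f\|)\cdot(\text{stuff bounded by }\|df\|)$, becomes small off large neighborhoods for Schwartz $\varphi$. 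This is exactly the ``vanishing variation away from $N$'' condition. Then the boundary map $\mu_N$ associated to $0\to\cAz(N\Subset M;B)\to\sHigCom_N(M;B)\to\sHigCor_N(M;B)\to 0$ — with the usual index shift realized by the $\Cl_1$ factor — sends the class of the image in the corona back to the class of this interior representative, which by construction is $\tau$; the index shift/Bott move is the standard one relating $C\in\Cl(-V)$ to the ``suspended'' picture, as in \cite{guentnerhigson}. I would package this as: the $*$-homomorphism $\cS\grtensor\cS\to\sHigCom_N$, $\varphi\grtensor\psi\mapsto[x\mapsto\varphi(\text{Cliff.\ mult.\ by }f(x))\,\psi(\text{the }\Cl_1\text{-generator})]$ (restricted appropriately), represents a class $\tilde\tau\in\K(\sHigCor_N(M;\Cl_{1,r}))$ with $\mu_N(\tilde\tau)=\tau$.

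The main obstacle I expect is the bookkeeping needed to identify, on the nose, the representative of $\tau$ from \Cref{def:Thomclassboundedgeometry} — which is built from the abstract spinor bundle $S'_V$, the exponential-map diffeomorphism $\Xi$, the cutoff $\psi_{R_1}$, and the embedding $G$ of $S'_V$ as a direct summand of a trivial bundle — with the concrete formula coming from $f$. One has to check that the $\KO$-orientation coming from the trivialization $df|_V$ of $V$ yields, up to a homotopy of the relevant $*$-homomorphisms into $\cAz(N\Subset M;\Cl_{0,r})$, the Clifford-by-$f$ section; here the uniform boundedness in \ref{it:mp:uniformlyregular} of $df|_{V_x}$ and its inverse is precisely what makes this homotopy (which interpolates between $\|w\|$ and $\|df(w)\|$-type norms, or between $\exp$ and a linear identification) proceed through maps of bounded variation, so that it descends to a homotopy in $\sHigCom_N$ and not just in $\cAz$. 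Once this identification is in place, everything else — the gradient estimates, the exactness of the six-term sequence in the non-separable setting (citing \cite{Wulff_NonseperableEtheory}), and feeding $\tilde\tau$ into \Cref{thm_thom_lifts_application} to get the wrong way map $x\mapsto\partial(x\cap\tilde\tau)$ taking $\ind(\slashed D_M,N)$ to $i_*\ind(\slashed D_N)$ — is routine. The final ``consequently'' clause is then immediate: combine the lift just produced with \Cref{thm_thom_lifts_application}, whose remaining hypotheses ($M$ spin, bounded geometry near $N$, $V$ $\KO$-oriented hence $S'_V$ of bounded geometry) are supplied by the statement and by the trivialization of $V$.
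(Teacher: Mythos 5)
Your approach is in spirit the same as the paper's: both produce the lift~$\tilde\tau$ by Clifford multiplication by the multi-partitioning function~$f$; the paper just packages this as factoring through the universal case $\{0\}\subset\IR^r$, where $\mu_{\{0\}}$ is the original Emerson--Meyer co-assembly map and is already known to be an isomorphism, and then pulling back along $f^*$, whereas you construct the same thing in situ on $M$ without naming the model case. What the paper's factorization buys is that the ``reconciliation'' you flag as your main obstacle becomes one commutative square, which they then verify by modifying $f$ in two stages (make $f$ smooth with bounded gradient everywhere, and make $f\circ\Xi^{-1}$ agree near $N$ with the isometric bundle trivialization $H$ coming from the polar decomposition of $df|_V$). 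That modification is not cosmetic: your claim that the homotopy ``interpolating between $\|w\|$ and $\|df(w)\|$-type norms, or between $\exp$ and a linear identification'' lifts from $\cAz$ to $\sHigCom_N$ needs exactly this sort of quantitative control, and the paper's proof shows that the polar decomposition of $df|_V$ together with uniform regularity is the precise device that makes it go through. So your outline is correct in its essentials but leaves the hardest part as a promise.

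There is one genuine gap, though, that is not just incomplete detail: you never verify that $N$ is uniformly embedded into $M$. This is needed even to \emph{define} the Thom class $\tau$ (\Cref{def:Thomclassboundedgeometry}) and is an explicit hypothesis of \Cref{thm_thom_lifts_application}, which you invoke at the end, and it does not follow tautologically from ``multi-partitioned.'' The paper spends a full paragraph proving it via the implicit function theorem: near each $x$ in the $R$-neighborhood of $N$, one writes $N$ locally as a graph $h_x$ with $f(\exp_x(y,h_x(y)))=0$, then uses the chain rule, the boundedness of the normal coordinate charts (from bounded geometry near~$N$), the boundedness of the derivatives of $f$ there (condition \ref{it:mp:smoothboundedder}), and the uniform bound on $(df|_{V_x})^{-1}$ (condition \ref{it:mp:uniformlyregular}) to get equi-uniformly bounded derivatives of $h_x$ and a uniformly positive lower bound on the defining radius $R_x$. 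Without this step your argument cannot even get started, so it should be supplied rather than absorbed into the phrase ``remaining hypotheses \dots\ supplied by the statement.''
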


\begin{proof}
We are going to verify the conditions of \cref{thm_thom_lifts_application}.

First we check that $N$ is uniformly embedded into $M$. Since we assumed that $f$ is smooth on an $R$-neighborhood of $N$, we can use the implicit function theorem to find for each $x$ a radius $0<R_x<R$ and a function $h_x\colon T_xM\supset W_x\to V_x$ on some open subset $W_x\subset T_xM$ whose graph describes $B_{R_x}(x)\cap N$ in normal coordinates.
It is characterized by the equation
\[\forall y\in W_x\colon f(\exp_x(y,h_x(y))=0\]
and applying the chain rule shows us that the derivatives of $h_x$ can be expressed in terms of:
\begin{itemize}
\item the derivatives of the normal coordinates of $M$, which are uniformly bounded on $R$-balls around points of $N$ because we assumed that $M$ has bounded geometry in the $R$-neighborhood of $N$;
\item the derivatives of $f$, which we assumed to be bounded on the $R$-neighborhood of $N$;
\item the fiberwise inverses of the differential $d(f\circ\exp_x)$ in normal direction.
\end{itemize}
Note that if $R_x$ is chosen small enough, then the norm of the fiberwise inverse in the last point is $1$-close to the norm of $(df|_{V_x})^{-1}$, which in turn is uniformly bounded in $x$ by our uniform regularity assumption \ref{it:mp:uniformlyregular}. For these choices of $R_x$, we see that each of the derivatives of $h_x$ is equi-uniformly bounded.

Now, a lower bound on the maximal radii $R_x$ such that all of the above properties hold can again be estimated in terms of the derivatives of the normal coordinates and $f$ in the $R$-neighborhood and the bounds from \ref{it:mp:uniformlyregular}. Therefore, we may assume that all the $R_x$ are uniformly positive.
The functions $h_x$ then satisfy the conditions in \Cref{defn:boundedgeometry}.\ref{it:uniformlyembedded} except that they are not defined on all of $T_xM$. But this is not a problem, because one can easily modify and extend the functions outside of the open subsets $W'_x\subset W_x$ where the graph describes $N\cap B_{R_x}(x)$ such that their derivatives are equi-uniformly bounded everywhere. We have thus shown that $N$ is uniformly embedded into $M$.

It remains to prove that the Thom class lies in the image of the coarse co-assembly map away from $N$. 
The proof will become really simple under two further assumptions on the function $f$.
\begin{enumerate}\setcounter{enumi}{4}
\item\label{it:mpplus:smootheverywhere} We assume that $f$ is smooth with bounded gradient on all of $M$.
\end{enumerate}
This first additional assumption together with condition \ref{it:mp:properness} implies that we obtain a commutative diagram
\[
\xymatrix{
\K(\sHigCor_N(M;\Cl_{1,r})) \ar[d]^-{\mu_N} & \K(\sHigCor_{\{0\}}(\IR^r;\Cl_{1,r})) \ar[d]^-{\mu_{\{0\}}} \ar[l]_-{f^*} \\
\K(\cAz(N\Subset M;\Cl_{0,r})) & \K(\cAz(\{0\}\Subset \IR^r;\Cl_{0,r})) \ar[l]_-{f^*}
}
\]
where on the right hand side we have the co-assembly map $\mu_{\{0\}}$ away from $\{0\}$, which is just the usual coarse co-assembly map introduced in \cite{EmeMey}. We know that the latter is an isomorphism and therefore the Thom class of $0$ in $\R^r$ lifts through it.
Hence it suffices to show that the lower horizontal arrow in the diagram maps the Thom class of $0$ in $\R^r$ to the Thom class of $N$ in $M$ and to this end we make a second additional assumption.

\begin{enumerate}\setcounter{enumi}{5}
\item\label{it:mpplus:niceontubularneighborhood} Let $\Xi$ denote the diffeomorphism from \Cref{lem:boundedgeometryproperties} defined on some $R_1$-disk bundle in $V$.
We assume that there exists an $0<\varepsilon<R_1$ with the following property: Let $U\subset V$ be the unit $\varepsilon$-ball bundle and $W\subset M$ the $\varepsilon$-tubular neighborhood of $N$ in $M$. Then the composition 
\[U\xrightarrow{(\Xi^{-1},\pi)}W\times N\xrightarrow{f\times\id}\R^r\times N\]
extends to an isometric oriented bundle isomorphism $H=(h,\pi)$ between the normal bundle $V$ and the trivial rank $r$ real bundle.
Moreover, we assume that $f$ maps the complement of $W$ to the complement of the $\varepsilon$-ball in $\R^r$.
\end{enumerate}

Note that the bundle isomorphism $H$ is exactly the one which provided us with a trivialization of the normal bundle $V$ and hence with the canonical $\KO$-orientation. 
The additional assumption that $H$ is isometric implies that it induces an isomorphism 
\[G\colon S'_V\to \Cl_{0,r}\times N\]
between the $\Cl_{0,r}$-bundles associated to the canonical spin structures of the normal bundle and the trivial bundle. Since the derivatives of $\Xi^{-1}$ and $f$ are bounded on $U$ and $W$, respectively, the derivatives of $H$ and $G$ must also be bounded. Since these are isometric bundle isomorphisms, their inverses have the same property. We can therefore take exactly this $G$ in order to embed $S'_V$ as a direct summand of bounded variation into $(\Cl_{0,r}\grtensor\grelltwo)\times N$, cf.\ \Cref{lem_thom_N_bounded_geom}.
As a consequence, we obtain a diagram
\[\xymatrix@C=4ex{
\cS_\varepsilon\ar[r]\ar@{=}[dd]
&\Ct( D_\varepsilon^r, S_\varepsilon^{r-1};\Cl_{0,r})\ar[r]\ar[d]^{h^*}
&\cA( D_\varepsilon^r, S_\varepsilon^{r-1};\Cl_{0,r})\ar[r]\ar[d]^{h^*}
&\cAz(\{0\}\Subset\R^r;\Cl_{0,r})\ar[dd]^{f^*}
\\
&\Cub(\overline{U},\partial U;\Cl_{0,r})\ar[r]
&\cA(\overline{U},\partial U;\Cl_{0,r})
&
\\\cS_\varepsilon\ar[r]
&\Cub(\overline{U},\partial U;\pi^*\Lin_{\Cl_{0,r}}(S'_V))\ar[u]_{\Ad_{G}}
&\cA(\overline{W},\partial W;\Cl_{0,r})\ar[u]_{\Xi^*}\ar[r]
&\cA(N\Subset M;\Cl_{0,r})
}
\]
where $D_\varepsilon^r,S_\varepsilon^{r-1}$ denote the $r$-disk and $(r-1)$-sphere of radius $\varepsilon$ in $\R^r$ and $\Cub$ stands for uniformly continuous bounded functions. Note that on manifolds with bounded geometry, the uniformly continuous functions are exactly the norm closure of the smooth functions with bounded gradient, cf. \cite[Lemma 3.10]{engel_uniform_Kth}, so these function algebras indeed fit into the diagram. 
The two leftmost horizontal maps are the canonical $*$-homomorphisms $\varphi\mapsto\varphi(C)$ in their respective contexts such that composition along the top is a representative of the Thom class of $0$ in $\R^r$ and the composition along the bottom is the canonical representative of the Thom class of $N$ in $M$.
The diagram commutes because of conditions \ref{it:mp:properness}, \ref{it:mpplus:smootheverywhere}, and \ref{it:mpplus:niceontubularneighborhood}, as one readily checks, and hence the claim follows.

For general $f$ we now just need to prove that we can modify it such that the two additional assumptions hold.
First, we apply the polar decomposition fiberwise to write $df|_{V_x}=P_x\circ h(x)$ where $h(x)\colon V_x\to\R^n$ is an isometric isomorphism and $P_x\colon \R^n\to\R^n$ is a positive semidefinite linear map.
By the uniform regularity condition \ref{it:mp:uniformlyregular}, the polar decomposition is unique and there exist $c,C>0$ such that $c\cdot \id\leq P_x\leq C\cdot \id$ for all $x\in N$, without loss of generality $c\leq 1$. Note that therefore $P_x$ and $h(x)$ must depend smoothly on $x\in N$, because they could only be unsmooth (even discontinuous) at points where $P_x$ is not invertible.
Hence we obtain an isometric bundle isomorphism  $H=(h,\psi)\colon V\to\R^r\times N$ and a smooth map $P\colon \R^r\times N\to\R^n$ which is fiberwise positive semidefinite linear and such that $df|_V=P\circ H$.

Since the derivatives of $f$ and $\Xi^{-1}$ are bounded on an $R$-neighborhood of $N$, the difference $df\circ\Xi^{-1}-f$ is uniformly of quadratic order in the distance of $N$ and we see that there exists $\varepsilon>0$ such that $4\varepsilon c^{-1}<R$ and $\|df\circ\Xi^{-1}(x)-f(x)\|<\varepsilon$ for all $x$ in the $4\varepsilon c^{-1}$-neighborhood of $N$. This implies that 
\[\|f(x)\|\geq \|df\circ\Xi^{-1}(x)\|-\varepsilon\geq c\|H\circ \Xi^{-1}(x)\|-\varepsilon\geq    2\varepsilon\]
for all $x\in M$ whose distance from $N$ is between $3\varepsilon c^{-1}$ and $4\varepsilon c^{-1}$. 
In addition $f$ does not take on the value zero outside of the $4\varepsilon c^{-1}$-neighborhood of $N$, so we can easily modify $f$ such that $\|f(x)\|\geq 2\varepsilon$ for all $x$ outside of the $3\varepsilon c^{-1}$-neighborhood of $N$.
Now we can approximate $f$ by a smooth function $f_1$ with bounded gradient and $\|f-f_1\|_\infty<\varepsilon$, because $f$ is uniformly continuous. Choose a smooth function $\psi\colon M\to [0,1]$ which only depends on the distance from $N$, is one on the  $3\varepsilon c^{-1}$-neighborhood and vanishes outside of the $4\varepsilon c^{-1}$-neighborhood. We then define $f_2\coloneqq \psi\cdot h\circ\Xi^{-1}+(1-\psi)\cdot f_1$.
Clearly, $f_2$ is a smooth function with bounded gradient such that $(f\times\id)\circ(\Xi^{-1},\pi)$ is equal to the isometric bundle isomorphism $H=(h,\pi)$ on the $3\varepsilon c^{-1}$-neighborhood of $N$, which contains the $\varepsilon$-neighborhood. Moreover, for all $x\in M$ outside of the $\varepsilon$-neighborhood we have $\|f_2(x)\|\geq \varepsilon$. Indeed, if the distance of $x$ from $N$ is between $3\varepsilon c^{-1}$ and $4\varepsilon c^{-1}$, then 
\begin{align*}
\|f_2(x)\|&\geq \|\psi\cdot h\circ\Xi^{-1}(x)+(1-\psi)\cdot f(x)\|-\varepsilon
\\&\geq\|\psi\cdot h\circ\Xi^{-1}(x)+(1-\psi)\cdot P\circ H\circ \Xi^{-1}(x)\|-2\varepsilon
\\&\geq c\|h\circ \Xi^{-1}(x)\|-2\varepsilon\geq 3\varepsilon-2\varepsilon=\varepsilon
\end{align*}
and the other cases are even simpler.
It is also straightforward to see that $f_2$ satisfies all the other properties that $f$ had and hence we are done.
\end{proof}

As a special case of the above we will state the partitioned manifold index theorem, i.e.\,the case $r=1$ of \cref{thm_multi_partitioned}:
\begin{cor}\label{cor_partitioned}
Let $M$ be a complete Riemannian spin-manifold with a separating hypersurface $N$ and assume that $M$ has bounded geometry in some $R$-neighbourhood of $N$.

Then the Thom class of $N$ in $M$ lifts through the coarse co-assembly map away from $N$ and there is a map
\[
K_{*}(C^*(M,N)) \to K_{*-1}(C^*(N))
\]
mapping $\ind(\slashed D_M,N)$ to $i_* \ind(\slashed D_N)$.
\end{cor}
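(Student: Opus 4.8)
The plan is to deduce \Cref{cor_partitioned} from \Cref{thm_multi_partitioned} by exhibiting, from the separating hypersurface, a multi-partitioning function of codimension $r=1$, and then rewriting the Clifford-graded $\K$-groups in the usual degree notation.

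First I would produce the function. Since $N$ separates $M$, write $M = M_+ \cup_N M_-$ and set $f\colon M \to \R$ equal to $\dist(\blank,N)$ on $M_+$ and to $-\dist(\blank,N)$ on $M_-$. Of the four conditions in the definition of \enquote{multi-partitioned}, three are essentially immediate: $f$ is $1$-Lipschitz, hence uniformly continuous, which is condition~\ref{it:mp:uniformlycontinuous}; $f^{-1}(K)$ lies in the $C$-neighborhood of $N$ whenever $K\subseteq[-C,C]$, which is condition~\ref{it:mp:properness}; and near $N$ the gradient $\grad f$ is the unit normal field, so $df$ restricts to an isometry on each (one-dimensional) fiber $V_x$ of the normal bundle, which gives condition~\ref{it:mp:uniformlyregular} and identifies the induced $\KO$-orientation of $V$ with the standard coorientation of the two-sided hypersurface $N$. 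The condition requiring genuine argument is~\ref{it:mp:smoothboundedder}: using that $M$ has bounded geometry on an $R$-neighborhood of $N$ --- and reading, as is implicit here, that the hypersurface $N$ is uniformly embedded (automatic when $N$ is compact, which is the case treated in \cite[Lemma 1.2]{schick_zadeh}) --- one obtains a uniform tubular neighborhood of $N$, and Fermi-coordinate estimates therein show that $f$ is smooth with uniformly bounded derivatives on some smaller $R'$-neighborhood of $N$; if necessary one replaces $f$ far from $N$ by a smooth bounded-gradient function without touching $f^{-1}(0)$ or the derivative bounds near $N$, as in the remark following \Cref{thm_multi_partitioned}. Thus $M$ is multi-partitioned by $N$ with $r=1$.

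Now \Cref{thm_multi_partitioned} applies verbatim and gives both assertions of the corollary simultaneously: the Thom class of $N$ in $M$ lifts through the coarse co-assembly map away from $N$, and since $M$ is spin and has bounded geometry near $N$, there is a wrong-way map $\K(\Roe(M,N;\Cl_m)) \to \K(\Roe(N\subset M;\Cl_{m,1}))$ sending $\ind(\slashed D_M,N)$ to $i_*\ind(\slashed D_N)$. To finish, I would translate into degrees using $\K(\Roe(X;\Hilbert\grtensor\Cl_{a,b})) \cong \K_{a-b}(\Roe(X;\Hilbert))$ from \Cref{sec:recapitulation}: with $r = m-n = 1$ this yields $\K(\Roe(M,N;\Cl_m)) \cong \K_m(\Roe(M,N))$ as well as $\K(\Roe(N\subset M;\Cl_{m,1})) \cong \K_{m-1}(\Roe(N\subset M)) \cong \K_{m-1}(\Roe(N))$, and under these identifications the two index classes become the usual ones; setting $* = m$ produces the asserted map $K_*(\Roe(M,N)) \to K_{*-1}(\Roe(N))$.

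The step I expect to be the main obstacle is precisely condition~\ref{it:mp:smoothboundedder} --- that the signed distance function is genuinely smooth with uniformly controlled derivatives in a uniform tubular neighborhood of $N$ --- since this is where the bounded-geometry hypothesis near $N$ (and the regularity of the hypersurface) is used, and it is the same analytic input already needed inside the proof of \Cref{thm_multi_partitioned}. Everything else --- matching Clifford degrees, and observing that in codimension one the abstract wrong-way map $x \mapsto \partial(x\cap\tilde\tau)$ of \Cref{thm_thom_lifts_application} specializes to the classical partitioned-manifold construction --- is routine bookkeeping.
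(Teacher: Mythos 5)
Your proposal is correct and follows exactly the paper's approach: take $f$ to be the signed distance function to $N$ and invoke \Cref{thm_multi_partitioned} with $r=1$. The paper's own proof is terser (it merely names $f$ and leaves the verification of the multi-partitioning conditions and the Clifford-degree bookkeeping implicit), but there is no substantive difference.
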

\begin{proof}
To show that $M$ is partitioned by $N$, we have to provide the map $f\colon M \to \IR$. In this case, we take $f$ to be the signed distance function to the submanifold $N$ (the sign depending on which side of $N$ we are in $M$).
\end{proof}

\printbibliography[heading=bibintoc]

\end{document}